\newtheorem{theorem}{Theorem}[section]
\newtheorem{lemma}[theorem]{Lemma}
\newtheorem{proposition}[theorem]{Proposition}
\newtheorem{corollary}[theorem]{Corollary}
\newtheorem{cor}{Corollary}
\newtheorem{conjecture}[theorem]{Conjecture}
\theoremstyle{plain}
\newtheorem{definition}[theorem]{Definition}
\newtheorem{example}[theorem]{Example}
\newtheorem{remark}[theorem]{Remark}
\newcommand{\qed}{\hfill \mbox{$\Box$}\medskip\newline}
\newenvironment{proof}{\noindent {\bf Proof:}}{\qed \par}
\newenvironment{proofZloc}{\noindent {\bf Proof of Theorem \ref{Zloc}:}}{\qed \par}
\newcommand{\Spec}{\operatorname{Spec}}
\newcommand{\Aut}{\operatorname{Aut}}
\newcommand{\Der}{\operatorname{Der}}
\newcommand{\id}{\operatorname{id}}
\newcommand{\codim}{\operatorname{codim}}
\renewcommand{\dim}{\operatorname{dim}}
\newcommand{\Sym}{\operatorname{Sym}}
\newcommand{\rk}{\operatorname{rk}}
\newcommand{\scrM}{\mathscr{M}}
\newcommand{\scrA}{\mathscr{A}}
\newcommand{\scrD}{\mathscr{D}}
\newcommand{\Z}{\mathbb{Z}}
\newcommand{\Q}{\mathbb{Q}}
\newcommand{\N}{\mathbb{N}}
\newcommand{\R}{\mathbb{R}}
\newcommand{\C}{\mathbb{C}}
\newcommand{\la}{\leftarrow}
\newcommand{\M}{\mathfrak{M}}
\newcommand{\sEnd}{{\cal E}\mathit{nd}}
\newcommand{\sHom}{{\cal H}\mathit{om}}
\renewcommand{\la}{\lambda}
\newcommand{\cs}{\C^\times}
\renewcommand{\a}{\alpha}
\renewcommand{\b}{\beta}
\newcommand{\Hom}{\operatorname{Hom}}
\newcommand{\HLCh}{H^{\dim\fM}_\fL\big(\fM; \C((h)) \big)}
\newcommand{\cP}{\mathcal{P}}
\newcommand{\cO}{\mathcal{O}}
\renewcommand{\cL}{\mathcal{L}}
\newcommand{\cT}{\mathcal{T}}
\newcommand{\cU}{\mathcal{U}}
\newcommand{\cM}{\mathcal{M}}
\newcommand{\cJ}{\mathcal{J}}
\newcommand{\becircled}{\mathaccent "7017}
\newcommand{\Ext}{\operatorname{Ext}}
\newcommand{\fS}{\mathfrak{S}}
\renewcommand{\cR}{\mathcal{R}}
\renewcommand{\cD}{\mathcal{D}}
\newcommand{\gr}{\operatorname{gr}}
\newcommand{\cQ}{\mathcal{Q}}
\newcommand{\Loc}{\operatorname{Loc}}
\newcommand{\cE}{\mathcal{E}}
\newcommand{\End}{\operatorname{End}}
\newcommand{\excise}[1]{}
\newcommand{\bT}{\mathbb{T}}
\newcommand{\bS}{\mathbb{S}}
\newcommand{\mg}{\mathfrak{g}}
\newcommand{\mt}{\mathfrak{t}}
\newcommand{\fp}{\mathfrak{p}}
\newcommand{\bi}{{_{\la'}\!T_{\la}}}
\newcommand{\kmi}{{_{k}\!\cT_{m}}}
\newcommand{\cN}{\mathcal{N}}
\renewcommand{\cH}{\mathcal{H}}
\newcommand{\fM}{\mathfrak{M}}
\newcommand{\fU}{\mathfrak{U}}
\newcommand{\fP}{\mathfrak{P}}
\newcommand{\fZ}{\mathfrak{Z}}
\newcommand{\fX}{\mathfrak{X}}
\renewcommand{\Loc}{\operatorname{Loc}}
\newcommand{\secs}{\Gamma_\bS}
\newcommand{\LLoc}{\mathbb{L}\!\operatorname{Loc}}
\newcommand{\Rsecs}{\mathbb{R}\Gamma_\bS}
\newcommand{\Zpmod}{\Zp\!\mmod}
\newcommand{\Zmod}{Z\!\mmod}
\newcommand{\Zpmodbd}{\!\Zpmod_{\operatorname{bd}}}
\newcommand{\Zmodbd}{\!\Zmod_{\operatorname{bd}}}
\newcommand{\Amod}{A\mmod}
\newcommand{\AMod}{A\operatorname{-Mod}}
\newcommand{\Dmod}{\cD\mmod}
\newcommand{\Dlmod}{\cD_\la\mmod}
\newcommand{\mmod}{\operatorname{-mod}}
\newcommand{\mMod}{\operatorname{-Mod}}
\renewcommand{\and}{\qquad\text{and}\qquad}
\newcommand{\mb}{\mathfrak{b}}
\newcommand{\Zp}{\Za^{(p)}}
\newcommand{\Za}{Z}
\newcommand{\suppc}{\operatorname{CC}}
\newcommand{\supp}{\operatorname{Supp}}
\newcommand{\Ht}{H^2(\fM;\C)}
\newcommand{\Htr}{H^2(\fM;\R)}
\newcommand{\si}{\sigma}
\newcommand{\aone}{\mathbb{A}^{\! 1}}
\newcommand{\HCa}{\mathbf{HC}^{\operatorname{a}}}
\newcommand{\HCg}{\mathbf{HC}^{\operatorname{g}}}
\newcommand{\HCalala}{{_{\la'}^{\mbox{}}\HCa_{\la}}}
\newcommand{\HCglala}{{_{\la'}^{\mbox{}}\HCg_{\la}}}
\newcommand{\Pic}{\operatorname{Pic}}
\newcommand{\Mov}{\operatorname{Mov}}
\newcommand{\fin}{\operatorname{fin}}
\newcommand{\an}{\operatorname{an}}
\newcommand{\op}{\operatorname{op}}
\newcommand{\scr}{\mathscr}
\newcommand{\scrQ}{\scr{Q}}
\newcommand{\bigmid}{\;\Big{|}\;}
\newcommand{\e}{\epsilon}
\newcommand{\cDop}{\cD^{\op}}
\newcommand{\Aop}{A^{\op}}
\newcommand{\bcN}{\overline{\cN}}
\newcommand{\cDred}{\cD_{\red}}
\newcommand{\cQred}{\cQ_{\red}}
\newcommand{\fXred}{\fX_{\red}}
\newcommand{\Lotimes}{\overset{L}\otimes}
\newcommand{\red}{\operatorname{red}}
\newcommand{\Caldararu}{C\u{a}ld\u{a}raru\xspace}
\newcommand{\hon}{h^{\nicefrac{1}{n}}}
\newcommand{\hmon}{h^{\nicefrac{-1}{n}}}
\newcommand{\scrN}{\scr{N}}
\newcommand{\Qua}{\mathsf{Qua}}
\newcommand{\Quag}{\mathsf{Qua}^{\operatorname g}}
\newcommand{\Quaa}{\mathsf{Qua}^{\operatorname a}}
\newcommand{\Cat}{\mathsf{Cat}}
\newcommand{\CLg}{\mathcal{C}^\fL_\la}
\newcommand{\CLa}{C^{\fL_0}_\la}
\newcommand{\DCLg}{D_{\fL}^b(\Dlmod)}
\newcommand{\DCLa}{D_{\fL_0}^b(A_\la\mmod)}
\newcommand{\fL}{\mathfrak{L}}
\newcommand{\coho}{\mathbbm{H}}
\begin{document}
\spacing{1.5}

\noindent {\Large \bf 
Quantizations of conical symplectic resolutions 
I: \\ local and global structure
}
\\
\spacing{1.2}
\noindent
{\bf Tom Braden}\footnote{Supported by NSA grants H98230-08-1-0097 and H98230-11-1-0180.}\\
Department of Mathematics and Statistics, University of Massachusetts,
Amherst, MA 01003\smallskip \\
{\bf Nicholas Proudfoot}\footnote{Supported by NSF grant DMS-0950383.}\\
Department of Mathematics, University of Oregon,
Eugene, OR 97403\smallskip \\
{\bf Ben Webster}\footnote{Supported by NSA grant H98230-10-1-0199.
}\\
Department of Mathematics, University of Virginia, Charlottesville, VA 22904
\bigskip\\
{\small
\begin{quote}
\noindent {\em Abstract.}
We re-examine some topics in representation theory of Lie
algebras and Springer theory in a more general context, viewing
the universal enveloping algebra as an example of the section ring of a 
quantization of a conical symplectic resolution.  While some
modification from this classical context is necessary, many familiar
features survive.  These include a version of the Beilinson-Bernstein
localization theorem, a theory of Harish-Chandra bimodules and their relationship
to convolution operators on cohomology, and a discrete
group action on the derived category of representations, generalizing the braid group action
on category $\cO$ via twisting functors.

Our primary goal is to apply these results to other quantized symplectic
resolutions, including quiver varieties and hypertoric varieties.
This provides a new context for known results about Lie algebras, Cherednik
algebras, finite W-algebras, and hypertoric enveloping algebras, while also
pointing to the study of new algebras arising from more
general resolutions.
\end{quote}
}
\bigskip

\section{Introduction}
\renewcommand{\thetheorem}{\Alph{theorem}}
\renewcommand{\thecor}{\Alph{theorem}.\arabic{cor}}
\begin{quote}
  \textsl{The dazzling success of algebraic geometry\ldots has so much
  reorientated the field that one particular protagonist has suggested,
  no doubt with much justification, that enveloping algebras should
  now be relegated to a subdivision of the theory of rings of
  differential operators.}

\flushright --Anthony Joseph, {\em On the classification of primitive
  ideals in the enveloping algebra of a semisimple Lie algebra} \cite{Josephclass}
\end{quote}

In this paper, we argue against the relegation suggested above, in favor of a different
geometric context.  While viewing universal enveloping
algebras as differential operators is unquestionably a powerful
technique, the differential operators on
flag varieties are odd men out in the world of differential operators
as a whole. For example, the only known examples of projective varieties
that are D-affine are homogeneous spaces for semi-simple complex Lie groups,
and it is conjectured that no other examples exist.
On the other hand, in this paper we consider a world
where this special case is very much at home: quantizations of symplectic
resolutions of affine singularities.

Differential operators on a smooth projective variety $X$ form a deformation quantization
of the cotangent bundle $T^*X$.  If $X$ is a homogeneous space for a
semi-simple complex Lie group $G$, its cotangent bundle is a resolution of the closure
of a nilpotent orbit in $\mg^*$ (or an affine variety finite over this
one).  If $X$ is the flag variety, this is known as the {\bf Springer resolution}. 
This is yet another sense in which these spaces are misfits;
homogeneous spaces for semi-simple complex Lie groups are conjecturally the only examples
of projective varieties whose cotangent bundles resolve affine singularities.  For most projective varieties $X$,
$T^*X$ does not have enough global functions.

There are, however, many other examples of symplectic algebraic varieties that
resolve affine cones.  While the Springer resolution is the
most famous, other examples include the minimal resolution of a
Kleinian singularity, the Hilbert scheme of points on such a
resolution, Nakajima quiver varieties, and hypertoric varieties.  One can study deformation quantizations
of these varieties, and many of them have the same affinity property enjoyed by the Springer resolution.
This paper is a study of these deformation
quantizations and their representation theory.

Several examples have been studied extensively by other authors.  
Universal enveloping algebras have been
considered from an enormous number of angles for decades, and other
examples such as spherical Cherednik algebras and finite W-algebras have
been active fields of research for many years.  The hypertoric
case has recently been studied by Bellamy and Kuwabara
\cite{BeKu} and by the authors of this paper, jointly with Licata \cite{BLPWtorico}.  
On the other hand, very few works attempt to view
all these examples in a single coherent theory.  Kashiwara and Rouquier began
to develop such a theory \cite{KR}, and our paper might be regarded as a continuation
of their work.  A recent preprint of McGerty and Nevins \cite{MN} addresses similar issues, with results
that are complementary to ours.

In Section \ref{sec:resolutions}, we discuss the algebraic geometry of conical
symplectic resolutions; this is essentially all material already in
the literature, but we collect it here for the convenience of the
reader.  Particularly important for us are deformations which appear
in the work of Kaledin and Verbitsky; these show that any symplectic
resolution flatly deforms to a smooth affine variety, which is key to many
properties of its quantization.  One ingredient we will use
systematically is the conical structure: a choice of
$\C^*$-action which makes the base into a cone and acts with
positive weight on the symplectic form.  

In Section \ref{sec:quantizations}, we discuss equivariant quantizations of a conical
symplectic resolution $\fM$, which are classified by $H^2(\fM;\C)$ \cite{BK04a,Losq}.
We prove some basic results about the ring
 $A$ of $\bS$-invariant global sections, a filtered algebra whose associated graded is isomorphic
to $\C[\fM]$.  We also study the behavior of quantizations under (quantum) Hamiltonian reduction,
proving a quantum version of the Duistermaat-Heckman theorem
(Proposition \ref{dui-heck}).

In Section \ref{sec:modules} we introduce the appropriate category $\Dmod$ of modules over a quantization $\cD$,
which one may regard as the quantum analogue of the category of coherent sheaves
(in particular, there is a finiteness assumption built into the definition).  
In the case where $\fM$ is a cotangent bundle, we show that this category is equivalent
to the category of finitely generated twisted D-modules on the base, where the twist is determined by the
period of the quantization.  The rest of the section is dedicated to the study of the sections and localization
functors that relate the category of modules over a quantization to the category of modules
over the section ring $A$.  We establish in Theorem \ref{derived-local} that these functors
induce derived equivalences for generic periods.

\begin{theorem}
Let $\fM$ be a conical symplectic resolution, and fix two classes $\eta,\la\in\Ht$ such that $\eta$
is the Chern class of an ample line bundle, or the strict transform of an ample line bundle on any other
conical symplectic resolution of $\fM_0$.
For all but finitely many complex numbers $k$,
the quantization of $\fM$ with period $\la+k\eta$ is derived affine; that is,
the derived functors of global sections and localization are inverse equivalences.
\end{theorem}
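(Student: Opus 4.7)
The plan is to reduce the result to two steps: first, to exhibit a specific value of $k$ where derived affinity holds, using ampleness of $\eta$ and a quantum Serre vanishing; and second, to argue that derived affinity is a Zariski-open condition on $k\in\aone$, so the failure locus is a proper Zariski-closed subset, hence finite.

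For the first step, assume first that $\eta$ is ample on $\fM$, and take $k$ to be a large positive integer so that $\cL^{\otimes k}$ is very ample, where $\cL$ has first Chern class $\eta$. The conical $\bS$-action together with the flat deformation of quantizations from Section \ref{sec:quantizations} should allow transfer of classical Serre vanishing to the quantum setting: for $k\gg 0$, $R^i\Gamma(\cD_{\la+k\eta})=0$ for $i>0$, and every coherent module over the quantization is globally generated after further twisting by the quantized line bundle. These are the standard inputs to a Beilinson-Bernstein-style argument, yielding derived affinity at $\la+k\eta$ for $k\gg 0$. If $\eta$ is only the strict transform of an ample class on a different conical symplectic resolution $\fM'$ of $\fM_0$, one first proves the result on $\fM'$ and then transfers via a derived equivalence of quantized module categories on $\fM$ and $\fM'$ arising from Kaledin's work on symplectic flops.

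The main obstacle lies in step two. The quantizations $\cD_{\la+k\eta}$, their section rings $A_{\la+k\eta}$, and the section/localization functors fit into a flat family over the affine line with coordinate $k$. Derived affineness amounts to the unit and counit maps of adjunction being isomorphisms, which in the family translates to the vanishing of certain coherent $\Ext$-sheaves; by semicontinuity their support is Zariski-closed. Combined with step one, which exhibits a $k$ off this support, the support must be a proper Zariski-closed subset of $\aone$, hence finite, and derived affinity holds for every $k$ in its complement. Controlling the flat family of functors and establishing the quantum Serre vanishing in the presence of the non-projective target $\fM$ will be the technical heart of the argument.
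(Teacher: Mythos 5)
There is a genuine gap, and it sits exactly where you flag "the main obstacle": your step two. The parameter $k$ does not enter through an honest coherent family over an affine line to which semicontinuity could be applied. In the paper's setup (and in any version of your family), the quantizations $\cD_{\la+k\eta}$ are obtained from a single quantization $\scrD$ of the twistor deformation $\scrM_\eta\to\aone$ by the specialization $t\mapsto hk$, i.e.\ by setting $h^{-1}t=k$ inside a formal neighborhood of $t=0$. The obstruction to derived localization (via Kaledin's criterion, Theorem \ref{kaledin-DL}, applied to the diagonal bimodule) is the cone $\scr{P}$ of $\LLoc(\scrA_{\operatorname{diag}})\to\scrD_{\operatorname{diag}}$, and its cohomology sheaves are supported entirely over $t=0$ (the fibers over $t\neq 0$ are affine by Proposition \ref{ample-affine}); the dependence on $k$ is only through the action of the operator $h^{-1}t$ on these sheaves. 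A priori that operator could have every complex number as a generalized eigenvalue, so "the failure locus is Zariski-closed in $k$" is not something you can extract from semicontinuity of $\Ext$-sheaves — there is no coherent sheaf on $\Spec\C[k]$ whose fibers compute the failure. The missing idea is precisely the paper's Lemma \ref{minimal-polynomial}: because $\scr{P}$ is supported on the Steinberg variety, which is Lagrangian by semi-smallness, each cohomology sheaf is holonomic, and the Kashiwara--Schapira finiteness theorem (plus the GAGA comparison, Theorem \ref{th:GAGA}) forces $h^{-1}t$ to satisfy a polynomial with complex coefficients; only the finitely many roots can be bad. One also needs the finite global dimension of $\cD_k\boxtimes\cD_k^{\op}$ together with \cite[3.3]{KalDEQ} to reduce the vanishing of the (unbounded) cone to finitely many cohomological degrees, hence to a single polynomial — another point your outline does not address.

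Your step one is also not as cheap as stated, and in fact is not needed in the paper's argument. A "quantum Serre vanishing plus Beilinson--Bernstein" argument in this generality is essentially Corollary \ref{large quantizations}, which the paper proves \emph{after} Theorem \ref{derived-local} via the $\Z$-algebra formalism, and whose proof again rests on the same minimal-polynomial lemma; moreover it would only give (abelian) localization at large integral $k$, whereas the theorem asserts derived localization for all but finitely many \emph{complex} $k$, so the real content is the finiteness of the bad set, not the existence of one good point. Finally, for $\eta$ the strict transform of an ample class on another resolution, the paper does not invoke a derived equivalence between flops: the only input used is that the generic twistor fiber $\scrM_\eta(\infty)$ is affine (Proposition \ref{ample-affine}); the transfer of derived localization between different resolutions is a separate result of McGerty--Nevins (Theorem \ref{one-to-all}(ii)), not something you can wave at via "Kaledin's work on symplectic flops."
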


In order to obtain an equivalence of abelian (rather than derived) categories that works for all (rather than only
generic) periods, we replace the section ring $A$ with a $\Z$-algebra, which mimics in a non-commutative
setting the homogeneous coordinate ring of a projective variety.  Given a quantized symplectic resolution
along with a very ample line bundle, we construct a $\Z$-algebra $Z$ and prove the following result (Theorem \ref{Zloc}).

\begin{theorem}
Let $\fM$ be a conical symplectic resolution, let $\cL$ be a very ample line bundle on $\fM$, and
let $Z$ be the associated $\Z$-algebra.  Then the category $\Dmod$ is equivalent to the category
of finitely generated modules over $Z$ modulo the subcategory of bounded modules.
\end{theorem}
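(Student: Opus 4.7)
The plan is to adapt the classical Serre theorem for projective schemes (as extended to the non-commutative setting by Artin--Zhang) to our quantized context. In its commutative form, this says that $\operatorname{coh}(\operatorname{Proj} R)$ is equivalent to finitely generated graded $R$-modules modulo the subcategory of modules supported only in finitely many degrees. Here $Z$ plays the role of the homogeneous coordinate ring of $\fM$ with respect to the very ample line bundle $\cL$, and our bounded modules are the analogue of torsion.

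First, I would pin down the construction of $Z$: its components $Z_{mn}$ should be identified with $\bS$-invariant global sections of a twist of the quantization by $\cL^{n-m}$, so that multiplication is given by composition of twisted shift functors. This produces a sections functor $\Phi\colon \Dmod \to Z\mmod$ sending $\cM$ to the $\Z$-graded space $\bigoplus_n \Gamma(\fM,\cM\otimes\cL^n)$, which I expect to land in finitely generated $Z$-modules after checking a coherence/generation property at each level. In the other direction, I would define a localization functor $\Psi\colon Z\mmod \to \Dmod$ via a sheafification procedure: given $M=\{M_n\}$, one localizes each piece to a quantization module in the quantization with period shifted by $n\cdot c_1(\cL)$, glues via the multiplication maps in $Z$, and uses the transfer bimodules between quantizations with different periods to patch to a single $\cD$-module on $\fM$.

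The heart of the argument is then to show that $\Psi$ and $\Phi$ descend to inverse equivalences after killing the bounded modules. The crucial tool is Theorem \ref{derived-local}: the period $\la + n\,c_1(\cL)$ is derived-affine for all but finitely many $n$, so $\Gamma(\fM,\cM\otimes\cL^n)$ accurately recovers $\cM$ for almost all $n$. From this one deduces (i) that the adjunction unit $\cM \to \Psi\Phi(\cM)$ is an isomorphism, because after truncating below a sufficiently large $N$ and applying localization for any period where localization holds, both sides agree; and (ii) that the adjunction counit $\Phi\Psi(M) \to M$ has kernel and cokernel supported in finitely many degrees (hence bounded), since their $n$-th graded pieces vanish whenever localization holds at period $\la+n\,c_1(\cL)$. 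Conversely, any $Z$-module annihilated by $\Psi$ is supported in only finitely many degrees, as otherwise one of its non-vanishing pieces at a derived-affine period would produce a nonzero $\cD$-module.

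The main technical obstacle I anticipate is establishing the requisite quantum analogue of Serre vanishing/finite generation: one must know that for every coherent $\cD$-module $\cM$, the higher $\bS$-invariant cohomology of $\cM\otimes\cL^n$ vanishes and that $\Gamma(\fM,\cM\otimes\cL^n)$ generates $\cM$ for $n\gg 0$. The natural strategy is to use Kaledin--Verbitsky style deformations to reduce to the classical Serre vanishing theorem applied to the associated graded sheaf on $\fM$, using that $\cL$ is very ample on the underlying variety and that the filtration on $\cM$ is compatible with the $\bS$-action. Once this vanishing is in place, combined with the derived-affinity input from Theorem \ref{derived-local}, the Artin--Zhang formalism carries the rest of the proof.
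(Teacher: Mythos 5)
Your overall skeleton---a sections functor and a localization functor between $\Dmod$ and finitely generated $Z$-modules, with bounded modules playing the role of torsion in a Serre-type theorem---is the right one, but the engine you propose to run it on has a genuine gap. You invoke Theorem \ref{derived-local} to claim that since $\la+n\,c_1(\cL)$ is derived-affine for all but finitely many $n$, the invariant sections of the $n$-th quantized twist ``accurately recover'' the module. Derived affineness does not give this: it is an equivalence of \emph{derived} categories via $\Rsecs$ and $\LLoc$, and it implies neither the vanishing of higher $\bS$-invariant cohomology of the twists nor generation by invariant sections, which are the abelian statements your steps (i) and (ii) actually use. Abelian localization at the periods $\la+k\eta$ for $k\gg 0$ is exactly Corollary \ref{large quantizations}, which in this paper is \emph{deduced from} Theorem \ref{Zloc} (through Proposition \ref{asympt-local} and Lemma \ref{minimal-polynomial}); taking it as an input would be circular, and proving it independently is substantially harder than the theorem at hand. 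Relatedly, the Artin--Zhang formalism you defer to requires finiteness ($\chi$-type) hypotheses that are too strong in this setting---this is precisely why the paper does not simply quote results such as \cite[11.1.1]{SvdB}.

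The actual proof needs no localization theorem (derived or abelian) at any period, and no deformation of $\fM$; your appeal to Kaledin--Verbitsky deformations is the wrong mechanism, although your instinct to reduce to the classical Serre theorem on the associated graded is correct. One sets $\secs^\Z(\cN)=\bigoplus_k\secs\big({}_k\cT'_0\otimes_{\cD}\cN\big)$ and $\Loc^\Z(N)=\big(\bigoplus_k {}_0\cT'_k\big)\otimes_Z N$, where the ${}_k\cT'_0$ are the quantized powers of $\cL$; a coherent lattice $\cN(0)$ induces a good filtration on $\secs^\Z(\cN)$, and a good filtration on $N$ induces a coherent lattice in $\Loc^\Z(N)$. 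The associated graded of the counit $\Loc^\Z(\secs^\Z(\cN))\to\cN$ is then the classical $\operatorname{Proj}$-localization isomorphism for the coherent sheaf $\bcN=\cN(0)/\cN(-1)$ on $\fM$, which is projective over an affine base with $\cL$ very ample; Nakayama's lemma for the $h$-adically complete lattice lifts this to an isomorphism of quantized modules. The unit is handled the same way (its associated graded is an isomorphism in sufficiently high degree), and a bounded $N$ has $\gr N$ localizing to the zero sheaf, so $\Loc^\Z(N)=0$. In short, the only Serre-type vanishing required is the classical one on the associated graded, and the passage from classical to quantum is Nakayama's lemma, not derived affinity.
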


Theorem B has three nice consequences.  First, we use it to prove the following abelian analogue
of Theorem A (Corollary \ref{large quantizations}).

\begin{cor}\label{first-cor}
Let $\fM$ be a conical symplectic resolution, and fix two classes $\eta,\la\in\Ht$ such that $\eta$
is the Chern class of an ample line bundle.
For all but finitely many positive integers $k$,
the quantization of $\fM$ with period $\la+k\eta$ is affine; that is,
the (abelian) functors of global sections and localization are inverse equivalences.
\end{cor}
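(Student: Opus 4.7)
The plan is to obtain the abelian equivalence from the $\Z$-algebra description of Theorem~B, in analogy with the classical fact that Serre's theorem upgrades the derived equivalence for projective space to an abelian equivalence once one passes to sheaves twisted by a high enough power of the ample line bundle. Since $\eta = c_1(\cL)$ for an ample line bundle $\cL$, some positive power $\cL^{\otimes N}$ is very ample; applying Theorem~B to $\cL^{\otimes N}$ for each starting period $\la+r\eta$, $r=0,1,\dots,N-1$, in parallel handles all residue classes of $k$ modulo $N$, and the ``large $k$'' thresholds from each of the $N$ copies combine to cover all but finitely many positive integers $k$.

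Fix one such copy and write $Z$ for the resulting $\Z$-algebra, so that its diagonal pieces $Z_{k,k}$ are identified with the section rings $A_{\la+kN\eta}$ and $\Dmod$ at period $\la+kN\eta$ is identified, by Theorem~B, with the quotient of graded $Z$-modules by the bounded subcategory. The central step is to show that for $k$ sufficiently large, restriction of a graded $Z$-module to its $k$-th component descends through this quotient to an equivalence with $A_{\la+kN\eta}$-mod: since bounded $Z$-modules vanish above some fixed degree, restriction is well defined past that threshold, and the inverse functor sends an $A_{\la+kN\eta}$-module $M$ to the graded $Z$-module $Z_{\geq k}\otimes_{Z_{k,k}}M$, a noncommutative analog of Serre's construction $M\mapsto\widetilde{M}$ for graded modules over a homogeneous coordinate ring.

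Once this is in place, one checks that the resulting equivalence agrees with the natural pair $(\Gamma,\Loc)$: by construction, the $k$-th graded piece of the $Z$-module associated to a $\cD$-module $\cM$ is $\Gamma(\cM)$, and the left adjoint $Z_{\geq k}\otimes_{Z_{k,k}}(-)$ matches $\Loc$ after passing to the quotient. The derived equivalence of Theorem~A, applied at the same period $\la+kN\eta$, then guarantees that the higher derived functors of $\Gamma$ vanish on $\Dmod$, so the equivalence is genuinely abelian rather than only derived. The main obstacle is the noncommutative Serre-type statement itself: classically it rests on Serre vanishing $H^{>0}(\cF\otimes\cL^{\otimes k})=0$ and on finite generation of $\bigoplus_k\Gamma(\cF\otimes\cL^{\otimes k})$, and in the quantized setting one must establish both properties using the very-ampleness of $\cL^{\otimes N}$ together with the conical $\bS$-action to control growth in the appropriate filtered/graded sense.
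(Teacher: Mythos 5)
Your outer scaffolding matches the paper's: the reduction from ample to very ample by running over residue classes (this is Remark \ref{ample vs very ample}), and the identification of $\Dmod$ with $\Zmod/\Zmodbd$ (Theorem \ref{Zloc}). But the step you call "central" --- that for $k\gg 0$ this quotient category becomes equivalent to modules over the single diagonal ring ${}_kZ_k\cong A_{\la+k\eta}$ --- is exactly the assertion that abelian localization holds at large periods, and your sketch of it does not go through. First, restriction to the $k$-th graded piece does not descend to the quotient by bounded modules for any fixed $k$: "bounded" means each module vanishes in sufficiently high degrees, not that all bounded modules vanish above a uniform threshold (a module concentrated in degree $k$ is bounded but has nonzero $k$-th piece), so the functor does not kill the Serre subcategory. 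The correct formulation, used in the paper via Gordon--Stafford, is that $\Zmod/\Zmodbd\simeq {}_kZ_k\mmod$ precisely when the shifted $\Z$-algebra $Z[k]$ is Morita (Proposition \ref{Morita-iff-loc}), so the entire weight of the proof falls on establishing that Morita property for $k\gg 0$ (Proposition \ref{asympt-local}).

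Second, that Morita property cannot be extracted from classical Serre vanishing and finite generation of $\bigoplus_k\Gamma(\cF\otimes\cL^k)$, which is what you propose; those inputs only yield Theorem \ref{Zloc} itself. The crux is surjectivity of the multiplication maps ${}_kZ_{k\pm 1}\otimes_{A_{k\pm 1}}{}_{k\pm 1}Z_k\to A_k$ for large $k$, and its classical limit is the product map $\Gamma(\fM;\cL^{\mp 1})\otimes\Gamma(\fM;\cL^{\pm 1})\to\C[\fM]$, which is typically \emph{not} surjective (sections of $\cL^{-1}$ vanish along divisors); the statement is invisible at the associated-graded level on $\fM$, and its dependence on $k$ is through the period of the quantization, not through a power of $\cL$. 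The paper proves it by passing to the twistor deformation $\scrM_\eta$, using Kaledin's result that $\scrM_\eta(\infty)$ is affine (Proposition \ref{ample-affine}) so the map is an isomorphism over nonzero points of $\aone$, and then Lemma \ref{minimal-polynomial} (which rests on holonomicity, the Kashiwara--Schapira finiteness theorem, and the GAGA Theorem \ref{th:GAGA}) to show that $h^{-1}t$ acting on the cokernel satisfies a polynomial, so only finitely many integers $k$ can fail. Finally, your appeal to Theorem A to conclude that the higher derived functors of $\Gamma$ vanish is a non sequitur: derived localization does not imply $t$-exactness of $\Rsecs$ or $\LLoc$ (abelian localization is strictly stronger, and indeed fails at $-\la-k\eta$ where derived localization still holds by Corollary \ref{left-right-local}); in the paper's argument the abelian statement comes from the Morita property alone, not from Theorem A.
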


Next, we prove a version of Serre's GAGA
theorem \cite{GAGA}.  
More precisely, we consider the analytic quantization $\cD^{\an}$ with the same period as $\cD$,
define the appropriate module category $\cD^{\an}\mmod$, and prove that it is equivalent to $\Dmod$
(Theorem \ref{th:GAGA}).  The existing literature is fairly evenly divided between working in the algebraic and analytic
categories, and this corollary is an indispensable tool that allows us to import previous results from both sides.

\begin{cor}
If $\fM$ is a conical symplectic resolution, then the analytification functor
from $\cD\mmod$ to $\cD^{\an}\mmod$ is an equivalence of categories.  
\end{cor}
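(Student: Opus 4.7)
The plan is to leverage the $\Z$-algebra presentation of Theorem B to reduce the comparison of $\Dmod$ and $\cD^{\an}\mmod$ to a comparison of graded section rings, where classical GAGA can be applied. The key geometric input is that the affinization map $\pi\colon\fM\to\fM_0$ is projective and $\fM_0$ is affine, so relative GAGA yields
\[
\Gamma(\fM,\cF)\cong\Gamma(\fM^{\an},\cF^{\an})
\]
for every coherent sheaf $\cF$ on $\fM$.

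First, fix a very ample line bundle $\cL$ and form the $\Z$-algebra $Z$ of Theorem B. Construct in parallel an analytic $\Z$-algebra $Z^{\an}$ whose components are spaces of global sections of twists of $\cD^{\an}$ by powers of $\cL^{\an}$. Each component of $Z$ carries the order filtration inherited from $\cD$, whose associated graded is a space of sections of a power of $\cL$ over $\fM$, and similarly for $Z^{\an}$. Applying relative GAGA to powers of $\cL$ identifies the associated gradeds of $Z$ and $Z^{\an}$. The $\bS$-action decomposes each component into finite-dimensional weight spaces whose weights are bounded below, so these graded identifications lift to a filtered isomorphism $Z\cong Z^{\an}$ of $\Z$-algebras.

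Second, I would establish the analytic analog of Theorem B, namely that $\cD^{\an}\mmod$ is equivalent to finitely generated $Z^{\an}$-modules modulo bounded ones. The algebraic proof presumably rests on Serre-type global generation of $\cD\otimes\cL^{\otimes k}$ for $k\gg 0$ together with noetherianness of $Z$; global generation is preserved on the analytic side by Serre vanishing applied to $\pi$, and the noetherian properties transfer via the $\bS$-action, so the same strategy should port over. One also checks naturality, namely that the functor $M\mapsto\bigoplus_n\Gamma(\fM,M\otimes\cL^{\otimes n})$ intertwines analytification on both sides; this is again relative GAGA, applied to a coherent sheaf underlying a good filtration of $M$.

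The hard part will be executing this analytic analog of Theorem B carefully, since every step of the $\Z$-algebra construction in Section \ref{sec:modules} must be replayed in the analytic setting, requiring analytic versions of coherence, noetherianness, and global generation of twists. An alternative is to bypass $Z^{\an}$ and argue directly: full faithfulness of analytification reduces to relative GAGA for the coherent sheaves appearing in good filtrations of $\Hom$ complexes, and essential surjectivity reduces to algebraizing a coherent analytic $\cD^{\an}$-module by algebraizing the associated graded of a good filtration. Either route ultimately rests on relative GAGA for the projective morphism $\pi$, which is the technical core of the argument.
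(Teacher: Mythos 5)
There is a genuine gap, and it sits at the very foundation of your argument: the claim that, because $\nu\colon\fM\to\fM_0$ is projective and $\fM_0$ is affine, ``relative GAGA yields $\Gamma(\fM,\cF)\cong\Gamma(\fM^{\an},\cF^{\an})$ for every coherent sheaf $\cF$'' is false. Comparison theorems for proper maps identify the analytifications of the higher direct images, $(R^i\nu_*\cF)^{\an}\cong R^i\nu^{\an}_*(\cF^{\an})$, but taking global sections over the \emph{non-compact} base $\fM_0^{\an}$ then produces the holomorphic (not algebraic) sections of a coherent sheaf on an affine variety, which is a much larger space. Already for $\cF=\fS_\fM$ the claimed isomorphism fails: $\Gamma(\fM^{\an};\fS^{\an}_{\fM^{\an}})$ contains $e^{f}$ for any algebraic function $f$, while $\Gamma(\fM;\fS_\fM)=\C[\fM]$ does not. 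Consequently your first step, identifying the associated gradeds of $Z$ and $Z^{\an}$ ``by relative GAGA applied to powers of $\cL$,'' does not go through as stated, and the same problem infects the proposed analytic analogue of the $\Z$-algebra localization theorem (Serre-type vanishing and generation statements over a non-proper total space do not by themselves control the global section spaces you need to match).

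The missing idea is that the conical $\bS$-action must be used not merely to ``lift'' a graded identification but to make the comparison possible at all: because $\secs$ takes $\bS$-invariant sections and the relevant analytic sections are $\bS$-locally finite, one reduces to comparing individual $\bS$-weight vectors, and a weight vector of an analytic section of $\cL^{k-m}$ on $\fM^{\an}$ can be reinterpreted as a section of a line bundle on the \emph{projectivization} of the cone $\fM_0$, which is a projective variety; there Serre's classical GAGA applies and shows the section is algebraic. This is what actually proves $Z\cong Z^{\an}$. Note also that one can then avoid re-proving the full analytic version of Theorem B (your acknowledged ``hard part''): once $Z\cong Z^{\an}$, the composite $\Loc^{\Z}\circ(\secs^{\Z})^{\an}\colon\cD^{\an}\mmod\to\Dmod$ splits $(-)^{\an}$, is exact, and kills no nonzero module (ampleness of $\cL$ plus Nakayama applied to $\mathcal{K}/h\mathcal{K}$), which already forces $(-)^{\an}$ to be an equivalence. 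Without the weight-space reduction to the projectivized cone, both of your proposed routes (via $Z^{\an}$ or the ``direct'' full-faithfulness/algebraization argument) rest on a section-comparison statement that is simply not true for the non-compact variety $\fM$.
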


Finally, we use Theorem B to
prove a categorical version of Kirwan surjectivity, 
relating the category of equivariant modules on a quantization to the category of modules on the
Hamiltonian reduction.  We consider a restriction functor defined by Kashiwara and Rouquier,
and we use our $\Z$-algebra formalism to construct left and right adjoints, thus proving that the restriction
functor is essentially surjective (Theorem \ref{surjectivity}).
In particular, this result establishes that our category $\Dmod$ is the same
as the analogous category considered by McGerty and Nevins (Remark \ref{ksmn}).  For a precise statement of the hypotheses of the following result, see the beginning of Section \ref{sec:kirwan}.

\begin{cor}
If $\fM$ is obtained via symplectic reduction from an action of a reductive group $G$ on $\fX$,
then every object of $\Dmod$ extends to a twisted $G$-equivariant module over a quantization of $\fX$.
\end{cor}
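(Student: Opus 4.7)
The plan is to derive the corollary as a straightforward consequence of Theorem \ref{surjectivity}, so the real content is to sketch how one would prove that the Kashiwara--Rouquier restriction functor admits adjoints and is essentially surjective. The setup is as follows: there is a quantization $\cD_\fX$ of $\fX$ carrying a strongly equivariant $G$-action with a quantum moment map $\mu\colon \mg \to \cD_\fX$, and one forms the restriction functor $\mathbf{r}\colon \cD_\fX\mmod^{G,\chi} \to \cD\mmod$ by taking $(\mu - \chi)\cdot N$-invariants of the induced module on the semistable locus, descending along $\fX^{ss}/G \to \fM$. Given $M \in \Dmod$, our goal is to exhibit a twisted equivariant $\cD_\fX$-module whose image under $\mathbf{r}$ is $M$.

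The first step is to reinterpret both sides via the $\Z$-algebra formalism of Theorem B. Pick a very ample $\cL$ on $\fM$ and, using that $\fM = \fX /\!\!/\!\!/ G$, lift it to a $G$-equivariant line bundle $\tilde\cL$ on $\fX^{ss}$; then the associated $\Z$-algebra $Z = \bigoplus_{m \leq n} \Gamma(\fM, \cD \otimes \cL^{n-m})$ identifies via Hamiltonian reduction with the $G$-invariants of the analogous $\Z$-algebra $\tilde Z$ built from $\tilde\cL$ on $\fX$. By Theorem B, $\Dmod \simeq Z\mmod / Z\mmod_{\operatorname{bd}}$, so it is enough to realize every finitely generated $Z$-module (modulo bounded ones) as the invariants of a finitely generated twisted $G$-equivariant $\tilde Z$-module.

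The second step is to construct left and right adjoints $\mathbf{r}^L, \mathbf{r}^R$ to $\mathbf{r}$ using this $\Z$-algebra picture. Concretely, given a $Z$-module $M_\bullet$, one defines $\mathbf{r}^L M$ to be $\tilde Z \otimes_Z M_\bullet$ and $\mathbf{r}^R M$ to be $\Hom_Z(\tilde Z, M_\bullet)$, each regarded as a twisted $G$-equivariant $\tilde Z$-module via the right $\tilde Z$-action, and then passes to the quotient categories. Adjunction with $\mathbf{r}$ follows from the adjunction between induction/coinduction along the inclusion $Z \hookrightarrow \tilde Z^G$ together with the identification of $\mathbf{r}$ with the invariants functor in the $\Z$-algebra setting. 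Essential surjectivity is then the statement that the counit $\mathbf{r}\, \mathbf{r}^L M \to M$ is an isomorphism after passing to the quotient by bounded modules, which reduces to checking that $(\tilde Z \otimes_Z M_\bullet)^G$ agrees with $M_\bullet$ up to bounded error; this is a formal consequence of the fact that the invariants $\tilde Z^G$ coincide with $Z$ in degrees determined by the ample cone, together with the analogous statement with $M_\bullet$ replaced by $\tilde Z \otimes_Z M_\bullet$.

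The main obstacle will be the last verification, namely controlling the kernel and cokernel of the unit/counit maps so that they lie in the bounded subcategory. This requires showing that the bad behavior of the $\Z$-algebra induction at the unstable locus of $\fX$ contributes only finitely many graded pieces, which ultimately rests on the $\C^*$-weight bounds available on any conical symplectic resolution and on the compatibility of the quantum moment map with the grading. Once $\mathbf{r}$ is shown to be essentially surjective, the corollary follows by applying this to an arbitrary object of $\Dmod$.
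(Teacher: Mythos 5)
Your overall strategy is the one the paper uses: the corollary is exactly Theorem \ref{surjectivity}, proved by transporting the Kirwan (restriction) functor through the $\Z$-algebra equivalence of Theorem \ref{Zloc} and producing a left adjoint whose composite with $\kappa$ is the identity modulo bounded modules. However, the identification on which you build your adjoints is false, and this is a genuine gap. The $\Z$-algebra of the reduced space is \emph{not} the $G$-invariants of the $\Z$-algebra of $\fX$: already at the classical level $\C[\fM]$ is $\C[\mu^{-1}(0)\cap\fU]^G$, not $\C[\fX]^G$ (the latter is the wrong size -- it describes a quotient of all of $\fX$, not of the zero fibre of the moment map -- and the discrepancy is not a bounded one), and the same failure persists for sections of powers of the line bundle. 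Consequently there is no inclusion of your $Z$ into $\tilde Z^G$ along which to induce or coinduce, and even formally $\tilde Z\otimes_Z M$ carries no natural $\xi$-twisted $G$-equivariant structure, because nothing forces the $\mg$-action coming from the equivariant structure to agree with left multiplication by $\eta(x)-\xi(x)$. The quantized moment map has to enter the $\Z$-algebra picture: the paper replaces your $\tilde Z$ by the $\Za$--$\Za_{\red}$ bimodule ${}_iY_j={}_iZ_j\big/{}_iZ_j\cdot\langle\eta_j(x)-\xi(x)\mid x\in\mg\rangle$, sets $\kappa^{\Z}=\Hom_{\Za}(Y,-)$ and $\kappa^{\Z}_{!}=Y\otimes_{\Za_{\red}}-$, and the heart of Lemma \ref{adjoints} is that the natural map ${}_iY_j^G\to{}_i(\Za_{\red})_j$ is an isomorphism for $i-j\gg 0$, so that $Y\cong Y'\oplus\Za_{\red}$ as right $\Za_{\red}$-modules up to bounded pieces and hence $\kappa^{\Z}\circ\kappa^{\Z}_{!}\cong\operatorname{id}$ on the quotient category. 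Your proposed counit computation has no analogue of this step, because without the moment map ideal the invariants are simply too big.

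Two further points. The compatibility of the geometric Kirwan functor with its $\Z$-algebra avatar is not the formality your sketch suggests: one must show that restriction of invariant sections from $\fX$ to the semistable locus $\fU$ has bounded kernel and cokernel, which the paper does (Proposition \ref{square}) via a local cohomology argument on the unstable locus, using reductivity of $G$, the vanishing of positive-degree invariant sections of $\cL$ off $\fU$, and finite generation of $\bigoplus_m\Gamma(\fU;\bar\cN\otimes\cL^m)^G$; similarly, boundedness preservation of the left adjoint requires its own argument (Proposition \ref{mod-bounded}). Finally, your claimed right adjoint $\Hom_Z(\tilde Z,-)$ runs into real finiteness problems -- the relevant bimodule is not finitely generated as a left module, and the paper only sketches a candidate via $G$-isotypic components, with finiteness known only in special cases -- but this is harmless for the corollary: essential surjectivity needs only the left adjoint together with $\kappa\circ\kappa_{!}\cong\operatorname{id}$.
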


Let $\fM_0 := \Spec \C[\fM]$ be the cone resolved by $\fM$, and consider the {\bf Steinberg variety} 
$\fZ := \fM\times_{\fM_0}\fM$.
The cohomology $H^{2\dim\fM}_\fZ(\fM\times\fM)$ with supports in $\fZ$, 
which by Poincar\'e duality can
be identified with the Borel-Moore homology group $H^{B\!M}_{2\dim\fM}(\fZ)$, has a natural
algebra structure via convolution \cite[\S 2.7]{CG97}.  Furthermore,
if $\fL\subset\fM$ is a Lagrangian subscheme that is equal to the preimage of its image in $\fL_0\subset \fM_0$, then
the convolution algebra acts on $H^{\dim\fM}_\fL(\fM)$.  
In the special case where $\fM = T^*(G/B)$ and $\fL$ is the conormal variety to the Schubert stratification of $G/B$, 
the convolution algebra is isomorphic to the group algebra of the Weyl group,
and $H^{\dim\fM}_\fL(\fM)$ is isomorphic to the regular representation.  More generally,
there is a natural algebra homomorphism from the group algebra $\C[W]$ of the Namikawa Weyl group of $\fM$ to the convolution algebra $H^{2\dim\fM}_\fZ(\fM\times\fM)$.

Section \ref{sec:bimodules} is devoted to categorifying the picture described in the paragraph above.
The convolution algebra is replaced by the monoidal category of Harish-Chandra bimodules, 
which comes in both an algebraic and a geometric version.
The module $H^{\dim\fM}_\fL(\fM)$ is replaced by a subcategory $\mathcal{C}^\mathfrak{L}\subset \Dmod$
(respectively $C^{\fL_0}\subset\Amod$) which is a module category for the category of geometric 
(respectively algebraic) Harish-Chandra bimodules.
Following Kashiwara and Schapira \cite{KSdq},
we define the characteristic cycle of a geometric Harish-Chandra bimodule, which lies in $\fZ$,
and the characteristic cycle of an object of $\mathcal{C}^\mathfrak{L}$, which lies in $\fL$.
Using the machinery developed in \cite{KSdq}, we prove that these cycles are compatible with convolution.

\begin{theorem}
The characteristic cycle map intertwines convolution of geometric Harish-Chandra bimodules
with convolution in the Borel-Moore homology of the Steinberg variety (Proposition \ref{K-conv});
it also intertwines the action of Harish-Chandra bimodules on $\mathcal{C}^\mathfrak{L}$ with
the action of $H^{2\dim\fM}_\fZ(\fM\times\fM)$ on $H^{\dim\fM}_\fL(\fM)$ (Proposition \ref{K-act}).
\end{theorem}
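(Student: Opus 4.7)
My plan is to deduce both propositions from the general characteristic cycle formalism of Kashiwara--Schapira for DQ-modules, specialized to the conical setting. The key inputs from \cite{KSdq} are: (i) the characteristic cycle $\suppc$ of a coherent DQ-module with good support takes values in Borel--Moore cycles on the support; and (ii) for a pair of such modules whose projected supports meet transversely (or more generally in a framework that permits proper pushforward), the characteristic cycle of their convolution product equals the convolution of their characteristic cycles. Thus the work is to verify that the Harish-Chandra bimodule setup, together with the Lagrangian module categories $\mathcal{C}^\fL$, satisfies the hypotheses under which this machinery applies.

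First I would spell out the convolution. A geometric Harish-Chandra bimodule is a coherent module over $\cD\boxtimes\cD^{\op}$ supported on the Steinberg variety $\fZ=\fM\times_{\fM_0}\fM$; convolution is given by $p_{13*}(p_{12}^{*}(-)\overset{L}{\otimes}p_{23}^{*}(-))$ on $\fM\times\fM\times\fM$, restricted to the union of pairwise Steinbergs. Because $\fM_0$ is affine and the resolution map $\fM\to\fM_0$ is proper and $\bS$-equivariant with contracting action, the map $p_{13}$ restricted to the relevant support is proper over $\fM\times\fM$; this is precisely the hypothesis in \cite{KSdq} guaranteeing both that the convolution is well-defined on coherent DQ-modules and that $\suppc$ commutes with the three operations $p_{12}^{*}$, $\otimes$, $p_{13*}$. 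Combining these three statements reduces to the equality $\suppc(p_{13*}(p_{12}^{*}\mathcal{F}\otimes p_{23}^{*}\mathcal{G}))=p_{13*}(p_{12}^{*}\suppc\mathcal{F}\cdot p_{23}^{*}\suppc\mathcal{G})$, which is exactly the convolution formula in Borel--Moore homology on $\fZ$ from \cite[\S 2.7]{CG97}. This handles Proposition \ref{K-conv}.

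For Proposition \ref{K-act}, the action of an object $\mathcal{F}\in\HCg$ on $\mathcal{M}\in\CLg$ is, by construction, $p_{2*}(\mathcal{F}\overset{L}{\otimes}_{p_{2}^{*}\cD}p_{1}^{*}\mathcal{M})$ on $\fM\times\fM$, supported on the preimage in $\fZ\times\fM$ of $\fL\subset\fM$. Under the hypothesis that $\fL$ equals the preimage of $\fL_0$, the first projection restricted to this intersection is proper over $\fL$, so exactly the same three-step argument (pullback, tensor, proper pushforward) gives the compatibility $\suppc(\mathcal{F}\cdot\mathcal{M})=\suppc(\mathcal{F})\cdot\suppc(\mathcal{M})$ in $H^{\dim\fM}_{\fL}(\fM)$, matching the action of the convolution algebra on the cohomology of $\fL$.

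The main obstacle will be verifying the technical hypotheses of \cite{KSdq} precisely in our setting: the characteristic cycle there is defined for DQ-modules with good filtrations and appropriate growth/coherence conditions at the formal parameter, and one must check both that geometric Harish-Chandra bimodules as we have defined them admit such filtrations globally and that transversality (or its derived substitute) holds for the triple intersection $p_{12}^{-1}\fZ\cap p_{23}^{-1}\fZ\subset\fM^{3}$. The conical $\bS$-action is the essential tool here: it lets us reduce transversality questions to a neighborhood of the attracting locus and guarantees that formal completions along supports recover the algebraic objects, so that all local computations of $\suppc$ patch to a well-defined global cycle in $H^{B\!M}_{2\dim\fM}(\fZ)$. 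Once this is in place, the two propositions follow by direct application of the Kashiwara--Schapira formulas.
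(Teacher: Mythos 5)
Your proposal follows essentially the same route as the paper: both propositions are deduced directly from Kashiwara--Schapira's compatibility of Hochschild/characteristic classes with kernel composition, the operative hypothesis being properness of $\fZ\times_{\fM}\fZ\to\fZ$ (respectively $\fZ\times_{\fM}\fZ\to\fL$). The only points you flag as obstacles are not actually needed in the paper's argument: no transversality of the triple intersection is required (properness alone suffices for \cite[6.5.4]{KSdq}), and the remaining inputs are just the analytification equivalence of Theorem \ref{th:GAGA} (so the analytic results of \cite{KSdq} apply, $(-)^{\an}$ being monoidal) together with Proposition \ref{multiplicities} to see that $\suppc$ of a Harish-Chandra bimodule lies in cohomology with $\C$-coefficients rather than $\C((h))$-coefficients.
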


There is particularly nice collection of algebraic Harish-Chandra bimodules which appear
naturally from changing the period of the quantization. 
Let $A_\la$ be the section ring of the quantization with period $\la\in\Ht$.
Derived tensor products with these special bimodules
give derived equivalences between the derived categories of modules over
$A_\la$ for various different $\la$.
These equivalences are far from being unique; instead, they
induce a large group of autoequivalences of
$D(A_\la\mmod)$ for each fixed $\la$, called {\bf twisting functors}.
There is a hyperplane arrangement in $\Htr$ whose chambers are the Mori chambers of $\fM$;
let $E\subset\Ht$ be the complement of the complexification of this arrangement.
The Namikawa Weyl group $W$ acts on $\Ht$ preserving $E$.  

\begin{theorem}
  There is a weak action of $\pi_1(E/W, [\la])$ on $D(A_\la\mmod)$ by twisting functors
  (Theorem \ref{twisting braid}); this action preserves the subcategory $D(C^{\fL_0})$ (Remark \ref{preserves subcategory}).
  The subgroup $\pi_1(E, \la)$ preserves the characteristic cycle of a module, thus $W \cong \pi_1(E/W, [\la]) / \pi_1(E, \la)$
  acts on $H^{\dim\fM}_\fL(\fM)$ (Proposition \ref{K-triv}).  This action agrees with the action
  induced by the natural map from $\C[W]$ to the convolution algebra (Remark \ref{W-action}).
\end{theorem}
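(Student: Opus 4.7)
The plan is to build up the statement piece by piece, following its four constituent claims. First I construct the twisting functors: when $\la,\la'$ lie in a common Mori chamber, the quantizations $A_\la$ and $A_{\la'}$ are related by a distinguished Harish-Chandra bimodule $\bi$ coming from deformation within the chamber, and derived tensor product with it gives an equivalence between $D(A_\la\mmod)$ and $D(A_{\la'}\mmod)$. For a path $\gamma$ in $E$ the twisting functor is defined as the composition of such equivalences along a sequence of chambers traversed by $\gamma$; independence of the chamber decomposition reduces to a combinatorial check at codimension two strata of the Mori arrangement, where only finitely many walls meet and the relation can be analyzed locally. Verifying that homotopic paths give canonically isomorphic functors then yields the weak braid action of $\pi_1(E,\la)$. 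To extend to $\pi_1(E/W,[\la])$, I would use that each $w\in W$ induces a canonical isomorphism $A_\la\cong A_{w\la}$ by naturality of quantizations under the Namikawa action on $\Ht$; composing a twisting functor along a lifted path with this isomorphism yields the extended action.

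Preservation of the subcategory $D(\CLg)$ is immediate because $\bi$ is by construction a Harish-Chandra bimodule, so tensoring with it sends $C^{\fL_0}_\la$ into $C^{\fL_0}_{\la'}$ by the very definition recalled in Section~\ref{sec:bimodules}. The heart of the argument is to show that $\pi_1(E,\la)$ acts trivially on characteristic cycles. For this I invoke the compatibility between characteristic cycles and convolution established in the preceding theorem: inside a single chamber the bimodule $\bi$ has characteristic cycle equal to the diagonal class $[\Delta_\fM]\in H^{2\dim\fM}_\fZ(\fM\times\fM)$, because its associated graded is identified with the structure sheaf of the diagonal. Any loop in $E$ is homotopic to a concatenation of excursions across individual walls and back, and the characteristic cycle of each such round trip is a convolution of diagonals, hence again the diagonal. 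Therefore the induced action on $H^{\dim\fM}_\fL(\fM)$ is trivial, and the $\pi_1(E/W,[\la])$-action descends to a $W$-action. The main obstacle I anticipate is controlling the characteristic cycle under a single wall-crossing, which demands a sufficiently explicit geometric model of the wall-crossing bimodule and its behavior under the deformations appearing in Section \ref{sec:resolutions}.

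Finally, to match this $W$-action with the one coming from the natural map $\C[W]\to H^{2\dim\fM}_\fZ(\fM\times\fM)$, I would exploit the $W$-equivariance of both constructions. The Namikawa action on $\Ht$ lifts to isomorphisms of quantizations whose graded components produce, via the characteristic cycle formalism, precisely the reflection classes appearing as images of simple reflections under the natural map. Comparing the characteristic cycle of the twisting functor associated to a generator of the quotient $\pi_1(E/W,[\la])/\pi_1(E,\la)$ with the corresponding reflection class then gives the identification at the level of simple reflections; multiplicativity of characteristic cycles under convolution, already used in the previous step, propagates the agreement to all of $W$.
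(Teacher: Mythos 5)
Your overall architecture matches the paper's: twisting functors built from the bimodules ${}_{\la'}T_{\la}$, path-composition with relations checked at codimension-two strata (this is exactly the Deligne-groupoid/Paris presentation used in Theorem \ref{twisting braid}), extension to $\pi_1(E/W,[\la])$ via the Weyl-group isomorphisms $A_\la\cong A_{w\la}$, triviality of pure twists on characteristic cycles, and then the comparison with $\C[W]$. But the central step is exactly the one you leave open. Your argument that $\pi_1(E,\la)$ acts trivially on characteristic cycles rests on the claim that every twisting bimodule --- in particular a wall-crossing one --- has characteristic cycle equal to the diagonal class, and your justification (``its associated graded is the structure sheaf of the diagonal'') does not suffice. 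The associated graded of the algebraic bimodule ${}_{\la'}T_\la$ is a sheaf on $\fM_0\times\fM_0$ supported on the diagonal of the \emph{singular} cone, while the characteristic cycle is computed from $\LLoc({}_{\la'}T_\la)$ on $\fM\times\fM$, whose support is only constrained to lie in the Steinberg variety $\fZ$; components of $\fZ$ over the singular locus could a priori contribute. Moreover, across a wall one cannot identify $\LLoc({}_{\la'}T_\la)$ with the quantized line bundle ${}_{\la'}\cT'_\la$ (Proposition \ref{bi-sections} needs $H^1$-vanishing), so Proposition \ref{multiplicities} cannot be applied naively, and indeed the round-trip functor across a wall is genuinely nontrivial. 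The paper closes this gap with Proposition \ref{same-cycle}: one spreads the bimodule out over a twistor deformation $\scrM_\nu$ with $\scrM_\nu(\infty)$ affine, observes that over every nonzero point of $\aone$ the localized bimodule is a line bundle on the diagonal (so its relative cycle is the diagonal class there), and uses flatness of ${}_{\la+\gamma}O_\la$ over $\aone$ together with the compatibility of characteristic cycles with specialization (Lemma \ref{restriction-commutes}) to conclude $\suppc(\LLoc({}_{\la+\gamma}T_\la))=[\fM_\Delta]$; triviality on $H^{\dim\fM}_\fL(\fM)$ then follows from Proposition \ref{K-conv}, since convolution with the diagonal is the identity. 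Without an argument of this kind your ``convolution of diagonals'' step is unsupported, and you in fact flag it yourself as the main obstacle.

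Two smaller points. First, to make the path-composition well defined you must choose a representative in each chamber of $\cH$ at which (derived) localization holds --- the paper's set $\Pi$, whose nonemptiness chamber-by-chamber uses Corollary \ref{large quantizations} and Lemma \ref{finitely-many} --- and the codimension-two relation check is not purely combinatorial: it is carried out by choosing colinear representatives on either side of a generic hyperplane through the face and invoking Lemma \ref{straight-line} (vanishing of higher cohomology of powers of an ample line bundle gives $\Phi^{k_\ell,k_0}\simeq \Phi^{k_\ell,k_{\ell-1}}\circ\cdots\circ\Phi^{k_1,k_0}$) together with Corollary \ref{within}. Second, for the identification with the map $\C[W]\to H^{2\dim\fM}_\fZ(\fM\times\fM;\C)$, the paper (Remark \ref{W-action}) again runs the specialization argument of Proposition \ref{same-cycle} on an impure twisting functor, showing the class of $w$ is the specialization of the graph of $w\colon\pi^{-1}(\nu)\to\pi^{-1}(w\cdot\nu)$, matching the Chriss--Ginzburg construction; your proposed comparison via ``graded components of the Namikawa isomorphisms'' would need to be replaced or made precise along these lines.
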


In the case where $\fM$ is the Springer resolution for $G$, the space
$E$ is the complement of the complexified Coxeter arrangement, $W$ is
the classical Weyl group, and $\pi_1(E/W)$ is the generalized braid
group.  If $\fL\subset\fM$ is taken to be the conormal variety to the Schubert stratification
and the period of the quantization is regular, then
$C^{\fL_0}$ is equivalent to a regular block of category $\cO$ (Example \ref{catO}).  In this case, 
the action of the generalized braid group coincides with Arkhipov's twisting action (Proposition \ref{shuf-twist}),
which categorifies the regular representation of $W$.\\

\noindent
{\em Acknowledgments:}
The authors would like to thank Roman Bezrukavnikov, Dmitry Kaledin, Ivan Losev, and especially Anthony Licata
for useful conversations.  Additional thanks are due to Kevin McGerty and Thomas Nevins for bringing their work
to the authors' attention.  
We are very grateful to the anonymous referee for many insightful comments and suggestions.
Finally, the authors are grateful to the
Mathematisches Forschungsinstitut Oberwolfach for its hospitality
and excellent working conditions during the initial stages of work on this paper.

\section{Conical symplectic resolutions}\label{sec:resolutions}
\renewcommand{\thetheorem}{\arabic{section}.\arabic{theorem}}
Let $\fM$ be a smooth, 
symplectic, complex algebraic variety.  By this
we mean that $\fM$ is equipped with a closed, nondegenerate, algebraic
2-form $\omega$.  Suppose further that $\fM$ is equipped with an action of the
multiplicative group $\bS := \cs$ such that $s^*\omega = s^n\omega$
for some integer $n\geq 1$.
We also assume that $\bS$ acts on the coordinate ring $\C[\fM]$ with
only non-negative weights, and that the trivial weight space
$\C[\fM]^\bS$ is 1-dimensional, consisting only of the constant
functions.  Geometrically, this means that the affinization $\fM_0 :=
\Spec \C[\fM]$ is a cone, and the $\bS$-action contracts $\fM_0$ to
the cone point $o\in \fM_0$.  Finally, we assume that the canonical
map $\nu\colon\fM\to\fM_0$ is a projective resolution of singularities.  (That is,
it must be projective and an isomorphism over the smooth locus of
$\fM_0$.)  We will refer to this collection of data as a {\bf conical
symplectic resolution of weight \boldmath$n$}.

Examples of conical symplectic resolutions include the following:
\begin{itemize}
\item $\fM$ is a crepant resolution of $\fM_0 = \C^2/\Gamma$, where $\Gamma$
is a finite subgroup of $\operatorname{SL}(2;\C)$.
The action of $\bS$ is induced by the inverse of the diagonal action on $\C^2$, and $n=2$.
\item $\fM$ is the Hilbert scheme of a fixed number of points on the crepant resolution of $\C^2/\Gamma$,
and $\fM_0$ is the symmetric variety of unordered collections of points on the singular space.
Once again, $\bS$ acts by the inverse diagonal action on $\C^2$, and $n=2$.
\item $\fM = T^*(G/P)$ for a reductive algebraic group $G$ and a parabolic subgroup $P$,
and $\fM_0$ is the affinization of this variety.  (If $G=\operatorname{SL}(r;\C)$, then $\fM_0$
is isomorphic to the closure of a nilpotent orbit in the Lie algebra of $G$.)
The action of $\bS$ is the inverse scaling action on the cotangent fibers, and $n=1$.
\item $\fM$ is a hypertoric variety associated to a simple, unimodular,
hyperplane arrangement in a rational vector space \cite{BD,Pr07}, and $\fM_0$ is the hypertoric
variety associated to the centralization of this arrangement.
If the arrangement is coloop-free, then it possible to define an $\bS$-action with $n=1$ \cite{HP04};
it is always possible to define an action with $n=2$ \cite{BeKu, BLPWtorico}.
\item $\fM$ and $\fM_0$ are Nakajima quiver varieties \cite{Nak94,Nak98}.  If the quiver is acyclic, then
there is a natural action with $n=1$ \cite[\S 5]{Nak94}; it is always possible to define
an action with $n=2$ \cite[\S 2.7]{Na}.
\item $\fM_0$ is a transverse slice to one Schubert variety
  $\operatorname{Gr}^\mu$ in an affine Grassmannian inside another
  $\operatorname{Gr}^\la$.  When $\la$ is a sum of minuscule
  coweights, this variety has a natural conical symplectic resolution
  constructed from a convolution variety; in most other cases, it
  seems to possess no such resolution.  This example is discussed in
  greater generality in \cite{KWWY}.
\end{itemize}

\begin{remark}
  The fifth class of examples overlaps significantly with each of the
  first four.  The first two examples are special
  cases of quiver varieties, where the underlying graph of the quiver is the extended Dynkin diagram corresponding
  to $\Gamma$.  When the group $G$ of the third example is $\operatorname{SL}(r;\C)$, then
  $T^*(G/P)$ is a quiver variety.  Finally, a hypertoric variety associated to a
  cographical arrangement is a quiver variety.
\end{remark}

\begin{example}\label{quotient construction}
Almost all of the examples above arise as symplectic quotients of vector spaces.  This applies
to the first, second, fourth, and fifth classes of examples, as well as the third class when 
$G=\operatorname{SL}(r;\C)$.  More precisely, 
let $G$ be a reductive algebraic group and $V$ a faithful linear representation
of $G$.  Then $G$ acts on the cotangent bundle $T^*V\cong V\times V^*$
with moment map $$\mu\colon V\times V^*\to\mg^*$$ given by the formula
$\mu(z,w)(x) := w(x\cdot z)$ for all $x\in\mg$, $z\in V$, and $w\in V^*$.
Choose a character $\theta$ of $G$, and let $\fM$ be the associated GIT quotient of $\mu^{-1}(0)$.
If $G$ acts freely on the semistable locus of $T^*V$, then $\fM$ is
symplectic and smooth.  Its affinization $\fM_0$ is a normal affine
variety, and the map $\nu\colon \fM\to \fM_0$ is automatically projective; if
it is furthermore birational, then it is a symplectic resolution of
singularities.  We also have a natural map from $\fM_0$ to the 
the categorical quotient of $\mu^{-1}(0)$
with no stability condition imposed, which is not always an
isomorphism, but will be in many interesting cases. 
The variety $\fM$ inherits a conical action of $\bS$ of weight 2 from the inverse scaling action on
$V\times V^*$.  If $V$ has no $G$-invariant functions, then we may take $\bS$ to act only on $V^*$ and obtain
a conical action of weight 1.
\end{example}

\begin{remark}
  All of these examples admit complete hyperk\"ahler metrics, and in fact we know of no examples
  that do not admit complete hyperk\"ahler metrics.  (Such examples do exist if we drop the hypothesis
  that $\fM$ is projective over $\fM_0$; these examples will appear in subsequent work by 
  the second author and Arbo.)
  The unit circle in $\bS$ acts by hyperk\"ahler isometries, but is
  Hamiltonian only with respect to the real symplectic form.  Our
  assumptions about the $\bS$-weights of $\C[\fM]$ translate to the
  statement that the real moment map for the circle action is proper and bounded below.
\end{remark}

\begin{proposition}\label{GR}
For all $i>0$, $H^i(\fM; \fS_\fM) = 0$, where $\fS$ is the structure sheaf\footnote{Throughout this paper we will use the symbol
$\fS$ for the structure sheaf of a variety.  We avoid the usual symbol $\cO$ because
this symbol will needed for the analogue of BGG category $\cO$ in the sequel to this paper \cite{BLPWgco}.}
of $\fM$.
\end{proposition}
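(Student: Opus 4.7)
The plan is to reduce to the Grauert--Riemenschneider vanishing theorem via the existence of a symplectic form, and then push down to the affine base.

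First, I would observe that because $\fM$ carries a nondegenerate algebraic $2$-form $\omega$, the top wedge power $\omega^{\dim\fM/2}$ is a nowhere-vanishing algebraic section of the canonical bundle $\omega_\fM = \Omega^{\dim\fM}_\fM$. Hence there is an isomorphism of sheaves
\[
\omega_\fM \;\cong\; \fS_\fM.
\]
This replaces the structure sheaf in the statement by the dualizing sheaf, so that a vanishing theorem for the latter becomes applicable.

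Next, I would apply Grauert--Riemenschneider vanishing to the projective birational morphism $\nu\colon \fM \to \fM_0$. Since $\fM$ is smooth and $\nu$ is proper and birational (being a resolution of singularities), the theorem yields $R^i\nu_*\omega_\fM = 0$ for all $i>0$, and by the isomorphism above this is $R^i\nu_*\fS_\fM = 0$ for $i>0$. Consequently, in the Leray spectral sequence
\[
E_2^{p,q} = H^p\bigl(\fM_0;\, R^q\nu_*\fS_\fM\bigr) \;\Longrightarrow\; H^{p+q}(\fM;\, \fS_\fM),
\]
all rows with $q>0$ are zero, so it collapses to give $H^i(\fM;\fS_\fM) \cong H^i(\fM_0;\nu_*\fS_\fM)$ for all $i$.

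Finally, since $\fM_0 = \Spec \C[\fM]$ is affine and $\nu_*\fS_\fM$ is coherent (as $\nu$ is proper), Serre's vanishing theorem on affine schemes gives $H^i(\fM_0;\nu_*\fS_\fM) = 0$ for $i>0$, completing the proof. There is really no hard step here; the only thing worth double-checking is that the isomorphism $\omega_\fM \cong \fS_\fM$ is available in the algebraic category (it is, since $\omega$ is assumed algebraic), so that algebraic rather than analytic Grauert--Riemenschneider applies.
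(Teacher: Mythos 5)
Your proof is correct and is exactly the standard Grauert--Riemenschneider argument that the paper invokes by citation (the symplectic form trivializes the canonical bundle, GR kills the higher direct images, and affineness of $\fM_0$ finishes); you have simply written out the details of the cited reference. Nothing further is needed.
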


\begin{proof}
This follows from the Grauert-Riemenschneider theorem; see, for example, \cite[2.1]{Kal00}.
\end{proof}

\begin{proposition}\label{cohomology}
All odd cohomology groups of $\fM$ vanish, and for all non-negative integers $p$
we have $H^{2p}(\fM; \C) = H^{p,p}(\fM; \C)$.  In particular, the class of the symplectic
form, which lies in $H^{2,0}(\fM; \C)$, is trivial.
\end{proposition}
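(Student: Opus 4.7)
The plan is to use the contracting $\bS$-action on $\fM$ to invoke the Bialynicki-Birula decomposition. Since $\bS$ acts on $\C[\fM_0]$ with positive weights outside the constants, it contracts $\fM_0$ to the cone point $o$, so the fixed locus $\fM^\bS$ sits inside the projective subvariety $\nu^{-1}(o)$; in particular, its connected components $F_\alpha$ are smooth projective varieties. Because $\fM$ is proper over $\fM_0$ with a contracting $\bS$-action (i.e.\ semi-projective), the Bialynicki-Birula theorem produces the plus decomposition $\fM = \bigsqcup_\alpha \fM_\alpha^+$, where $\fM_\alpha^+ = \{x \in \fM : \lim_{t\to 0} t\cdot x \in F_\alpha\}$ is a Zariski-locally trivial affine bundle of rank $d_\alpha^+$ over $F_\alpha$, with $d_\alpha^+$ equal to the rank of the positive-weight subbundle of the normal bundle $N_{F_\alpha/\fM}$.

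The associated stratification by closures of the $\fM_\alpha^+$ yields, via iterated long exact sequences together with the Thom isomorphism on each attracting cell, an isomorphism of mixed Hodge structures
$$H^k(\fM;\C) \;\cong\; \bigoplus_\alpha H^{k-2d_\alpha^+}(F_\alpha;\C)(-d_\alpha^+).$$
Since each $F_\alpha$ is smooth projective, each summand on the right is pure of weight $k$, so $H^k(\fM;\C)$ is itself pure of weight $k$, and the parity and Tate type of $H^\bullet(\fM;\C)$ reduce to those of the $H^\bullet(F_\alpha;\C)$. The main obstacle is precisely this final input: that each $F_\alpha$ has cohomology concentrated in even degrees of Hodge type $(p,p)$. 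I would argue by induction on $\dim\fM$, using that $\dim F_\alpha < \dim\fM$ because the weight-$n$ symplectic form pairs the weight-$0$ subspace of $T_x\fM$ at a fixed point non-degenerately with the nonzero weight-$n$ subspace, forcing $F_\alpha \subsetneq \fM$. The inductive step requires feeding in either an auxiliary torus acting on $\fM$ and commuting with $\bS$ whose restriction gives a further Bialynicki-Birula paving of $F_\alpha$, or the hyperk\"ahler circle action as a perfect Morse-Bott function whose critical submanifolds are the $F_\alpha$ themselves.

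For the final assertion a direct argument suffices and is independent of the preceding analysis. Let $V$ denote the vector field generating the $\bS$-action. Differentiating $s^*\omega = s^n\omega$ yields $\cL_V\omega = n\omega$, and since $d\omega = 0$, Cartan's magic formula gives $n\omega = d(\iota_V\omega)$. Hence $\omega = d(n^{-1}\iota_V\omega)$ is an exact algebraic $2$-form, and in particular $[\omega] = 0$ in $H^2(\fM;\C)$.
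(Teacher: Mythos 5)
Your reduction via the Bialynicki--Birula decomposition is fine as far as it goes: since $\C[\fM]^\bS=\C$, the fixed locus lies in the projective fiber $\nu^{-1}(o)$, the attracting cells are affine bundles over the smooth projective components $F_\alpha$, and the resulting decomposition is compatible with mixed Hodge structures, giving purity of $H^k(\fM;\C)$. But purity is strictly weaker than the statement being proved, and the entire content of the proposition is exactly the step you flag as ``the main obstacle'': that each $F_\alpha$ has cohomology concentrated in even degrees of type $(p,p)$. A priori the $F_\alpha$ are just smooth projective varieties, and nothing in your setup rules out, say, nonzero $H^1$ or $H^{2,0}$ on them. Your proposed ways of closing this gap do not work: (i) the $F_\alpha$ are compact, hence are not themselves conical symplectic resolutions, so an induction on $\dim\fM$ within the class of conical symplectic resolutions cannot be applied to them; (ii) an auxiliary torus commuting with $\bS$ need not exist (the only structural action guaranteed by the hypotheses is $\bS$ itself), and even when it does, a further BB paving of $F_\alpha$ only reduces to smaller fixed loci, for which the same problem recurs; (iii) perfection of the hyperk\"ahler moment map as a Morse--Bott function yields the additive (Betti) decomposition you already have from BB, not any statement about the Hodge types of the critical manifolds. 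So the essential input is missing, and it is not a formal consequence of the contracting action plus properness --- indeed the same BB argument applies verbatim to $T^*X$ for any smooth projective $X$ with the fiberwise scaling action, where the fixed locus is $X$ itself and the conclusion is false; the resolution hypothesis must enter, and in your argument it never does.

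For comparison, the paper does not argue via fixed points at all: it quotes Kaledin's theorem \cite[1.9]{Kal09}, which establishes the vanishing of odd cohomology and the Hodge--Tate property for the fiber $\nu^{-1}(o)$ of a symplectic resolution (this is where the real work lives), and then shows by a Durfee-type retraction argument that the inclusion $\nu^{-1}(0)\hookrightarrow\fM$ is a homotopy equivalence, transporting the statement to $\fM$. If you want to keep your BB framework you would still need an input of Kaledin's strength for the $F_\alpha$ (or for the core), so the approach does not avoid the hard theorem. Your final paragraph is correct and is a pleasant direct proof of the last clause: from $s^*\omega=s^n\omega$ one gets $\cL_V\omega=n\omega$, and Cartan's formula gives $\omega=d\bigl(n^{-1}\iota_V\omega\bigr)$, so $[\omega]=0$ in $H^2_{DR}(\fM)\cong H^2(\fM;\C)$; but this only recovers the ``in particular'' sentence, not the main statement.
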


\begin{proof}
The analogous result with $\fM$ replaced by a fiber of $\nu$ is proven in \cite[1.9]{Kal09}, 
thus it suffices to prove that $\nu^{-1}(0)$ is homotopy equivalent to $\fM$.  
To see this, let $\Phi\colon \M_0 \to \R$ be a real algebraic function 
which takes non-negative values and which is $\bS$-equivariant for
an action of the form $z \cdot t = |z|^k \cdot t$ of $\bS$ on $\R$, where $k$ is some positive integer. 
Such a function can be found of the form $\Phi = \sum_{i=1}^r |f_i|^{d_i}$, where $f_i$ are homogeneous generators of $\C[\M]$, with the grading induced by the action of $\bS$.
The argument from \cite[1.6]{Durf83} shows that the inclusions
$\nu^{-1}(0) \hookrightarrow  (\Phi\circ\nu)^{-1}[0, t] \hookrightarrow \M$  
induce isomorphisms of homotopy groups, and so are homotopy equivalences.
\end{proof}

\begin{remark}\label{core}
The subvariety $\nu^{-1}(0)\subset \fM$ is often called the {\bf core} or {\bf compact core},
see for example \cite[\S 4]{AB02} or \cite[\S 2.2]{Pr04}.  If $\fM$ is the cotangent bundle of a projective
variety $X$, then the core of $\fM$ is simply the zero section.  If $\fM$ is a crepant resolution of $\C^2/\Gamma$,
then the core of $\fM$ is a union of projective lines in the shape of the Dynkin diagram for $\Gamma$.
If $\fM$ is the Hilbert scheme of points on such a resolution, then the core of $\fM$ consists of configurations
supported on the core of the resolution.
If $\fM$ is the hypertoric variety associated to a real hyperplane arrangement, then the core of $\fM$
is a union of toric varieties corresponding to the bounded chambers of the arrangement \cite[6.5]{BD}.
\end{remark}

\subsection{Deformations}\label{sec:deformations}
We next collect some results of Namikawa and Kaledin on deformations of conical symplectic resolutions.
The following proposition is due to Namikawa (see Lemma 12, Proposition 13, and Lemma 22 of \cite{Namiflop}).

\begin{proposition}[Namikawa]\label{universal family}
  The variety $\fM$ has a universal Poisson deformation $\pi\colon\scrM\to H^2(\fM;\C)$ which
  is flat.  The variety $\scrM$ admits an action of $\bS$ extending the action on $\fM\cong \pi^{-1}(0)$,
  and $\pi$ is $\bS$-equivariant with respect to the weight $-n$ action on $H^2(\fM;\C)$.
\end{proposition}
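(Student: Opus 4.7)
The plan is to follow the general approach of Namikawa (who built on earlier ideas of Kaledin and Verbitsky), which reduces the existence of a universal Poisson deformation to a tangent/obstruction calculation in Poisson cohomology, and then algebraizes using the conical $\bS$-action.

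First, I would set up the deformation-theoretic framework. For a smooth holomorphic symplectic variety $\fM$, the functor of flat Poisson deformations has tangent space $HP^2(\fM)$ and obstructions in $HP^3(\fM)$, where $HP^{\bullet}$ denotes Poisson (Lichnerowicz) cohomology computed on global sections. The nondegenerate symplectic form $\omega$ yields a quasi-isomorphism between the Poisson complex and the de Rham complex, so $HP^i(\fM) \cong H^i(\fM;\C)$. By Proposition \ref{cohomology}, $H^3(\fM;\C) = 0$, so all obstructions vanish; standard Schlessinger/Kodaira--Spencer theory then produces a formal universal Poisson deformation of $\fM$ over $\Spf \widehat{\Sym} (H^2(\fM;\C)^*)$, which is formally smooth.

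Second, I would upgrade this formal deformation to an honest algebraic family $\pi\colon\scrM\to H^2(\fM;\C)$ using the conical $\bS$-action. Because $\bS$ acts on $\C[\fM]$ with non-negative weights and $\C[\fM]^\bS = \C$, the entire deformation theory can be run $\bS$-equivariantly: the tangent space $H^2(\fM;\C)$ and the obstruction space $H^3(\fM;\C)$ inherit $\bS$-weights, and since the symplectic form has weight $n$, the Poisson bivector has weight $-n$, forcing the deformation parameters (and hence the base $H^2(\fM;\C)$) to sit in weight $-n$. The non-negativity of the $\bS$-weights on $\C[\fM]$ ensures that each graded piece of $\C[\scrM]$ is finite-dimensional and is determined after finitely many orders of infinitesimal deformation, so the formal family glues to an algebraic one, equipped with a compatible $\bS$-action extending the action on $\fM = \pi^{-1}(0)$.

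Third, flatness follows because formal smoothness of the deformation functor, together with the equidimensionality coming from the $\bS$-equivariant algebraization, implies that $\pi$ is smooth of the expected relative dimension, hence flat. Universality (as opposed to mere versality) is obtained because on a symplectic variety the tangent map to the Kodaira--Spencer map is an isomorphism onto $H^2(\fM;\C)$ at the base point, so the formal moduli problem is prorepresentable by the completed local ring at the origin, and the $\bS$-equivariant algebraization inherits this universal property.

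The main obstacle is the algebraization step: a priori Schlessinger-type arguments only produce a formal deformation over a complete local ring, and one must promote it to a family over the affine space $H^2(\fM;\C)$. This is exactly where the conical hypothesis does essential work; without the positively weighted contracting $\bS$-action one could not control infinitely many orders of deformation simultaneously, and the assignment of weight $-n$ to the base is what makes the formal-to-algebraic transition canonical rather than a matter of arbitrary choice.
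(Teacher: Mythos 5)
The paper itself gives no proof of this statement: it is quoted from Namikawa (Lemma 12, Proposition 13, and Lemma 22 of \cite{Namiflop}), so the only fair comparison is with the argument in that source. Your overall strategy -- identify the tangent and obstruction spaces of the Poisson deformation functor with $H^2(\fM;\C)$ and $H^3(\fM;\C)$ via the symplectic form, use the vanishing of odd cohomology to get formal smoothness, and then algebraize $\bS$-equivariantly, with the coordinate functions on the base in positive weight $n$ -- is indeed the same route Namikawa takes, and the weight bookkeeping (bivector in weight $-n$, hence base in weight $-n$) is right. Two small corrections at the infinitesimal stage: the relevant Poisson cohomology is the \emph{hypercohomology} of the sheaf complex of polyvector fields, not cohomology of global sections (these differ precisely because $\fM$ is not affine); and flatness is not a consequence of formal smoothness of the functor -- it is built into the definition of a Poisson deformation, so what needs checking is that the algebraized family still represents the functor, not that it is flat.

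The genuine gaps are in your last two steps. First, universality: an isomorphism of the Kodaira--Spencer map onto $H^2(\fM;\C)$ only yields a \emph{miniversal} (semi-universal) formal deformation; to promote this to a universal one you must prorepresent the functor, i.e.\ verify Schlessinger's condition on automorphisms of infinitesimal deformations (this is where $H^1(\fM;\C)=0$ and the analysis of Poisson vector fields enter, and it is exactly the content of Namikawa's Proposition 13). As written, your justification proves less than what the proposition asserts. Second, algebraization: your argument reads off graded pieces of $\C[\scrM]$, but $\scrM$ is not affine, so its coordinate ring does not determine it; controlling finitely many orders of $\C[\scrM]$ in each weight cannot by itself reconstruct the family, and in fact $\Spec\C[\scrM]=\scrN$ is a different (singular) space. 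The algebraization in \cite{Namiflop} works with the projective-over-affine structure: one simultaneously deforms the affine base $\fM_0$ (or equivalently uses an ample line bundle relative to it) and then invokes a $\C^*$-equivariant algebraization statement (Namikawa's Lemma 22, in the spirit of graded Artin approximation or equivariant formal GAGA) using that the $\bS$-weights on the base coordinates are strictly positive. Without some version of this relative/equivariant argument, the passage from the formal universal deformation over $\Spf\widehat{\Sym}(H^2(\fM;\C)^*)$ to an honest flat family over the affine space $H^2(\fM;\C)$, together with the extension of the $\bS$-action to $\scrM$, does not follow.
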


\begin{remark}
A formal version of this result appears in the work of Kaledin and
Verbitsky \cite{KV02}; the work of Kaledin on twistor families
contains a very similar result, but not quite in the form we need.
\end{remark}


\begin{example}\label{quotient deformation}
Suppose that $\fM$ arises from the quotient construction of Example \ref{quotient construction}.
Let $\chi(\mg)$ denote the vector space of characters $\mg\to\C$,
and consider the Kirwan map $\mathsf{K}:\chi(\mg) \to \Ht$ that takes an integral character to the Euler class
of the induced line bundle on $\fM$.  If the Kirwan map is an isomorphism (this is known when $\fM$
is a hypertoric variety, and conjectured in all cases), 
then $\scrM$ is isomorphic to the GIT quotient of $\mu^{-1}((\mg^*)^G)$,
with the map to $\Ht\cong\chi(\mg)\cong (\mg^*)^G$ given by $\mu$.
\end{example}

Given any class $\eta\in H^2(\fM; \C)$, let $\scrM_\eta:= \scrM\times_{H^2(\fM;\C)}\aone$, where $\aone$ maps
to $H^2(\fM;\C)$ via the linear map that takes 1 to $\eta$.  Of particular interest is the case where $\eta$
is the Euler class of a line bundle $\cL$ on $\fM$.
In this case, the following result follows
from the work of Kaledin \cite[1.4-1.6]{KalPois}.

\begin{proposition}[Kaledin]\label{twistor def}
There exists a unique $\bS$-equivariant Poisson line bundle $\mathscr{L}$ on $\scrM_\eta$ extending the bundle $\cL$ on $\fM$
such that the Poisson action of the coordinate function $t\in \C[\aone]$ on the space of sections of $\mathscr{L}$
is the identity.
\end{proposition}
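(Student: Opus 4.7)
My plan is to construct $\scrL$ in two stages: first extend $\cL$ as a line bundle to $\scrM_\eta$, then equip the extension with the unique Poisson structure satisfying the normalization $\{t,-\} = \id$.

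\textbf{Extending $\cL$.}  Since $\pi\colon\scrM_\eta\to\aone$ is flat with central fiber $\fM$, the obstruction to extending $\cL$ from the $k$-th infinitesimal neighborhood $\fM^{(k)}\subset\scrM_\eta$ (cut out by $t^{k+1}$) to $\fM^{(k+1)}$ lies in $H^2(\fM;\fS_\fM)$, and the indeterminacy in such an extension lies in $H^1(\fM;\fS_\fM)$.  Both vanish by Proposition \ref{GR}, so $\cL$ extends uniquely up to unique isomorphism to a formal line bundle on the formal neighborhood of $\fM$ in $\scrM_\eta$.  Since $\scrM_\eta$ inherits a contracting $\bS$-action from $\fM_0$, weight-space considerations algebraize this formal line bundle to an $\bS$-equivariant line bundle $\scrL$ on $\scrM_\eta$ extending $\cL$.

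\textbf{Poisson structure.}  A Poisson line bundle structure on $\scrL$ is a compatible bracket $\fS_{\scrM_\eta}\otimes\scrL\to\scrL$; the obstruction to equipping $\scrL$ with one is governed by a lift of the Atiyah class of $\scrL$ from $H^1(\scrM_\eta,\Omega^1_{\scrM_\eta})$ to a class of closed forms.  On $\scrM_\eta$ this Atiyah class restricts to $c_1(\cL) = \eta$ on $\fM$, which by the very construction of $\scrM_\eta$ represents the tangent direction of the deformation $dt$.  This is exactly Kaledin's twistor family picture \cite[1.4-1.6]{KalPois}: it is what distinguishes $\scrM_\eta$ as the correct base and yields an explicit Poisson line bundle structure.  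This existence step---matching the Atiyah class to a closed-form lift---is the technical heart of the proof and the main obstacle.

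\textbf{Normalization and $\bS$-equivariance.}  Because $t$ is a Casimir on $\scrM_\eta$ (functions pulled back from the base of a Poisson deformation are Casimirs), the operator $\{t,-\}$ on sections of $\scrL$ is $\fS_{\scrM_\eta}$-linear, hence given by multiplication by a global section in $H^0(\scrM_\eta;\fS_{\scrM_\eta})$.  Given any initial Poisson line bundle structure, the space of such structures on $\scrL$ forms a torsor whose evaluation at $t$ surjects onto the Casimirs $\C[t]$; the linear condition $\{t,-\}=\id$ therefore cuts out a unique structure, proving both existence and uniqueness of the normalized $\scrL$.  Finally, $\bS$-equivariance follows by uniqueness: pulling the normalized structure back along $s\in\bS$ yields another Poisson line bundle structure on $\scrL$ extending $\cL$ whose evaluation at $t$ is again $\id$ (the $\bS$-weight $n$ of $t$ and the weight $-n$ of the Poisson bracket cancel, making the condition $\{t,-\}=\id$ $\bS$-invariant), so it must coincide with $\scrL$.
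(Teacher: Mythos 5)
The paper does not actually prove this statement: it is quoted from Kaledin \cite[1.4-1.6]{KalPois}, where the twistor family and the Poisson line bundle are produced together (essentially by constructing the symplectic structure on $\scrL^\times$ whose reduction recovers the family), not by first extending $\cL$ and then adjusting the bracket. Your first step (extending $\cL$ across infinitesimal neighborhoods using $H^1(\fM;\fS_\fM)=H^2(\fM;\fS_\fM)=0$, then algebraizing via the $\bS$-action) is fine in outline, and your observation that $\{t,-\}$ acts on sections by a Casimir is correct. But you explicitly defer the existence of \emph{some} Poisson structure on the extension to Kaledin (``the technical heart \ldots the main obstacle''), and the step you do supply---the renormalization to $\{t,-\}=\id$---is exactly where the real content sits, and it is the step that fails.

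Concretely: Poisson-module structures on a fixed $\scrL$ form a pseudo-torsor over the Poisson vector fields of $\scrM_\eta$, and replacing a structure by $X$ changes the Casimir $\{t,-\}$ by $X(t)$. A Poisson vector field with $X(t)$ invertible near $0\in\aone$ would lift (a rescaling of) $\partial_t$ and trivialize the Poisson deformation to first order, contradicting the fact that the Kodaira--Spencer/period class of $\scrM_\eta$ at $0$ is $\eta\neq 0$; so evaluation at $t$ does \emph{not} surject onto $\C[t]$, and one cannot renormalize an arbitrary structure. Worse, an ``initial'' structure with $\{t,-\}\equiv 0 \pmod t$ would descend modulo $t$ to a flat contravariant connection on $\cL$ over the symplectic fiber $\fM$, forcing $c_1(\cL)=\eta$ to vanish in de Rham cohomology; so the unnormalized starting point need not exist at all when $\eta\neq 0$. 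The acid test is that your torsor argument never uses which one-parameter family you are on: applied to the trivial family $\fM\times\aone$ it would yield a normalized Poisson line bundle there too, while the geometric reformulation (the remark following the proposition together with Proposition \ref{twistor}, i.e.\ the Duistermaat--Heckman relation $\frac{d}{dt}[\omega_t]=c_1(\cL)$) rules this out for $\eta\neq 0$. Thus the existence of the normalization is precisely the nontrivial assertion identifying the deformation direction with $c_1(\cL)$---the part you attributed to Kaledin---and cannot be decoupled as ``extend, then normalize.'' The uniqueness claim also needs more than a linear condition: two normalized structures on the same $\scrL$ differ by a vertical (fiberwise symplectic, hence Hamiltonian, since $H^1(\fM;\C)=0$) Poisson vector field, so uniqueness is only up to isomorphism of $\bS$-equivariant Poisson line bundles extending $\cL$, and absorbing these differences into automorphisms and equivariance requires an argument you have not given.
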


\begin{remark}
Kaledin refers to the pair $(\scrM_\eta,\mathscr{L})$ as a {\bf twistor family}.
The second half of the proposition can be stated more geometrically as the
condition that the complement $\mathscr{L}^\times$ of the zero section in the total space of $\mathscr{L}$
(the relative spectrum of the algebra sheaf $\bigoplus_{m\in \Z} \mathscr{L}^m$) carries a symplectic structure
coinducing the Poisson structure on $\scrM_\eta$ such that the
Hamiltonian vector field $\{t,-\}$ is the infinitesimal rotation of the fibers.  In
particular, $\fM$ is the symplectic reduction of $\mathscr{L}^\times$ by this Hamiltonian vector field.
\end{remark}

Kaledin also tells us that $\scrM_\eta$ is symplectic over $\aone$, and he computes the class of the 
relative symplectic form 
as follows \cite[1.7]{KalPois}.

\begin{proposition}[Kaledin]\label{twistor}
The Poisson structure on $\scrM_\eta$ is nondegenerate over $\aone$, and the relative symplectic form $\omega_{\!\scrM_\eta}\in\Omega^2(\scrM_\eta/\aone)$
satisfies $$[\omega_{\!\scrM_\eta}] = t\eta \in H_{DR}^2(\scrM_\eta/\aone) \cong H^2(\fM; \C)[t].$$
\end{proposition}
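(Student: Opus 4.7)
The plan is to derive both assertions from the twistor family structure of Proposition \ref{twistor def} combined with a Duistermaat-Heckman computation. The symplectic variety $\mathscr{L}^\times$ carries a Hamiltonian $\bS$-action by fiber rotation whose moment map is the pullback of the coordinate $t \in \C[\aone]$. For each $c \in \aone$, the preimage of $\scrM_\eta|_{t=c}$ in $\mathscr{L}^\times$ is exactly $t^{-1}(c)$, and quotienting by the free $\bS$-scaling on this slice recovers $\scrM_\eta|_{t=c}$. This exhibits every fiber of $\scrM_\eta \to \aone$ as a symplectic reduction of $\mathscr{L}^\times$, hence endows it with a genuine symplectic form; in particular the relative Poisson structure is nondegenerate.

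Next I would apply Duistermaat-Heckman to this family of symplectic reductions. The DH theorem asserts that the class of the reduced symplectic form is affine-linear in the moment map value, with linear coefficient equal to the Chern class of the principal $\bS$-bundle over the central reduction. Here the central reduction is $\fM$ and the bundle is $\mathscr{L}^\times|_\fM \to \fM$, which by construction is the frame bundle of $\cL$ and therefore has Chern class $\eta$. Combined with the vanishing $[\omega_\fM] = 0$ from Proposition \ref{cohomology}, this yields the fiberwise identity $[\omega_{\scrM_\eta|_{t=c}}] = c\eta$ in $H^2(\fM;\C)$ for every $c$.

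Finally, to promote this fiberwise identity to the stated relative identity, note that under the given isomorphism $H^2_{DR}(\scrM_\eta/\aone) \cong H^2(\fM;\C)[t]$ the specialization at $c \in \aone$ corresponds to the evaluation $t \mapsto c$; since an element of $H^2(\fM;\C) \otimes_\C \C[t]$ is determined by its specializations at infinitely many values of $c$, the fiberwise agreement forces $[\omega_{\scrM_\eta}] = t\eta$. The main obstacle is the Duistermaat-Heckman step itself: the classical theorem is typically stated for compact Hamiltonian $S^1$-spaces, whereas $\mathscr{L}^\times$ is not proper. I would address this either by restricting to the $\bS$-stable core (Remark \ref{core}), onto which the relevant cohomology deformation retracts by the argument used in Proposition \ref{cohomology}, or by verifying the DH relation directly at the level of de Rham representatives supplied by the twistor construction, as in Kaledin's original proof.
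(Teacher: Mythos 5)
The paper itself contains no proof of this statement: Proposition \ref{twistor} is imported verbatim from Kaledin \cite[1.7]{KalPois}, just as Proposition \ref{twistor def} is imported from \cite[1.4-1.6]{KalPois}, so there is no internal argument to measure yours against. Your sketch does identify the geometric mechanism underlying Kaledin's theorem: the fibers of $\scrM_\eta\to\aone$ are the symplectic reductions of $\mathscr{L}^\times$ at the levels of the moment map $t$ for the fiber-rotation action, and the reduced class should vary linearly in the level with slope the Euler class $\eta$, starting from $[\omega_\fM]=0$ (Proposition \ref{cohomology}); this is consistent with how the paper later uses $\mathscr{L}^\times$ (Lemma \ref{minimal-polynomial}) and Duistermaat--Heckman arguments (Proposition \ref{dui-heck}).

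As a proof, however, there are genuine gaps. For the nondegeneracy half you invoke the symplecticity of $\mathscr{L}^\times$ from the remark following Proposition \ref{twistor def}; but that symplecticity is not a formal consequence of the literal statement of \ref{twistor def} (a Poisson line bundle on which $t$ acts as the identity). Indeed, if the relative Poisson structure on $\scrM_\eta$ were degenerate at a point of some fiber, the induced bivector on $\mathscr{L}^\times$ would have rank at most the fiberwise rank plus $2$, hence strictly less than $\dim\fM+2=\dim\mathscr{L}^\times$ there; so the geometric gloss already contains the nondegeneracy you are trying to prove, and using it is circular unless one imports Kaledin's actual construction of the twistor family --- which is exactly what the citation supplies. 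Second, the Duistermaat--Heckman step, which you flag yourself, is the entire substance of the period formula, and neither proposed remedy closes it: the classical theorem assumes compactness (or a proper moment map) and a real parameter, while here $\mathscr{L}^\times$ and the level sets are noncompact, the parameter ranges over all of $\aone$, one works with a holomorphic symplectic form, and the identification of $H^2$ of the different fibers must be the canonical one coming from the flat family (remark following Proposition \ref{universal family}). Restricting to the core does not repair this, because the DH variation argument needs a normal form for a neighborhood of the level set $t^{-1}(c)$ inside $\mathscr{L}^\times$, which compactness of the core of the central reduced fiber does not provide; moreover for ample $\cL$ the nonzero fibers are affine (Proposition \ref{ample-affine}), so there is no analogous compact core there. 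Your second remedy, checking the relation on de Rham representatives ``as in Kaledin's original proof,'' simply defers the essential computation to the reference, i.e.\ it amounts to the citation the paper already makes. A complete argument would also have to fix the sign of the slope ($+\eta$ rather than $-\eta$) using the normalization $\{t,s\}=s$ in Proposition \ref{twistor def}.
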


\begin{remark}\label{rmk:twistor}
Proposition \ref{twistor} may be easily extended to say that $\scrM$ has a nondegenerate Poisson structure over $H^2(\fM;\C)$
with relative symplectic form
$$[\omega_{\!\scrM}] = I \in H_{DR}^2(\scrM/ H^2(\fM;\C)),$$
where we identify the latter cohomology group with the space of polynomial maps from $H^2(\fM; \C)$ to itself,
and $I$ is the identity map.
\end{remark}

Note that the $\bS$-action may be used to identify all of the nonzero fibers of $\fM_\eta$ with a single symplectic
variety $\scrM_\eta(\infty) := \left(\scrM_\eta \smallsetminus \fM\right)/\;\bS$.  
The following result of Kaledin \cite[2.5]{KalDEQ} will be crucial to our proof of Proposition \ref{asympt-local}.

\begin{proposition}[Kaledin]\label{ample-affine}
If $\cL$ is ample, then $\scrM_\eta(\infty)$ is affine.
\end{proposition}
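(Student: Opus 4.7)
The plan is to reduce affineness of $\scrM_\eta(\infty)$ to affineness of a single nonzero fiber $\scrM_\eta(t_0)$. This reduction is essentially formal: $\bS$ acts on $\aone$ with nonzero weight $-n$, so it acts transitively on $\aone\setminus\{0\}$ up to the finite stabilizer $\mu_n$ at each point, giving $\scrM_\eta(\infty)=\scrM_\eta(1)/\mu_n$; a finite quotient of an affine variety is affine.

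First, I would upgrade ampleness of $\cL$ on the central fiber to relative ampleness of $\mathscr{L}$ on the entire family $\pi\colon\scrM_\eta\to\aone$. Relative ampleness is a Zariski open condition on the base in a flat family, so by hypothesis it holds on some open neighborhood of $0\in\aone$; that neighborhood is $\bS$-stable by $\bS$-equivariance of $\pi$ and $\mathscr{L}$, and since $\bS$ acts on $\aone$ with nonzero weight $-n$ it must be all of $\aone$. Relative ampleness gives $\scrM_\eta=\operatorname{Proj}_\aone R$ for the graded $\C[t]$-algebra $R:=\bigoplus_{k\ge 0}\pi_*\mathscr{L}^k$, together with the affine cone $Y:=\operatorname{Spec}_\aone R$.

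Second, I would use the twistor structure of Proposition \ref{twistor def} to realize $\mathscr{L}^\times\subset Y$ as an exact symplectic variety. The Hamiltonian identity $\{t,s\}=s$ iterates via the Leibniz rule to $\{t,-\}=k\cdot\id$ on sections of $\mathscr{L}^k$, which is precisely the statement that the Euler vector field $E$ generating $\cs_{\operatorname{rot}}$ on $\mathscr{L}^\times$ satisfies $\iota_E\omega=dt$. Combining this with a connection $1$-form $\theta$ for the $\cs_{\operatorname{rot}}$-torsor (pairing to $1$ on $E$), one verifies $\omega=d(t\theta)$, so $\omega$ is exact. Symplectic reduction then recovers $\scrM_\eta(t_0)=\mu^{-1}(t_0)/\cs_{\operatorname{rot}}$ for every $t_0\ne 0$.

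Third, I would deduce affineness of $\scrM_\eta(t_0)$ from exactness and relative ampleness. The existence of the primitive $t\theta$, together with the fact that sections of $\mathscr{L}^k$ become genuine degree-$k$ functions on $Y$, should allow one to identify $\mu^{-1}(t_0)$ with a principal affine open subset of the affine variety $\operatorname{Spec}(R/(t-t_0))$. Then $\scrM_\eta(t_0)$ is the affine GIT quotient of an affine variety by a free reductive action, hence is affine. The main obstacle is precisely this last step: translating the exactness of the symplectic form on the nonzero levels into the algebraic statement that the augmentation ideal of $R/(t-t_0)$ has the same radical as a principal ideal generated by a section of $\mathscr{L}$. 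This is the algebraic counterpart of the geometric fact that for $t_0\ne 0$ the primitive $t_0\theta$ provides a genuine global trivializing section over $\mu^{-1}(t_0)$, rather than merely a local one.
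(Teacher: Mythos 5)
The paper offers no proof of this proposition at all --- it is quoted from Kaledin \cite[2.5]{KalDEQ} --- so your argument has to stand on its own, and as written it does not: the step that carries the entire content of the statement is the one you leave open. Your reduction to a single fiber is fine (the weight $-n$ action of $\bS$ on $\aone$ gives $\scrM_\eta(\infty)\cong\pi^{-1}(1)/\mu_n$, and finite quotients of affines are affine), but everything then rests on showing $\pi^{-1}(t_0)$ is affine for $t_0\neq 0$, and there your mechanism fails. The identity $\omega=d(t\theta)$ is false: pulling back along the zero level $i\colon\{t=0\}\hookrightarrow\mathscr{L}^\times$ kills $d(t\theta)$ (since $i^*t=0$ and $i^*dt=0$), whereas $i^*\omega$ is the pullback of the symplectic form of the reduction at level $0$, i.e.\ of $\omega_\fM\neq 0$. (Exactness of $\omega$ on $\mathscr{L}^\times$ is in fact true, because $[\omega_\fM]=0$ by Proposition \ref{cohomology} and the zero-level $\C^\times$-bundle includes as a homotopy equivalence, but it does not follow from your formula, and as a $C^\infty$ statement it gives no algebraic leverage.) Moreover, a connection $1$-form is not a section of $\mathscr{L}$, so ``$t_0\theta$ provides a global trivializing section over $\mu^{-1}(t_0)$'' does not parse; and the algebraic statement you actually need --- that the irrelevant locus of $\Spec\big(R/(t-t_0)\big)$ is cut out up to radical by a single equation, or any other reason why that complement is affine --- is precisely what you concede you cannot supply. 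That step \emph{is} the theorem.

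There are also gaps in the setup. Openness of fiberwise ampleness on the base is a theorem about \emph{proper} morphisms, and $\pi\colon\scrM_\eta\to\aone$ is not proper; likewise, for a non-proper morphism a relatively ample line bundle only gives an open immersion $\scrM_\eta\hookrightarrow\operatorname{Proj}_{\aone}R$, never an isomorphism in general (compare $\atwo\setminus\{0\}$ with $\cO$). Both points are repairable, but only by an argument of the kind you skipped: work with the proper morphism $\scrM_\eta\to\scrN_\eta:=\Spec\C[\scrM_\eta]$, where fiberwise ampleness is open, and use the conical structure (the coordinate $t$ has positive $\bS$-weight, so every nonempty closed $\bS$-stable subset of $\scrN_\eta$ meets the fiber over $0$) to propagate ampleness from the central fiber. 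Note also that, within the logic of this paper, the statement already follows from Theorem \ref{dream}: for $\eta$ ample the punctured line $\C^\times\eta$ avoids $\bigcup_{H\in\cH}H_\C$, so those fibers are affine; Kaledin's own proof uses the twistor/Poisson structure in an essentially algebraic way, which your differential-geometric exactness observation does not replace.
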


\subsection{The Weyl group}\label{sec:Weyl-gp-def}
Next, we put some results of Namikawa \cite{NamiaffII} into a form which is 
convenient for our purposes.
Let $\{\Sigma_j\}$ be the codimension 2 connected components of the smooth part of the
singular locus of $\fM_0$.  At any point $\sigma_j\in \Sigma_j$, there exists a normal slice to $\Sigma_j$
at $\sigma_j$ which is isomorphic to a Kleinian singularity, thus the preimage $\nu^{-1}(\sigma_j)\subset \fM$
is a union of projective lines in the shape of a simply-laced finite-type Dynkin diagram $D_j$.
The monodromy representation of the fundamental group $\pi_1(\Sigma_j)$ defines an action on $D_j$
by diagram automorphisms.  Let $W_j$ be the centralizer of $\pi_1(\Sigma_j)$ in the Coxeter group
associated to $D_j$, and let $W :=\prod W_j$.  We will call $W$
the {\bf Weyl group} of $\fM$ (see Remark \ref{Weyl} for motivation).
Namikawa constructs an action of $W$ on $H^2(\fM; \R)$; he
proves that the natural restriction map
\begin{equation}\label{restriction-eq}
H^2(\fM; \R) \to\bigoplus_j H^2\!\left(\nu^{-1}(\sigma_j); \R\right)^{\pi_1(\Sigma_j)}
\end{equation}
is $W$-equivariant and that $W$ acts trivially on the kernel \cite[1.1]{NamiaffII}.\footnote{
This statement is equivalent to Namikawa's statement that the map 
$\iota$, which he defines in the proof of his theorem, is an isomorphism.}

\begin{remark}\label{Weyl}
Let $G$ be the reductive algebraic group associated to a simply-laced finite-type Dynkin diagram $D$,
and let $B$ be a Borel subgroup.  If $\fM = T^*(G/B)$, then $\fM_0$
is isomorphic to the nilpotent cone in $\mg := \operatorname{Lie}(G)$.  The singular locus of $\fM_0$
is irreducible, and its smooth locus is called the subregular nilpotent orbit. 
The normal slice to the subregular orbit is isomorphic to the Kleinian singularity associated to $D$ and $W$ is
isomorphic to the Weyl group of $G$.  
The action of $W$ on $H^2(\fM; \C)$ is isomorphic to the action on
the dual of a Cartan subalgebra of $\mg$ and the restriction map \eqref{restriction-eq} is an isomorphism.
\end{remark}

Let $\scrN := \Spec \C[\scrM]$.\footnote{It would be natural to use the notation $\scrM_0$ rather than $\scrN$,
but unfortunately that notation has already been used in the previous section to mean something else.}  
Then the map $\pi:\scrM\to\Ht$ factors canonically through $\scrN$.
Namikawa \cite{Namiaff, NamiaffII, NamikawaNote} proves that the action of $W$ on $\Ht$ lifts to a symplectic action on $\scrN$,
and that the quotient map $\scrN/W\to \Ht/W$ is the universal Poisson deformation of the central fiber $\fM_0$.

\begin{remark}
The quotient $H^2(\fM; \C)/W$ is itself a vector space, which may be
identified by a theorem of Namikawa \cite[1.1]{NamiaffII} with the Poisson cohomology
group $H\! P^2(\fM_0; \C)$ as defined in \cite[\S 2]{Namiflop}.
\end{remark}

\subsection{Birational geometry}
\label{sec:systems-resolutions}

Let $P := \Pic(\fM)$ be the Picard group of $\fM$.  Proposition \ref{cohomology} tells us that 
$$P_\R:= P\otimes_\Z\R \cong H^2(\fM; \R);$$ in particular, $P$ has finite rank.
A class $\eta\in P$ is called {\bf movable} if the associated line bundle is globally generated away from a codimension
2 subvariety of $\fM$.  Let $\Mov\subset P_\R$ be the movable cone (the convex hull of the images of 
movable classes), and let $\overline\Mov$ be its closure.

\begin{proposition}\label{fundamental}
The cone $\overline\Mov\subset P_\R$ is a fundamental domain for $W$.
\end{proposition}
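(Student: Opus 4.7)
The plan is to combine Namikawa's $W$-equivariance statement \eqref{restriction-eq} with the characterization of the movable cone by nonnegative intersection against the exceptional curves over the codimension-two leaves $\Sigma_j$. Write
$$\rho : H^2(\fM; \R) \longrightarrow \bigoplus_j H^2\!\left(\nu^{-1}(\sigma_j); \R\right)^{\pi_1(\Sigma_j)}$$
for the restriction map. Each summand is canonically the real Cartan of the root system obtained from the Dynkin diagram $D_j$ by folding along $\pi_1(\Sigma_j)$, and $W_j$ acts as its Weyl group; let $\overline{C_j}$ denote the dominant chamber, i.e.\ the set of classes pairing nonnegatively with the effective exceptional curve classes contained in $\nu^{-1}(\sigma_j)$.

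By standard Coxeter theory, $\prod_j \overline{C_j}$ is a fundamental domain for $W = \prod_j W_j$ on the target of $\rho$. Since $W$ fixes $\ker\rho$ pointwise by \eqref{restriction-eq}, the preimage $\rho^{-1}\!\left(\prod_j \overline{C_j}\right)$ is automatically a fundamental domain for $W$ on all of $H^2(\fM; \R)$. The proposition therefore reduces to the geometric identification
$$\overline{\Mov} \;=\; \rho^{-1}\!\left(\prod_j \overline{C_j}\right).$$

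For the inclusion $\overline{\Mov}\subseteq \rho^{-1}\!\left(\prod_j \overline{C_j}\right)$, a line bundle whose base locus has codimension at least two must pair nonnegatively with every proper curve contained in a fiber of $\nu$, in particular with the components of each $\nu^{-1}(\sigma_j)$; taking closures yields the claim. For the reverse inclusion one argues that the only $\nu$-exceptional curves capable of obstructing movability lie over codimension-two strata of $\fM_0$, namely the $\Sigma_j$, up to a locus of codimension at least two in $\fM$ which is irrelevant for movability. A class in $\rho^{-1}\!\left(\prod_j \overline{C_j}\right)$ thus pairs nonnegatively with every such curve, giving the desired movability.

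The main obstacle is upgrading the numerical nonnegativity in the reverse inclusion to actual movability of the corresponding line bundle. The cleanest way to do this is to deform: use Proposition \ref{universal family} and the twistor family of Proposition \ref{twistor def} to spread $\cL$ out over $\scrM_\eta \to \aone$, where the general fiber is smooth and affine, apply the classical positivity criterion there, and then invoke semicontinuity of stable base loci in the flat family to deduce that the codimension of the base locus on the central fiber $\fM$ is at least two. Residual care is needed to handle the boundary of $\overline{\Mov}$ (where one takes closures) and to verify that the $W$-action constructed by Namikawa via reflections in the $\Sigma_j$ really is the one induced on chamber combinatorics from the codomain of $\rho$, but both are encoded in the $W$-equivariance of \eqref{restriction-eq}.
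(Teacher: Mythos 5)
Your reduction is exactly the paper's: since $W$ acts trivially on the kernel of \eqref{restriction-eq}, a fundamental domain on the target pulls back, and on each summand the dominant chamber is cut out by nonnegativity against the exceptional curves over $\Sigma_j$; so everything comes down to identifying $\overline\Mov$ with the preimage of the product of dominant chambers. The problem is that both halves of that geometric identification are where the actual content lies, and your justifications for them have gaps. For the inclusion $\overline\Mov\subseteq\rho^{-1}\bigl(\prod_j\overline{C_j}\bigr)$ you assert that a movable class must pair nonnegatively with \emph{every} curve contained in a fiber of $\nu$. That statement is false in general: movable classes can be strictly negative on flopping curves lying over strata of codimension $\geq 4$ (e.g.\ lines in the zero section of $T^*\mathbb{P}^2$ over the cone point), which is precisely why $\Mov$ is strictly larger than the nef cone. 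The true statement concerns only curves over the codimension-two leaves $\Sigma_j$, and its proof needs an extra idea you omit: if $\eta\cdot E<0$ for a component $E\cong\mathbb{P}^1$ of $\nu^{-1}(\sigma_j)$, then every section vanishes on $E$, and since $E$ moves in a family as $\sigma_j$ varies in $\Sigma_j$, the sections vanish on the whole component of $\nu^{-1}(\Sigma_j)$ containing $E$, which is a \emph{divisor}; only then does movability fail.

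For the reverse (and harder) inclusion, the mechanism you propose does not work. Semicontinuity of base loci goes the wrong way: base loci can only jump \emph{up} on special fibers of a flat family, so knowing that the nearby twistor fibers are affine (where every line bundle is globally generated) puts no upper bound on the base locus of $\cL$ on the central fiber $\fM$; nor do you address why sections on nearby fibers, or on the total space, restrict to enough sections on $\fM$. (There is also the minor point that the twistor family only exists for integral classes, whereas you need the statement for real classes, handled by convexity.) The paper's argument avoids deformation entirely and is purely local-to-global on $\fM$ itself: nonnegativity of $\eta$ on the components of $\nu^{-1}(\sigma_j)$ forces the line bundle to be globally generated over $\nu^{-1}(\Sigma_j)$ (a local statement about minimal resolutions of Kleinian singularities), it is automatically globally generated over the preimage of the smooth locus because $\fM_0$ is affine, and the remaining locus---the preimage of the strata of codimension $\geq 4$---has codimension $\geq 2$ in $\fM$ by semismallness, so $\eta$ is movable. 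You should replace your deformation step with an argument of this kind.
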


\begin{proof}
Consider the restriction map \eqref{restriction-eq}.  Since $W$ acts trivially on the kernel, any fundamental domain
for the action on the target pulls back to a fundamental domain for the action on the source.
The space $H^2\!\left(\nu^{-1}(\sigma_j); \R\right)^{\pi_1(\Sigma_j)}$
may be identified $W_j$-equivariantly with the real part of the dual of the Cartan subalgebra of the Lie algebra determined by 
the Dynkin diagram $D_j$.  The standard fundamental domain is the positive Weyl chamber, which may be characterized
as the set of classes that are non-negative on the fundamental classes of the components of $\nu^{-1}(\sigma_i)$.

We have thus reduced the proposition to showing that a class $\eta\in P$ is movable if and only if $\eta\cdot E\geq 0$
for every curve $E\subset \fM$ such that $E$ is a component of $\nu^{-1}(\sigma_j)$ for some $j$.
Suppose first that $\eta\cdot E<0$ for some such curve $E$.
Since $E\cong\mathbb{P}^1$, this implies that every section of the line bundle associated to $\eta$ vanishes on $E$,
and therefore on the component of $\nu^{-1}(\Sigma_j)$ containing $E$.  Since this component has codimension 1 in $\fM$,
$\eta$ cannot be movable.  On the other hand, suppose that $\eta\cdot E\geq 0$ for every such curve.
This implies that the associated line bundle is globally generated over $\nu^{-1}(\Sigma_j)$ for every $j$.
It is obviously globally generated over the preimage of the smooth locus of $\fM_0$, since $\fM_0$ is affine.
It is therefore globally generated over an open set whose complement has codimension $2$, thus $\eta$ is movable.
\end{proof}

We will wish to consider not just a single conical symplectic resolution, but
rather a collection of varieties $\fM_1,\dots, \fM_\ell$, all conical
symplectic resolution of the same cone $\fM_0$; for any two of these,
there is a birational map $f_{ij}\colon \fM_i \dasharrow\fM_j$, given
by composing the resolution of $\fM_0$ by $\fM_i$ with the inverse of
the resolution by $\fM_j$.

\begin{proposition}\label{different resolutions}
  Each $\fM_i$ contains an open subvariety $U_i$ with
  $\codim(\fM_i\setminus U_i)\geq 2$ such that
  $f_{ij}$ induces an isomorphism $U_i\cong U_j$ for all $j$, and thus a canonical
  isomorphism between Picard groups of the different
  resolutions.
\end{proposition}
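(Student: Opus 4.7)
The plan is to use the symplectic structure to show that each rational map $f_{ij}$ becomes étale on its maximal domain of definition, and to conclude from this that it restricts to an isomorphism between open subsets whose complements have codimension $\geq 2$; taking a finite intersection over $j$ will then produce the required $U_i$.

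For each $j$, let $V_i^{(j)}\subset \fM_i$ denote the maximal open subset on which $f_{ij}$ extends to a morphism. Since $\fM_i$ is smooth and the resolution $\nu_j\colon\fM_j\to\fM_0$ is proper, the valuative criterion implies that $f_{ij}$ is defined at every codimension one point of $\fM_i$, so $\codim(\fM_i\setminus V_i^{(j)})\geq 2$. On the preimage of the smooth locus of $\fM_0$ both $\omega_i$ and $f_{ij}^*\omega_j$ coincide with the pullback of the symplectic form on $(\fM_0)_{\operatorname{sm}}$; since this preimage is open and dense in $V_i^{(j)}$ and both forms are algebraic, one has $f_{ij}^*\omega_j=\omega_i$ throughout $V_i^{(j)}$. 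Nondegeneracy of $\omega_i$ now forces the differential of $f_{ij}$ to be injective at every point, and since $\dim\fM_i=\dim\fM_j$ it must be an isomorphism, so $f_{ij}|_{V_i^{(j)}}$ is étale and in particular flat.

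Set $U_i^{(j)}:=V_i^{(j)}\cap f_{ij}^{-1}\bigl(V_j^{(i)}\bigr)$. Its complement in $\fM_i$ is contained in $(\fM_i\setminus V_i^{(j)})\cup f_{ij}^{-1}(\fM_j\setminus V_j^{(i)})$; the first term has codimension $\geq 2$ by the above, and the second has codimension $\geq 2$ because it is the flat preimage of a codimension $\geq 2$ closed subset. On $U_i^{(j)}$ the composition $f_{ji}\circ f_{ij}$ is a morphism to $\fM_i$ agreeing with the identity on a dense open set (namely the preimage of $(\fM_0)_{\operatorname{sm}}$), hence equal to the identity; by symmetry $f_{ij}$ restricts to an isomorphism $U_i^{(j)}\xrightarrow{\sim}U_j^{(i)}$ with inverse $f_{ji}$. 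Taking the finite intersection $U_i:=\bigcap_{j\neq i}U_i^{(j)}$ produces an open subset of $\fM_i$ with codimension $\geq 2$ complement on which every $f_{ij}$ simultaneously restricts to an isomorphism onto the corresponding $U_j$. The canonical isomorphism of Picard groups follows, since restriction along an open immersion of codimension $\geq 2$ complement into a smooth variety induces an isomorphism on $\Pic$.

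The main obstacle is the étaleness step: without the identity $f_{ij}^*\omega_j=\omega_i$ a birational morphism between smooth varieties of the same dimension can contract divisors, which would allow the preimage of $\fM_j\setminus V_j^{(i)}$ under $f_{ij}|_{V_i^{(j)}}$ to contain a divisor and destroy the codimension estimate. The symplectic structure is exactly the input that rules this out and turns the birational map into an isomorphism in codimension one.
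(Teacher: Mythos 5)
Your proof is correct, but it takes a genuinely different route from the paper's. The paper argues through the Calabi--Yau property: since each $\fM_i$ is symplectic its canonical bundle is trivial, and one takes a common resolution $Q\to\fM_0$ factoring through $\fM_i$ and $\fM_j$, discards the irreducible components of the canonical divisor of $Q$, and observes that what remains maps isomorphically onto open subsets of $\fM_i$ and $\fM_j$ with codimension $\geq 2$ complements (the standard ``birational Calabi--Yau varieties are isomorphic in codimension one'' argument, cited to Kawamata). You instead use the symplectic form itself: properness of $\fM_j$ over $\fM_0$ plus the valuative criterion shows $f_{ij}$ is defined outside codimension $2$, and the identity $f_{ij}^*\omega_j=\omega_i$ forces the map to be \'etale on its domain of definition, which is exactly what rules out divisorial contraction. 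Your route is more self-contained (no common resolution, no discrepancy bookkeeping) and yields slightly more, namely that the isomorphisms $U_i\cong U_j$ respect the symplectic forms; the paper's route uses only triviality of the canonical class, so it applies to any crepant resolutions of $\fM_0$ regardless of how their $2$-forms compare.

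Two small points to make explicit. First, your step ``both $\omega_i$ and $f_{ij}^*\omega_j$ coincide with the pullback of the symplectic form on $(\fM_0)_{\operatorname{sm}}$'' silently assumes that the different resolutions induce the same symplectic form on the smooth locus of $\fM_0$; this is the standard convention for symplectic resolutions of a fixed symplectic cone, but the paper's definition only fixes $\fM_0$ as a variety, and the paper's Calabi--Yau argument is insensitive to this. Second, the final assertion that $f_{ij}$ carries $U_i=\bigcap_{k}U_i^{(k)}$ onto $U_j=\bigcap_{k}U_j^{(k)}$ is true but not automatic for arbitrary choices of codimension-$2$ open sets: it holds because your $U_i^{(j)}$ is the canonical maximal locus where $f_{ij}$ is a local isomorphism, and the maps satisfy $f_{jk}=f_{ik}\circ f_{ji}$ as rational maps, so membership in each $U_j^{(k)}$ transfers along $f_{ij}$; a sentence to this effect would close the argument (the paper relies on the same maximality remark).
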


\begin{proof}
Since the spaces in question are symplectic and therefore Calabi-Yau,
there exist open subsets $U_i^j\subset \fM_i$ and $U_j^i\subset \fM_j$
with complements of codimension $\geq 2$ such that $f_{ij}$ induces an isomorphism
from $U_i^j$ to $U_j^i$; see, for example \cite[4.2]{Kawa}. 
Briefly, one can take any resolution $Q\to
\fM_0$ (no longer symplectic!) which factors through $\fM_i$ and $\fM_j$ and
pull out all irreducible components of the canonical divisor
of $Q$; the remainder of $Q$ maps isomorphically to subsets of $\fM_i$
and $\fM_j$ with complements of codimension $\geq 2$; there
is a canonical largest such set, so we can take $U^j_i$ to be that one.
We then let $U_i := \bigcap_{j=1}^\ell U_i^j$.
\end{proof}

Note that any class $\eta\in P$ which is movable
for $\fM_i$ is also movable for $\fM_j$, thus we have a well-defined movable cone $\Mov\subset P_\R$.
The following result of Namikawa \cite{NamikawaNote} can be roughly summarized by the statement
that $\fM$ is a {\bf relative Mori dream space} over $\fM_0$ \cite[2.4]{AW}.

\begin{theorem}[Namikawa]\label{dream}
There are finitely many isomorphism classes of conical symplectic resolutions of $\fM_0$.
Furthermore, there exists a finite collection $\cH$ of hyperplanes in $P_\R$, preserved by the action of $W$, with the following
properties:
\begin{itemize}
\item For each conical symplectic resolution $\fM$, the ample cone of $\fM$ is a chamber of $\cH$
(and different resolutions have different ample cones).
\item The union of the closures of these ample cones is equal to $\overline\Mov$.
\item The union $\bigcup_{H\in\cH}H_\C\subset P_\C \cong \Ht$ is precisely equal to the locus over which the map $\scrM\to\scrN$
fails to be an isomorphism.   Equivalently, it is the locus
over which the fibers are not affine.
\end{itemize}
\end{theorem}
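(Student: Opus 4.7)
The strategy is to combine the universal Poisson deformation $\pi\colon \scrM \to \Ht$ (Proposition \ref{universal family}) with Kaledin's affineness theorem (Proposition \ref{ample-affine}) and Namikawa's $W$-action on $\scrN$ from Section \ref{sec:Weyl-gp-def}. The goal is to show that the locus $Z\subset\Ht$ where $\scrM\to\scrN$ fails to be an isomorphism is a finite union of hyperplanes, and that the chambers of this arrangement inside $\overline\Mov$ biject with conical symplectic resolutions of $\fM_0$.

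First I would analyze $Z$. Proposition \ref{ample-affine} implies that $\scrM_\eta$ is affine over a dense open of $\aone$ whenever $\eta$ is ample, so ample classes avoid $Z$; the $\bS$-action with weight $-n$ on $\Ht$ forces $Z$ to be a cone through the origin. To upgrade this to a union of hyperplanes, I would use Namikawa's identification of $\scrN/W \to \Ht/W$ with the universal Poisson deformation of $\fM_0$. Near each codimension-$2$ singular stratum $\Sigma_j$, the family $\scrM\to\Ht$ restricts to a simultaneous resolution of the corresponding Kleinian singularity, whose universal deformation is parameterized by the Cartan dual $\mathfrak{h}_j^*$ of the relevant root system and whose non-affine locus is cut out exactly by the root hyperplanes. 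Assembling these local pictures and using $W$-invariance of the construction shows $Z$ is a finite union of hyperplanes $\cH$, with $W$-invariance coming for free.

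For the chamber-to-resolution correspondence, pick a chamber $C \subset \Mov \setminus Z$, fix a reference resolution $\fM$ with an integral class $\eta \in C$ lying in $\Pic(\fM)$, and form $\fM_C := \operatorname{Proj}\bigoplus_{m\geq 0} H^0\!\big(\fM;\cL^m\big)$, where $\cL$ has class $\eta$. Since $\eta \notin Z$, Proposition \ref{ample-affine} applied to $\fM_C$ confirms that $\fM_C$ is a conical symplectic resolution of $\fM_0$ whose ample cone contains $C$. By Proposition \ref{different resolutions} we canonically identify Picard groups of different resolutions, so ample cones of distinct resolutions are disjoint chambers inside $\Mov$; combined with Proposition \ref{fundamental} (which says $\overline\Mov$ is a fundamental domain for $W$), this forces each chamber of $\cH$ inside $\overline\Mov$ to be the ample cone of exactly one resolution. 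Finiteness of the number of resolutions then follows from finiteness of $\cH$.

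The main obstacle is the local-to-global analysis showing that $Z$ is a union of hyperplanes rather than a more complicated cone; this requires Namikawa's deformation theory of symplectic singularities along the codimension-two strata, which is precisely what ties the Weyl group $W$ (defined via monodromy on the Dynkin diagrams $D_j$) to the global birational geometry of $\fM$. Everything else flows from this once combined with the fundamental domain and Picard-group identification results above.
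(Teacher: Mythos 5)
The paper does not actually prove this statement: Theorem \ref{dream} is quoted from Namikawa \cite{NamikawaNote} (with the gloss that $\fM$ is a relative Mori dream space over $\fM_0$), so there is no in-paper argument to compare with, and your proposal has to be judged as a reconstruction of Namikawa's proof. As such it has two genuine gaps. The first is the step producing a resolution for each chamber: you set $\fM_C := \operatorname{Proj}\bigoplus_{m\ge 0}H^0(\fM;\cL^m)$ and claim that ``Proposition \ref{ample-affine} applied to $\fM_C$ confirms that $\fM_C$ is a conical symplectic resolution.'' Proposition \ref{ample-affine} takes as input a symplectic resolution together with an \emph{ample} line bundle on it and outputs affineness of the generic twistor fiber; it cannot certify that $\fM_C$ exists, is smooth, or is symplectic. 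To even form the $\operatorname{Proj}$ you need finite generation of the section ring of a class that is merely movable, and you then need that the resulting small modification of $\fM$ over $\fM_0$ is again smooth and crepant — i.e.\ that every $\Q$-factorial crepant model of $\fM_0$ is a symplectic resolution. That is precisely the surjectivity content of the theorem (each chamber of $\cH$ in $\overline\Mov$ \emph{is} an ample cone), so as written this part is circular. In Namikawa's argument this is obtained from the Poisson/twistor deformation theory — passing to a generic fiber of $\scrM\to\Ht$, where $\scrM\to\scrN$ is an isomorphism, and deforming back, together with a smoothness-under-flop argument — none of which appears in your sketch. (The injectivity direction, that distinct resolutions have distinct ample cones under the identification of Proposition \ref{different resolutions}, is fine.)

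The second gap is the identification of the non-isomorphism locus $Z\subset\Ht$ of $\scrM\to\scrN$ with a finite union of hyperplanes. Your local model along the codimension-two strata $\Sigma_j$ (simultaneous resolutions of Kleinian singularities, discriminant equal to the monodromy-invariant root hyperplanes pulled back along the restriction map) is indeed the source of the hyperplanes, but nothing in your sketch excludes components of the discriminant lying over deeper strata of $\fM_0$, where no ADE local picture is available; ruling these out, and showing the discriminant is a genuine hyperplane arrangement rather than just a $\bS$-stable cone, is the delicate heart of Namikawa's analysis via the universal Poisson deformation $\scrN/W\to\Ht/W$ of $\fM_0$. You explicitly flag this as ``the main obstacle,'' but flagging it does not fill it: without it you have neither the finiteness of $\cH$, nor the third bullet of the theorem, nor the finiteness of the set of resolutions, which in your scheme is deduced from the finiteness of $\cH$.
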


\begin{remark}\label{chambers}
Note that, by Proposition \ref{fundamental} and Theorem \ref{dream},
the chambers of $\cH$ are in bijection with the set of pairs $(\fM, w)$, where $\fM$ is a conical symplectic
resolution of $\fM_0$ and $w$ is an element of $W$.  This bijection sends the pair $(\fM,w)$ to the $w$ translate of
the ample cone of $\fM$.
\end{remark}

\begin{remark}
If $\fM$ is a quotient as in Example \ref{quotient construction} and the 
Kirwan map of Example \ref{quotient deformation} is an isomorphism in degree 2, then
the chambers of $\cH$ are exactly the top dimensional cones in the GIT fan in $\chi(G)_\R\cong P_\R$.
\end{remark}

\section{Quantizations}
\label{sec:quantizations}


Throughout the remainder of the paper, we will always use  $S$ to
denote a scheme of finite type over $\C$ and $\fX$ to
denote a smooth finite type $S$-scheme, projective over an affine scheme\footnote{In \cite{KalPois}, Kaledin uses 
the terminology ``algebraically convex", but in other papers this term
  allows the map to only be {\it proper}; we emphasize
  that projectivity is essential.} $\fX_0$, equipped with a symplectic form
$\omega_\fX\in \Omega^2(\fX/S)$.  
After
Section \ref{sec:period}, we will also assume
throughout that $\fX$ and $S$ carry compatible actions of $\bS$ such that:
\begin{itemize}
\item The function algebra $\C[\fX]$ has no elements of negative $\bS$-weight.
\item The symplectic form satisfies $s^*\omega_\fX = s^n\omega_\fX$ for some positive integer $n$.
Equivalently, the induced Poisson bracket $\{-,-\}$ on $\fS_{\fX}$ is homogeneous of weight
  $-n$.  
\item We have $H^1(\fX;\fS_\fX)^{\bS}\cong H^2(\fX;\fS_\fX)^{\bS}=0$.
\end{itemize}

The cases that will be of primary interest to us arise in connection with a conical
symplectic resolution $\fM$:
 \begin{itemize}
 \item $\fX = \fM$ and $S$ is a point
 \item $\fX=\scrM_\eta$ and $S=\aone$
 \item $\fX = \scrM$ and $S = H^2(\fM;\C)$.
 \end{itemize}
 Here $\scrM_\eta$ is the twistor deformation and $\scrM$
 is the universal deformation, as in Section
 \ref{sec:deformations}.  We'll use these notations consistently
 throughout the paper. Each of these examples satisfies our assumptions for $\fX$; the
 only assumption which needs explanation is the cohomology
 vanishing, which holds for all three as a
 consequence of Grauert-Riemenschneider.

\subsection{The period map}\label{sec:period}
A {\bf quantization} of $\fX$ consists of 
\begin{itemize}
\item sheaf $\cQ$ of flat $\pi^{-1}\fS_{\! S}[[h]]$-algebras
on $\fX$, complete in the $h$-adic topology
\item an isomorphism from $\cQ/h\cQ$ to the structure sheaf $\fS_\fX$ of $\fX$
\end{itemize}
satisfying the condition that, if $f$ and $g$ are functions over some open set and $\tilde f$ and $\tilde g$
are lifts to $\cQ$, the image in $\fS_\fX \cong \cQ/h\cQ\cong h\cQ/h^2\cQ$ of the element $[\tilde f, \tilde g]\in h\cQ$
is equal to the Poisson bracket $\{f,g\}$.  Note that while we have
assumed that $\fX$ is smooth over $S$ and that the base field is $\C$, the notion of a quantization
makes sense for any Poisson variety.

  If $H^1(\fX;\fS_\fX)\cong
H^2(\fX;\fS_\fX)=0$ then, Bezrukavnikov and Kaledin \cite[1.8]{BK04a} show that the set of quantizations of $\fX$ is in natural bijection
via the {\bf period map} with the vector space\footnote{Following the
  conventions of \cite{BK04a}, we will mean here the $h$-adic
  completion of $H^2_{DR}(\fX/S; \C)\otimes \C[[h]]$.  This applies
  whenever we use the notation $V[[h]]$ for some vector space
  $V$.} $$[\omega_\fX] + h\cdot H^2_{DR}(\fX/S; \C)[[h]].$$ More
concretely, by Propositions \ref{cohomology} and \ref{twistor} and
Remark \ref{rmk:twistor}, 
\begin{itemize}
\item the period map for $\fM$ takes values in $h\cdot
  H^2(\fM;\C)[[h]]$
\item the period map for $\scrM_\eta$ takes values
  in $t\eta\, +\, h\cdot H^2(\fM;\C)[t][[h]]$
\item  the period map for
  $\scrM$ takes values in $I\, +\; h\cdot
  \left(H^2(\fM;\C)\otimes\C[H^2(\fM;\C)]\right)\![[h]]$.
\end{itemize}
The (unique) quantization with period $[\omega_\fX]$ is called the {\bf canonical quantization} of $\fX$.

Let $\scrQ$ be a quantization of $\scrM_\eta$.  There is an obvious way to recover a quantization of
$\fM$ from $\scrQ$:  if we divide by the ideal sheaf of $\pi^{-1}(0)$, we obtain a sheaf supported on $\pi^{-1}(0)\cong\fM$,
and this sheaf is clearly a quantization.
However, this is not the only quotient of $\scrQ$ which is supported on $\pi^{-1}(0)$. 
Fix an element $P(h) \in h\cdot \C[[h]]$.
The map from $\C[t]$ to $\C[[h]]$ taking $t$ to $P(h)$ induces a map from
$\Delta:= \Spec\C[[h]]$ to $\aone$ sending the closed point to 0, and therefore a section $\si_{\!
  P}$ of  the projection $\aone\times \Delta\to
\Delta$ which sends the closed point of $\Delta$ to $0$.
Dividing $\scrQ$ by the ideal sheaf in $\fS_{\aone}[[h]]$ of
the image of $\si_{\! P}$ also gives a quantization of $\fM$.  
Following Bezrukavnikov and Kaledin, we denote this quantization by $\si_{\! P}^*\scrQ$.
Note that the first construction in this paragraph corresponds to the
choice $P=0$.

More generally, for any quantization $\scrQ$ of $\fX/ S$, let 
$\sigma\colon \Delta\to \Delta\times S$ be any section of the
projection $\Delta\times S\to \Delta$.  If $*$ is the unique closed point of $\Delta$
and $\sigma(*) = (*,s)$, then we may define $\sigma^*\scrQ$
to be the quotient of $\scrQ$ by the ideal sheaf of this section, thought
of as a sheaf on $\pi^{-1}(s)$.

Let $\scrQ_R$ be the quantization of $\fX$ with period given by
$R(h)\in H^2_{DR}(\fX/S; \C)[[h]]$.  If $\fX=\scrM_\eta$, we can think
of this as a two variable function $R(t,h)\in t\eta\, +\, h\cdot
H^2(\fM;\C)[t][[h]]$.  The following proposition is an easy
modification of \cite[6.4]{BK04a}; it follows immediately from the naturality of
periods under pullback.

\begin{proposition}[Bezrukavnikov and Kaledin]\label{general pullback}
The period of the quantization $\si^*\scrQ_R$ is $\sigma^*R(h)\in
H^2(\pi^{-1}(s);\C)[[h]]$.  In particular, if $\fX=\scrM_\eta$ and $S = \aone$, then $\si_{\! P}^{*}\scrQ_R$ has period $R(P(h),h)\in h\cdot
  H^2(\fM;\C)[[h]]$.
\end{proposition}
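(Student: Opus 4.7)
The plan is to reduce the statement directly to Bezrukavnikov and Kaledin's original naturality result \cite[6.4]{BK04a}, which handles the case where $\sigma$ is the inclusion of a point in $S$, and to extend it by the obvious base change.

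First, I would recall how the period is constructed. For a smooth symplectic $\fX/S$ with $H^i(\fX;\fS_\fX)=0$, one picks an affine open cover on which the quantization is isomorphic to a Weyl-type deformation (i.e.\ locally trivial as a quantization of an affine symplectic space), and the period is extracted as a \v{C}ech cocycle measuring the failure of these local trivializations to glue, yielding the class in $[\omega_\fX] + h\cdot H^2_{DR}(\fX/S;\C)[[h]]$. This construction is entirely local on $\fX$ and functorial with respect to base change along $S$, since tensoring with $\fS_{\Delta}$ through $\sigma$ carries local trivializations of $\scrQ_R$ to local trivializations of $\sigma^*\scrQ_R$ and pulls back the \v{C}ech cocycle term by term.

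Second, I would verify that $\sigma^*\scrQ_R$, as defined by quotienting $\scrQ_R$ by the ideal sheaf of the image of $\sigma$ inside $\Delta\times S$, is indeed a quantization of $\pi^{-1}(s)$: flatness over $\C[[h]]$ follows from flatness of $\scrQ_R$ over $\fS_S[[h]]$ together with flatness of $\sigma$, and the Poisson compatibility is inherited from $\scrQ_R$ because the quotient map on associated gradeds is the restriction of functions along $\pi^{-1}(s) \hookrightarrow \fX$. Combining this with the previous paragraph, the period of $\sigma^*\scrQ_R$ is exactly the image of $R(h)$ under the pullback $\sigma^*\colon H^2_{DR}(\fX/S;\C)[[h]] \to H^2(\pi^{-1}(s);\C)[[h]]$, establishing the general statement.

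The ``In particular'' clause is then an unraveling of notation. For $\fX=\scrM_\eta$ and $S=\aone$, the section $\sigma_{\!P}$ is by definition induced by the ring map $\C[t]\to\C[[h]]$, $t\mapsto P(h)$. Viewing $R\in t\eta + h\cdot H^2(\fM;\C)[t][[h]]$ as a two-variable expression $R(t,h)$, the pullback $\sigma_{\!P}^*R$ is obtained by substituting $t=P(h)$, giving $R(P(h),h) \in h\cdot H^2(\fM;\C)[[h]]$ (the constant term $t\eta$ becomes $P(h)\eta$, which lies in $h\cdot H^2(\fM;\C)[[h]]$ because $P(h)\in h\cdot\C[[h]]$). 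The only real content is the general naturality statement; the principal obstacle is simply bookkeeping to confirm that the formalism of \cite[6.4]{BK04a} extends verbatim from the inclusion of a closed point to an arbitrary section $\sigma$, which it does because nothing in that construction uses that the base change is a closed immersion into $S$ itself rather than into $\Delta\times S$.
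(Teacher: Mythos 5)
Your proposal is correct and follows essentially the same route the paper takes: the paper offers no independent argument, attributing the statement to the naturality of the period map under pullback from \cite[6.4]{BK04a}, which is exactly the reduction you carry out, with the ``in particular'' clause being the substitution $t\mapsto P(h)$ as you say. One small quibble: flatness of $\sigma^*\scrQ_R$ over $\C[[h]]$ needs only that $\scrQ_R$ is flat over $\pi^{-1}\fS_S[[h]]$ (flatness is preserved under arbitrary base change), not any flatness of $\sigma$ itself, which as a closed immersion it would not have.
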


Let us collect one more fact about quantizations which will be
important for us. If $\cQ$ is a quantization of $\fX/S$, we let $\cQ^{\op}$ be the
opposite algebra of $\cQ$, thought of as a $\C[[h]]$-algebra with the
action twisted by the automorphism $h\mapsto -h$; this convention is
necessary to assure that $\cQ^{\op}$ again quantizes the same Poisson structure. 
\begin{proposition}\label{opp-zero}
  If $P(h)\in [\omega_\fX] + h\cdot H^2_{DR}(\fX/S; \C)[[h]]$ is the
  period of $\cQ$, then the period of $\cQ^{\op}$ is $P(-h)$. 
\end{proposition}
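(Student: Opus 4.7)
The plan is to exploit the bijectivity of the Bezrukavnikov-Kaledin period map \cite[1.8]{BK04a}. The operation $\cQ \mapsto \cQ^{\op}$ with $\C[[h]]$-action twisted by $h \mapsto -h$ is manifestly functorial, and, as noted just before the proposition, it sends a quantization of $(\fX,\{-,-\})$ to another quantization of the same Poisson structure: the reversed commutator and the negated Planck constant cancel to reproduce the original bracket. Consequently this operation descends to an involution $\Phi$ on the affine space $[\omega_\fX] + h\cdot H^2_{DR}(\fX/S; \C)[[h]]$, and we need only identify $\Phi$ with the substitution $h \mapsto -h$.

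To do so, I would work in a local Darboux chart $U \subset \fX$, where $\cQ|_U$ can be trivialized as a star product $f * g = fg + \sum_{k \geq 1} h^k B_k(f,g)$. Unraveling definitions, $\cQ^{\op}$ with twisted action acquires, in this trivialization, the star product $f \star g = \sum_{k \geq 0} (-h)^k B_k(g,f)$, which is obtained from $*$ by simultaneously taking the opposite and substituting $h \mapsto -h$. The crucial input is that the formal Weyl algebra, the local model underlying the period construction, admits a canonical isomorphism with its opposite via the identity on generators combined with the twist $h \mapsto -h$; using this identification, the Harish-Chandra torsor that computes the period of $\cQ^{\op}$ (with twist) is naturally identified with the corresponding torsor for $\cQ$, post-composed with the $h \mapsto -h$ automorphism of the Weyl algebra.

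Since the period is by definition the characteristic class of this torsor (equivalently, the class of the canonical Fedosov connection), the induced effect on periods is precisely $h \mapsto -h$, giving $\Phi(P(h)) = P(-h)$. The main technical hurdle is purely bookkeeping: one must trace through the construction of \cite[\S 4]{BK04a} and verify that every step commutes with the opposite-with-twist operation, with the sign changes accumulating precisely to the substitution $h \mapsto -h$ on the characteristic class. I do not expect any genuinely new ideas beyond this careful tracking of conventions.
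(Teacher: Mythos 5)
Your proposal is correct and takes essentially the same route as the paper: both arguments reduce to the anti-automorphism of the formal Weyl algebra that fixes the generators and sends $h\mapsto -h$, and to the fact that the induced symmetry of the Harish-Chandra pair $(\Aut D,\Der D)$ carries the universal class $c(h)$ to $c(-h)$, so that the period of $\cQ^{\op}$ (with twisted $\C[[h]]$-action) is $P(-h)$. The only difference is that the paper outsources the bookkeeping you defer to the end — the verification that this anti-automorphism acts on the universal class by $c(h)\mapsto c(-h)$ — to the proof of \cite[2.3.2]{Losq}, rather than retracing the construction of \cite[\S 4]{BK04a} directly.
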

\begin{proof}
  A proof of this fact is given in the proof of
  \cite[2.3.2]{Losq}, but the result is not stated as a theorem.  As defined in \cite[4.1]{BK04a},
  the period map is the localization of a universal class $c\in H^2((\Aut
  D,\Der D),h\C[[h]])$  in the
  cohomology of the Harish-Chandra pair $(\Aut D,\Der D)$, where $D$
  is the Weyl algebra.  The existence of a particular anti-automorphism sending
  $h\mapsto -h$ and $c(h)\mapsto c(-h)$ given in \cite{Losq} shows
  that the period transforms the same way.
\end{proof}

\begin{remark}
In this Remark, contrary to our usage elsewhere, we will not assume
{\it a priori} that the symbols $\fM$ and $\fX$ denote smooth
varieties.  Not every symplectic variety (in the sense of Beauville \cite{Beau})
admits a symplectic resolution; for
example, closures of non-Richardson nilpotent orbits do not \cite{Fu}.  
On the other hand, every symplectic variety has
a crepant partial resolution $\fM$ which is terminal and
$\Q$-factorial; this is again a symplectic variety, since it is
dominated by some resolution of $\fM_0$. The fact that this variety is
$\Q$-factorial means that it cannot be resolved further without
introducing discrepancy: a crepant partial resolution of $\fM$ would
have to be isomorphic to $\fM$ in codimension 1 so their group of Weil divisors
would be the same; thus an ample line bundle on the resolution would have to
correspond to a Weil divisor on $\fM$, some power of which is a
Cartier divisor, showing that the resolution is in fact $\fM$.

While the theory of periods we have discussed thus far cannot be
directly applied to $\fM$, it can be applied to the smooth locus
$\becircled{\fM}$. 
More generally, let $\fX/S$ be a convex symplectic (not necessarily
smooth) variety with
terminal singularities \cite[\S 1]{Namiflop}, and
$\becircled{\fX}$ its smooth locus.  
 As noted by Namikawa in the proof of
\cite[Lemma 12]{Namiflop}, $H^1(\becircled{\fX};\fS_{\becircled{\fX}})=H^2(\becircled{\fX};\fS_{\becircled{\fX}})=0$,
so $\becircled{\fX}$ satisfies our running assumptions.  By
\cite[1.8]{BK04a}, the quantizations of $\becircled{\fX}$ are in
bijection with $[\omega_{\fX}] + h\cdot H^2_{DR}(\becircled{\fX}/S; \C)[[h]]$.
Let $i\colon \becircled{\fX}\to \fX$ be the
inclusion map.

\begin{proposition}
If $\becircled\cQ$ is a quantization of $\becircled \fX$, then $i_*\becircled\cQ$ is a quantization of $\fX$.
If $\cQ$ is a quantization of $\fX$, then $i^{-1}\cQ$ is a quantization of $\becircled\fX$.
These two operations induce inverse bijections
between isomorphism classes of quantizations of $\becircled\fX$ and $\fX$.
\end{proposition}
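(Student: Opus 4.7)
The plan is to use the geometry of terminal symplectic singularities to control the interaction of $i_*$ and $i^{-1}$ with the $h$-adic filtration. The key input is that the singular locus of $\fX$ has codimension at least $4$ (a consequence of terminality for symplectic singularities, due to Namikawa), and that $\fS_\fX$ is Cohen--Macaulay since $\fX$ has rational singularities. By Grothendieck's local cohomology vanishing, these facts combine to give $i_* \fS_{\becircled\fX} = \fS_\fX$ together with $R^j i_* \fS_{\becircled\fX} = 0$ for $j = 1, 2$. This vanishing is what I expect to do all the real work.

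Granted this, I would show first that $i_*\becircled\cQ$ is a quantization of $\fX$. The natural filtration by powers of $h$ on $\becircled\cQ / h^n \becircled\cQ$ has subquotients isomorphic to $\fS_{\becircled\fX}$, so an induction on $n$ using the long exact sequence in higher pushforwards yields $R^1 i_*(\becircled\cQ/h^n\becircled\cQ) = 0$, and that $i_*(\becircled\cQ/h^n\becircled\cQ)$ is a flat $\C[h]/(h^n)$-algebra with reduction mod $h$ equal to $\fS_\fX$. In particular the transition maps of the inverse system $\{i_*(\becircled\cQ/h^n\becircled\cQ)\}_n$ are surjective, so the Mittag--Leffler condition holds and the inverse limit has vanishing $\varprojlim{}^1$. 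Since $\becircled\cQ$ is $h$-adically complete, $i_*\becircled\cQ = \varprojlim i_*(\becircled\cQ/h^n\becircled\cQ)$, and it follows that $i_*\becircled\cQ$ is complete, $h$-torsion-free (hence flat over $\C[[h]]$), and has reduction $\fS_\fX$ modulo $h$. The Poisson compatibility condition in the definition of a quantization can be checked on an open set, and it holds on $\becircled\fX$ by hypothesis.

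The construction in the other direction is considerably easier. Since $i$ is an open immersion, $i^{-1}$ is exact and commutes with both tensor products and inverse limits, so $h$-adic flatness and completeness pass from $\cQ$ to $i^{-1}\cQ$, and $i^{-1}\cQ / h \cdot i^{-1}\cQ = i^{-1}(\cQ/h\cQ) = i^{-1}\fS_\fX = \fS_{\becircled\fX}$; the Poisson identity is inherited on the open subset.

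Finally, for the bijection: $i^{-1} i_* \becircled\cQ = \becircled\cQ$ holds tautologically because $i$ is an open immersion. For $i_* i^{-1} \cQ \cong \cQ$, I would again work modulo $h^n$ and induct. The base case is $i_* i^{-1} \fS_\fX = i_* \fS_{\becircled\fX} = \fS_\fX$, and the inductive step uses the same filtration argument together with the vanishing $R^1 i_* \fS_{\becircled\fX} = 0$ to obtain $i_* i^{-1}(\cQ/h^n\cQ) = \cQ/h^n\cQ$. Taking the inverse limit, using Mittag--Leffler once more and $h$-adic completeness of $\cQ$, yields $i_* i^{-1}\cQ \cong \cQ$. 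The main obstacle in this whole argument is precisely the interchange of $i_*$ with the inverse limit defining the $h$-adic completion, but this is controlled exactly by the depth-based vanishing established in the first step.
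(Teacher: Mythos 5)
Your argument is correct, and it is essentially a more careful, fully detailed version of what the paper does in one line. The paper's own proof simply invokes normality of symplectic varieties for the pushforward direction and a Nakayama argument (isomorphism mod $h$ implies isomorphism, by completeness) for $i_*i^{-1}\cQ\cong\cQ$; your proof instead runs the Mittag--Leffler induction twice. The more substantive difference is that you identify the real geometric input: normality alone only gives $i_*\fS_{\becircled\fX}\cong\fS_\fX$ (codimension $\geq 2$), which is not enough to see that the reduction of $i_*\becircled\cQ$ modulo $h$ surjects onto $\fS_\fX$ --- for that one needs the vanishing $R^1 i_*\fS_{\becircled\fX}=0$, which you extract from terminality (Namikawa: singular locus of codimension $\geq 4$) together with Cohen--Macaulayness and depth/local-cohomology vanishing. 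This is exactly the mechanism the paper itself uses later in its Lemma on open subsets with complement of codimension $\geq 3$ (Lemma \ref{codimension-3}), so your proof is in effect that lemma specialized to the smooth locus, and it repairs the slight imprecision of the phrase ``follows from normality.'' What the paper's route buys is brevity in the final step ($i_*i^{-1}\cQ\to\cQ$ handled by Nakayama rather than a second induction plus Mittag--Leffler); what your route buys is an explicit and self-contained justification of why the pushforward direction works at all, with the codimension bound doing the work it genuinely has to do.
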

\begin{proof}
The fact that $i_*\becircled\cQ$ is a quantization follows from normality of symplectic varieties;
the fact that $i^{-1}\cQ$ is a quantization is trivial, as is the isomorphism $i^{-1}i_*\becircled\cQ\cong\becircled\cQ$.
In the other direction, the natural map
$i_*i^{-1}\cQ\to \cQ$ is an isomorphism mod $h$, and thus is an isomorphism by Nakayama's lemma.
\end{proof}

In most sections of this paper (with the exception of Section \ref{sec:index}), we could 
allow our conical symplectic resolutions to be terminal and $\Q$-factorial rather than smooth.  For ease of exposition,
however, we will continue to assume smoothness.
\end{remark}

\subsection{$\bS$-structures}\label{S-structures}
From this point forward, we will assume
that $\fX$ and $S$ carry compatible actions of $\bS$ such that:
\begin{itemize}
\item The function algebra $\C[\fX]$ has no elements of negative $\bS$-weight.
\item The symplectic form satisfies $s^*\omega_\fX = s^n\omega_\fX$ for some positive integer $n$.
Equivalently, the induced Poisson bracket $\{-,-\}$ on $\fS_{\fX}$ is homogeneous of weight
  $-n$.  
\item We have $H^1(\fX;\fS_\fX)^{\bS}\cong H^2(\fX;\fS_\fX)^{\bS}=0$.
\end{itemize}
In this section we define the notion of an $\bS$-structure on a quantization of $\fX/S$,
and we consider the question of which quantizations carry
$\bS$-structures.

Let $a\colon\bS\times\fX \to\fX$ be the action map, let $p\colon\bS\times\fX
\to\fX$ be the projection onto $\fX$, and let $e\colon \bS\times \fX\to \bS$ be
the projection onto $\bS$.
If $\cQ$ is a quantization of $(\fX, \omega_\fX)$, then the 
naive pullback $a^*\cQ:=a^{-1}\cQ\otimes_{\C[[h]]}e^{-1}\fS_{\bS}[[h]]$ 
is a quantization of $\fX\times\bS$ over $\bS$ with the relative symplectic form
$a^*\omega_\fX=z^np^*\omega_\fX$, where $z$ is the coordinate function
on $\bS$.  Since
forms are contravariant and bivectors covariant, the corresponding
Poisson brackets are related by $\{-,-\}_a=z^{-n}\{-,-\}_p$.
As long as the Poisson bracket on $\fX$ is nontrivial,
the sheaves $a^*\cQ$ and $p^*\cQ$ are quantizations of different Poisson brackets on $\fX\times\bS$, 
thus they are never isomorphic.

This difference between the two Poisson brackets can be resolved by twisting the action of $h$.
More precisely, let $a^*_{\text{tw}}\cQ:=a^{-1}\cQ\otimes_{\C[[h]]}e^{-1}\fS_{\bS}[[h]]$, where this time the action of
$\C[[h]]$ on $e^{-1}\fS_\bS[[h]]$ is given by sending $h$ to $z^nh$.  Put differently, $a^*_{\text{tw}}\cQ$ and $a^*\cQ$
are isomorphic as sheaves of vector spaces, but the endomorphism given by 
multiplication by $h$ in $a^*_{\text{tw}}\cQ$ corresponds to the endomorphism given by
multiplication by $z^{-n}h$ in $a^*\cQ$.
Then $a^*_{\text{tw}}\cQ$ is a quantization of the Poisson
bracket $z^n\{-,-\}_a=\{-,-\}_p$, that is, corresponding to the
relative  symplectic form $p^*\omega_\fX$.  

An {\bf $\bS$-structure} on $\cQ$ is an isomorphism $a_\text{tw}^*\cQ\cong p^*\cQ$ as
$(\id_{\bS}\times \pi)^{-1}\fS_{\bS\times S}[[h]]$-algebras, satisfying the natural cocycle condition.
That is, the above isomorphism induces an isomorphism $s^*\cQ\cong \cQ$ for every $s\in\bS$,
and we require that for any three elements of $\bS$ with $s\cdot s'\cdot s'' = 1$, 
the composition of the three isomorphisms is the identity.  In
\cite{Losq}, this is called a ``grading'' on the quantization.  We will often refer to a quantization endowed with an $\bS$-structure as an {\bf $\bS$-equivariant} quantization.

As a
general principle, quantizations have $\bS$-structures whenever their
period does not obstruct this possibility.  More precisely, Losev \cite[2.3.3]{Losq} proves 
the following result.\footnote{Losev assumes that $n=2$, but his proof works for arbitrary $n$.}

\begin{proposition}[Losev]\label{S-structure}
A quantization of $\fX$ admits an $\bS$-structure if and only if 
its period lies in the vector space $[\omega_{\fX}] + h\cdot H^2_{DR}(\fX/S; \C)\subset [\omega_{\fX}] + h\cdot H^2_{DR}(\fX/S; \C)[[h]]$, in which case its $\bS$-structure
is unique.
\end{proposition}  
As noted in \cite[\S 6.1]{BK04a}, as long as we have the assumptions
$H^{1}(\fX;\fS_{\fX})^\bS=H^{2}(\fX;\fS_{\fX})^\bS=0$, the variety
$\fX$ is $\bS$-equivariantly admissible.  Even if there are vectors
of non-zero weight 
in $H^{1}(\fX;\fS_{\fX})$ or $H^{2}(\fX;\fS_{\fX})$, we can still apply the
theory of \cite{BK04a} to $\bS$-equivariant quantizations; in particular, every period in
$[\omega_{\fX}] + h\cdot H^2_{DR}(\fX/S; \C) $
has a corresponding unique $\bS$-equivariant quantization.  

\subsection{The section ring}\label{section ring}
Let
$\cQ$ be an $\bS$-equivariant quantization of $\fX$.  Define 
$$\cD(0) := \cQ[\hon],\qquad\cD:=\cQ[\hmon],
\and
\cD(m) := h^{\nicefrac{-m}{n}}\cD(0)\subset\cD\;\text{for all $m\in\Z$}.$$
We will frequently abuse notation by referring to $\cD$ as a quantization of $\fX$.

Let $A := \Gamma_\bS(\cD)$
be the ring of $\bS$-invariant sections of $\cD$.  This ring
inherits a $\mathbb{Z}$-filtration $$\ldots \subset A(-1)\subset A(0)\subset A(1)\subset \ldots\subset A$$
given by putting 
$$A(m) := \Gamma_\bS\big(\cD(m)\big).$$
The associated graded of $A$ may be canonically identified with $\C[\fX]$ as a 
$\Z$-graded ring via the maps
$$A(m) = \Gamma_\bS\big(\cD(m)\big)
\overset{\cdot h^{\nicefrac{m}{n}}}\longrightarrow \Gamma\big(\cD(0)\big) \twoheadrightarrow
\Gamma\big(\cD(0)/\cD(-1)\big) \cong
\Gamma(\fS_\fX) = \C[\fX].$$

Many of the examples of conical symplectic resolutions we gave at the beginning of 
Section \ref{sec:resolutions} admit
quantizations for which the ring $A$ is of independent interest.  (In all of these examples $S$ is a point.)

\begin{itemize}
\item Let $\Gamma\subset \operatorname{SL}_2(\C)$ be a finite subgroup.
Any quantization of
the Hilbert scheme of $m$ points on a crepant resolution of
$\C^2/\Gamma$ has its invariant section ring $A$ isomorphic to a spherical symplectic 
reflection algebra for the wreath product $S_m\wr\Gamma$, with parameters
corresponding to the period of the quantization \cite[1.4.4]{EGGO}, \cite[1.4]{Gorremark}.
\item Let $G$ be a reductive Lie group and $B\subset G$ a Borel
  subgroup.  Then each quantization of $T^*(G/B)$ has its invariant section ring $A$ isomorphic to a 
  central quotient of the universal enveloping algebra $U(\mg)$.  All central quotients arise this way,
  and two quantizations give the same central quotient if their periods are related by the action
  of the Weyl group \cite[Lemma 3]{BB}.
\item Many quantizations of a resolution of a Slodowy slice to a
  nilpotent orbit in $\mg$ 
have invariant section ring $A$ isomorphic to a central quotient of a
finite W-algebra; the cases where every quantization has this property
(which includes all slices in type A) are classified by Ambrosio,
Carnovale, Esposito and Topley \cite{ACET}.  Again,
all central quotients of the W-algebra arise this way,
  and two quantizations give the same central quotient if their periods are related by the action
  of the Weyl group \cite[6.4]{Sk}.
\item Any quantization of a hypertoric variety 
  has its invariant section ring $A$ isomorphic to a central quotient of the
  hypertoric enveloping algebra.  Once more, all central quotients arise this way,
  and two quantizations give isomorphic central quotients if their periods are related by the action
  of the Weyl group \cite[\S 5]{BeKu},
  \cite[5.9]{BLPWtorico}.
\item In \cite{KWWY}, it is conjectured that the algebra arising from
  the slices in the affine Grassmannian can be described as a quotient
  of a {\bf shifted Yangian}, a variant of the usual Yangian of Drinfeld. 
\end{itemize}

Consider the universal Poisson deformation $\pi\colon\scrM\to\Ht$ of $\fM$.
Let $\scrD$ be the canonical quantization of $\scrM$, and let $\scrA := \secs(\scrD)$
be its invariant section algebra.
The $\pi^{-1}\fS_{H^2(\fM;\C)}$-structure on $\scrD$ induces a map
$$c\colon \C[H^2(\fM;\C)]\to \Gamma(\scrM;\scrD)$$ which is
$\bS$-equivariant for the weight $n$ action on
$\C[H^2(\fM;\C)]$.  In particular, if $x\in \Ht^*$ is a linear
function on $H^2(\fM;\C)$, we have that $h^{-1}c(x)\in \scrA$.

Let $\la\in\Ht$ be the period of $\cD$.  By Proposition \ref{general pullback}, $\cD = \sigma_{h\la}^*\scrD$,
and this induces a restriction map from $\scrA$ to $A$.

\begin{proposition}\label{restriction of sections}
The map from $\scrA$ to $A$ is surjective with kernel generated by $h^{-1}c(x)-\la(x)$ for all $x\in\Ht^*$.
\end{proposition}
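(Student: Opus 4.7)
The plan is to realize $\cD$ as a Koszul-type quotient of $\scrD$ and then take invariant sections. By construction $\cD = \si_{h\la}^*\scrD$, so as a sheaf $\cD = \scrD/J$, where $J$ is the two-sided ideal generated by $c(x) - h\la(x)$ for $x \in \Ht^*$; since $h$ is invertible in $\scrD$, we can equivalently describe $J$ as generated by $f_x := h^{-1}c(x) - \la(x)$. Each $f_x$ is $\bS$-invariant of weight zero and central in $\scrD$ (as $c$ maps into the center), so these elements span a weight-zero subspace $V \subset \scrA$ of dimension $d = \dim \Ht^*$.

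I would then resolve $\cD$ by the Koszul complex on a basis $f_1, \ldots, f_d$ of $V$. The linear forms $x_i - h\la(x_i)$ form a regular sequence in $\C[\Ht][[h]]$, since their quotient is the domain $\C[[h]]$. By Proposition~\ref{universal family}, $\scrM \to \Ht$ is flat, and this promotes via standard deformation-quantization arguments to flatness of $\scrD$ as a sheaf over $\pi^{-1}\fS_{\Ht}[[h]]$; hence the $f_i$'s form a regular sequence on stalks of $\scrD$. The Koszul complex
$$K^\bullet \;=\; \bigl[\,\scrD \otimes {\textstyle\bigwedge^d} V \;\to\; \cdots \;\to\; \scrD \otimes V \;\to\; \scrD\,\bigr]$$
with differential $s \otimes v \mapsto sv$ is therefore a sheaf-level resolution of $\cD$.

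Next I would apply $\Gamma(\scrM, -)$. The required vanishing $H^i(\scrM, \scrD) = 0$ for $i > 0$ follows from $H^i(\scrM, \fS_{\scrM}) = 0$ (an analogue of Proposition~\ref{GR} for $\scrM$ as a conical symplectic resolution of $\scrN$), via d\'evissage along the $h$-adic filtration and compatibility of cohomology with $h$-inversion. Since $V$ is a weight-zero $\bS$-representation, taking $\bS$-invariants preserves exactness, yielding
$$\cdots \;\to\; \scrA \otimes {\textstyle\bigwedge^2} V \;\to\; \scrA \otimes V \;\to\; \scrA \;\to\; A \;\to\; 0.$$
Exactness at $A$ gives surjectivity of $\scrA \to A$; exactness at $\scrA$ identifies the kernel with the image of $\scrA \otimes V \to \scrA$, which is precisely the two-sided ideal generated by $\{h^{-1}c(x) - \la(x) : x \in \Ht^*\}$.

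The main obstacle will be the technical checks: the regular-sequence property of the $f_i$'s in $\scrD$ (which reduces to flatness of $\scrD$ over $\pi^{-1}\fS_{\Ht}[[h]]$), and the higher cohomology vanishing $H^i(\scrM, \scrD) = 0$. Both are standard consequences of Propositions~\ref{universal family} and~\ref{GR} combined with the $h$-adic topology on the quantization, but they require care to set up correctly.
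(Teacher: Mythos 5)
Your proof is correct and takes essentially the same route as the paper: the paper tensors $\scrD$ with the Koszul resolution of $\C_\la$ over $\C[\Ht]$ and collapses the resulting hypercohomology spectral sequence $\operatorname{Tor}^i_{\C[\Ht]}(H^j(\scrD)^\bS,\C_\la)\Rightarrow H^{j-i}(\cD)^\bS$ using the vanishing of higher cohomology of $\scrD$, which is exactly your Koszul resolution plus acyclicity argument in spectral-sequence packaging. The two technical points you flag are not obstacles: flatness of $\scrD$ over $\pi^{-1}\fS_{\Ht}[[h]]$ is built into the definition of a quantization (and survives inverting $h$), and the vanishing $H^{i}(\scrM;\scrD)=0$ for $i>0$ is tacitly used by the paper's proof as well.
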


\begin{proof}  Let $\C_\la$ be the evaluation module at $\la$ of
  $\C[\Ht]$.  The sheaf $\cD$ can be rewritten as
  the cohomology of the tensor product of
  $\scrD$ with the Koszul resolution of
  $\C_\la$.  Thus, the sheaf cohomology of $\cD$
  is the hypercohomology of this complex. Filtering this complex by
  degrees in the Koszul resolution, we obtain the spectral 
  sequence \[\operatorname{Tor}^i_{\C[\Ht]}(H^j(\scrD)^\bS,\C_\la)\Rightarrow
H^{j-i}(\cD) ^\bS\]
converging to the cohomology of $\cD$. 
Since $\scrD$ has
trivial higher cohomology, this spectral sequence collapses
immediately, and we obtain the desired isomorphism.
\end{proof}

\vspace{-\baselineskip}
\begin{lemma}\label{codimension-3}
Let $\fX$ be a smooth symplectic variety over a smooth base
$S$.  Let $i\colon U\hookrightarrow \fX$ be an open inclusion,
and let $d$ be the codimension of the complement of $U$.
\begin{itemize}
\item If $d\geq 2$, then for any
  quantization $\cQ$ of $\fX$, the restriction $i^*\cQ$ to $U$ is a
  quantization of $U$ with $\Gamma(U;i^*\cQ)\cong\Gamma(\fX;\cQ)$.
\item If $d\geq 3$, then for any
  quantization $\cQ'$ of $U$, the pushforward $i_*\cQ'$ is a
  quantization of $\fX$ with $\Gamma(U;\cQ')\cong\Gamma(\fX;i_*\cQ')$.
\end{itemize}
\end{lemma}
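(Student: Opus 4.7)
The plan rests on local cohomology vanishing for the structure sheaf of a smooth variety, combined with induction along the $h$-adic filtration. Since $\fX$ is smooth, $\fS_\fX$ is Cohen--Macaulay, so if $Z := \fX \setminus U$ has codimension at least $c$, then $\cH^j_Z(\fS_\fX) = 0$ for all $j < c$. The distinguished triangle $R\Gamma_Z\fS_\fX \to \fS_\fX \to Ri_*\fS_U \to +1$ then yields $i_*\fS_U \cong \fS_\fX$ whenever $c \geq 2$, and $R^1i_*\fS_U = \cH^2_Z(\fS_\fX) = 0$ whenever $c \geq 3$.

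For the first bullet, the claim that $i^*\cQ$ is a quantization of $U$ is immediate: flatness over $\C[[h]]$, $h$-adic completeness, the reduction $i^*\cQ/h \cong \fS_U$, and Poisson compatibility are all local conditions preserved by restriction. For the sections, I would write $\cQ_k := \cQ/h^{k+1}\cQ$ and argue inductively on $k$ using the short exact sequences
\[
0 \to h^k\fS_\fX \to \cQ_k \to \cQ_{k-1} \to 0
\]
and their restrictions to $U$. Comparing the resulting six-term $\Gamma$-sequences via restriction, the base case $\Gamma(\fX;\fS_\fX) = \Gamma(U;\fS_U)$ (algebraic Hartogs, i.e., the vanishing of $H^0_Z(\fS_\fX)$ and $H^1_Z(\fS_\fX)$) together with the four-lemma gives $\Gamma(\fX;\cQ_k) \cong \Gamma(U;i^*\cQ_k)$ at every stage. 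Since the transition maps are surjective (Mittag--Leffler), the inverse limit yields the desired isomorphism.

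For the second bullet, the same analysis with $c \geq 3$ supplies both $i_*\fS_U \cong \fS_\fX$ and $R^1 i_*\fS_U = 0$. Setting $\cQ'_k := \cQ'/h^{k+1}\cQ'$, I would prove inductively that applying $i_*$ to the truncation sequences on $U$ produces short exact sequences
\[
0 \to \fS_\fX \to i_*\cQ'_k \to i_*\cQ'_{k-1} \to 0
\]
on $\fX$, where right-exactness is exactly where $R^1 i_*\fS_U = 0$ is used. By induction each $i_*\cQ'_k$ is a flat $\C[[h]]/h^{k+1}$-algebra with reduction mod $h$ equal to $\fS_\fX$, and the Poisson identity pushes forward because it is an equality of sections of a sheaf on $\fX$ that already holds on the dense open $U$. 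Passing to the inverse limit, which agrees with the sheaf-theoretic pushforward $i_*\cQ'$ since $i_*$ is a right adjoint, yields an $h$-adically complete, flat $\C[[h]]$-algebra on $\fX$ quantizing $\fS_\fX$. The equality $\Gamma(U;\cQ') = \Gamma(\fX;i_*\cQ')$ is tautological.

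The main obstacle will be bookkeeping around the inverse limit: one must verify that the natural map $i_*\cQ'/h^{k+1}(i_*\cQ') \to i_*(\cQ'/h^{k+1}\cQ')$ is an isomorphism for all $k$, so that the constructed limit is genuinely $h$-adically complete and its truncations recover the sheaves built inductively. This is a second application of the same $R^1 i_*$ vanishing, propagated through the filtration by $h^k\cQ' \cong \cQ'$ via multiplication. Once these compatibilities are in place, flatness and the quantization axioms at the level of the limit follow formally.
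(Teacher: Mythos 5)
Your argument is correct and follows essentially the same route as the paper: vanishing of the local cohomology of $\fS_\fX$ in degrees below the codimension, induction along the $h$-adic filtration, and a Mittag--Leffler/inverse-limit step (the paper simply runs the five-lemma at the sheaf level, proving $\cQ/h^m\cQ\cong i_*i^*(\cQ/h^m\cQ)$, rather than on global sections, and likewise deduces $\mathbb{R}^1i_*\cQ'=0$ before comparing with $\fS_\fX$). The one detail to make explicit in your first bullet is that surjectivity in the five-lemma step also requires injectivity of $H^1(\fX;\fS_\fX)\to H^1(U;\fS_U)$, which follows from the same vanishing $\cH^0_Z(\fS_\fX)=\cH^1_Z(\fS_\fX)=0$ you already invoked.
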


\begin{proof} 
Let $j\colon \fX\setminus U\to \fX$ be the inclusion.  As usual for complementary closed and open embeddings, we have an
exact triangle $j_*j^!\fS_\fX \to \fS_\fX\to i_*i^*\fS_\fX\to j_*j^!\fS_\fX[1]$.
The induced long exact sequence takes the form of a short exact sequence
\[0\to \fS_\fX\to i_*\fS_U \to  j_*\mathbb{R}^{1}j^!\fS_\fX\to 0\]
along with isomorphisms
$\mathbb{R}^ki_*\fS_{U}\cong j_*\mathbb{R}^{k+1}j^!(\fS_\fX)$
for all $k>0$.
The local cohomology sheaf $\mathbb{R}^kj^!\fS_\fX$ vanishes
for all $k<d$, so we may conclude that $i_*\fS_{U}\cong
\fS_{\fX}$ if $d\geq 2$, and $\mathbb{R}^1i_*\fS_{U}=0$ if $d\geq 3$.

Assume that $d\geq 2$, and
consider a quantization $\cQ$ on $\fX$.  It is clear that
$i^*\cQ$ is a quantization of $U$, so we need only show that the
sections are unchanged.  For each $m\geq 0$, the natural map
$\cQ/h^m\cQ\to i_*i^*(\cQ/h^m\cQ)$ is an isomorphism; this follows
from induction and the five-lemma applied to
the diagram:
\[\tikz[->,thick]{
\matrix[row sep=10mm,column sep=20mm,ampersand replacement=\&]{
\node (a) {$h \cQ/h^m\cQ$}; \& \node (b) {$\cQ/ h^m\cQ$}; \& \node (c) {$\fS_{\fX}$}; \\
\node (d) {$i_*i^*(h \cQ/h^m\cQ)$}; \& \node (e) {$i_*i^*(\cQ/ h^m\cQ)$}; \& \node (f) {$i_*i^*(\fS_{\fX})$}; \\
};
\draw (a) -- (b) ; 
\draw (b) -- (c) ;
\draw (a) -- (d) ; 
\draw (b) -- (e) ; 
\draw (c) -- (f) ;
\draw (d) -- (e) ; 
\draw (e) -- (f) ; 
}\]
Since $U$ is open, $i_*i^*$ commutes with projective limits, so we have an isomorphism
$i_*i^*\cQ\cong \cQ$.  The isomorphism of sections of $\cQ$ and $i^*\cQ$ now follows by the
functoriality of push-forward.

Now assume that $d\geq 3$, and let $\cQ'$ be a
quantization on $U$.  
  The flatness of $ i_*\cQ'$ is automatic, so
we need only show that $i_*\cQ'/ i_*(h \cQ' )\cong \fS_\fX$.  The short exact sequence \begin{equation*}
h \cQ'/h^m\cQ'\to
\cQ'/ h^m\cQ'\to\fS_{U}
\end{equation*} similarly shows inductively that $\mathbb{R}^1i_*(\cQ'/ h^m\cQ')=0$ for all
$m$.  An argument as in \cite[2.12]{KR}, using the Mittag-Leffler
condition, shows that thus $\mathbb{R}^1i_*\cQ'=0$.  

Consider the long exact sequence \[0\to i_*(h\cQ')\to
i_*\cQ'\to\fS_{\fX}\to \mathbb{R}^1i_*(h\cQ')\to \cdots. \] Since 
$\mathbb{R}^1i_*(h\cQ')\cong \mathbb{R}^1i_*\cQ'=0$, we get an isomorphism $i_*\cQ'/ i_*(h \cQ' )\cong \fS_\fX$, and so the $\C[[h]]$-module $i_*\cQ'$ is a quantization.
\end{proof}

 Now we turn to the case of a conical symplectic resolution $\fM$. In
 this case, the ring $A$ depends
only on the cone $\fM_0$, and not on the choice of resolution.  

More precisely, 
let $\fM$ and $\fM'$ be two conical symplectic resolutions of the same affine cone.
By Proposition \ref{different resolutions}, the groups $\Ht$ and $H^2(\fM'; \C)$ are canonically isomorphic.
Let $\cD$ and $\cD'$ be quantizations of $\fM$ and $\fM'$ with the
same period, and $\scrD$ and $\scrD'$ the corresponding quantizations
of the universal quantizations $\scrM$ and $\scrM'$.  

\begin{proposition}\label{deformed same sections}
  There is a canonical isomorphism
between the section rings $\scrA := \secs(\scrM; \scrD)$
and $\scrA' := \secs(\scrM'; \scrD')$.
\end{proposition}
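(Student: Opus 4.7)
The plan is to localize the comparison to the codimension-$2$ locus where $\scrM$ and $\scrM'$ can be identified, and then to extract the section ring on each side using Lemma \ref{codimension-3}.

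First I would extend the identification of Proposition \ref{different resolutions} from the zero fibers to the universal deformations. The open subsets $U\subset\fM$ and $U'\subset\fM'$ with codimension-$2$ complements are canonically isomorphic, and this isomorphism is equivariant for the $\bS$-actions. Because each $\scrM_i$ is the \emph{universal} Poisson deformation (Proposition \ref{universal family}) and the bases $H^2(\fM;\C)$, $H^2(\fM';\C)$ have been canonically identified via the isomorphism of Picard groups from Proposition \ref{different resolutions}, I would argue that the identification $U\cong U'$ extends uniquely to an $\bS$-equivariant Poisson isomorphism $\scrU\cong\scrU'$ between opens $\scrU\subset\scrM$ and $\scrU'\subset\scrM'$ whose complements still have codimension at least $2$ in each fiber, and which is compatible with the projections to $\Ht$.

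Next I would compare the restrictions $\scrD|_\scrU$ and $\scrD'|_{\scrU'}$, which are $\bS$-equivariant quantizations of the common open. By Remark \ref{rmk:twistor}, the period of $\scrD$ is the identity class $I\in H^2_{DR}(\scrM/\Ht)\cong H^2(\fM;\C)\otimes\C[\Ht]$, and analogously for $\scrD'$; under the canonical identification of Picard groups from Proposition \ref{different resolutions}, these periods restrict to the same class on $\scrU\cong\scrU'$. Applying Losev's uniqueness of $\bS$-equivariant quantizations with a prescribed period (Proposition \ref{S-structure}), one obtains a canonical isomorphism $\scrD|_\scrU\cong \scrD'|_{\scrU'}$.

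Finally, the codimension-$2$ statement of Lemma \ref{codimension-3} gives
\[
\scrA \;=\; \secs(\scrM;\scrD) \;\cong\; \secs(\scrU;\scrD|_\scrU) \;\cong\; \secs(\scrU';\scrD'|_{\scrU'}) \;\cong\; \secs(\scrM';\scrD') \;=\; \scrA',
\]
and the middle isomorphism is canonical because both the identification $\scrU\cong\scrU'$ in Step 1 and the isomorphism of restricted quantizations in Step 2 were canonical.

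The main obstacle is Step 2: verifying that Losev's uniqueness genuinely applies to $\scrU$, which requires the cohomology vanishing $H^1(\scrU;\fS) = H^2(\scrU;\fS)=0$. This does not follow automatically from a codimension-$2$ bound via local cohomology, so I would either strengthen the argument by pushing forward to $\scrM$ (where Proposition \ref{S-structure} applies) and comparing the resulting quantization with $\scrD$ using its characterization by period, or, alternatively, exploit the $\bS$-weight grading together with the fact that $\bS$-invariant sections of a quantization on an open with codimension-$2$ complement are controlled by sections of its associated graded, which is $\C[\scrU]=\C[\scrM]$ by Hartogs.
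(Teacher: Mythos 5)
Your outline is close in spirit to the paper's argument, but the step you yourself flag as the ``main obstacle'' is a genuine gap, and the way you propose to fix it is missing the one geometric observation that makes the fix work. The classification of quantizations by their period (Bezrukavnikov--Kaledin, and Losev's uniqueness of $\bS$-structures) is only available on a space satisfying $H^1(\fS)=H^2(\fS)=0$; this is known for $\scrM$ and $\scrM'$ but not for the open sets $\scrU,\scrU'$, so ``same period on $\scrU\cong\scrU'$ implies isomorphic'' is not a licensed inference. Your first repair --- push forward to a full deformation and compare periods there --- is the right idea, but the second bullet of Lemma \ref{codimension-3} (the only tool that turns $i_*$ of a quantization on an open set into a quantization of the whole space) requires the complement to have codimension at least $3$, whereas your Step 1 only delivers codimension $2$ (fiberwise, hence in total). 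With only codimension $2$ the obstruction $\mathbb{R}^1 i_*$ need not vanish and the pushforward need not be a quantization, so the argument stalls exactly there.

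The paper closes this gap with a codimension count you never make: by Theorem \ref{dream}, over a generic point of $\Ht$ the map $\scrM\to\scrN$ is an isomorphism and the fiber is affine, so the generic fiber avoids the exceptional locus of $\scrM\dasharrow\scrM'$ entirely; combined with the fiberwise codimension-$2$ bound from Proposition \ref{different resolutions}, the exceptional locus has codimension at least $3$ in $\scrM$ (and in $\scrM'$). With that, one sets $\scrD'':=i'_*\,i^*\scrD$, which by the codimension-$3$ part of Lemma \ref{codimension-3} is an honest quantization of $\scrM'$ whose sections are $\secs(U;\scrD)\cong\scrA$; since $(i')^*\scrD''\cong i^*\scrD$ and $(i')^*\scrD'$ have the same period, $\scrD''$ and $\scrD'$ have the same period as quantizations of $\scrM'$, where the classification does apply, hence $\scrD''\cong\scrD'$ and $\scrA\cong\secs(\scrM';\scrD'')\cong\scrA'$. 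So to complete your proof you should replace the appeal to uniqueness on $\scrU$ by this pushforward-to-$\scrM'$ argument, after first upgrading your codimension estimate from $2$ to $3$ using the affineness of the generic fibers.
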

\begin{proof}
We have a canonical rational map $\scrM\dasharrow\scrM'$.  This
induces an isomorphism between the fiber over a generic point in
$\Ht\cong H^2(\fM';\C)$, and gives a pair of crepant resolutions of
each fiber.  Thus, applying Proposition \ref{different
  resolutions} to  each fiber, we find that the exceptional locus of
this map is codimension 2 in each fiber.  Combining this with the fact
that the generic fiber avoids the exceptional locus, we see that it
has codimension 3.  Let $U\subset \scrM, U'\subset \scrM'$ be the
complements to the exceptional loci, so that $\scrM\dasharrow\scrM'$
induces an isomorphism $U\cong U'$.  

Let $i\colon U \to \scrM$ and $i'\colon U' \to \scrM'$ be the
inclusions of these sets.  By Lemma \ref{codimension-3}, $\scrD'' := i'_*i^*\scrD$ is a
quantization of $\scrM'$ with section ring $$\secs(\scrM'; \scrD'') \cong \secs(U; \scrD) \cong \secs(\scrM; i_*\scrD)\cong \scrA.$$
Since $(i')^*\scrD''\cong i^*\scrD$ and $(i')^*\scrD'$
have the same period (by definition), the quantizations $\scrD'$ and
$\scrD''$ must also have
the same period and thus are isomorphic.  Thus, we have that $\scrA\cong
\secs(\scrM'; \scrD'')\cong \secs(\scrM'; \scrD')\cong \scrA'.$
\end{proof}

Propositions \ref{restriction of sections} and \ref{deformed same sections} have the following corollary.

\begin{corollary}\label{same-sections}
There is a canonical isomorphism
between the section rings $A := \secs(\fM; \cD)$
and $A' := \secs(\fM'; \cD')$.
\end{corollary}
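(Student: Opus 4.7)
The plan is to derive the corollary by combining the two preceding propositions and then chasing the canonical identifications carefully.

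First, I would invoke Proposition \ref{different resolutions} to fix the canonical identification $\Ht \cong H^2(\fM';\C)$ coming from the birational map $\fM\dashrightarrow\fM'$ restricted to the common codimension-$\geq 2$ open subset; this guarantees that a ``common period'' $\la$ makes sense on both sides, and that the universal Poisson deformations $\scrM$ and $\scrM'$ are parametrized by the same vector space. Next, I would apply Proposition \ref{deformed same sections} directly to obtain a canonical ring isomorphism $\Phi\colon\scrA\xrightarrow{\sim}\scrA'$ between the section rings of the universal quantizations $\scrD$ and $\scrD'$.

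The key point, and what I expect to be the main (minor) obstacle, is checking that $\Phi$ intertwines the two structure maps $c\colon\C[\Ht]\to\scrA$ and $c'\colon\C[H^2(\fM';\C)]\to\scrA'$. For this I would trace through the construction in Proposition \ref{deformed same sections}: the isomorphism $\Phi$ was defined by restricting $\scrD$ and $\scrD'$ to the common open subset $U\cong U'$ (whose complements have codimension $\geq 3$ in $\scrM$ and $\scrM'$, respectively) and then pushing forward. Under this restriction-and-pushforward procedure, the $\pi^{-1}\fS_{\Ht}$-algebra structure on $\scrD$ is preserved — restricting to $U$ is obviously a morphism of $\pi^{-1}\fS_{\Ht}$-algebras, and Lemma \ref{codimension-3} assures that no sections are gained or lost in either direction. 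Hence the pullback of $c'$ along $\Phi$ equals $c$, once we identify $\Ht\cong H^2(\fM';\C)$ as above.

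Having established this compatibility, I would finish by applying Proposition \ref{restriction of sections} to both $\cD=\sigma_{h\la}^*\scrD$ and $\cD'=\sigma_{h\la}^*\scrD'$: each of $A$ and $A'$ is the quotient of its respective deformed section ring by the ideal generated by $h^{-1}c(x)-\la(x)$ (respectively $h^{-1}c'(x)-\la(x)$) as $x$ ranges over linear functions on $\Ht$. Since $\Phi$ carries one ideal onto the other, it descends to a canonical ring isomorphism $A\xrightarrow{\sim} A'$, proving the corollary.
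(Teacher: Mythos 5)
Your proposal is correct and follows essentially the same route as the paper, whose proof is simply the one-line observation that Propositions \ref{restriction of sections} and \ref{deformed same sections} combine to give the result. The extra step you spell out—that the isomorphism $\scrA\cong\scrA'$ intertwines the maps $c$ and $c'$, so the ideals $\langle h^{-1}c(x)-\la(x)\rangle$ correspond—is exactly the compatibility the paper leaves implicit (it holds because the isomorphism of quantizations in Proposition \ref{deformed same sections} is by definition $\pi^{-1}\fS$-linear over the base), so your write-up is a faithful, slightly more detailed version of the paper's argument.
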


We may now use Proposition \ref{restriction of sections} to show that the ring $A$ does not change 
when the period of $\cD$ changes by an element of
the Weyl group; this unifies the isomorphisms mentioned in three of the four examples above.
For any $\la\in\Ht$, let $A_\la$ be the invariant section algebra of the quantization with period $\la$.

\begin{proposition}\label{Weyl rings}
For any $\la\in\Ht$ and $w\in W$, we have an isomorphism $A_{\la} \cong A_{w\cdot \la}$.
Furthermore, these isomorphisms may be chosen
to be compatible with multiplication in the Weyl group.
\end{proposition}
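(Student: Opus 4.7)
The plan is to lift the Namikawa $W$-action on $\scrN$ from Section \ref{sec:Weyl-gp-def} to an action on the section ring $\scrA = \secs(\scrM;\scrD)$ of the canonical quantization of the universal deformation, and then to descend this action to the quotients $A_\la$ using Proposition \ref{restriction of sections}.

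First, I would construct, for each $w \in W$, an algebra automorphism $\phi_w\colon \scrA \to \scrA$ as follows. Namikawa's theorem realizes $w$ as an $\bS$-equivariant symplectic automorphism of $\scrN$ covering the linear action on $\Ht$. Precomposing the resolution map $\scrM \to \scrN$ with $w\colon \scrN \to \scrN$ exhibits the underlying variety $\scrM$ as another $\bS$-equivariant conical symplectic resolution of $\scrN$; by Theorem \ref{dream} this is (the universal deformation of) one of the finitely many conical symplectic resolutions $\fM'$ of $\fM_0$, corresponding to the $w$-translate of the ample chamber of $\fM$. Call this universal deformation $\scrM'$ and its canonical quantization $\scrD'$. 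Proposition \ref{deformed same sections} provides a canonical isomorphism of section rings $\scrA \cong \scrA'$, and the identification $\scrM \cong \scrM'$ coming from Namikawa's action induces a further identification $\scrA' \cong \scrA$; the composite is $\phi_w$.

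Next, I would verify the two crucial compatibilities. First, $\phi_w$ intertwines $c\colon \C[\Ht] \to \scrA$ with the natural $W$-action on $\C[\Ht]$: this holds because $c$ is induced from the $\C[\Ht]$-algebra structure on $\scrD$, and $w$ covers the linear action on $\Ht$. Second, the assignment $w \mapsto \phi_w$ satisfies the cocycle identity $\phi_{ww'} = \phi_w \circ \phi_{w'}$; this is inherited from the group law of the action on $\scrN$ together with the naturality of the canonical isomorphisms furnished by Proposition \ref{deformed same sections}.

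Finally, I would invoke Proposition \ref{restriction of sections} to write $A_\la = \scrA / I_\la$, where $I_\la$ is the ideal generated by $h^{-1}c(x) - \la(x)$ for $x \in \Ht^*$. Since $\phi_w$ fixes $h$ and sends $c(x)$ to $c(wx)$, it carries the generator $h^{-1}c(x) - \la(x)$ to $h^{-1}c(wx) - \la(x) = h^{-1}c(y) - (w\la)(y)$ with $y = wx$. Thus $\phi_w(I_\la) = I_{w\la}$, and $\phi_w$ descends to the required isomorphism $A_\la \cong A_{w\la}$; the compatibility with multiplication in $W$ follows from the cocycle identity for $\{\phi_w\}$. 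The main obstacle is the first step, specifically confirming that the pullback of $\scrD'$ along the variety isomorphism $\scrM \cong \scrM'$ really does coincide with $\scrD$. This reduces to tracking how the period class of Section \ref{sec:period} transforms under these identifications, and invoking the Bezrukavnikov--Kaledin uniqueness of quantizations with prescribed period, together with the $\bS$-equivariance supplied by Proposition \ref{S-structure}.
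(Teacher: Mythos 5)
Your proposal is correct and, at bottom, follows the same strategy as the paper: produce a $W$-action on the universal section ring $\scrA=\secs(\scrM;\scrD)$ intertwining the map $h^{-1}c$ with the $W$-action on $\C[\Ht]$, and then specialize via Proposition \ref{restriction of sections}, exactly as in your final paragraph (your computation $\phi_w(h^{-1}c(x)-\la(x))=h^{-1}c(y)-(w\la)(y)$ is the paper's last line). Where you differ is in how the $W$-action on $\scrA$ is manufactured. The paper does it more directly: Namikawa's $W$-action on $\scrN$ restricts to its smooth locus, which is identified with the open set $\becircled{\scrM}\subset\scrM$ whose complement has codimension at least $2$; the canonical quantization restricted there is $W$-equivariant, and Lemma \ref{codimension-3} identifies $\secs(\becircled{\scrM};\scrD)$ with $\scrA$. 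Your detour--reading the $w$-twisted family as the universal deformation of another resolution via Theorem \ref{dream} and invoking Proposition \ref{deformed same sections}--unwinds to the same codimension argument, since the proof of Proposition \ref{deformed same sections} is itself Lemma \ref{codimension-3} applied to the common open locus; but it forces you to confront the extra step you flag as the ``main obstacle,'' namely that the pullback of $\scrD'$ along the identification $\scrM\cong\scrM'$ is again $\scrD$. That point is genuine but routine: the canonical period is the class of the relative symplectic form, which is intrinsic to the fibration and unchanged by twisting the structure map by $w$, so Bezrukavnikov--Kaledin classification plus the uniqueness of $\bS$-structures (Proposition \ref{S-structure}) settles it--this is precisely the fact hidden in the paper's brief assertion that $\scrD|_{\becircled{\scrM}}$ is $W$-equivariant, and the same uniqueness underlies the cocycle identity in both treatments. (Whether the twisted resolution corresponds to the $w$- or $w^{-1}$-translate of the ample cone is a harmless convention.) In short, your route is a correct but more roundabout packaging; the paper's restriction to $\becircled{\scrM}$ buys an honest regular $W$-action on a variety and dispenses with any explicit period bookkeeping.
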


\begin{proof}
As in Section \ref{sec:deformations}, let
$\scrN := \Spec\C[\scrM]$ be the affinization of the universal deformation of $\fM$, and
let $\becircled{\scrM}\subset\scrM$
be the locus on which the map to $\scrN$ is a local isomorphism.
Since this map is a crepant resolution of singularities, it induces
an isomorphism from $\becircled{\scrM}$ to the smooth
locus of $\scrN$.
Thus, $\becircled\scrM$ inherits a $W$-action from $\scrN$ and the canonical quantization $\scrD$
of $\scrM$ restricted to $\becircled{\scrM}$ is also $W$-equivariant.
Note that $\scrA:=\secs(\scrM;\scrD)$ is isomorphic to 
$\secs(\becircled{\scrM};\scrD)$ by Lemma \ref{codimension-3}, since the codimension of the
complement of $\becircled{\scrM}$ is at least $2$.  Thus $\scrA$
carries a natural $W$-action.
The proposition now follows from Proposition \ref{restriction of sections} and the $W$-equivariance of $h^{-1}c$.
\end{proof}

\subsection{Quantum Hamiltonian reduction}\label{QHr}
Let  $\cQ$ be an $\bS$-equivariant quantization of $\fX$. 
Let $G$ be a connected reductive algebraic group over $\C$, 
and assume that $\fX$ is equipped with a $G$-action
commuting with the action of $\bS$.  
We will assume 
that the action of $G$ is Hamiltonian with 
moment map $\mu:\fX\to\mg^*$, and that $\mu$ is $\bS$-equivariant with respect to the weight $n$
scalar action on $\mg^*$.  A {\bf Hamiltonian
  \boldmath$G$-action} on the pair $(\fX, \cQ)$ consists of
\begin{enumerate}
\item an action of $G$ on $\fX$ as above
\item a $G$-equivariant structure on $\cQ$ so that the algebra map $\cQ\to
  \fS_\fX$ is equivariant
\item a $G$-equivariant filtered\footnote{We filter $U(\mg)$ so that the associated graded $\C[\mg^*]$
has $\mg$ sitting in degree $n$.}
 $\C[S]$-algebra homomorphism
  $\eta\colon U(\mg)\to \secs(\cQ[h^{-1}])\subset A$
\end{enumerate}
such that for all $x\in \mg$, 
the adjoint action of $\eta(x)$ on $\cQ$ agrees with the action of $x$ induced by the $G$-structure on $\cQ$.
The map $\eta$ is called a {\bf quantized moment map} because the associated graded
$$\gr\eta \colon \C[\mg^*] \cong \gr U(\mg)\longrightarrow\gr A \cong\C[\fX]$$ induces a
$G\times\bS$-equivariant classical moment map 
$\mu\colon\fX\to\mg^*$, where $\bS$ acts on $\mg^*$ with weight $-n$.  
We note that for any $x\in\mg$, we will have
$$\eta(x)\in \secs(h^{-1}\cQ)\subset\secs(\cD(n))= A(n) \subset A.$$
The following proposition says that
the condition of admitting a quantized moment map
is no stronger than the condition of admitting a classical moment map.
Recall that we use $\chi(\mg)$ to denote the vector space of characters $\mg\to\C$.

\begin{proposition}\label{Hamilton}
For any $\bS$-equivariant quantization $\cQ$ of $\fX$, the pair $(\fX,\cQ)$ admits a Hamiltonian $G$-action
that induces $\mu$ in the manner described above and
the set of quantized moment maps compatible with a given $G$-action on
$\cQ$ is a torsor for $\chi(\mg)\otimes \C[\fX]^{\bS\times G}$.
\end{proposition}

\begin{proof} We break this up to into proofs of the different components of this result.
First, we prove that $\cQ$ has a unique $G$-equivariant structure.  Since $G$ is connected, its action on the de Rham cohomology $ H^2_{DR}(\fX/S; \C)$ is trivial.
As noted in the proof of \cite[Prop. 6.2]{BK04a} and expanded on in more detail in \cite[\S 2.3]{Losq}, this means that the quantization for every class in $H^2_{DR}(\fX/S; \C)$ has a unique equivariant structure.  As discussed in the sources above, this can be seen by thinking about inserting $G$-equivariance at each point  in the original proof of \cite[Thm. 1.8]{BK04a}.    Since the discussions in \cite{BK04a} and \cite{Losq} are quite brief, we will add some more details below.

This proof of \cite[Thm. 1.8]{BK04a} proceeds by interpreting the
problem of lifting a given quantization modulo $h^n$ to be a
quantization modulo $h^{n+1}$ as finding an extension of the
corresponding Harish-Chandra torsor over the pair $\langle (\Aut
D)_n,(\Der D)_n\rangle $ to $\langle (\Aut D)_{n+1},(\Der
D)_{n+1}\rangle$.  The set of such extensions is a torsor for the
sheaf cohomology $H^1(\fX;\fS_{\fX}/\pi^{-1}\fS_{S})$ of the quotient
of  functions on $\fX$ by the pullback of functions on $S$.  This torsor is
identified with the de Rham cohomology $H^1_{DR}(\fX/S;\mathcal V)$ of a relative local system
$\mathcal{V}$ on $\fX/S$, and thus can be identified with the set of extensions of
relative local systems $\mathcal{V}\to \mathcal{W}\to \fS_{\fX}$.  As
usual, a $\mathcal{V}$-valued closed 1-form $\theta$ gives a natural
relative local system, whose flat sections are identified with the set of
sections of $\mathcal V$ satisfying $df=k\theta$ for $k\in \C$. Two such extensions $\mathcal{W}, \mathcal{W}'$
arising from $1$-forms $\theta$, $\theta'$ are isomorphic if and only $\theta-\theta'=dg$ for some $g$ in the
holomorphic sections of $\mathcal{V}$, with the isomorphism given by
$f\mapsto f+g$. We have a short exact sequence $0\to \pi^{-1}\fS_{S}\to
\fS_{\fX}\to \fS_{\fX}/\pi^{-1}\fS_{S}\to 0$.
The boundary map in the long exact sequence induced on sheaf
cohomology gives an isomorphism
$H^1(\fX;\fS_{\fX}/\pi^{-1}\fS_{S})\cong H^2_{DR}(\fX/S; \C)\cong H^2(\fX; \pi^{-1}\fS_{S})$.

In order to pass to the equivariant case, we only need to think about the same lifting problem equivariantly for $G$; thus, the lifts are a torsor for the groups of $G$-equivariant extensions of local systems $\mathcal{V}\to \mathcal{W}\to \fS_{\fX}$.  Of course, any such extension corresponds to a $1$-form $\theta$ whose cohomology class is $G$-invariant.  
Since $G$ is reductive, we can assume $\theta$ itself is invariant by 
projecting it to invariant 1-forms, which doesn't change its class in cohomology.  The extension of local systems discussed above has an induced $G$-equivariant structure, and using reductivity again, this structure is unique, because the projection of a not necessarily $G$-equivariant isomorphism of extensions to invariants is a $G$-equivariant isomorphism.  This completes the proof that  $\cQ$ has a unique $G$-equivariant structure.

 Now, we turn to the question of the existence 
 of a quantized moment map for this $G$-equivariant structure.  Since $h$ has $\bS$-weight $n>0$, the lack of functions on $\fX$ of negative $\bS$-weight shows that
$\secs(\fX; \cQ)$ is a commutative algebra, canonically isomorphic to
the $\bS$-invariants $\C[\fX]^\bS$.  Furthermore,
we have a natural Lie algebra structure on $\secs(\fX; h^{-1}\cQ)$
induced by the bracket, since sections of $\cQ$ commute modulo $h$. 
We have a short exact sequence of Lie algebras
\begin{equation}\label{eq:LA-sequence}
0\to\secs(\fX; \cQ)\to\secs(\fX; h^{-1}\cQ)\to\C[\fX]_n\to 0,	
\end{equation}
where the Lie bracket on $\C[\fX]_n$ is the Poisson bracket.  To show a quantized moment map exists, we wish to show that the moment map $\mu\colon \mg \to \C[\fX]_n$ lifts to a $G$-equivariant Lie algebra homomorphism $\mg\to \secs(\fX; h^{-1}\cQ)$.

Note that the subsheaf $h\cQ$ satisfies $\secs(h\cQ)=0$ and $H^1(\fX;h\cQ)=0$, so the exact sequence \eqref{eq:LA-sequence} is isomorphic to  
$$0\to\secs(\fX; \cQ/h\cQ)\cong \C[\fX]_0\to\secs(\fX; h^{-1}\cQ/h\cQ)\to\C[\fX]_n\to 0.$$ 
Thus we aim to show that there is a lift  $\mg\to \secs(\fX; h^{-1}\cQ/h\cQ)$.  

If such a lift exists, then the set of such lifts forms a torsor over $\chi(\mg)\otimes \C[\fX]^{\bS\times G}$, the group of Lie algebra homomorphisms $\mg \to \C[\fX]^{\bS\times G}$. 
Thus, to prove the result, it only remains that establish that a lift exists.  
The obstruction to the existence of a lift is a Lie algebra cohomology class $\nu\in H^1(\mg; \C[\fX]^{\bS\times G})$.  Since $\mg\cong \mathfrak{z}(\mg)\oplus [\mg,\mg]$ and the latter Lie algebra is semi-simple, Whitehead's first lemma gives
an isomorphism $H^1(\mg; \C[\fX]^{\bS\times G})\cong H^1(\mathfrak{z}(\mg); \C[\fX]^{\bS\times G})$.
So it suffices to check that a lift exists after replacing $G$ by the connected component of the identity in its center.  That is, we can assume without loss of generality that $G$ is commutative.  

First, we will prove the existence of a quantum moment map in the case where $\cQ$ is the canonical quantization.  In this case, we have an isomorphism $\eta\colon \cQ\cong \cQ^{\op}$ as quantizations, where $\cQ^{\op}$ has its $\C[[h]]$-module structure twisted by $h\mapsto -h$.  
 The isomorphism $\eta$ induces a Lie algebra anti-automorphism 
 of the exact sequence
\[0 \to\cQ/h\cQ\cong \fS_{\fX}\to h^{-1}\cQ/h\cQ\to h^{-1}\cQ/\cQ\cong \fS_{\fX}\to 0,\]
and it induces the identity on $\cQ/h\cQ$, and $-1$ on $h^{-1}\cQ/\cQ$.  Thus, considering the eigenspaces of $\eta$ splits this sequence.  This induces an isomorphism of sheaves of Lie algebras \begin{equation}\label{eq:mod-h}
	h^{-1}\cQ/h\cQ\cong (\fS_{\fX}\otimes h^{-1}\C[[h]]/h\C[[h]])
\end{equation} 
where the RHS has the Lie algebra structure
\[[f_0+h^{-1}f_{-1},g_0+h^{-1}g_{-1}]=\{f_0,g_{-1}\}+\{f_{-1},g_0\}+h^{-1}\{f_{-1},g_{-1}\}.\]    We can also deduce this result from \cite[Lem. 3.6]{BK04a}, which has effectively the same proof.  In this case, $h^{-1}\mu$ gives us the desired splitting.  

\newcommand{\can}{\operatorname{can}} Finally, we prove that a quantum moment map exists for a quantization $\cQ$ with general period $\la$.  
By standard results, we can $G$-equivariantly embed $\fX$ as a locally
closed subvariety of the projectivization of a representation of $G$.
Using the usual affine cover of projective space associated to a
basis of weight vectors, we can
cover $\fX$ with $G$-invariant open affines $U_i$, and by taking the cover fine enough, we can trivialize the period $\la$ of the quantization on each
$U_i$.  Thus we have isomorphisms
$\beta_i\colon \cQ|_{U_i}\overset{\sim}\longrightarrow \cQ^{\op}|_{U_i}$. We can assume the $\beta_i$ are $G$-equivariant, since $G$ acts on the set of ring homomorphisms $\cQ|_{U_i}\to \cQ^{\op}|_{U_i}$, and the projection to the invariants of an isomorphism which is the
identity modulo $h$ (and thus $G$-equivariant modulo $h$) remains an isomorphism by Nakayama's
lemma.  The compositions $\gamma_{ij}=\beta_j^{-1}\circ \beta_i$ give a
\v Cech 1-cocycle of automorphisms of $\cQ$.  We can view $\cQ^{\op}$ as the
regluing (as in \cite[\S 4.1]{LosHC}) of $\cQ$ with respect to this cocycle; that is, the sheaf of algebras $\cQ^{\op}$ is isomorphic to the sheaf \[\cQ^{\operatorname{glue}}(V)=\{(a_1,\dots a_n) \mid a_i\in \cQ(V\cap U_i),  \gamma_{ij}(a_j)=a_i\hspace{2mm} \forall i,j\}\] via the map $a\mapsto (\beta_1^{-1}(a),\dots \beta_n^{-1}(a))$.  

By \cite[Prop. 4.1]{LosHC}, we can assume that
$\gamma_{ij}=\exp(\operatorname{ad} c_{ij})$ for $c_{ij}\in
\cQ(U_i\cap U_j)$.   Again, we can assume that $c_{ij}$ is
$G$-invariant by projection to invariants, since $\gamma_{ij}$ is
$G$-invariant. By the cocycle equation, for any $f$ in
$\cQ(U_i\cap U_j\cap U_k)/h^2$ we have
\[f=\gamma_{ij}\gamma_{jk}\gamma_{ki}(f)=f+h
  \{\bar{c}_{ij}+\bar{c}_{jk}+\bar{c}_{ki},\bar f\}\]
where we use $\bar{g}$ to mean the reduction of a section of $\cQ$ mod $h$. 
This shows that $\alpha_{ijk}=\bar{c}_{ij}+\bar{c}_{jk}+\bar{c}_{ki}$ is constant
on $U_i\cap U_j\cap U_k$, and thus defines a class $\alpha\in
H^2(\fX;\C)$ constructed via \v Cech cohomology. 

Let us now compute the effect this has on the period of the regluing
of $\cQ$.  In the sentences below we use the notation of \cite[\S 4]{BK04a}. It's enough to calculate the reduced period map
$\mathsf{Per}_1$ since by $\bS$-equivariance, all higher terms in the
period vanish.  Poisson bracket with the  classes $\bar{c}_{ij}$ defines a \v Cech
1-cocycle of Hamiltonian vector fields, i.e.\ a class in
$H^1(\fX;\mathcal{W})$, and the regluing is exactly the torsor
structure of this group on the set of quantizations modulo $h^2$ (that
is, $H^1_{\mathcal{M}}(\fX,\langle (\mathsf{Aut}
D)_{2},(\operatorname{Der} D)_2\rangle)$.  The class $\alpha$ is the
image of $\bar{c}_{ij}$ under the boundary map, so it follows
from the compatibility of the period map with the
$H^1(\fX;\mathcal{W})$-torsor structure that the period of the
regluing is given by $\la+\alpha$.
 Since the period of $\cQ^{\op}$ is $-\la$, we have $\alpha=-2\la$.

Let $\gamma_{ij}'=\exp(\frac{1}{2}\operatorname{ad} c_{ij})$.  Consider the regluing $\cQ'$ of $\cQ$ with respect to $\gamma_{ij}'$; this has period $\la+\frac{1}{2}\alpha=0$, and so it is the canonical quantization $\cQ^{\can}$.  
Note that this defines a $G$-structure on $\cQ^{\can}$ by taking the obvious $G$-structure on the regluing.  

We have seen that $\cQ^{\can}$ admits a quantum moment map $\mu'\colon \mathfrak{g}\to \secs(\fX; h^{-1}\cQ^{\can}/h\cQ^{\can})$. Viewing $\cQ^{\can}$ as a  regluing, this means we have maps $\mu_i\colon \mathfrak{g}\to\secs(U_i; h^{-1}\cQ/h\cQ)$ that satisfy $\mu_j=\gamma_{ji}'\circ \mu_i=\exp(\frac{1}{2}\operatorname{ad} c_{ji}) \mu_i$ on the overlap $U_i\cap U_j$.  However, since $c_{ji}$ is $G$-invariant, $[c_{ji},\mu_i(X)]=0$ for all $X\in \mg$.  Thus $\exp(\frac{1}{2}\operatorname{ad} c_{ji})\mu_i=\mu_i$.    This shows that $\mu_j$ and $\mu_i$ agree on $U_i\cap U_j$, so they define a Lie algebra map $\mu\colon \mathfrak{g}\to \secs(\fX; h^{-1}\cQ/h\cQ)$, which gives the desired lifting.  This completes the proof.
\end{proof}

Assume that $(\fX,\cQ)$ carries a Hamiltonian $G$-action with quantized moment map $\eta\colon U(\mg)\to A$ and associated
classical moment map $\mu\colon\fX\to\mg^*$.  Fix a $G$-equivariant ample line
bundle $\cL$ on $\fX$, and let $\fU\subset\fX$ be the associated
semistable locus.  We will assume through the end of the section that the action of $G$ on $\fU$
is free; in particular, semistability and stability coincide.

Let $\fXred := (\mu^{-1}(0)\cap\fU)/G$ with its induced relative symplectic form 
and $\bS$-action, and let $\psi\colon\mu^{-1}(0)\cap\fU\to\fXred$ be the natural
projection.  We'll further assume that 
the natural map $\C[\mu^{-1}(0)]^G\to \C[\fX_{\red}]$ is an isomorphism.

\begin{samepage}
 Let $\cD_\fU$ and $\cQ_\fU$ denote the restrictions of $\cD$ and $\cQ$ to $\fU$, and for any $\xi\in\chi(\mg)$, let
$$\cR_\xi := \cQ_{\fU}\Big/\cQ_{\fU}\cdot \langle h\eta(x) - h\xi(x)\mid x\in\mg\rangle,$$
$$\cE_\xi(0) := \cD_{\fU}(0)\Big/\cD_{\fU}(-n)\cdot \langle \eta(x) - \xi(x)\mid x\in\mg\rangle,$$
$$\cE_\xi := \cD_{\fU}\Big/\cD_{\fU}\cdot \langle \eta(x) - \xi(x)\mid
x\in\mg\rangle.$$
\end{samepage}
These are all sheaves on $\fU$ with support $\mu^{-1}(0)\cap\fU$, which we use to define sheaves of algebras on $\fXred$ as follows: 
$$\cQred := \psi_*\sEnd_{\cQ_\fU}(\cR_\xi)^{\op},$$
$$\cDred(0) := \psi_*\sEnd_{\cD(0)_\fU}(\cE_\xi(0))^{\op},$$ 
$$\cDred := \psi_*\sEnd_{\cD_\fU}(\cE_\xi)^{\op}.$$
Kashiwara and Rouquier \cite[2.8(i)]{KR}
show that the first sheaf is an $\bS$-equivariant quantization of $\fXred$ of weight $n$, and the 
second and third are related to the first in the usual way.
Kashiwara and Rouquier work in the classical topology, but their
argument works equally well in the Zariski topology.  

\begin{remark}
Kashiwara and Rouquier
also take the fixed points of $G$. Since we have assumed that $G$ is
connected, this is redundant; the pushforward is automatically
invariant under $\mg$.  Of course, a reader interested in quotients by
disconnected groups can apply our results to the connected component
of the identity, and then consider the residual action of the
component group.
\end{remark}

We observe that this geometric operation of symplectic reduction is closely related
to an algebraic one. 
Let $Y_\xi:=A\big{/}A\cdot
    \langle \eta(x) - \xi(x)\mid x\in\mg\rangle$, where as before we let $A = \secs(\cD)$.

\begin{proposition}\label{quotient}
If $A_{\red} = \secs(\cD_{\red})$, then
  $A_{\red}\cong \End_{A}\!\left(Y_\xi\right).$
\end{proposition}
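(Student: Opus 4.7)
The plan is to produce the isomorphism in two steps. First, I identify the sheaf of endomorphisms $\sEnd_{\cD_\fU}(\cE_\xi)^{\op}$ with the subsheaf $\cE_\xi^G$ of $G$-invariants (equipped with the ring structure inherited from $\cD_\fU$) via a quantum moment map computation. Then, after taking invariant sections on both sides and matching with the algebraic description of $\End_A(Y_\xi)$, the result will follow.

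For the sheaf identification: since $\cE_\xi$ is the quotient of $\cD_\fU$ by the left ideal $I = \cD_\fU\cdot \langle\eta(x)-\xi(x)\rangle_{x\in\mg}$, it is cyclic with canonical generator $\bar 1$, and any local endomorphism $\phi$ is determined by $\phi(\bar 1)\in\cE_\xi$, which must be annihilated on the left by each $\eta(x)-\xi(x)$. For any local lift $\tilde s\in\cD_\fU$ of $s\in\cE_\xi$, the commutator identity
$$(\eta(x)-\xi(x))\tilde s \;=\; \tilde s(\eta(x)-\xi(x)) + [\eta(x),\tilde s],$$
combined with the Hamiltonian condition (which makes $\operatorname{ad}(\eta(x))$ act as the infinitesimal $G$-action determined by the $G$-structure on $\cD_\fU$), yields upon reduction modulo $I$ the key identity $(\eta(x)-\xi(x))\cdot s = x\cdot s$. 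Hence the valid choices for $\phi(\bar 1)$ are precisely the $\mg$-invariant sections of $\cE_\xi$, equivalently (since $G$ is connected) the $G$-invariant sections $\cE_\xi^G$. A direct computation of composition shows that it corresponds to the product $[a]\cdot[b]=\overline{ba}$ on $\cE_\xi^G$, producing a canonical ring isomorphism $\sEnd_{\cD_\fU}(\cE_\xi)^{\op}\cong\cE_\xi^G$ with the product descended from $\cD_\fU$.

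Next, taking $\bS$-invariant global sections and using that $\psi_*$ for the free $G$-action on $\fY$ extracts $G$-invariant sections, I obtain $A_\red\cong \Gamma_\bS(\fU;\cE_\xi)^G$ as rings. The identical computation carried out algebraically in $A$ in place of $\cD_\fU$ shows that an $A$-linear endomorphism of $Y_\xi=A/A\langle\eta-\xi\rangle$ is determined by an element of $Y_\xi^G$, giving $\End_A(Y_\xi)\cong Y_\xi^G$ with the corresponding product. So the proof reduces to identifying $Y_\xi$ with $\Gamma_\bS(\fU;\cE_\xi)$ as a left $A$-module.

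This last identification is the main obstacle. The natural map $Y_\xi\to\Gamma_\bS(\fU;\cE_\xi)$ is obtained from the restriction $A=\secs(\cD)\to\Gamma_\bS(\fU;\cD_\fU)$ followed by the quotient defining $\cE_\xi$. Proving that it is an isomorphism requires two ingredients: first, the equality $A=\Gamma_\bS(\fU;\cD_\fU)$, which by Lemma~\ref{codimension-3} holds whenever the unstable locus in $\fX$ has codimension at least two; and second, the exactness of $\bS$-invariant sections on the short exact sequence $0\to I\to\cD_\fU\to\cE_\xi\to 0$ on $\fU$, which amounts to a vanishing statement for $H^1_\bS(\fU;I)$. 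Both rely on properties of the GIT-stable locus and constitute the technical heart of the proof.
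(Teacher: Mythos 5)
Your first two steps are fine and parallel arguments that already appear in the paper (the identification $\sEnd_{\cD_\fU}(\cE_\xi)^{\op}\cong\cE_\xi^{\mg}$ is exactly the idealizer computation used in the proof of Proposition \ref{S equals T}, and the same computation gives $\End_A(Y_\xi)^{\op}\cong Y_\xi^{\mg}$). The genuine gap is in your final reduction: you reduce the proposition to the claim that the natural map $Y_\xi\to\secs(\fU;\cE_\xi)$ is an isomorphism of left $A$-modules, and then you explicitly defer its proof (``the technical heart''). That claim is both unestablished and too strong. Its classical shadow is the restriction map $\C[\mu^{-1}(0)]\to\C[\fY]$ from the zero fibre to its semistable locus, which is generally \emph{not} an isomorphism; making it one requires the unstable locus to have codimension at least $2$ plus a cohomology-vanishing statement ($H^1_\bS(\fU;\cI)=0$ for the ideal subsheaf, and also that $\secs(\fU;\cI)$ is generated by the global generators $\eta(x)-\xi(x)$), none of which is among the hypotheses of the proposition or argued for. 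Only the $G$-invariant parts need to be compared, and for those no codimension hypothesis is needed: classically $\C[\mu^{-1}(0)]^G\to\C[\fX_{\red}]$ is an isomorphism by GIT, even though the map before taking invariants is not.

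The paper's proof exploits exactly this: it considers the natural map $\End_A(Y_\xi)\to A_{\red}$ (both sides are already the $G$-invariant objects), equips both sides with their natural filtrations, and identifies the associated graded of this map with the classical map $\C[\mu^{-1}(0)]^G\to\C[\fX_{\red}]$ --- the nontrivial input being Holland's result (which uses flatness of $\mu$) that the associated graded of the quantum reduction is the classical one. Since the classical map is an isomorphism, so is the quantum one. Your proposal never engages with this associated-graded comparison, which is where the actual content lies; as written, the argument is incomplete, and the route you propose for completing it (a full, non-invariant comparison of $Y_\xi$ with sections of $\cE_\xi$ over $\fU$) would fail in general. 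To repair it along your lines, you should pass to $\mg$-invariants \emph{before} comparing global objects --- i.e.\ compare $Y_\xi^{\mg}$ with $\secs(\fU;\cE_\xi)^{\mg}$ directly --- and prove that comparison by the filtration/associated-graded argument, at which point you have essentially reproduced the paper's proof.
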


\begin{proof}
Restriction gives a natural map $A\to \secs(\cD_{\fU})$, which induces a map
\[A^G\to \secs(\fU;\sEnd_{\cD_\fU}(\cE_\xi)^{\op})\cong A_{\red}.\]
This map kills any $G$-invariant element of the left ideal generated by
$\eta(x) - \xi(x)$ for $x\in \mg$
 and thus induces a map
$Y_\xi^G\cong \End_{A}\!\left(Y_\xi\right)\to A_{\red}$.  We wish to
show that this map is an isomorphism.

By Nakayama, it's enough to check this after passing to associated
graded.  The associated graded of $A^G$ is $\C[\fX]^G$ (since $G$ is
reductive), and the map $\C[\fX]^G\to \gr(A_{\red})\subset
\C[\fX_{\red}]$ is the obvious quotient map.  The associated graded of
$Y_\xi^G$ is a quotient of $\C[\fX]^G/(\mu^*(\mg))\cong \C[\mu^{-1}(0)]^G$, so we have maps
\[\C[\mu^{-1}(0)]^G\twoheadrightarrow \gr(Y_\xi^G)\to
\gr(A_{\red})\hookrightarrow \C[\fX_{\red}].\]  The composition of these maps
is a isomorphism. Since the first map is a surjection and the last is an injection,
 each of the intermediate steps is
an isomorphism.  
\end{proof}

Next we describe the period of $\cQ_{\red}$ in terms of the parameter $\xi$;
this will prove to be an important technical tool that is needed for the proofs of 
Proposition \ref{cot-period} and Lemma \ref{minimal-polynomial}.
For simplicity, we assume that $\fX$ is symplectic
over $\Spec \C$ (rather than over an arbitrary base) and $\C[\fX]^{\bS\times G}=\C$, that 
$\cQ$ is the canonical quantization of $\fX$, and that $\fX_{\red}$
satisfies our running assumptions on $\fX$.  

The following general result about opposites
and quantum Hamiltonian reduction will be used to prove Lemma \ref{canonical},
and may also be of independent interest.

\begin{lemma}\label{lem:left-right}
  Let $A$ be an algebra with an action of a connected reductive affine algebraic
  group $G$ with noncommutative moment map $\eta\colon U(\mg)\to A$.
  Then we have natural isomorphisms
  \begin{equation}
  \End_A(A/A\eta(\mg))^{\op}\cong \End_{A^{\op}}(A/\eta(\mg)A)\cong \End_{A^{\op}}(A^{\op}/A^{\op}\eta(\mg)).\label{eq:opp-reduction}
\end{equation}
That is,
  the left and right quantum Hamiltonian reductions are opposite to
  each other.
\end{lemma}

\begin{proof}
We can freely replace $G$ with a finite cover, and thus assume that
$G$ is a product of simple groups.  Since reducing by $G_1\times G_2$
can be done in stages as reduction by $G_1$ and then by $G_2$, we can
reduce to the case where $G$ is simple.

Right (resp. left) multiplication define homomorphisms 
\[\End_A(A/A\eta(\mg))^{\op}\cong (A /A\eta(\mg))^G\longleftarrow
A^G\longrightarrow (A /\eta(\mg)A)^G\cong  \End_{A^{\op}}(A/\eta(\mg)A).\]
Since $G$ is reductive, the functor of invariants is exact and these maps are surjective, so we need only
show their kernels agree.  The kernel $K_1$ of the left map is $A^G\cap
A\eta(\mg)$ and the kernel $K_2$ of the right map is $A^G\cap \eta(\mg)A$.  If $G$
is abelian then \[A^G\cap \eta(\mg)A=\eta(\mg)A^G=A^G\eta(\mg)=A^G\cap
A\eta(\mg),\] so we can assume that $G$ is non-abelian.  

Thus, assume that $a=\sum_i y_i\eta(x_i)$ is an element of $K_1$, where $x_i$ ranges over a basis of $\mg$.  
We can replace
$y_i$ with its projection to the isotypic component of $A$
corresponding to the adjoint representation $\mg\cong \mg^*$ (since
any other simple tensored with $\mg$ has no invariants).  In this
case, invariance shows that there is an equivariant map $\pi\colon \mg\to A$
sending $\pi(x^i)=y_i$ where $x^i$ is the dual basis to $x_i$ under
the Killing form.  Thus we have $a=\sum_i \eta(x_i) y_i+x_i\cdot
y_i=\sum_i \eta(x_i) y_i+\pi([x_i,x^i])$ by the equivariance of $\pi$.  Since $\sum_i [x_i,x^i]$ is
invariant under the adjoint action, it is trivial, and we have that
$a=\sum_i \eta(x_i) y_i\in K_2$.  Applying a symmetric argument, we
see that $K_1=K_2$, so the first equality of \eqref{eq:opp-reduction}
follows immediately.  The second is just the equivalence of categories
between right $A$-modules and left $A^{\op}$-modules.
\end{proof}

A quantized moment map $\eta\colon U(\mg)\to A$ is called {\bf balanced} if, when $\xi = 0$,
$\cQ_{\red}$ is the canonical quantization of $\fX_{\red}$.

\begin{lemma}\label{canonical}
The canonical quantization of the variety $\fX$ admits a 
balanced quantized moment map.
\end{lemma}

\begin{proof}
By Proposition \ref{Hamilton}, the set of quantized moment maps is a torsor for $\chi(\mg)$.
Since $\cQ$ is the canonical quantization, we know that $\cQ\cong \cQ^{\op}$, and any choice of such an
isomorphism (that is, any algebra anti-automorphism $\phi$ of $\cQ$)
sends a quantized moment map to minus a
quantized moment map.  Thus, $-\phi$ preserves the set of quantized
moment maps, and is an anti-automorphism of $\chi(\mg)$-torsors, so
it fixes a unique point. 

Recall that 
$$\cQ_{\red} = \psi_*\sEnd_{\cQ_\fU}\left( \cQ_{\fU}\Big/\cQ_{\fU}\cdot \langle h\eta(x) - h\xi(x)\mid x\in\mg\rangle \right)^{\op}.$$
By Lemma \ref{lem:left-right}, the opposite ring of $\cQ_{\red}$ is obtained as the analogous reduction of the
opposite ring of $\cQ$:
$$\cQ_{\red}^{\op} \cong \psi_*\sEnd_{\cQ_\fU}\left( \cQ_{\fU}^{\op}\Big/\cQ_{\fU}^{\op}\cdot \langle -h\eta(x) + h\xi(x)\mid x\in\mg\rangle \right)^{\op}.$$
Twisting the action of $\cQ$ by the action of $\phi$, this sheaf is also isomorphic to
$$\psi_*\sEnd_{\cQ_\fU}\left( \cQ_{\fU}\Big/\cQ_{\fU}\cdot \langle -h\phi(\eta(x)) + h\xi(x)\mid x\in\mg\rangle \right)^{\op}.$$
Thus, if we choose $\eta$ to be the fixed point of $-\phi$ and take
$\xi=0$, the quantization $\cQ_{\red}$ is isomorphic to its own opposite, and therefore to the canonical quantization.
\end{proof} 

The following proposition is implicit in the principal results of \cite{Losq},
but does not seem to be explicitly stated in the generality that we need.
Our proof is similar to the proof of \cite[5.3.1]{Losq}.

\begin{proposition}\label{dui-heck}
If $\fX$ is canonically quantized, $\eta$ is a balanced quantized moment map, 
and $\xi\in\chi(\mg)$ is arbitrary,
then the period of $\cQ_{\red}$ is equal to $[\omega_{\red}]+h\mathsf{K}(\xi)$, where
$\mathsf{K}\colon \chi(\mg)\to H^2(\fX_{\red}; \C)$  is the Kirwan map.
\end{proposition}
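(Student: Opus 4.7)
The plan is to establish the formula in three steps: (i) the period of $\cQ_{\red}$ depends affine-linearly on $\xi \in \chi(\mg)$; (ii) at $\xi = 0$ the period equals $[\omega_{\red}]$; (iii) the linear part equals $h\,\mathsf{K}$.

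For (i), the family $\{\cQ_{\red,\xi}\}_{\xi\in\chi(\mg)}$ assembles into a single $\bS$-equivariant quantization of $\fXred\times\chi(\mg)$ over $\chi(\mg)$, obtained by pulling $\cQ$ back to $\fX\times\chi(\mg)$, shifting the quantized moment map by the tautological linear map $\chi(\mg)\to\chi(\mg)$, and reducing by $G$. Since reducing modulo $h$ always yields the same classical reduction $\fXred$ (level zero being forced upon us by the presence of $h$ in the defining ideal), this really is a family of quantizations of $\fXred$. By Proposition \ref{S-structure}, its period lies in $[\omega_{\red}]+h\cdot H^2(\fXred;\C)\otimes\C[\chi(\mg)]$; that is, no higher powers of $h$ appear. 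Writing the period as $[\omega_{\red}]+h\,\alpha(\xi)$, matching $\bS$-weights (with $h$ of weight $-n$ and $\xi$ of weight $n$) forces $\alpha$ to be homogeneous linear in $\xi$.

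Step (ii) is exactly the definition of a canonical quantized moment map from Lemma \ref{canonical}, so no further work is required.

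For (iii), by linearity it suffices to check agreement on a spanning set of $\chi(\mg)$, so we may take $\xi$ to be the derivative of an integral character $\chi\colon G\to\C^\times$. The associated $G$-equivariant line bundle $\cL_\chi$ on $\fX$ descends to a line bundle $\bar\cL_\chi$ on $\fXred$ whose first Chern class is $\mathsf{K}(\xi)$ by definition of the Kirwan map. Following the strategy of \cite[5.3.1]{Losq}, I would realize $\cQ_{\red,\xi}$ as the pullback of the canonical quantization of the twistor family $\scrM_{\mathsf{K}(\xi)}$ from Proposition \ref{twistor def} along the section $\sigma\colon\Delta\to\aone\times\Delta$ sending $*$ to $(1,*)$. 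By Propositions \ref{twistor} and \ref{general pullback}, this pullback has period $h\,\mathsf{K}(\xi)$ modulo $[\omega_{\red}]$, completing the identification.

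The main obstacle is step (iii): one must verify that the algebraic construction $\cQ_{\red,\xi}=\psi_*\sEnd_{\cQ_\fU}(\cR_\xi)^{\op}$ actually coincides with the geometric twistor pullback. The key input is that the twistor family realizes $\bar\cL_\chi$ via the complement of the zero section in its total space as a principal $\C^\times$-bundle carrying a compatible Hamiltonian $\C^\times$-action, and that the algebraic shift $\eta \mapsto \eta - \xi$ in the moment map is the noncommutative counterpart of Hamiltonian reduction at a nonzero level of this $\C^\times$-action; once this compatibility is in hand, Kaledin's period computation does the rest.
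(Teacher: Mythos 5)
There is a genuine gap, in fact two. First, your step (i) does not deliver linearity. The assembled family over $\chi(\mg)$ is cut out by the relations $h\eta(x)-h\tau(x)$, where $\tau(x)$ is the tautological coordinate on $\chi(\mg)$. Since $\eta(x)$ is an $\bS$-invariant section and $h$ has weight $n$, the element $h\eta(x)$ has weight $n$, so this ideal is $\bS$-stable only if you give the base $\chi(\mg)$ the \emph{trivial} $\bS$-action; in that case Proposition \ref{S-structure} does apply and kills the higher powers of $h$, but then $\xi$ has weight $0$ and the weight count places no bound whatsoever on the polynomial degree of $\alpha$, so affine-linearity does not follow. If instead you insist on the weight-$n$ action on $\chi(\mg)$ (so that your bookkeeping ``$h$ of weight $-n$, $\xi$ of weight $n$'' could make sense), the ideal is not $\bS$-homogeneous and Proposition \ref{S-structure} does not apply to your family at all; the $\bS$-equivariant family one is forced to is the one with relations $h\eta(x)-\tau(x)$, which is exactly the paper's family $\fP=\big(\fU\cap\mu^{-1}(\chi(\mg))\big)/G$, and there the linear variation is the content of the classical Duistermaat--Heckman theorem, not of a weight count. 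This gap is repairable (polynomiality in $\xi$ plus agreement with $\mathsf{K}$ on the lattice of differentials of characters of $G$ would pin $\alpha$ down), but only by putting all the weight on step (iii).

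Second, and fatally as written, step (iii) is precisely the substantive content and is not proved: you must show that $\cQred$ at level $\xi=d\chi$ is the pullback of the \emph{canonical} quantization of the twistor family $\scrM_{\mathsf{K}(\xi)}$, and your proposal explicitly defers this (``once this compatibility is in hand\ldots''). Establishing it requires knowing that the quantized reductions at varying level assemble into the canonical quantization of the relevant deformation, and this is where the canonicity of $\eta$ must be used -- your outline invokes canonicity only at $\xi=0$. Note also that, as written, your section sends $*$ to $(1,*)$, i.e.\ $t=1$ at the closed point; pulling back along such a section yields a quantization of the affine fiber over $t=1$, not of $\fXred$ -- you need a quantum section such as $t=h$, as in Section \ref{sec:period}. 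The paper supplies the missing content by a different device: it quantizes the family $\fP$ over all of $\chi(\mg)$ by imposing only the relations $h\eta(x)$ for $x\in[\mg,\mg]$, shows this quantization is self-opposite (here the canonical moment map enters) and hence has period $[\omega_{\fP}]$, realizes reduction at level $\xi$ as pullback along the quantum section $s^*x=h\xi(x)$, and then evaluates $s^*[\omega_{\fP}]=[\omega_{\red}]+h\mathsf{K}(\xi)$ by the classical Duistermaat--Heckman theorem -- the step your outline omits entirely. Until you either prove the twistor identification (for instance by an argument in the style of Lemma \ref{minimal-polynomial}, using the $\C^\times$-bundle given by the complement of the zero section of the descended line bundle) or adopt something like the paper's family argument, the proposal is a plan rather than a proof.
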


\begin{proof}
Consider the inclusion $\chi(\mg)\cong (\mg^*)^G \subset \mg^*$,
and let $\fP := \big(\mathfrak{U}\cap \mu^{-1}(\chi(\mg))\big)/G$, which is equipped with a natural map $\gamma\colon\fP\to\chi(\mg)$.
Since $G$ acts freely on $\fU$, $\gamma$ is a submersion and $\mathfrak{P}$ is a
flat deformation of $\fX_{\red}=\gamma^{-1}(0)$, symplectic over the base $\chi(\mg)$.   
The quantization \[\hat{\cQ}_{\red}\cong \gamma_*\sEnd\Big( \cQ_{\fU}\Big/\cQ_{\fU}\cdot \langle h\eta(x) \mid x\in[\mg,\mg]\rangle\Big)^G\]
of  $ \mathfrak{P}$ is self opposite, and thus canonical, so its period is equal to the class of the relative symplectic form
  $\omega_{\mathfrak{P}}\in \Omega^2(\mathfrak{P}/\chi(\mg))$.  The quotient
  \[\cQ_{\red}=\hat{\cQ}_{\red}/\hat{\cQ}_{\red}\cdot \langle h\eta(x)-h\xi(x) \mid
  x\in\mg/[\mg,\mg]\rangle,\] which is supported on $\fX_{\red}$, can be thought of as the pullback of
  $\hat{\cQ}_{\red}$ by the map $s\colon \Delta\to \Delta\times
  \chi(\mg)$ which is the identity on $\Delta$ and has the property that 
  $s^*x=h\cdot \xi(x)$ for any element $x\in \mg/[\mg,\mg]\cong \chi(\mg)^*$.
By Proposition \ref{general pullback},
this quantization of $\fX_{\red}$ has period
$s^*[\omega_{\mathfrak{P}}]$. The usual
 Duistermaat-Heckman theorem implies that $s^*[\omega_{\mathfrak{P}}] =[\omega_{\red}]+h\mathsf{K}(\xi)$.
\end{proof}

\section{Modules over quantizations}
\label{sec:modules}

 Let $\cQ$ be an $\bS$-equivariant quantization of $\fX$, and consider the sheaves $\cD$ and $\cD(m)$
defined in the beginning of Section \ref{section ring}.
An $h$-adically complete module over $\cQ$ (respectively $\cD(0)$) is called 
{\bf coherent} if it is locally a quotient of a sheaf which is free of
finite rank.  By Nakayama's lemma, this is equivalent to the property that one obtains a coherent sheaf by setting
$h$ (respectively $\hon$) to zero.

\begin{remark}
   Some other sources on modules over deformation quantizations
   contain an {\em a priori} stronger notion of ``coherent'' as in defined in \cite[\S
   1.1]{KSdq}.  However, since $\fX$ (and thus $\cD$) is Noetherian,
   \cite[1.2.5]{KSdq} shows that this notion coincides with the one we
   have given above.  In general, we simplify many issues around
   finiteness by assuming that the modules we consider are coherent.  Removing this
   condition would complicate matters substantially.
\end{remark}

A $\bS$-equivariant $\cD$-module is a $\cD$-module equipped with an $\bS$-structure in the sense of Section \ref{S-structures},
compatible with the $\bS$-structure on $\cD$.  More precisely, it is a $\cD$-module $\cN$ along with an isomorphism
$a_{\text{tw}}^*\cN\cong p^*\cN$ satisfying the natural cocycle condition, such that the following diagram commutes.
\[\tikz[->,very thick]{
\matrix[row sep=10mm,column sep=20mm,ampersand replacement=\&]{
\node (a) {$a_{\text{tw}}^*\cD\otimes a_{\text{tw}}^*\cN$}; \& \node (c) {$a_{\text{tw}}^*\cN$};\\
\node (b) {$p^*\cD\otimes p^*\cN$}; \& \node (d) {$p^*\cN$};\\
};
\draw (a) -- (c) ;
\draw (a) -- (b) node[left,midway]{$\cong$};
\draw[->] (b) --(d) ;
\draw (c)--(d) node[right,midway]{$\cong$};
}\]
An $\bS$-equivariant $\cD$-module $\cN$ is called {\bf good} if it 
admits a coherent $\bS$-equivariant $\cD(0)$-lattice $\cN(0)$.  
Let $\cD\mMod$ be the category of arbitrary $\bS$-equivariant modules over $\cD$, and let $\Dmod\subset\cD\mMod$
be the full subcategory consisting of good modules.  
Note that the choice of lattice is not part of the data of an object of $\cD\mmod$.
The reason for this is that we want an abelian category, which would fail if we worked with lattices:
the quotient of a lattice by a sublattice is only a lattice after killing torsion.

Many of our important results require considering derived categories;
unfortunately, there seems to be no single choice of finiteness
condition on derived categories which will suit us once and for all.  
In order to define the cohomology of sheaves of $\cD$-modules, it is
most convenient to work in unbounded derived category
$D(\cD\mMod)$ of arbitrary $\cD$-modules (in order to use \v Cech
resolutions), but in most cases of interest to us, we can restrict to
the bounded derived category $D^b(\Dmod)$ of good $\cD$-modules.  

\begin{remark}\label{rem:bounded}
Note that if $\mathcal{C}$ is an abelian category and
  $\mathcal{C}_0$ an abelian subcategory closed under taking subobjects, we can consider both the
  derived category $D^b(\mathcal{C}_0)$ and the category
  $D^b_{\mathcal{C}_0}(\mathcal{C})$ of bounded complexes in
  $\mathcal{C}$ with cohomology in $\mathcal{C}_0$.  There is an
  obvious functor
  $D^b(\mathcal{C}_0)\to D^b_{\mathcal{C}_0}(\mathcal{C})$ which is
  sometimes an equivalence and sometimes not.  If 
  $\mathcal{C}_0$ has enough projectives which remain projective in $\mathcal{C}$, then every complex in
  $D^b_{\mathcal{C}_0}(\mathcal{C})$ can be replaced by a
  quasi-isomorphic projective resolution in $\mathcal{C}_0$, which
  shows that this functor is an equivalence.
In particular, this argument carries through when $\mathcal{C}$ is the category of all modules
over some ring, and $\mathcal{C}_0$ is the subcategory of finitely generated modules.

If $\mathcal{C}$ is the category of quasi-coherent sheaves on a
projective (over affine) scheme and $\mathcal{C}_0$ is the subcategory of coherent sheaves, then this
functor is still an equivalence, even though coherent sheaves do not
have enough projectives; this follows from considering the
corresponding modules over the projective coordinate ring.  
Similarly, we will show that $\Dmod$ admits an analogous description
(Theorem~\ref{Zloc}), which implies that $D^b(\Dmod)$ is equivalent to
$D^b_{\Dmod}(\cD\mMod)$ (Corollary \ref{cor:D-full}).
\end{remark}

\begin{remark}\label{fukaya}
If $\fX = \fM$ is a conical symplectic resolution,
there are heuristic reasons to treat $\Dmod$ as an algebraic version of the
  Fukaya category of $\fM$ twisted by the
  B-field defined by $e^{2\pi i\la}\in H^2(\fM;\C^{\times})$, where $h\la$ is the period of $\cD$.  
  The firmest justification at moment lies
  in the physical theory of A-branes, which the Fukaya category is an
  attempt to formalize.  Kapustin and Witten \cite{KW07} suggest that
  on a hyperk\"ahler manifold, there are objects in an
  enlargement of the Fukaya category which correspond not just to
  Lagrangian submanifolds, but higher dimensional coisotropic
  submanifolds.  In particular, there is an object in this category
  supported on all of $\fM$ called the {\bf canonical coisotropic
    brane}.  Following the prescription of Kapustin and Witten further
  shows that $\cD$ is isomorphic to the sheaf of endomorphisms of this object.
  Nadler and Zaslow \cite{NZ} prove a related result in which $\fM$ is replaced by
  the cotangent bundle of an arbitrary real analytic manifold.
\end{remark}

\subsection{Cotangent bundles}\label{sec:cotangent}
Let us consider the special case of quantizations of $\fX = T^*X$ for
some smooth projective variety $X$, where
$\bS$ acts by inverse scaling of the cotangent fibers\footnote{This
  variety may not satisfy the property of being projective over an
  affine variety $\fX_0$, but we will not use that assumption in this section.}. 
Quantizations of cotangent bundles have been
considered many times before in different contexts, but for the sake
of completeness, we wish to show in detail how it fits in our schema.  
We will assume that $H^1(X)=0$ and $H^2(X)\cong H^{1,1}(X)$; in particular
\[H^i(\fX;\fS_{\fX})^\bS\cong  H^i(X;\fS_X)\cong H^{i,0}(X)=0 \text{ for
$i=1,2$}\] and $H^2(X)\cong \operatorname{Pic}(X)\otimes \C$.

A {\bf Picard Lie algebroid} $\cP$ on $X$ is an extension in the abelian category of Lie algebroids of the tangent sheaf
$\cT_X$, with its tautological Lie algebroid structure, by the structure sheaf $\fS_X$, with
the trivial Lie algebroid structure.  Such an extension in the category of coherent sheaves
is classified by
$$\Ext^1(\cT_X, \fS_X) \cong H^1(X; \cT^*_X) \cong H^{1,1}(X; \C) \cong H^2(\fX; \C).$$
Since we have that 
$H^0(X; \wedge^{\!2}\,\cT^*_X) = 0,$
there is a unique Picard Lie algebroid $\cP_\la$ on $X$ for each
$\la\in H^2(\fX; \C)$.  

Let $\cU_\la$ be the universal enveloping algebra of $\cP_\la$ modulo the ideal that identifies 
the constant function $1\in\fS_X$ with the unit of the algebra.  If $\la$ is the image of the Euler
class of a line bundle $\cL$ on $X$, then $\cU_\la$ is isomorphic to the sheaf of differential operators on $\cL$. 
More generally, $\cU_\la$ is referred to as the {\bf sheaf of $\la$-twisted differential
operators} on $X$.  A coherent sheaf of $\cU_\la$-modules is called a
{\bf $\la$-twisted D-module} on $X$. The sheaf $\cU_\la$ has an order filtration, and any coherent sheaf of $\cU_\la$-modules admits
a compatible filtration.

However, $\cU_\la$ is a sheaf on $X$, and we wish to find one on
$T^*X$.  This requires the technique of {\bf microlocalization} (see,
for example, \cite{KashMC,AVV} for more detailed discussion of this technique). The associated graded of $\cU_\la$ with respect to the order filtration is isomorphic to
$\Sym_{\fS_X}\!\!\cT_X$; put differently, if \[R_\la:=\left\{ \sum u_ih^i\in \cU_\la[h] \:\Big\vert \:u_i \text{ has order $\leq i$}\right\}\] is the Rees algebra of the order filtration on $\la$, then $R_\la/hR_\la\cong \Sym_{\fS_X}\!\!\cT_X$.   Given an open subset $U\subset T^*X$, we obtain a multiplicative subset $S_U\subset \Sym_{\fS_X}\!\!\cT_X(\pi(U))$ consisting of functions 
on $\pi^{-1}(\pi(U))$ which are invertible on $U$.  

We can give a non-commutative version of this construction using an
associated multiplicative system in $R_\la(\pi(U))$.
Let \[S'_U=\big\{r\in R_\la(\pi(U))\;\big\vert\; \exists\, m \text{
  such that } r\in h^mR_\la \text{ with } \overline{h^{-m}r}\in
S_U\big\}.\] This is a multiplicative system because
$\Sym_{\fS_X}\!\!\cT_X$ is a sheaf of domains.  Furthermore, since
$[r,R_\la]\subset hR_\la$, the operation of bracket with any algebra
element is topologically nilpotent (the successive powers converge to
0 in the $h$-adic topology).  Thus, in any quotient $R_\la/h^mR_\la$,
the reduction of this set $S'_U$ satisfies the Ore condition, and we
can define the localization of $R_\la$ by $S_U'$ as the inverse limit
$\cR_\la(U):=\varprojlim(R_\la/h^mR_\la)_{S'_U}$.  This defines an
$\bS$-equivariant sheaf of rings $\cR_\la$ on
$\Spec\!\big(\Sym_{\fS_X}\!\!\cT_X\big)\cong T^*X = \fX$, which is
free over $\C[[h]]$ and satisfies $\cR/h\cR \cong \fS_{T^*X}$, and is
therefore a quantization of $\fX$.

\begin{proposition}\label{cot-period}
  The period of $\cR_\la$ is $h(\la-\nicefrac \varpi 2)$, where
  $\varpi=c_1(T^*X)\in H^2(X;\C)\cong H^2(\fX;\C)$ is the canonical
  class.
\end{proposition}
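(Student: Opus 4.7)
The plan is to combine three ingredients: polynomial dependence of the period on $\la$, a shift formula for line-bundle Morita twists, and an opposite-algebra symmetry.

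First, I would argue that the period depends polynomially on $\la$. The Picard Lie algebroids $\cP_\la$ form an affine family over $H^2(X;\C)$, and this affine structure propagates through the universal enveloping, Rees, and microlocalization constructions to produce a single quantization of $T^*X$ over the base $H^2(X;\C)$, whose fiber at $\la$ is $\cR_\la$. By the naturality of the period map (Proposition \ref{general pullback}), the period of $\cR_\la$ is polynomial in $\la$. Since $\cR_\la$ carries an $\bS$-structure inherited from the Rees grading and $[\omega_{T^*X}]=0$, Proposition \ref{S-structure} forces the period to lie in $h\cdot H^2_{DR}(T^*X;\C)$, so it takes the form $hP(\la)$ with $P(\la)\in H^2(X;\C)$ polynomial.

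Next, I would determine the linear part of $P$. For any line bundle $\cL$ on $X$ with Euler class $\eta\in H^2(X;\Z)$, tensoring sheaves with $\cL$ induces an isomorphism $\cU_{\la+\eta}\cong\cL\otimes\cU_\la\otimes\cL^{-1}$, which microlocalizes to $\cR_{\la+\eta}\cong\pi^*\cL\otimes\cR_\la\otimes\pi^*\cL^{-1}$. A direct \v{C}ech-cocycle computation of the Bezrukavnikov-Kaledin period shows that such a twist by a quantized line bundle shifts the period by $h\eta$, so $P(\la+\eta)=P(\la)+\eta$ for every integer $\eta$. The hypothesis $H^2(X)=H^{1,1}(X)$, together with the Lefschetz $(1,1)$ theorem on the projective variety $X$, makes integer Chern classes Zariski-dense in $H^2(X;\C)$; this functional equation therefore forces $P(\la)=\la+C$ for some constant $C\in H^2(X;\C)$.

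Finally, the constant is fixed by the classical self-duality $\cR_\la^{\op}\cong\cR_{\varpi-\la}$, which microlocalizes the isomorphism $\cU_\la^{\op}\cong\cU_{\varpi-\la}$ obtained by using the canonical bundle $K_X$ to exchange left and right actions on twisted $\cU_\la$-modules. By Proposition \ref{opp-zero}, the period of $\cR_\la^{\op}$ equals $-hP(\la)=-h(\la+C)$, but directly it equals $h(\varpi-\la+C)$; equating the two yields $C=-\varpi/2$.

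The main obstacle will be justifying the two classical facts invoked above: the shift of period under Morita twist by a line bundle, and the identification $\cR_\la^{\op}\cong\cR_{\varpi-\la}$ at the level of DQ algebras, with the correct $h\mapsto -h$ convention built into the definition of the opposite. Both are well-established in the theory of microdifferential operators but deserve a careful translation into the Bezrukavnikov-Kaledin period framework used in the rest of this paper.
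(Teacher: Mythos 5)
Your proposal is correct in outline, but it takes a genuinely different route from the paper. The paper's proof writes $\la=\sum\zeta_i c_1(\cL_i)$, realizes $\cR_\la$ as a quantum Hamiltonian reduction of (microlocalized) differential operators on the total space $Y$ of $\oplus\cL_i$ following Beilinson--Bernstein, and computes the period via the quantum Duistermaat--Heckman formula (Proposition \ref{dui-heck}), pinning the constant by identifying the canonical quantized moment map in the anticanonical case $Y=\mathsf{Tot}(\omega_X^{-1})$, where the reduction at $\xi=-\nicefrac{1}{2}$ is self-opposite. You instead prove affine-linearity of the period in $\la$ (universal family over $H^2(X;\C)$ plus the line-bundle Morita shift and density of integral classes, using $H^{2,0}=0$), and fix the constant from $\cU_\la^{\op}\cong\cU_{\varpi-\la}$ together with Proposition \ref{opp-zero}; note that both arguments ultimately extract the $-\nicefrac{\varpi}{2}$ from the same opposite-algebra symmetry, just at different stages. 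Two points you defer deserve emphasis. First, the period shift under twisting by a quantized line bundle is exactly the computation done in the proof of Proposition \ref{line bundles} (where the sign is tracked through $\delta\circ\b([\cL])=-c_1(\cL)$), so you can adapt that rather than redo a \v{C}ech calculation; but the sign is the crux of your argument --- your conjugation convention $\cL\otimes\cU_\la\otimes\cL^{-1}$ versus the paper's $\sEnd(\cdot)^{\op}$ convention must be reconciled carefully, since reversing the direction of the shift would yield $h(\nicefrac{\varpi}{2}-\la)$ instead. Second, the polynomiality step requires actually constructing the universal microlocalized TDO over $H^2(X;\C)$ and knowing its relative period is algebraic and purely of order $h$, which is where $\bS$-equivariance and Proposition \ref{S-structure} enter; the latter carries cohomology-vanishing hypotheses that hold in the intended applications (e.g.\ $X=G/P$) but not for an arbitrary smooth $X$. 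What your route buys is independence from the Hamiltonian-reduction machinery of Section \ref{QHr}; what the paper's route buys is that the torus reduction handles arbitrary complex coefficients $\zeta_i$ in one stroke, with no extrapolation from the integral lattice.
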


\begin{proof}
We begin by choosing line bundles $\cL_1,\ldots,\cL_k$ on $X$ and complex numbers $\zeta_1,\ldots,\zeta_k$ such that
$\la=\sum_{i=1}^k\zeta_ic_1(\cL_i)$.  Let $Y$ be the total space of $\oplus\cL_i$ and let $T:= (\cs)^k$ act on $Y$
by scaling the fibers of the individual lines.  Let $\bS$ act on $T^*Y$ via the inverse scaling action on the fibers, and
let $\tilde{\cR}$ be the  $T\times\bS$-equivariant quantization
of $T^*Y$ obtained by microlocalizing the sheaf of (untwisted) differential operators on $Y$.
The action of $T$ on $(T^*Y, \tilde{\cR})$ admits a quantized moment map
$$\varphi\colon U(\mt)\to\secs(\tilde{\cR}) \cong \Gamma(Y, D_Y)$$
given on $\mt$ by the equation 
\begin{equation}\label{moment map}
\varphi(a_1,\dots,a_k)=
\sum_{i=1}^ka_i t_i\frac{\partial}{\partial t_i},
\end{equation}
where $t_i$ is any coordinate on the fiber of $\cL_i$ (the operator $t_i
\frac{\partial}{\partial t_i}$ is independent of this choice).
If we take $\zeta := (\zeta_1,\ldots,\zeta_k)\in\C^k\cong\chi(\mt)$, then symplectic reduction of $(T^*Y,\tilde{\cR})$
at the parameter $\zeta$ yields the pair $(\fX, \cR_{\la})$, as noted by
Beilinson and Bernstein in \cite[\S 2.5]{BBJant}.

First, consider the special case where $k=1$ and $\cL_1=\omega_X^{-1}$, the anti-canonical bundle of $X$.
Then $Y$ is Calabi-Yau and $\tilde{\cR}$ is the canonical
quantization, and so we can apply Proposition \ref{dui-heck}.  In
order to do this, we must find a quantized moment map with self-opposite reduction.  By \cite[\S 2.5]{BBJant},  the
reduction by $\varphi$ at the parameter $\xi\in\C\cong\chi(\mt)$ is
isomorphic to the sheaf of differential operators on $X$ twisted by $-\xi\varpi\in H^2(X; \C)$, and this
sheaf is self-opposite when $\xi= -\nicefrac{1}{2}$.  This implies that 
$$\eta(a):=a\Big(t_1\frac{\partial}{\partial t_1}+\frac{1}{2}\Big)$$
is a canonical quantized moment map.
By Proposition \ref{dui-heck}, the reduction by $\eta$ at the parameter $\xi$ has period
equal to $-h\xi\varpi$, and is isomorphic to the sheaf of differential operators twisted by
$(-\xi + \nicefrac{1}{2})\varpi$,
confirming the result for multiples of the canonical class.

Now, assume that $\cL_1=\omega_X^{-1}$, which we can always arrange.  If
$\sigma\colon T^*Y\to T^*Y/G$ is the projection, then $\sigma_*\tilde{\cR}^T$
is an $\bS$-equivariant quantization of the relative Poisson scheme $T^*Y/G\to \mathfrak{t}^*$, and thus
has period $[\omega_{T^*Y/G}]+h\epsilon$ for some $\epsilon\in H^2(X;\C)$.  If
$s\colon \Delta\to \Delta\times \mt^*$ is the section corresponding to
$\zeta_1=-1/2$ and $\zeta_i=0$ for $i>1$, then we arrive at the conclusion that
$s^*\sigma_*\tilde{\cR}^T\cong \cR_{\nicefrac{-\varpi}2}$, which already know
has period 0. Thus, we must have
$s^*([\omega_{T^*Y/G}]+h\epsilon)=h(\nicefrac{\varpi}2+\epsilon)=0$, so
$\epsilon=-\nicefrac{\varpi}2$.

For arbitrary $\zeta_i$, we have a section $s_\zeta\colon \Delta\to
\Delta\times \mt^*$, and
\[s_\zeta^*([\omega_{T^*Y/G}]-h\nicefrac{\varpi}2)=h\Big(\sum_{i=1}^n\zeta_ic_1(\cL_i)-\nicefrac{\varpi}2\Big)=h(\la-\nicefrac{\varpi}2).\]
Thus $\cR_{\la}$ has the desired period.
\end{proof}

There is a natural $\bS$-equivariant map $p^{-1}\cU_\la\to \cR_\la[h^{-1}]$, where $p\colon\fX\to X$ is the projection
and $\bS$ acts trivially on $p^{-1}\cU_\la$.
For any $\la$-twisted D-module $\cN$ on $X$, the {\bf microlocalization} of $\cN$ is defined
to be the $\cR_{\la}[h^{-1}]$-module $\cR_{\la}[h^{-1}]\otimes_{p^{-1}\cU_\la}\cN$.
Proposition \ref{microlocalization}, which is well-known to the experts, may be regarded as a non-commutative version of the equivalence between coherent sheaves on $\fX$ and sheaves of coherent $\Sym_{\fS_X}\!\!\cT_X$-modules on $X$.
 
\begin{proposition}\label{microlocalization}
Microlocalization defines an equivalence of categories from the category of finitely generated
$\la$-twisted D-modules on $X$ to $\cR_{\la}[h^{-1}]\mmod$.
\end{proposition}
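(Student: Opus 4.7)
My plan is to construct an explicit inverse functor $\Psi$ to microlocalization $\Phi$ and verify that both compositions are naturally isomorphic to the identity. For an $\bS$-equivariant good $\cR_\la$-module $\cM$, I choose an $\bS$-equivariant coherent $\cR_\la(0)$-lattice $\cM(0)$ (note $n=1$ here, since $\bS$ acts on $T^*X$ with weight $1$ on the symplectic form). The pushforward $p_*\cM(0)$ decomposes as $\bigoplus_i (p_*\cM(0))^{(i)}$ under the trivial $\bS$-action on $X$, each weight piece being a coherent $\fS_X$-module. Because $h$ has $\bS$-weight $1$, multiplication by $h$ gives $\fS_X$-linear maps $(p_*\cM(0))^{(i)} \to (p_*\cM(0))^{(i+1)}$, and I set
\[
\Psi(\cM) := \varinjlim_i h^{-i}(p_*\cM(0))^{(i)}.
\]
Applied to $\cR_\la(0)$ itself, this construction yields $\cU_\la$, via the identification of $h^{-i}(p_*\cR_\la(0))^{(i)}$ with the order-$\leq i$ part of $\cU_\la$. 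So $\Psi(\cM)$ naturally becomes a $\cU_\la$-module, with a good filtration provided by the weight pieces.

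The construction is independent of the lattice: any two $\bS$-equivariant coherent lattices $\cM(0), \cM(0)'$ of $\cM$ are commensurable ($h^k\cM(0) \subseteq \cM(0)' \subseteq h^{-k}\cM(0)$ for some $k$), so the resulting direct limits are canonically isomorphic as $\cU_\la$-modules. Coherence of $\Psi(\cM)$ over $\cU_\la$ follows from goodness of $\cM(0)$.

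For $\Phi\circ\Psi\cong\id$: given a coherent $\cU_\la$-module $\cN$ with good filtration $F_\bullet\cN$, the Rees module $R\cN = \bigoplus_i F_i\cN\cdot h^i$ is a coherent $R_\la$-module, and $\cR_\la(0)\otimes_{R_\la}R\cN$ furnishes a natural $\bS$-equivariant lattice in $\Phi(\cN)$ whose image under $\Psi$ recovers $\cN$. For $\Psi\circ\Phi\cong\id$: given $\cM$ with lattice $\cM(0)$, the direct sum $\bigoplus_i (p_*\cM(0))^{(i)}$, viewed as an $R_\la$-module, is the Rees module of $\Psi(\cM)$ for its induced good filtration, and its microlocalization recovers $\cM(0)$ and hence $\cM$.

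The main technical obstacle is verifying the exactness of microlocalization, equivalently that $\cR_\la$ is flat over $R_\la$ in the appropriate filtered sense, so that the tensor product defining $\Phi$ behaves well on exact sequences. This reduces via the associated graded to flatness of commutative localization on $\Sym_{\fS_X}\!\cT_X$, combined with flatness of $h$-adic completion for Noetherian filtered rings—all standard facts from the classical theory of microlocalization for filtered Noetherian sheaves of rings.
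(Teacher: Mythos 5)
Your inverse functor $\varinjlim_i h^{-i}\big(p_*\cM(0)\big)^{(i)}$ is exactly the paper's adjoint $\cJ\mapsto (p_*\cJ)^{\bS}$, and your verification (Rees modules, reduction to the associated graded, where it becomes the classical equivalence for $\Sym_{\fS_X}\!\!\cT_X$) is essentially the paper's argument, which checks the adjunction on the algebras $\cU_\la$ and $\cR_\la$ by the same associated-graded computation. The proposal is correct; the only slip is cosmetic: the paragraph you label $\Phi\circ\Psi\cong\id$ actually treats $\Psi\circ\Phi\cong\id$ and vice versa, but both compositions are covered.
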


\begin{proof}
The adjoint equivalence is $\cJ\mapsto (p_*\cJ)^{\bS}$; we need only check this on the algebras themselves.  
It is clear that the microlocalization of $\cU_\la$ is $\cR_\la[h^{-1}]$.  On the other hand, 
we have a map $\cU_\la\to (p_*\cR_\la[h^{-1}])^{\bS}$ which is  injective, and whose surjectivity 
is easily verified by passing to the associated graded.
\end{proof}

\begin{remark}\label{curves}
While the cotangent bundles of smooth projective varieties provide a large
supply of conical symplectic varieties, these varieties very rarely are conical symplectic resolutions.
In general they do not have enough global functions to be resolutions of their affinizations.
For example, consider the case of a curve:
  \begin{itemize}
  \item If $X=\mathbb{P}^1$, $T^*X$ is a resolution of a singular quadric.
  \item If $X$ is elliptic, $T^*X\cong X\times \aone$, so the
    affinization of $T^*X$ is isomorphic to $\aone$.
  \item If $X$ has genus greater than 1, then $T^*X$ is a line bundle of positive
    degree, and thus has no nonconstant global functions.
  \end{itemize}
\end{remark}

\begin{example}\label{G-mod-P}
One class of projective varieties whose cotangent bundles {\em are} conical symplectic resolutions
are varieties of the form $X = G/P$, where $G$ is a reductive algebraic group and $P\subset G$ is a parabolic subgroup.
Philosophically, the reason is that $X$ has a lot of vector fields (induced by the action of $\mg$), therefore
its cotangent bundle has a lot of functions.  It is conjectured (see for example \cite[1.3]{Kal09}) that these are the {\em only}
such projective varieties.  
  
If $P$ is a Borel subgroup, then $T^*X$ is the Springer resolution of the nilpotent cone in $\mg$.
More generally,
the moment map $\mu\colon T^*X\to \mg^*\cong\mg$ is always generically finite, and its
image is the closure $\bar{O}_P=G\cdot \fp^\perp$ of the Richardson orbit $O_P$ associated with $P$. 
If $G=\operatorname{SL}(r;\C)$, or if $O_P$ is
simply-connected, then $\mu$ is generically one to one, and $T^*X$ is a symplectic resolution of $\bar{O}_P$
\cite[1.3]{Hes}.  In other cases, it is still a symplectic resolution of its affinization, but this affinization 
may be a finite cover of $\bar{O}_P$.
\end{example}

\subsection{Localization}
\label{sec:localization}
We return to considering a general $\fX/S$ satisfying the
assumptions of Section \ref{sec:quantizations}. We fix a quantization $\cD$ of $\fX$,
and we let $A := \secs(\cD)$ be its section algebra.
Let $\AMod$ be the category of arbitrary $A$-modules, and let $\Amod$ be the full subcategory
of finitely generated modules.  As in the case of $\cD$-modules, we will be interested in the unbounded derived category
$D(\AMod)$ and the bounded derived category $D^b(\Amod)$;
by Remark \ref{rem:bounded}, $D^b(\Amod)$
is equivalent to the full subcategory
of $D(\AMod)$ consisting of objects whose cohomology is both bounded
and finitely generated.

We have a functor $$\secs\colon\Dmod\to\Amod$$ given by taking $\bS$-invariant global sections.
The left adjoint functor $$\Loc\colon\Amod\to\Dmod$$ is defined by putting
$\Loc(N) := \cD\otimes_AN,$ with the $\bS$-action induced from the action on $\cD$.
To see that $\Loc(N)$ is indeed an object of $\Dmod$, let
$Q\subset N$ be a finite generating set and
define a filtration of $N$ by putting $N(m) := A(m)\cdot Q$.  
We define the {\bf Rees algebra} $R(A)$ to be the $h$-adic completion of 
$$A(0)[[\hon]] + \hon A(1)[[\hon]] + h^{\nicefrac{2}{n}}A(2)[[\hon]] + \ldots\subset A[[\hon]]$$
and the {\bf Rees module} $R(N)$ to be the $h$-adic completion of
$$N(0)[[\hon]] +\hon N(1)[[\hon]] + h^{\nicefrac{2}{n}}N(2)[[\hon]] + \ldots\subset N[[\hon]].$$
Note that $R(N)$ is a module over $R(A)\cong \Gamma(\cD(0))$,
and $\cD(0)\otimes_{R(A)}R(N)$ is a coherent lattice in $\Loc(N)$.

\begin{remark}\label{lattice-filtration}
If $N$ is an object of $\Amod$, we have shown that $\Loc(N)$ always admits a coherent lattice,
but the construction of that lattice depends on a choice of filtration of $N$.  Conversely,
any coherent lattice $\cN(0)$ for an object $\cN$ of $\Dmod$ induces a filtration of
$N:= \secs(\cN)$ by putting $N(m) := \secs\big(h^{\nicefrac{-m}{n}}\cN(0)[h^{\nicefrac{1}{n}}]\big)$.
\end{remark}

If $\secs$ and $\Loc$ are biadjoint equivalences of categories, we
will say that {\bf localization holds for \boldmath$\cD$} or that {\bf localization holds at \boldmath$\la$},
where $[\omega_\fX] + h\la$ is the period of $\cD$.  
Localization is known to hold for certain parameters in many special cases, including quantizations of the Hilbert
scheme of points in the plane \cite[4.9]{KR}, the cotangent bundle of $G/P$ \cite{BB},
resolutions of Slodowy slices  \cite[3.3.6]{Gin08} \& \cite[7.4]{DK}, and hypertoric varieties \cite[5.8]{BeKu}.
We conjecture that any conical symplectic resolution $\fM$ admits many quantizations for which localization holds.

\begin{conjecture}\label{localization}
Let $\Lambda\subset H^2(\fM; \C)$ be the set of periods of quantizations for which
localization holds.
There exists 
\begin{itemize}
\item a finite list of effective classes $x_1,\ldots,x_r\in H_2(\fM;\Z)$
\item a finite list of rational numbers $a_i\in \Q$
\end{itemize}
such that $\Lambda=H^2(\fM;\C)\smallsetminus \displaystyle\bigcup_{i=1}^r D_i$, where 
\[D_i := \big\{\la\in H^2(\fM;\C)\;\;\big{|}\;\;\langle
x_i,\la\rangle-a_i\in \Z_{\leq 0}\}.\]
\end{conjecture}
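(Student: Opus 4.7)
The strategy is to combine the derived localization theorem with a wall-crossing analysis, using the Harish--Chandra bimodules introduced in Section \ref{sec:bimodules} and the Mori-dream-space structure of Theorem \ref{dream}. Theorem A and Corollary \ref{first-cor} already tell us that along any ample direction $\eta$, the set of periods where localization fails is a finite subset of the line $\la + \C\eta$ (or $\la + \Z_{\ge 0}\eta$ in the abelian case). The conjecture asserts that these finitely many exceptional points, as $\la$ ranges, assemble into a countable family of parallel affine hyperplanes $\langle x_i,-\rangle = a_i - j$, $j\in \Z_{\geq 0}$. The goal is to (a) identify the finite list of classes $x_i$ geometrically, and (b) verify that localization fails exactly on the union of the associated $D_i$'s.

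First I would produce the candidate list of $x_i$'s. By Theorem \ref{dream}, the relative Mori cone of $\fM/\fM_0$ has finitely many extremal rays, and each such ray comes from curves contracted by a birational transformation of $\fM$ (or of one of the other conical symplectic resolutions of $\fM_0$, using Proposition \ref{deformed same sections} to transport quantizations between them). The $x_i$ should be primitive integral generators of these rays, together with classes of the $\mathbb{P}^1$'s in the Kleinian fibers over the components $\Sigma_j$ of Section \ref{sec:Weyl-gp-def}. The corresponding shifts $a_i$ should be computed from the canonical class of the contracted locus, in the spirit of the $\nicefrac{\varpi}{2}$ correction established in Proposition \ref{cot-period} via the Duistermaat--Heckman formula of Proposition \ref{dui-heck}.

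Next I would attack one direction of the inclusion. For $\la$ lying off all the $D_i$, I would try to show localization holds by constructing explicit inverses using translation bimodules $B_\eta$ with $\eta$ ample, iteratively shifting $\la$ until the period is large enough to apply Corollary \ref{first-cor}. Each such shift is an equivalence unless one hits a wall, and the wall conditions should read off as $\langle x_i,\la\rangle - a_i \in \Z_{\leq 0}$ by analyzing when the natural unit and counit maps for $B_\eta$ fail to be isomorphisms after taking sections. For the other direction, one would exhibit for each $D_i$ an explicit $A_\la$-module with no nonzero localization (for instance, a finite-dimensional module supported on the contracted locus), certifying failure of localization on that wall.

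The principal obstacle is controlling the global wall structure: it is easy to prove finiteness of walls meeting a single generic line through $\la$, but showing that the \emph{global} failure locus is governed by only finitely many classes $x_i$ will require a uniform statement, and one cannot simply deduce it from the finiteness of minimal resolutions. A natural approach is to deform to the universal quantization $\scrD$ of $\scrM$ (Section \ref{sec:quantizations}), where the failure locus becomes a closed subscheme of $\Ht$, and then to show that its irreducible components are affine-linear by exploiting the $\bS$-action together with Proposition \ref{general pullback}. Verifying the precise integrality of the shifts $a_i$, and ruling out any ``hidden'' walls not coming from extremal curves, is likely to require a local-to-global argument reducing to the already-known cases of Kleinian singularities, hypertoric varieties, and $T^*(G/P)$.
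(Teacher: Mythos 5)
The statement you are trying to prove is Conjecture \ref{localization}: in the paper it is explicitly a conjecture, not a theorem, and the authors state immediately afterwards that they cannot prove it and instead establish only asymptotic substitutes --- derived localization for all but finitely many $k$ along a fixed ample direction (Theorem \ref{derived-local}) and abelian localization for all sufficiently large $k$ (Corollary \ref{large quantizations}). So there is no proof in the paper to compare against, and your proposal should be judged as a research plan rather than checked against an existing argument.

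As a plan it does not close the gap, and the places where it is vague are exactly the places where the conjecture is hard. First, identifying the classes $x_1,\ldots,x_r$ with extremal curve classes (plus Kleinian $\mathbb{P}^1$'s) is itself the conjectural content: the paper's remark following the conjecture only says these classes ``should'' be the effective curve classes in generic non-affine deformations in the sense of \cite{BMO}; nothing in Sections \ref{sec:resolutions}--\ref{sec:bimodules} pins this down, and Proposition \ref{cot-period} computes a $\rho$-shift only for cotangent bundles, not a general recipe for the $a_i$. Second, your wall-crossing step assumes that tensoring with a translation bimodule is an equivalence ``unless one hits a wall,'' with the wall condition $\langle x_i,\la\rangle-a_i\in\Z_{\leq 0}$; but establishing when the unit and counit fail after taking sections in terms of such affine-integral conditions is precisely what one cannot currently do --- the paper's own technique (Lemma \ref{minimal-polynomial}) only shows that the obstruction is killed away from the roots of an \emph{unknown} polynomial in one variable, with no control on where those roots lie or how they vary with $\la$. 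Third, the converse inclusion (localization genuinely fails on every $D_i$) is waved at by hoping for finite-dimensional or core-supported modules with vanishing localization; such modules need not exist in general (the paper notes the core can be too small when $n>1$), and in any case derived localization can hold while abelian localization fails, so failure must be certified more delicately. Finally, the proposed reduction to the universal quantization $\scrD$ over $\Ht$, with the failure locus ``a closed subscheme'' whose components are affine-linear by the $\bS$-action, is not justified: the set of $\la$ where $\secs$ and $\Loc$ fail to be inverse equivalences is not obviously algebraic in $\la$, and the $\bS$-action only rescales $\Ht$, which is compatible with cones, not with the affine-integral sets $D_i$ appearing in the statement. In short, your outline reproduces the expected picture the authors themselves describe, but each of its three main steps presupposes rather than proves the finiteness, linearity, and integrality that make up the conjecture.
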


\begin{remark}
%
The classes $x_1,\ldots,x_r$ should exactly correspond to the effective curve classes in 
``generic non-affine deformations'' of $\fM$ in the sense of \cite[1.15]{BMO}.
These classes play an important role in the formula for quantum cohomology of the Springer resolution \cite[1.1]{BMO},
and conjecturally of any conical symplectic resolution.
\end{remark}

Though we cannot prove Conjecture \ref{localization}, we will establish asymptotic results both in the derived
(Theorem \ref{derived-local}) and abelian (Corollary \ref{large quantizations}) settings. 

\subsection{Derived localization}\label{sec:derived}
In this section, we continue the assumptions of Section
\ref{sec:localization}.  We next wish to consider the derived functors $\Rsecs$ and  $\LLoc$
relating the triangulated categories $D(A\mMod)$ and
$D(\cD\mMod)$.  
Note that these derived functors are well-defined
by \cite[Th. A]{SpaRes}.  First, let us establish certain homological properties
of these functors.

\begin{lemma}\label{projlim}
For any good $\bS$-equivariant module $\cN$, the module
$\mathbb{R}^k\Gamma(\fX;\cN(0))$ is finitely generated over $R(A)$ and 
the map \[\mathbb{R}^k\Gamma(\fX;\cN(0))\to \varprojlim
  \mathbb{R}^k\Gamma(\fX;\cN(0)/\cN(-nm))\] 
is an isomorphism for all $k$.
\end{lemma}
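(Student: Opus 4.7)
The plan is to reduce the lemma to a Mittag-Leffler statement and establish that statement using the $\bS$-grading. Since $\cN(0)$ is a coherent $\cD(0)$-lattice, it is $h$-adically complete; identifying $\cN(-nm)$ with $h^m\cN(0)$ gives $\cN(0) \cong \varprojlim_m \cN_m$, where $\cN_m := \cN(0)/\cN(-nm)$. The transition maps in this tower of sheaves are surjective, so $\mathbb{R}\varprojlim_m \cN_m$ coincides with $\varprojlim_m \cN_m \cong \cN(0)$. Applying $\mathbb{R}\Gamma$, which commutes with $\mathbb{R}\varprojlim$, yields for each $k$ the Milnor short exact sequence
\[
0 \to \varprojlim\nolimits_m^1 \mathbb{R}^{k-1}\Gamma(\fM;\cN_m) \to \mathbb{R}^k\Gamma(\fM;\cN(0)) \to \varprojlim\nolimits_m \mathbb{R}^k\Gamma(\fM;\cN_m) \to 0,
\]
so it suffices to show that $\varprojlim\nolimits^1 \mathbb{R}^{k-1}\Gamma(\fM;\cN_m)$ vanishes for every $k$.

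I would check this weight-by-weight using the $\bS$-grading. Each $\cN_m$ carries a filtration by the subsheaves $\cN(-j)/\cN(-nm)$ for $j = 0, 1, \ldots, nm$, whose successive quotients $\cN(-j)/\cN(-j-1)$ are isomorphic, via multiplication by $h^{j/n}$, to the coherent $\fS_\fM$-module $\overline{\cN} := \cN(0)/\cN(-1)$, with $\bS$-weights shifted up by $j$. Since $\fM$ is projective over the affine cone $\fM_0$, each $\mathbb{R}^q\Gamma(\fM; \overline{\cN})$ is a finitely generated $\bS$-graded $\C[\fM]$-module; since $\C[\fM]$ is nonnegatively graded with one-dimensional weight-zero part and finite-dimensional weight spaces, $\mathbb{R}^q\Gamma(\fM; \overline{\cN})$ is bounded below in weight with finite-dimensional weight spaces.

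Iterating the long exact sequences arising from the filtration, each $\mathbb{R}^{k-1}\Gamma(\fM; \cN_m)$ is then $\bS$-graded with finite-dimensional weight spaces. Moreover, for any fixed weight $w$, only finitely many layers contribute to the weight-$w$ part, namely those with $j$ at most $w$ minus the lower weight bound of $\mathbb{R}^{k-1}\Gamma(\fM;\overline{\cN})$; consequently, $\dim \mathbb{R}^{k-1}\Gamma(\fM;\cN_m)_w$ is bounded uniformly in $m$. A tower of finite-dimensional vector spaces trivially satisfies the Mittag-Leffler condition, so $\varprojlim^1$ vanishes on each weight space, and therefore on the entire tower. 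The main delicate point is carefully propagating the weight bounds through the long exact sequences of the filtration in both degrees $k-1$ and $k$, and verifying that the transition maps of the inverse system are themselves graded; both are routine once the base case for $\overline{\cN}$ has been established.
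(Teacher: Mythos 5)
Your reduction to the Milnor sequence is fine (granting the routine point that $\cN(0)\cong\mathbb{R}\varprojlim\,\cN(0)/\cN(-nm)$, which needs quasi-coherence on an affine cover rather than just surjectivity of the sheaf maps), and your weight-space estimates for the finite-level terms are correct: each $\mathbb{R}^{k-1}\Gamma(\fM;\cN(0)/\cN(-nm))$ is $\bS$-graded with weight spaces of dimension bounded uniformly in $m$, bounded below in weight. But the final step --- ``$\varprojlim^1$ vanishes on each weight space, and therefore on the entire tower'' --- is a genuine gap, not a routine verification. The terms of your tower are (infinite) direct sums of their weight spaces, and $\varprojlim^1$ does not commute with infinite direct sums. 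Concretely, take $M_m=h^m\C[h]\subset\C[h]$ with the inclusions as transition maps and $h$ given weight $n$: every weight-space tower is eventually zero, hence Mittag-Leffler with vanishing $\varprojlim^1$, yet $\varprojlim^1 M_m\cong\C[[h]]/\C[h]\neq 0$. Towers of exactly this shape ($h$-adic--type filtrations of graded modules that need not be complete) are precisely what arise in this lemma, so the failure is not an edge case; the weight-wise picture cannot distinguish $\C[h]$ from $\C[[h]]$, which is exactly what $\varprojlim^1$ is detecting here.

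To close the gap you need an input beyond gradedness and finite-dimensionality of weight spaces. The paper's proof supplies it by showing that $G^k(0)=\mathbb{R}^k\Gamma(\fM;\cN(0))$ is finitely generated over the complete Rees algebra $R(A)$ (injecting $G^k(0)/hG^k(0)$ into the cohomology of a coherent sheaf and applying Nakayama), deducing $G^k(0)\cong\varprojlim G^k(0)/h^mG^k(0)$ and the vanishing of the derived limits of $h^mG^k(0)$, and then controlling the comparison with $\varprojlim \mathbb{R}^k\Gamma(\fM;\cN(0)/\cN(-nm))$ through the long exact sequences, using that the $h$-torsion of $G^k(0)$ is killed by a fixed power of $h$ to verify Mittag-Leffler for the relevant $\operatorname{Tor}$-towers. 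Some argument of this kind --- finite generation over the complete ring plus bounded $h$-torsion, or eventual surjectivity of the transition maps of your tower --- is what your proposal is missing; without it the asserted vanishing of $\varprojlim^1$ for the full tower does not follow.
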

\begin{proof}
Let $$G^k(m):=\mathbb{R}^k\Gamma(\fX;\cN(-nm))\and
G^k(m|p):=\mathbb{R}^k\Gamma(\fX;\cN(-nm)/\cN(-np))$$ for $p\geq m$.
We claim that the cohomology $G^k(0)$ is a finitely
  generated $\bS$-equivariant $R(A)$-module. 
  To see this, note that the cohomology long exact sequence of $$0 \to \cN(-n) \to \cN(0)\to \cN(0)/\cN(-n) \to 0$$  
  gives an
  injective map $G^k(0)/hG^k(0)\hookrightarrow
  G^k(0|1)=\mathbb{R}^k\Gamma(\fX;\cN(0)/\cN(-n))$. The latter is the
  cohomology of a coherent sheaf, and thus finitely generated over
  $\C[\fX]$.  Let $P$ be a submodule of $G^k(0)$ generated by representatives of 
  a finite generating set of 
  $G^k(0)/hG^k(0)$, so we have $G^k(0) = P + hG^k(0)$.
  Then given any $x \in G^k(0)$, we can inductively find $p_i \in P$, $i = 0,1,2,\dots$, so that
  $x - \sum_{j=0}^N h^jp_j$ lies in $h^{N+1}G^k(0)$.  Since $R(A)$ is complete in the $h$-adic topology, we can take the limit to obtain $p \in P$ such that $x - p$ lies in $\bigcap_{i=0}^\infty h^i G^k(m)$.  But this intersection is zero, since 
  $\bigcap_{i=0}^\infty h^i\cN(-nm)=0$, and so
  $G^k(0)=P$. Thus $G^k(0)$ is finitely
  generated as desired.

Thus, $G^k(0)$ is a quotient of a finite rank free module 
$R(A)^{\oplus n}$ by a submodule $K$.  Consider the short exact
sequence of projective systems
\[0\to K/(K\cap h^m R(A)^{\oplus n}) \to (R(A)/h^m R(A))^{\oplus n}
\to G^k(0)/h^mG^k(0)\to
0. \]
Since the kernel satisfies Mittag-Leffler, we obtain an isomorphism 
\[ G^k(0)\cong R(A)^{\oplus n}/K\cong \Big(\varprojlim (R(A)/h^m
R(A))^{\oplus n}\Big)/ \Big(\varprojlim K/(K\cap h^m R(A)^{\oplus
  n})\Big)\cong \varprojlim G^k(0)/h^mG^k(0).\]
Note that the long exact sequence associated to the short exact
sequence of projective systems $h^mG^k(0)\to G^k(0) \to G^k(0)/h^mG^k(0)$
further shows that the first derived functor of $\varprojlim$ vanishes on the left hand system:
\[\varprojlim\nolimits^{1}h^mG^k(0)\cong \big(\varprojlim G^k(0)/h^mG^k(0) \big)
/ G^k(0)=0.\]
All higher derived functors vanish, since this holds for any projective
system over $\Z_{\geq 0}$ in the category of modules over a ring.  
Now, we consider the long exact sequence 
\begin{equation}
\cdots \to G^{k-1}(0|m)\to G^k(m)\to G^k(0) \to G^k(0|m) \to
G^{k+1}(m)\to \cdots\label{eq:1}.
\end{equation}
This breaks into a series of short exact
sequences 
\[0\to\operatorname{Tor}^1(\C[h]/(h^m),G^k(m))\to G^k(m)\to  G^k(0)\to
G^k(0)/h^mG^k(0)\to 0.\]
The submodule of all $h$-torsion elements in $G^k(0)$ is finitely
generated, so it is killed by $h^M$ for some $M$.  For $m>M$, the
group $\operatorname{Tor}^1(\C[h]/(h^m),G^k(m))$ stabilizes, and the
induced map in the projective system is multiplication by $h$.  This
projective system satisfies the
property that the image of
$\operatorname{Tor}^1(\C[h]/(h^{m+M}),G^k(M+m))$ in
$\operatorname{Tor}^1(\C[h]/(h^m),G^k(m))$ is trivial, so the projective
system has $\varprojlim \operatorname{Tor}^1(\C[h]/(h^m),G^k(m))\cong
\varprojlim\nolimits^{1} \operatorname{Tor}^1(\C[h]/(h^m),G^k(m))=0$ by
Mittag-Leffler again.  
The short exact sequence 
\[0\to \operatorname{Tor}^1(\C[h]/(h^m),G^k(m))\to G^k(m)\to
h^mG^k(0)\to 0\]
shows that $\varprojlim G^k(m)=\varprojlim\nolimits^{1}G^k(m)=0$ as well. 

Since $G^k(0|m)$ is the extension of two projective
systems with higher derived limits vanishing, the higher projective limits
of $G^k(0|m)$ vanish as well.  The long exact sequence \eqref{eq:1} thus remains exact
when we take the projective limit, since the higher derived functors
of all its terms vanish.  Therefore, we obtain the desired isomorphism
$G^k(0)\cong \varprojlim G^k(0|m)$.
\end{proof}

\begin{proposition}\label{prop:Rsecs-bounded}
  The functor $\Rsecs$ induces a functor
  $D^b(\cD\mmod)\to D^b(A\mmod)$.
\end{proposition}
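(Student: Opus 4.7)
My plan is to establish two separate facts about a good $\bS$-equivariant $\cD$-module $\cN$: first that $\mathbb{R}^k\Gamma_\bS(\cN)$ vanishes for $k$ larger than $\dim\fM$, and second that each $\mathbb{R}^k\Gamma_\bS(\cN)$ is finitely generated over $A$. Combined, these say that $\Rsecs(\cN)$ lies in $D^b(A\mmod)$; since a bounded complex of good modules has only finitely many nonzero cohomology sheaves, the hyper-cohomology spectral sequence then gives the statement for arbitrary objects of $D^b(\cD\mmod)$.

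To begin, I would choose a coherent $\bS$-equivariant $\cD(0)$-lattice $\cN(0)$, which exists by definition of goodness, and form the filtration $\cN(-nm)$ as in Remark \ref{lattice-filtration}. Each quotient $\cN(-nm)/\cN(-n(m+1))$ is a coherent sheaf on the underlying scheme $\fM$. Since $\fM$ is projective over the affine cone $\fM_0$ and therefore of finite cohomological dimension at most $\dim\fM$, the cohomology $\mathbb{R}^k\Gamma(\fM;\cN(-nm)/\cN(-n(m+1)))$ vanishes for $k>\dim\fM$ and is finitely generated over $\C[\fM_0]$ in each degree. A straightforward induction on $m$ using the short exact sequences
\[
0\to \cN(-n(m+1))/\cN(-np)\to \cN(-nm)/\cN(-np)\to \cN(-nm)/\cN(-n(m+1))\to 0
\]
shows that the same vanishing and finite generation statements hold for $\mathbb{R}^k\Gamma(\fM;\cN(0)/\cN(-nm))$ uniformly in $m$. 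Passing to the limit using Lemma \ref{projlim} yields $\mathbb{R}^k\Gamma(\fM;\cN(0))=0$ for $k>\dim\fM$, and inverting $\hon$ gives the desired vanishing for $\cN=\cN(0)[\hmon]$.

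For the finite generation half, the relevant argument is already embedded in the proof of Lemma \ref{projlim}: the injection $G^k(0)/hG^k(0)\hookrightarrow G^k(0|{-1})$ together with Nakayama's lemma shows that $\mathbb{R}^k\Gamma(\fM;\cN(0))$ is finitely generated over the Rees algebra $R(A)\cong\Gamma(\cD(0))$. Taking $\bS$-invariants — which is exact on rational $\bS$-modules and sends $R(A)$ to (an $h$-adic completion of) the Rees module of the filtration $A(m)$ — and then inverting $\hon$ gives finite generation of $\mathbb{R}^k\Gamma_\bS(\cN)$ over $A$.

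The step I expect to require the most care is the compatibility of the argument with the $\bS$-equivariant structure: Lemma \ref{projlim} is phrased for $\Gamma$ rather than $\Gamma_\bS$, so I need to verify either that the proof carries over verbatim once one restricts to $\bS$-isotypic components of the filtration (these are finite-dimensional over $\C$ in each degree because $\C[\fM]$ has finite-dimensional graded pieces), or equivalently that exactness of the invariants functor lets one deduce the $\bS$-equivariant statement from the statement for all sections. Once this is settled, the rest of the argument reduces to standard manipulations with spectral sequences and filtered modules, and no further geometric input is needed.
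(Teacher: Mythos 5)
Your proposal is correct and follows essentially the same route as the paper's proof: choose a coherent lattice, use Lemma \ref{projlim} to reduce to the finite quotients $\cN(0)/\cN(-nm)$, observe these are iterated extensions of a coherent sheaf on $\fM$, and invoke projectivity of $\fM$ over the affine $\fM_0$ for boundedness and finite generation (with the Nakayama step already contained in the proof of Lemma \ref{projlim}). Your explicit flagging of the $\bS$-invariants bookkeeping is a point the paper leaves implicit, but it is not a different argument.
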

\begin{proof}
  Since any complex of $A$-modules with cohomology that is
  finitely generated and bounded is quasi-isomorphic to a bounded
  complex, we need only prove that $\Rsecs$ applied to any good
  $\cD$-module $\cN$ has finitely generated cohomology in finitely many
  degrees.  By Lemma \ref{projlim}, the cohomology is finitely
  generated, and we need only check that
  $G^k(0|m)$ (using the notation of the lemma) is only non-zero in
  finitely many degrees.  Since
  $\cN(0)/\cN(-nm)$ is just an iterated extension of $\cN(0)/\cN(-1)$
  it suffices to show the same for $H^i(\fX;\cN(0)/\cN(-1))$.  
  Since $\fX$ is projective over $\fX_0$, this group is finitely generated over $\C[\fX]$
  and can only be non-zero if $0\leq i\leq \dim\fX$.
  \end{proof}
If the functors $\Rsecs$ and $\LLoc$ induce biadjoint equivalences
$D^b(\cD\mmod)\cong D^b(A\mmod)$, we say that 
{\bf derived localization holds for \boldmath$\cD$} or that {\bf derived localization holds at \boldmath$\la$},
where $[\omega_\fX] + h\la$ is the period of $\cD$.
The following result of Kaledin \cite[\S 3.1]{KalDEQ} gives a sufficient condition for derived localization to hold.
Let $\cDop$ be the opposite ring of $\cD$, and let $\Aop$ be its section algebra.
Consider the sheaf of algebras $\cD\hat\boxtimes_{\C((h))}\cDop$ on $\fX\times\fX$, 
which has section algebra $A\otimes\Aop$.
Let $\cD_{\operatorname{diag}}$ be the $\cD\hat\boxtimes_{\C((h))}\cDop$-module obtained by pushing $\cD$ forward along the diagonal inclusion
from $\fX$ to $\fX\times\fX$, and let $A_{\operatorname{diag}}$ be the algebra $A$, regarded as a module over $A\otimes\Aop$.

\begin{theorem}[Kaledin]\label{kaledin-DL}
Suppose that the higher cohomology groups of $\cD$ vanish.\footnote{By Proposition \ref{GR},
this condition is satisfied by any conical symplectic resolution.}
 Then derived localization holds if and
  only if the natural map $\LLoc(A_{\operatorname{diag}})\to
  \cD_{\operatorname{diag}}$ is a quasi-isomorphism.
\end{theorem}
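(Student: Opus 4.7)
The plan is to recognize the map $\LLoc(A_{\operatorname{diag}})\to \cD_{\operatorname{diag}}$ as the ``kernel'' incarnation of the counit of the adjoint pair $(\LLoc,\Rsecs)$ extended to the product $\fX\times\fX$, and to exploit the fact that $\cD_{\operatorname{diag}}$ represents the identity functor on $D(\cD\mMod)$ under convolution.

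First I would upgrade $\LLoc$ and $\Rsecs$ to the product: on $\fX\times\fX$ we have the quantization $\cD\boxtimes_{\C((h))}\cDop$ with section algebra $A\otimes \Aop$, and an analogous adjoint pair. By a K\"unneth argument together with the hypothesis $H^{>0}(\fX;\cD)=0$, the higher cohomology of $\cD\boxtimes_{\C((h))}\cDop$ also vanishes, and in particular $\Rsecs(\cD_{\operatorname{diag}})\cong A_{\operatorname{diag}}$ as $A\otimes\Aop$-modules, concentrated in degree zero. Under this identification, the counit of adjunction evaluated at $\cD_{\operatorname{diag}}$ is exactly the map $\LLoc(A_{\operatorname{diag}})\to\cD_{\operatorname{diag}}$ in the statement.

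Next I would observe that, under the vanishing hypothesis, the unit $M\to \Rsecs\LLoc(M)$ is automatically an isomorphism: evaluated at the rank-one free module $A$ it is the identity $A\to\Rsecs(\cD)=A$, and since both functors commute with arbitrary coproducts, the unit is an isomorphism on every compact object and hence on all of $D^-(\AMod)$. Thus derived localization is equivalent to the counit being an isomorphism on $D(\cD\mMod)$. The forward direction of the theorem is then immediate: if derived localization holds, the counit is in particular an isomorphism at $\cD_{\operatorname{diag}}$ in the product setting, i.e.\ $\LLoc(A_{\operatorname{diag}})\to\cD_{\operatorname{diag}}$ is a quasi-isomorphism.

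For the converse, I would realize the identity functor on $D(\cD\mMod)$ as convolution with $\cD_{\operatorname{diag}}$, and $\LLoc\circ\Rsecs$ as convolution with $\LLoc(A_{\operatorname{diag}})$. Concretely, let $p_1,p_2\colon \fX\times\fX\to\fX$ denote the projections. For $\cN\in D(\cD\mMod)$ one has a canonical isomorphism
$$\cN\;\cong\;\mathbb{R} p_{1*}\bigl(\cD_{\operatorname{diag}}\otimes^{L}_{p_2^{-1}\cD}p_2^{-1}\cN\bigr),$$
and analogously $\LLoc\Rsecs(\cN)\cong \mathbb{R}p_{1*}\bigl(\LLoc(A_{\operatorname{diag}})\otimes^{L}_{p_2^{-1}\cD}p_2^{-1}\cN\bigr)$, with the natural transformation between them induced by the map in the hypothesis. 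A quasi-isomorphism of this map therefore yields a quasi-isomorphism of the counit at every $\cN$, and combining with the unit being an isomorphism, we conclude that $(\LLoc,\Rsecs)$ is a biadjoint equivalence.

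The main obstacle will be making the convolution/Fourier--Mukai formalism precise in the $\bS$-equivariant deformation-quantization setting: one must control the interaction of $\LLoc$ with the external tensor product and the derived pushforward along $p_2$, including the $h$-adic completions, the separate $\bS$-actions on the two factors, and flatness questions needed to commute $\LLoc$ past the tensor product over $p_2^{-1}\cD$. This is the core geometric content of the argument, for which one may invoke Kaledin's setup of derived categories of DQ-modules.
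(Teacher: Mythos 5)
You should note at the outset that the paper does not prove this statement at all: it is quoted from Kaledin \cite[\S 3.1]{KalDEQ}, so the only comparison available is with Kaledin's own argument, and your kernel/convolution strategy is indeed the standard route to it. Your identification of the map $\LLoc(A_{\operatorname{diag}})\to\cD_{\operatorname{diag}}$ as the counit of the product adjunction at the diagonal kernel, the observation that the unit is automatically invertible once $H^{>0}(\fM;\cD)=0$ (iso at the generator $A$, both functors commute with coproducts, hence iso on the localizing subcategory generated by $A$ --- which you should run on all of $D(\AMod)$, not just $D^-$, since the paper's notion of derived localization concerns the unbounded categories), and the realization of $\LLoc\circ\Rsecs$ as convolution with $\LLoc(A_{\operatorname{diag}})$ are all the right moves for the ``if'' direction. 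But be aware that the isomorphism $\LLoc\Rsecs(\cN)\cong\mathbb{R}p_{1*}\bigl(\LLoc(A_{\operatorname{diag}})\otimes^L_{p_2^{-1}\cD}p_2^{-1}\cN\bigr)$ --- the projection-formula/K\"unneth step with the completed external product $\cD\boxtimes_{\C((h))}\cDop$ and the two $\bS$-structures --- is precisely where the substance of the theorem lives, and in your proposal it is only flagged, not established.

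The genuine gap is your ``only if'' direction, which you call immediate. Derived localization is a hypothesis about the adjunction $(\LLoc,\Rsecs)$ for $(\fM,\cD)$, whereas the map in the statement is the counit of the adjunction for $(\fM\times\fM,\cD\boxtimes_{\C((h))}\cDop)$ evaluated at the object $\Delta_*\cD$ of $D(\cD\boxtimes_{\C((h))}\cDop\mMod)$; nothing formal transports ``the counit is an isomorphism on $D(\cD\mMod)$'' to a statement about modules over the product quantization. To close this you must either show that derived localization for $\cD$ implies it for $\cDop$ and then for the external product (neither is free --- in the paper the $\cD$ versus $\cDop$ comparison is only obtained later, via McGerty--Nevins's global-dimension criterion in Corollary \ref{left-right-local}, and product-affinity is essentially the same kind of assertion you are trying to prove), or argue one factor at a time: restrict the map to $\fM\times U$ for an $\bS$-stable affine open $U$, identify the restriction with the counit of the $(\fM,\cD)$-adjunction at the plain left $\cD$-module $j_*j^{-1}\cD$ (the commuting right action of sections over $U$ rides along), apply the hypothesis there, and glue over a cover. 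Some argument of this kind is genuinely needed; as written, the forward implication does not follow from what you have asserted.
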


\begin{remark}\label{rem:bounded-enough}
Kaledin uses the bounded above derived categories $D^-(\cD\mmod)$
  and $D^-(A\mmod)$; however, this equivalent to the claim that the
  equivalence holds on bounded derived categories, by the argument of \cite[1.2]{KalDEQ}.
  \end{remark}

We will use Kaledin's result to prove Theorem A from the introduction.
To do this, we first need to establish a technical result.  As usual, we let $\fM$ be a conical symplectic resolution. Let $\scrD$ be any $\bS$-equivariant
quantization of its twistor deformation $\scrM_\eta$, and let $t$ be the coordinate on $\aone$. Let  $\cN$ be a $\scrD$-module supported on a Lagrangian
 subvariety of $\fM\subset \scrM_\eta$ for $\eta\in H^2(\fM;\Z)$ (in
 the sense that its pullback to the complement of this Lagrangian is
 zero).  

\begin{lemma}\label{minimal-polynomial}
There exists a nonzero polynomial $q(x)\in \C[x]$ such that $q(h^{-1}t)\in\scr{A}_{\eta}$ acts by zero on $\cN$.
 \end{lemma}
 
\begin{proof}
Let $\scr L$ be the twistor line bundle on $\scrM_\eta$, i.e.\ the line bundle 
satisfying the statement of Proposition \ref{twistor def}, 
and let $u\colon \mathsf{Tot}(\scr L^\times)\to\scrM_\eta$ be the projection.  Then the total space
$\mathsf{Tot}(\scr L^\times)$ is symplectic, and the fiberwise $\C^*$-action is Hamiltonian
with moment map $t$, where $t$ is the coordinate on $\aone$, and the map $u$ coinduces the Poisson structure
on $\scrM_\eta$. 

Since the quantization $\scr D$ is $\bS$-equivariant, its period will be of the form
$[\omega_{\scr M_\eta}] +h\la = t\eta + h\la$ for some $\la \in H^2(\scr M_\eta; \C)$. 
Let $\scr{U}$ be the quantization of
$\mathsf{Tot}(\scr L^\times )$ with period $u^*\la$.  As noted
by Bezrukavnikov and Kaledin \cite[6.2]{BK04a}, the algebra $\scr{U}$
carries a $\C^*$-equivariant structure for the fiberwise action, commuting with the
$\bS$-equivariant structure.  
By Proposition \ref{Hamilton}, $\scr{U}$ has a quantized
moment map for the $\C^*$-action; choose one, and
let $\tau\in \secs(\scr{U}[\hmon])$ be the image of the generator $y$ of $\operatorname{Lie}(\C^*)$.
By the definition of a non-commutative
moment map, $y-\tau$ commutes with the action of any $\C^*$-invariant
section of $\scr{U}$ on any
$\C^*$-equivariant 
module over this algebra.

As noted in the proof of Proposition \ref{dui-heck}, the invariant pushforward
$(u_*\scr{U}[\hmon])^{\C^*}$ is the quantization of
$\scrM_\eta$ with period $h\la+t\eta$, and is therefore isomorphic to our given 
quantization $\scr D$.  Thus, we have an equivalence
between good $\bS$-equivariant $\scrD$-modules and $\bS\times\C^*$-equivariant
$\scr{U}[\hmon]$-modules, induced by the adjoint functors $u^*$ and
$u_*^{\C^*}$.  

Recall that we are given a $\scrD$-module $\cN$ on $\scrM_\eta$ supported on a
Lagrangian subvariety of $\fM$.  Thus, $u^*\cN$ is supported on the
preimage of $\supp(\cN)$ which is Lagrangian. 
By a finiteness theorem of Kashiwara and Schapira \cite[7.1.10]{KSdq},
the self Ext-sheaf of $(u^*\cN)^{\an}$ is perverse, and in particular, its
endomorphism algebra commuting with $\bS$ is finite dimensional over $\C$.  By
Theorem \ref{th:GAGA}, the same holds for $u^*\cN$.\footnote{One can also use
the theory of Euler classes from \cite{KSdq}, which we will discuss
later in Section \ref{sec:index}, to show that such a module has finite
length, imitating the usual proof for D-modules \cite[3.1.2(ii)]{HTT}.  We thank the referee for this observation.}
As with any element of any finite dimensional algebra over $\C$, the
endomorphism $y-\tau$ has a minimal polynomial $q(x)$
such that $q(y-\tau)=0$.  
Since the structure map
$\pi^{-1}\fS_\aone\to \scrD$ is given by $t\mapsto h\tau$, we thus have
that the action of $\tau=h^{-1}t$  on 
the reduction $\cN=(u_*u^*\cN)^{\C^*}$ satisfies the same polynomial equation.
\end{proof}

\begin{remark}
Lemma \ref{minimal-polynomial} almost
certainly holds for general $\eta\in H^2(\fM;\C)$ rather than just 
classes in the image of integral cohomology; however the proof uses the line bundle $\cL$ in a very strong way.
Proving the general case will require understanding the theory of twistor
deformations of line bundles over gerbes.
\end{remark}

\begin{theorem}\label{derived-local}
Fix a class $\eta\in H^2(\fM; \Z)$ such that $\scrM_\eta(\infty)$ is affine.\footnote{
See Proposition \ref{ample-affine} and the preceding paragraph for a discussion of $\scrM_\eta(\infty)$.}
Derived localization holds at $\la+k\eta$ for all but finitely many $k\in \C$.  
\end{theorem}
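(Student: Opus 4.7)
The plan is to apply Kaledin's criterion (Theorem \ref{kaledin-DL}) in the universal family over $\aone$ and then specialize. First I would let $\scr{D}$ be the unique $\bS$-equivariant quantization of $\scrM_\eta$ with period $h\la+t\eta$, set $\scr{A}:=\secs(\scr{D})$, and observe via Proposition \ref{general pullback} that the pullback $\sigma_{hk}^{*}\scr{D}$ along the section $\sigma_{P}$ with $P(h)=hk$ is exactly the quantization $\cD_k$ of $\fM$ with period $h(\la+k\eta)$ and section ring $A_{\la+k\eta}$. On the fiber product $\scrM_\eta\times_\aone\scrM_\eta$ I would form the sheaf of algebras $\scr{D}\boxtimes\scr{D}^{\op}$, let $\scr{D}_{\operatorname{diag}}$ be the diagonal pushforward of $\scr{D}$, and let $\scr{A}_{\operatorname{diag}}$ denote $\scr{A}$ as an $\scr{A}\otimes_{\C[t][[h]]}\scr{A}^{\op}$-module via multiplication. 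Kaledin's construction then furnishes a canonical universal map
\[
\alpha\colon \LLoc(\scr{A}_{\operatorname{diag}})\longrightarrow \scr{D}_{\operatorname{diag}},
\]
whose specialization along $\sigma_{hk}$ is, by flat base change, precisely the Kaledin map for $\cD_k$ that Theorem \ref{kaledin-DL} identifies with derived localization at $\la+k\eta$.

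The core of the argument is to control the cone $\scr{C}$ of $\alpha$, which has only finitely many nonzero cohomology sheaves since $\cD$ has finite homological dimension bounded by $\dim\fM$. I would establish two support properties. First, a standard characteristic-variety argument shows that $\scr{C}$ is supported on the diagonal $\scrM_\eta\hookrightarrow\scrM_\eta\times_\aone\scrM_\eta$: both $\scr{D}_{\operatorname{diag}}$ and $\LLoc(\scr{A}_{\operatorname{diag}})$ have associated graded supported on the classical diagonal. Second, $\scr{C}$ vanishes on $U\times_\aone U$, where $U:=\scrM_\eta\smallsetminus\fM$. Since $\scrM_\eta(\infty)=U/\bS$ is affine by Proposition \ref{ample-affine} and our hypothesis, and $\bS$ is affine, $U$ itself is affine; consequently $\scr{D}|_U$ is a quantization of an affine scheme, for which derived localization is tautological, and Kaledin's map is a quasi-isomorphism there.

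Combining the two support properties, each cohomology sheaf $H^i(\scr{C})$ is a good $\scr{D}\boxtimes\scr{D}^{\op}$-module supported on the diagonal inside the central fiber, that is, on the Lagrangian $\fM\hookrightarrow\fM\times\fM\subset\scrM_\eta\times_\aone\scrM_\eta$. The left and right actions of the central element $h^{-1}t\in\scr{A}$ coincide on such a module, and a straightforward bimodule variant of Lemma \ref{minimal-polynomial}, whose proof goes through because the Kashiwara-Schapira finiteness theorem yields finite-dimensional endomorphism algebras for modules with Lagrangian support in the central fiber, produces a nonzero polynomial $p_i(x)\in\C[x]$ with $p_i(h^{-1}t)H^i(\scr{C})=0$. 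Let $E\subset\C$ be the finite union of the root sets of the $p_i$. For $k\notin E$, the generalized $h^{-1}t$-eigenspace decomposition of $H^i(\scr{C})$ contains no eigenvalue $k$, so specialization at $t=hk$ annihilates every cohomology sheaf. Hence $\sigma_{hk}^{*}\alpha$ is a quasi-isomorphism, and Theorem \ref{kaledin-DL} yields derived localization at $\la+k\eta$.

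The principal technical obstacle is the vanishing of $\scr{C}$ on $U\times_\aone U$: one must identify the restriction of the universal map $\alpha$ with the Kaledin map for the affine quantization $\scr{D}|_U$, which requires care in commuting $\bS$-invariant global sections both with restriction to $U$ and with flat base change over $\aone$. Both steps use the affineness furnished by Proposition \ref{ample-affine} in an essential way.
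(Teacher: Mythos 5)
Your overall strategy is the same as the paper's: reduce to Kaledin's criterion (Theorem \ref{kaledin-DL}), spread the diagonal bimodule over the twistor line, show the cone of the universal Kaledin map is concentrated over $0\in\aone$ with Lagrangian support in the central fiber, and kill its specializations away from the roots of the minimal polynomial of $h^{-1}t$ supplied by Lemma \ref{minimal-polynomial}. But there is one genuine gap: your assertion that the cone $\scr{C}$ ``has only finitely many nonzero cohomology sheaves since $\cD$ has finite homological dimension'' is unjustified, and it sits exactly where the real difficulty is. The functor $\LLoc$ is computed from a projective resolution over the section ring, and it is bounded essentially only when derived localization already holds: by \cite[1.1]{MN} (quoted in the paper), $\LLoc(A_\la)$ is unbounded as soon as $A_\la$ has infinite global dimension, and finite homological dimension of the sheaf $\cD$ does not help; applied fiberwise your claim is close to circular, and applied to the family it is unproven. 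Without boundedness, your exceptional set is the union of the root sets of infinitely many polynomials $p_i$, which only excludes countably many $k$, not finitely many. The paper closes this by invoking \cite[3.3]{KalDEQ}: the specialized cone $\mathcal{P}_k$ is zero if and only if its cohomology vanishes in degrees above $-\ell$, where $\ell$ is the finite global dimension of $\cD_k\boxtimes_{\C((h))}\cD_k^{\op}\mmod$ (finite because $\fM\times\fM$ is smooth), so only finitely many of the minimal polynomials, hence finitely many roots, are relevant.

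Two smaller corrections to your setup. First, the cohomology sheaves of $\scr{C}$ are not supported on the diagonal: the associated graded of $\scrA_{\operatorname{diag}}$ only controls the support through the affinization, so the correct statement is support in the relative Steinberg variety, i.e.\ in the central fiber on $\fZ=\fM\times_{\fM_0}\fM$; this is still Lagrangian by semi-smallness of the resolution, so your minimal-polynomial step survives with this replacement. Second, since $\cD_k^{\op}$ has period $-h(\la+k\eta)$ by Proposition \ref{opp-zero}, the family carrying the second tensor factor is the twistor deformation for $-\eta$: the universal map lives on $\scrM_\eta\times_{\aone}\scrM_{-\eta}$ (the twistor family of $\fM\times\fM$ for the class $(\eta,-\eta)$), not on $\scrM_\eta\times_{\aone}\scrM_\eta$. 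With your identification of the two copies of $t$, specializing at $t=hk$ would yield $\cD_k\boxtimes\cD_{-k}^{\op}$ rather than $\cD_k\boxtimes\cD_k^{\op}$, so the ``flat base change'' identification with the Kaledin map for $\cD_k$ would fail as written.
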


\begin{proof}
  Let $\cD_k$ be the quantization with period $h(\la+k\eta)$.  By
  Theorem \ref{kaledin-DL}, we need to show that the map
  $\LLoc((A_k)_{\operatorname{diag}})\to
  (\cD_k)_{\operatorname{diag}}$ is an isomorphism for all but
  finitely many $k$; let $\mathcal{P}_k$ denote the cone of this map.  Let $\scr{D}$ be the quantization of
  $\scrM_\eta$ with period $t\eta + h\la$, and let
  $\sigma_k\colon\Delta\to\aone\times\Delta$ be the map associated to the
  polynomial $hk$ as in Section \ref{sec:period}.  Proposition
  \ref{general pullback} tells us that $\sigma_k^*\scrD\cong \cD_k$,
  which implies that the morphism
  $\LLoc((A_k)_{\operatorname{diag}})\to
  (\cD_k)_{\operatorname{diag}}$ on $\fM\times\fM$ is the pullback of
  the morphism $\phi\colon \LLoc(\scr{A}_{\operatorname{diag}}) \to
  \scr{D}_{\operatorname{diag}}$ on
  $\scrM_\eta\times_{\aone}\scrM_{\eta}$.  It follows that
  $\mathcal{P}_k \cong\sigma_k^*\scr{P}$ where $\scr{P}$ is the cone
  of $\phi$.

We now apply Lemma \ref{minimal-polynomial} to the symplectic resolution
$\fM\times \fM$, with sheaf $\cN = \scr{P}$ and cohomology class $(\eta,\eta)$.  The associated
twistor deformation is $\scrM_\eta\times_{\aone}\scrM_{\eta}$. 
The sheaf we will apply it to is $\mathcal{H}^j(\scr{P})$.  This is supported on the preimage of
$0\in \aone$, since all fibers over non-zero points of $\aone$ are
affine varieties, where obviously the map of interest is an
isomorphism.  If we localize $R(A _{\operatorname{diag}})$ to a
sheaf on $\fM_0\times\fM_0$, the result is supported on the diagonal.  In
fact, its classical limit is the structure sheaf of the diagonal
$\Delta_{\fM_0}$. Thus, its localization is supported the preimage of
the diagonal, that is, on the Steinberg variety
$\fM\times_{\fM_0}\fM\subset \fM\times \fM$. Since
$(\cD_k)_{\operatorname{diag}}$  is also supported on diagonal
$\Delta_{\fM}$, the sheaf $\scr{P}$ is also supported on the Steinberg.  Since
any symplectic resolution is semi-small, the Steinberg variety is isotropic.
That means that either the Steinberg is Lagrangian or $\scr{P}=0$, so the hypotheses
of Lemma \ref{minimal-polynomial} are satisfied.

If $k$ is not a root of the polynomial $p$ provided by the Lemma, then $h^{-1}t-k$ acts
invertibly on $\mathcal{H}^j(\scr{P})$, so the specialization of 
this sheaf at $k$ is trivial, and so we have
$\sigma_k^*\mathcal{H}^j(\scr{P})=0$.  Thus, for any integer $m$, we
can find a polynomial (the product of those for each individual
homological degree) where $\mathcal{H}^j(\mathcal{P}_k)$ is trivial
for $j\geq -m$.

  By \cite[3.3]{KalDEQ}, $\mathcal{P}_k$ is trivial if and only if it
  it has trivial homology in degrees above $-\ell$ where $\ell$ is the global dimension of
  $\cD_k\hat\boxtimes_{\C((h))}\cD_k^{\operatorname{op}}\mmod$ (which is
  finite since the same is true for $\fS_{\fM\times \fM}$).  By the
  argument above, this happens at all $k$ other than the roots of
  a polynomial with complex coefficients, and thus for all but finitely many $k$.
\end{proof}

\section{\texorpdfstring{$\mathbb{Z}$}{Z}-algebras}
\label{sec:Z-algebra}
A  {\bf \boldmath$\mathbb{Z}$-algebra} is an algebraic structure that mimics the
homogeneous coordinate ring of a projective variety in a
noncommutative setting. 
More precisely, it is an $\N\times\N$-graded vector space
$$Z = \bigoplus_{k \geq m\geq 0} {}_kZ_m$$
with a product that satisfies the condition ${}_kZ_\ell\cdot {}_\ell Z_m\subset {}_kZ_m$ for all $k\geq \ell\geq m$
and ${}_kZ_\ell\cdot {}_{\ell'} Z_m = 0$ if $\ell\neq\ell'$.
While $Z$ itself will usually not have a unit, each algebra ${}_kZ_k$ is
required to be unital; we will also always assume that ${}_kZ_{k}$ is
Noetherian, that ${}_kZ_{m}$ is finitely generated as a left
${}_kZ_{k}$-module and as a right ${}_mZ_m$-module, and that there exists a natural number $r$ such that $Z$ is
generated as an algebra by those ${}_kZ_m$ with $k-m\leq r$.
A left module over the $\mathbb{Z}$-algebra $Z$ is an $\mathbb{N}$-graded vector space $$N = \bigoplus_{m\geq 0} {}_mN$$
with an action of $Z$ such that ${}_kZ_m\cdot {}_mN\subset {}_kN$ for all $k\geq m$
and ${}_kZ_m \cdot {}_{m'} N = 0$ if $m \ne m'$.
It is called {\bf bounded} if ${}_mN = 0$ for $m\gg 0$.  

\begin{remark}
Some authors also
discuss {\bf torsion} modules, which are those isomorphic to a direct limit of bounded
modules.  We will only be interested in finitely generated $Z$-modules, and in this setting 
the conditions of being bounded and being torsion are equivalent.
\end{remark}

We also assume that $S$ is affine.
Our goal is to define a $\mathbb{Z}$-algebra whose localized module
category is equivalent to $\Dmod$.

\subsection{Quantizations of line bundles}
In this section, we continue the assumptions given above, though
projectivity of $\fX$ and the affinity of $S$ are not needed in this subsection.  In addition to modules over quantizations, we will also need to consider bimodules over pairs of quantizations.
Let $\cQ$ and $\cQ'$ be $\bS$-equivariant quantizations.
A $\cQ'-\cQ$ bimodule is a sheaf of modules over the sheaf  $\cQ'\otimes_{\C[[h]]}\cQ^{\op}$
of algebras on $\fX$.  Such a bimodule is called {\bf coherent} if it is a quotient
of a bimodule which is locally free of finite rank.  
The most important examples will be quantizations of line bundles.

Let $\cL$ be an $\bS$-equivariant line bundle on $\fX$ and let $\eta\in H^2_{DR}(\fX/S; \C)$ 
be the image of the Euler class of $\cL$.  
Fix an $\bS$-equivariant quantization $\cQ_0$ with period
$[\omega_{\fX}]+h\la$, and for any integer $k$, let $\cQ_k$ be
the quantization with period $[\omega_{\fX}] + h(\la+k\eta)$.

\begin{proposition}\label{line bundles}
For every pair of integers $k$ and $m$,
there exists a coherent
$\bS$-equivariant $\cQ_{k}-\cQ_{m}$ bimodule $\kmi$ with an isomorphism\footnote{This is an isomorphism of $\fS_\fX-\fS_\fX$ bimodules,
where the two actions of $\fS_\fX$ on $\cL^{k-m}$ are the same.} $\kmi/(h\cdot \kmi)\cong\cL^{k-m}$.
This bimodule is unique up to canonical isomorphism,
and tensor product with $\kmi$ defines an equivalence of categories
from $\cQ_m\mmod$ to $\cQ_k\mmod$. 
\end{proposition}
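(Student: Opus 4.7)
The plan is to construct $\kmi$ from the twistor family and to deduce uniqueness from a bimodule version of the Bezrukavnikov--Kaledin period classification. The key observation is that a coherent $\bS$-equivariant $\cQ_k$--$\cQ_m$ bimodule $\cT$ with $\cT/h\cT \cong \cL^{k-m}$ carries a ``period'' whose leading term is determined by the periods of $\cQ_k$ and $\cQ_m$ together with $(k-m)\eta = c_1(\cL^{k-m})$; these data are already fixed by the hypotheses, so at most one such bimodule (up to canonical isomorphism) can exist.

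For existence I would use the $\cs$-equivariant quantization $\scr{U}$ of $\mathsf{Tot}(\scrL^{\times})$ that appears in the proof of Lemma \ref{minimal-polynomial}, where $\scrL$ is the Poisson line bundle on $\scrM_\eta$ from Proposition \ref{twistor def}. The pushforward $u_{\ast}\scr{U}$ decomposes under the fiberwise $\cs$-action as $\bigoplus_d \scr{T}_d$; the invariant piece $\scr{T}_0$ is the $\bS$-equivariant quantization of $\scrM_\eta$ with period $t\eta + h\la$, while each $\scr{T}_d$ is a $\scr{T}_0$-bimodule reducing mod $h$ to $\scrL^d$. Because $\tau = h^{-1}t$ is a quantized moment map for the $\cs$-action, the identity $[\tau,x] = d\cdot x$ on $\scr{T}_d$ translates into $t\cdot x - x\cdot t = hd\cdot x$; therefore the left specialization $\sigma_{hk}^{\ast}$ and the right specialization $\sigma_{hm}^{\ast}$ (see Proposition \ref{general pullback}) are mutually compatible precisely when $d = k - m$. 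Applying both simultaneously to $\scr{T}_{k-m}$ yields the desired coherent $\bS$-equivariant $\cQ_k$--$\cQ_m$ bimodule $\kmi$ with $\kmi/h\kmi \cong \cL^{k-m}$, with the $\bS$-structure inherited from $\scr{U}$.

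Given existence and uniqueness, the equivalence of categories follows formally: the tensor product $\kmi \otimes_{\cQ_m} {}_m\!\cT_k$ is a $\cQ_k$-bimodule quantizing the trivial line bundle $\fS_\fX$, so by the uniqueness part applied in the case $k = m$ it is canonically isomorphic to $\cQ_k$. Hence $-\otimes_{\cQ_k}\kmi$ and $-\otimes_{\cQ_m}{}_m\!\cT_k$ are inverse equivalences. The main obstacle is the classification/uniqueness step: the Bezrukavnikov--Kaledin result in \cite{BK04a} classifies quantizations of the structure sheaf, and extending it to line-bundle bimodules requires rerunning their Harish-Chandra deformation-theoretic argument with the obstructions now living in Hochschild-type cohomology groups attached to $\cL^{k-m}$. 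These groups reduce to cohomology of $\fS_\fX$ using the invertibility of $\cL^{k-m}$, so they vanish in the relevant degrees under the standard cohomological hypotheses on $\fX$; canonicity of the isomorphism is then enforced by requiring it to lift the identity on $\cL^{k-m}$ modulo $h$.
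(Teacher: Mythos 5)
Your existence construction takes a genuinely different route from the paper's: the paper builds $\kmi$ by identifying $\bS$-equivariant invertible left $\cQ_k$-modules with classes in $H^1_\bS(\fX;\cQ_k^\times)\cong H^1_\bS(\fX;\fS_\fX^\times)$, and then the real work is a first-order deformation computation showing that twisting $\cQ_k$ by the cocycle of $\kmi$ shifts the period by $\delta\circ\b_*([\cL^{k-m}])=-c_1(\cL^{k-m})$, so that $\End_{\cQ_k}\!\big(\kmi\big)^{\op}\cong\cQ_m$. Your weight-decomposition of the quantized total space of the twistor line bundle is appealing (it is close in spirit to how the paper later pulls these bimodules back from $\scrM_\eta$ in Proposition \ref{asympt-local}), but as written it has gaps exactly where that computation does the work. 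First, the period bookkeeping: which pair of quantizations your specialized weight-$(k-m)$ piece connects depends on the normalization of the quantized moment map $\tau$ (unique only up to an additive constant, Proposition \ref{Hamilton}) and on the identity $t=h\tau$, together with the assertion that the weight-zero pushforward has period $t\eta+h\la$; the latter is borrowed from the proof of Proposition \ref{dui-heck}, which is proved for the canonical quantization and the canonical moment map, whereas your $\scr{U}$ has period $u^*\la$. An unresolved constant shift here would give $\cQ_{k+c}$--$\cQ_{m+c}$ bimodules rather than $\cQ_k$--$\cQ_m$ ones; pinning that constant down is precisely the $-c_1$ normalization the paper's boundary-map diagram establishes. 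Second, and more fundamentally, your uniqueness step is asserted rather than proved: no notion of a ``period'' of a bimodule is available in the paper or in \cite{BK04a}, and the sketch (rerun the Harish-Chandra argument with Hochschild-type obstructions, reduce to $\fS_\fX$ by invertibility) is essentially the statement to be proven; it also does not by itself identify the constraint linking the two periods to $c_1(\cL^{k-m})$, with its sign. Since your equivalence-of-categories step invokes uniqueness at $k=m$, this gap propagates.

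There is also a scope problem. The proposition is stated, and later used, in the general relative setting of this section --- $\fX$ projective over an affine base with $S$ affine --- including $\fX=\scrM_\eta$ over $\aone$ and $\fX=\scrM$ over $H^2(\fM;\C)$, which are exactly the cases needed in Proposition \ref{asympt-local} and in Section \ref{twisting bimodules}. Your construction requires a twistor deformation of $\fX$ carrying a Poisson line bundle and a quantization of its $\cs$-total space; the paper supplies these (Proposition \ref{twistor def} and \cite[6.2]{BK04a}) only when $\fX$ is a conical symplectic resolution over a point, so even granting the details above, the argument does not prove the statement in the generality in which it is applied, whereas the paper's cohomological construction works uniformly over a base. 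A final caution: you may only borrow the setup of the proof of Lemma \ref{minimal-polynomial} (the quantization $\scr{U}$, its $\cs$-structure and moment map), not its conclusion, since that lemma relies on Theorem \ref{th:GAGA} and hence, through Theorem \ref{Zloc}, on the present proposition.
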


\begin{proof}
By the usual sheaf theory, the locally free $\bS$-equivariant modules of rank $1$ over
$\cQ_k$ are in bijection with $H^1_\bS(\fX;\cQ_k^\times)$.
We have a surjective map  of
sheaves of groups
$\cQ_k^\times\longrightarrow \fS_{\fX}^\times$.  The kernel of this
map is $1+h\cQ_k$. As a sheaf of groups, this possesses a filtration
by the subgroups $1+h^n\cQ_k$, with successive quotients isomorphic to the
structure sheaf $\fS_\fX$ considered as a sheaf of abelian groups,
since \[(1+h^na)(1+h^nb)\equiv 1+h^n(a+b)\pmod{h^{n+1}}.\]
Since $\fS_\fX$ has vanishing higher cohomology, an argument as in
\cite[2.12]{KR} shows the inverse limit $1+h\cQ_k$ has vanishing
higher cohomology as well. By the Hochschild-Serre spectral sequence, its higher equivariant
cohomology also vanishes.
In particular, we have
an induced isomorphism \[H^1_\bS(\fX;\cQ_k^\times)\cong
H^1_\bS(\fX;\fS_{\fX}^\times).\] 
The line bundle $\cL^{k-m}$ is classified by an element $[\cL^{k-m}
]$ of $H^1_\bS(\fX;\fS_{\fX}^\times)$, and we define $\kmi$ to be the
locally free rank $1$ left $\cQ_k$-module given by the corresponding
element $[\kmi ]$ of $H^1_\bS(\fX;\cQ_k^\times)$.  The structure maps of
$\kmi/(h\cdot \kmi)$ as a $\fS_\fX$-module are just the reduction mod
$h$ of the structure maps of $\kmi$, which tells us that $\kmi/(h\cdot
\kmi)\cong \cL^{k-m}$.

Now consider the sheaf of $\pi^{-1}\fS_S[[h]]$-algebras $\cQ'=\End_{\cQ_k}\!\!\big(\kmi\big)^{\op}$.
This sheaf is an $\bS$-equivariant quantization of $\fX$, and it is obtained from $\cQ_k$
by twisting the transition functions by the 1-cocycle representing $\kmi$.
We want to show that this quantization is isomorphic to $\cQ_m$.
In order to show this, it suffices to calculate the period of
$\cQ'$ and see that it agrees with that of $\cQ_m$.  

If we can show this in the case where $S$ is a point,
then it will imply that these periods agree after pullback to every
single point in $S$.  Any two sections of $H^2_{DR}(\fX/S; \C)$
that agree after pullback to every point in $S$ are the same.  
Thus we can assume that $S=\Spec\C$.  Since $\cQ'$ is $\bS$-equivariant, its period must be of the
form $[\omega_{\fX}] + h\la'$ by Proposition \ref{S-structure}.  By
definition, the period is the obstruction to lifting the torsor
corresponding to $\cQ'$ to $\mathsf{G}$ in the notation of
Bezrukavnikov and Kaledin \cite[(3.2)]{BK04a}.  
 The
class $\la'$ is determined by the reduction $\cQ'/h^3\cQ'$, since the
obstruction to lifting this to $\mathsf{G}_3$ is $[\omega_{\fX}] +
h\la' \in H^2_{DR}(\fX/S; \C)\otimes \C[h]/(h^3).$

As shown in the proof of \cite[1.8]{BK04a}, the set of
quantizations of a given symplectic structure up to second order is a torsor over
$H^1_{DR}(\fX;\EuScript{H})$ where $\EuScript{H}$ is, in the language of \cite{BK04a},
the localization
$\mathsf{Loc}(\mathcal{M}_s,\mathsf{H})$ 
of the module $\mathsf{H}$ of Hamiltonian vector fields on the formal
disk for the Harish-Chandra torsor.  

It is helpful to think about the classical rather than Zariski
topology in order to understand this action.  As we discuss in Section
\ref{GAGA}, associated to $\cQ$, there is a quantization of the
structure sheaf of the complex manifold $\fM^{\an}$, which we denote
$\cQ^{\an}$, and analytic versions of all the sheaves we have considered.
Since the higher
pushforwards $R^n\pi_*\fS_{\fX}$ or $R^n\pi_*\fS^{\an}_{\fX^{\an}}$ vanish, we have an isomorphisms of groups
\[H^1_{DR}(\fX;\EuScript{H})\cong H^2_{DR}(\fX; \C)\qquad
H^1_{DR}(\fX;\EuScript{H}^{\an})\cong H^2_{DR}(\fX^{\an}; \C)\] via the
boundary map $\delta$ for the short exact sequence of sheaves \[0\to
\fS_{\fX}\longrightarrow
J_\infty\fS_{\fX}\overset{H}\longrightarrow\EuScript{H}\to 0,\] (or
its counterpart in the classical topology).  By a classical result of
Grothendieck, algebraic and analytic de Rham cohomology of the
structure sheaf agree, so the same holds for
$H^1_{DR}(\fX;\EuScript{H})\cong
H^1_{DR}(\fX^{\an};\EuScript{H}^{\an})$.

The classical topology has the advantage that the de Rham cohomology of
$\fS_{\fX}^{\an}$ and $\EuScript{H}^{\an}$ agree with the usual sheaf
cohomology of their flat sections, which are locally constant
functions and Hamiltonian vector fields $\mathcal{H}^{\an}$
respectively; thus we can think of an element of $H^1_{DR}(\fX
^{\an};\EuScript{H}^{\an})\cong H^1(\mathcal{H}^{\an})$ as a 1-cocycle
in Hamiltonian vector fields.  In the torsor action, a 1-cocycle acts
on a first order quantization $\cQ^{\an}/h^3\cQ^{\an}$ by twisting it via the
action of $\EuScript{H}^{\an}$ on $\cQ ^{\an}/h^3\cQ ^{\an}$ by
$X\cdot a= a+h^2X(\bar a)$ where $X(\bar a)$ denotes the usual action of
a vector field on the function $\bar a$, which is the image of $a$ in
$\cQ ^{\an}/h\cQ^{\an} \cong \fS_{\fX}^{\an}$.  Note that this does
not change the underlying Poisson bracket.
The period mod $h^2$ changes by the image under the boundary map
$h\delta$. Note that the period map is normalized so that the $n$th
order describes the $(n+1)$st order of the quantization; for example,
the 0th order part, the symplectic form, describes the 1st order
part of the quantization.

Now, we have a map of abelian groups $\b\colon (\cQ_k^{\an})^\times\to
\EuScript{H}^{\an}$ uniquely determined by $a^{-1}qa=q+h^2\b(a)(\bar q)$,
which thus matches actions on  $\cQ_k^{\an}/h^3\cQ_k^{\an}$.  Thus, when we twist
by a 1-cocycle in $(\cQ_k^{\an})^\times$, this is the same as twisting by its
image under $\b$.
That is, the period mod $h^2$ of $\End_{\cQ_k^{\an}}\!\!\big(\kmi^{\an}\big)^{\op}$
is $[\omega_{\fX}] + h(\la+k\eta+\delta\circ \b_*([\cL^{k-m}]))$ where
$\b_*$ is the induced map $H^1(\cQ_k^\times) \to
H^1((\cQ_k^{\an})^\times)\to H^1(\mathcal{H}^{\an})\cong H^1_{DR}(\EuScript{H})$
induced by the map $\b\colon (\cQ_k^{\an})^\times\to \mathcal{H}^{\an}$.

Now, we calculate that \[aqa^{-1}= q-a^{-1}[a,q]\equiv q-h^2\{\log
\bar a,q\}\pmod {h^3}\] so $\beta(a)=-H(\log \bar a)=-H(\bar a)/a$.
Thus, we wish to understand the map induced on the composition
$\delta\circ \beta$ in first cohomology.  
 Consider the
diagram of sheaves in the analytic topology (we leave off superscripts
to avoid clutter)
with short exact rows, 
along with the relevant piece of the associated long exact sequences:
\[\tikz[ thick,->]{\matrix[row sep=11mm,column sep=16mm,ampersand
    replacement=\&]{
\& \& \node (a) {$\cQ^\times_k$}; \\
\node (b) {$\Z_{\fX}$};\& \node (d) {$\fS_{\fX}$};\&  \node (e) {$\fS_{\fX}^\times$}; \\
\node (f) {$\pi^{-1}\fS_{S}$};\& \node (g) {$\fS_{\fX}$};\&  \node (h) {$\EuScript{H}$}; \\
};
\draw (a)--(e);
\draw (b)--(d);
\draw (d)--(e)  node[ above,midway]{$\exp$};
\draw (f)--(g);
\draw (g)--(h)  node[ below,midway]{$H$};
\draw (b)--(f) node[ left,midway]{$-1$};
\draw (d)--(g) node[ left,midway]{$-1$};
\draw (e)--(h)  node[right,midway]{$-H\circ \log$};
}\hspace{.3in}\tikz[ thick,->]{\matrix[row sep=11mm,column sep=16mm,ampersand
    replacement=\&]{
\node (a) {$H^1(\fX;\cQ^\times_k)$}; \&\\
 \node (e) {$H^1(\fX;\fS_{\fX}^\times)$}; \&\node (b) {$H^2(\fX;\Z)$};\\
 \node (h) {$H^1(\fX;\EuScript{H})$}; \& \node (f) {$H^2(\fX;\pi^{-1}\fS_{S})$};\\
};
\draw (a)--(e);
\draw (e)--(b) node[above,midway]{$c_1$};
\draw (h)--(f) node[below,midway]{$\delta$};
\draw (b)--(f) node [right,midway]{$-1$};
\draw (e)--(h) node[left,midway]{$-H\circ \log$};
}\]

This shows that \[\delta\circ
\b([\cL^{k-m}])=-c_1(\cL^{k-m})=(m-k)\eta,\] so by our previous calculation
$\End_{\cQ_k}\!\!\big(\kmi\big)^{\op}$ and $\cQ_m$ have identical
periods and thus are isomorphic as
$\pi^{-1}\fS_{S}[[h]]$-algebras.  

Also, we wish to show that on
$\kmi/(h\cdot \kmi)$, the quotient $\cQ_m/h\cQ_m\cong
\fS_\fX$ acts by the usual module structure on $\cL^{k-m}$.  This is a
local question, so we may assume that the line
bundle $\cL$ is trivial, in which case, $\cQ_k\cong \kmi\cong
\cQ_m$ with the left and right actions just being left and right
multiplication, which both coincide with the usual $\fS_\fX$-action
after killing $h$.

Of course, $\kmi\otimes_{\cQ_m}{}_m\cT_k$ is a quantization of
$\cL^{k-m}\otimes_{\fS_\fM}\cL^{m-k}\cong \fS_\fM$, so by uniqueness,
$\kmi\otimes_{\cQ_m}{}_m\cT_k\cong \cQ_k$, and tensor product is
indeed an equivalence.
\end{proof}

\begin{remark}
In the next proposition and later in Section \ref{sec:bimodules} we will want to vary the periods of the quantizations in more than one-dimensional families, so we will use an alternate notation and label the quantizations and bimodules by elements of $H^2_{DR}(\fX/S; \C)$ instead of integers.
In other words, the quantization $\cQ_k$ will be written $\cQ_{\la+k\eta}$ and the bimodule denoted ${}_1\cT_0$ in the notation of Proposition \ref{line bundles} will be written ${}_{\la+\eta}\cT_{\la}$. 
\end{remark}

We conclude this section by studying quantizations of line bundles in
the context of Hamiltonian reduction.  Let $(\fX, \cQ)$ be a
quantization with a Hamiltonian action of a complex algebraic group $G$.
For any $\xi\in\chi(G)$, let $\cL_\xi$ be the line bundle on $\fX_{\red}$ descending from
the trivial bundle on $\fX$ with $G$-structure given by $\xi$.
Fix a quantized moment map $\eta$ for the action of $G$ on and a
pair of elements $\xi,\xi'\in\chi(\mg)$, and let $\cQ_{\red} = \cQ_{\mathsf{K(\xi)}}$ and
$\cQ_{\red}' = \cQ_{\mathsf{K(\xi')}}$ be the corresponding reductions.  Consider the
$\cQ_{\red}'-\cQ_{\red}$ bimodule 
\[{_{\xi'}\mathcal{S}_{\xi}}:=\psi_*(\sHom_{\cQ_{\mathfrak{U}}}(\mathcal{R}_{\xi'},\mathcal{R}_{\xi})).\]

  \begin{proposition}\label{S equals T} If $\xi'-\xi$ does not integrate to a character
    of $G$, then  $\displaystyle {_{\xi'}\mathcal{S}_{\xi}}$ is
    trivial.  If it does, then it is isomorphic to the quantization
    ${_{\mathsf{K}(\xi')}\cT_{\mathsf{K}(\xi)}}$ of $\cL_{\xi'-\xi}$.
  \end{proposition}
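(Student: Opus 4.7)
The plan is to analyze ${}_{\xi'}\mathcal{S}_{\xi}$ by identifying its local sections explicitly, computing the mod-$h$ reduction, and then invoking the uniqueness assertion in Proposition \ref{line bundles}. Using the presentation $\cR_\xi \cong \cQ_\fU \otimes_{U(\mg)} \C_\xi$ (where $U(\mg)\to\cQ_\fU$ is given by $x \mapsto h\eta(x)$ and $\C_\xi$ is the one-dimensional module on which $x$ acts by $h\xi(x)$), Hom-tensor adjunction identifies local sections of $\sHom_{\cQ_\fU}(\cR_{\xi'}, \cR_\xi)$ with elements $s \in \cR_\xi$ satisfying $(h\eta(x) - h\xi'(x)) \cdot s = 0$ for all $x \in \mg$. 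Writing $s = q\bar 1_\xi$ and applying the quantized moment map relation $[h\eta(x), q] = h\cdot x(q)$, this condition simplifies modulo $h$ to the requirement that the class $\bar q \in \fS_\fY$ be a $G$-semi-invariant function of infinitesimal weight $\xi' - \xi$. Consequently, the classical limit $({}_{\xi'}\mathcal{S}_\xi)/h\cdot({}_{\xi'}\mathcal{S}_\xi)$ is identified with the pushforward to $\fX_{\red}$ of the sheaf of $(\xi' - \xi)$-semi-invariant regular functions on $\fY$.

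Suppose first that $\xi' - \xi$ does not integrate to a character of $G$. Since the action is free, I can work locally on $\fX_{\red}$ over opens $V$ above which the principal bundle $\fY \to \fX_{\red}$ trivializes as $V\times G \to V$; under this trivialization, semi-invariants of weight $\xi' - \xi$ factor as $\fS_V$ times semi-invariants of that weight on $G$, and the latter vanish because a non-integrating infinitesimal character is not realized by any nonzero regular function on the connected reductive group $G$. Hence the classical limit vanishes, and combined with the $h$-adic completeness of ${}_{\xi'}\mathcal{S}_\xi$---obtained by realizing it as a coherent subsheaf of $\psi_*\cR_\xi$ via $\phi \mapsto \phi(\bar 1_{\xi'})$---the topological Nakayama lemma forces ${}_{\xi'}\mathcal{S}_\xi = 0$.

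In the complementary case, where $\xi' - \xi$ integrates to a character of $G$, the classical limit is canonically identified with $\cL_{\xi'-\xi}$. A local lifting argument (lifting a local trivialization of $\cL_{\xi'-\xi}$ through the surjection from $\cQ_\fU$, then applying Nakayama) shows that ${}_{\xi'}\mathcal{S}_\xi$ is locally free of rank one as a right $\cQ_{\red}$-module and as a left $\cQ_{\red}'$-module, so it is a coherent $\bS$-equivariant $(\cQ_{\red}', \cQ_{\red})$-bimodule whose reduction mod $h$ is $\cL_{\xi'-\xi}$. By Proposition \ref{dui-heck}, $\cQ_{\red}$ and $\cQ_{\red}'$ have periods $[\omega_{\red}] + h\mathsf{K}(\xi)$ and $[\omega_{\red}] + h\mathsf{K}(\xi')$, while $c_1(\cL_{\xi'-\xi}) = \mathsf{K}(\xi'-\xi)$; the uniqueness in Proposition \ref{line bundles} then produces a canonical isomorphism ${}_{\xi'}\mathcal{S}_\xi \cong {}_{\mathsf{K}(\xi')}\cT_{\mathsf{K}(\xi)}$.

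The principal technical difficulty will be verifying the coherence and $h$-adic completeness of $\psi_*\sHom_{\cQ_\fU}(\cR_{\xi'}, \cR_\xi)$ and the pushforward-vs-reduction commutation that makes the mod-$h$ identification global, since these underlie the Nakayama arguments in both cases. Both follow from an explicit analysis on the trivializing charts of the principal $G$-bundle $\fY \to \fX_{\red}$, together with Mittag-Leffler estimates in the spirit of Lemma \ref{codimension-3}.
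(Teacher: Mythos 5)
Your proof is correct and follows essentially the same route as the paper's: both identify maps $\cR_{\xi'}\to\cR_{\xi}$ with sections of $\cR_{\xi}$ on which $\mg$ acts through the character $\xi'-\xi$, deduce vanishing when that character does not integrate to $G$, and in the integrable case recognize ${}_{\xi'}\mathcal{S}_{\xi}$ as a quantization of $\cL_{\xi'-\xi}$, concluding via the uniqueness in Proposition \ref{line bundles}. The only cosmetic difference is that you pass to the classical limit and invoke Nakayama plus $h$-adic separatedness, where the paper integrates the $\mg$-action on $\cR_{\xi}$ to a $G$-action and argues directly on the quantized sheaf.
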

  
  \begin{proof}
    First, note that the sheaf $\mathcal{R}_{\xi}$ inherits a left
    $\mg$-module structure via the action of left multiplication by
    $\eta(x)-\xi(x)$; furthermore $\mathcal{R}_{\xi}/h
    \mathcal{R}_{\xi}\cong \fS_{\fU\cap\mu^{-1}(\chi(\mg))}$, with the induced
    $\mg$-action coinciding with the natural one on
    $\fS_{\fU\cap\mu^{-1}(\chi(\mg))}$.  In particular, it integrates
    to the group $G$.

The sheaf $\sEnd(\mathcal{R}_{\xi})^{\op}$
    is naturally isomorphic to the $\mg$-invariant subsheaf of $\mathcal{R}_{\xi}$ via the
    map that takes an endomorphism over any open set to the image of 
    $\bar 1\in \mathcal{R}_{\xi}(\fU)$ .  Similarly, a map
    $\mathcal{R}_{\xi'}\to \mathcal{R}_{\xi}$ must take $\bar 1\in
    \mathcal{R}_{\xi'}(\fU)$ to a section $r$ killed by
    $\eta(x)-\xi'(x)$, that is, one on which the $\mg$-action is of
    the form
    \[x\cdot r=(\eta(x)-\xi(x))r=(\xi'(x)-\xi(x))r.\] 
Since this action must integrate to an action of the group $G$, there
can be no such maps if $\xi'-\xi$ does not integrate.  If it does,
then 
    the pushforward ${_{\xi'}\mathcal{S}_{\xi}}$ is a
    quantization of the line bundle $\cL_{\xi'-\xi}$ and thus isomorphic to
    ${_{\mathsf{K}(\xi')}\cT_{\mathsf{K}(\xi)}}$.
\end{proof}

\subsection{The quantum homogeneous coordinate ring of $\fX$}
Fix an $\bS$-equivariant quantization $\cQ$ of
$\fX$ with period $[\omega_{\fX}]+h\la\in H^2_{DR}(\fX/S; \C)[[h]]$ and an $\bS$-equivariant
line bundle $\cL$ on $\fX$ that is very ample relative to the affinization of $\fX$.
To these data we will associate a $\Z$-algebra $Z = Z(\fX, \cQ, \cL)$.
Let $\eta\in H^2_{DR}(\fX/S; \C)$ be the Euler class of $\cL$, let
$\cQ_k$ be the quantization with period $[\omega_{\fX}]+h(\la + k\eta)$,
let $\cD := \cQ[\hmon]$ and $\cD_k := \cQ_k[\hmon]$,
and let $\kmi$ be the $\cQ_{k}-\cQ_{m}$ bimodule that quantizes the line
bundle $\cL^{k-m}$.   

\begin{definition}\label{Tprime}
Let ${_{k}\cT_{m}'}:={_{k}\cT_{m}}[\hmon]$ be the $\cD_{k}-\cD_{m}$ bimodule
associated to the $\cQ_{k}-\cQ_{m}$ bimodule ${_{k}\cT_{m}}$.
\end{definition}

\begin{definition}\label{qhcr} 
Let ${}_kZ_m := \secs({_{k}\cT_{m}'})$ with 
products induced by the canonical isomorphisms
${}_k\cT'_\ell \otimes_{\cD_\ell} {}_\ell\cT'_m \cong {}_k\cT'_m$.  
We call $Z$ {\bf the quantum homogeneous coordinate ring of $\fX$.}
\end{definition}

We filter the sheaf ${}_k\cT'_m$ by setting ${}_k\cT'_m(0) = {_{k}\cT_{m}}[\hon]$
and ${}_k\cT'_m(\ell) = h^{\nicefrac{\ell}{n}}{}_k\cT'_m(0)$, and
give ${}_kZ_m$ the induced filtration; it is compatible with the 
multiplication, so it makes $Z$ into a filtered $\Z$-algebra.  

Note that the associated graded of ${}_kZ_m$ is isomorphic to
$\Gamma(\fX; \cL^{k-m})$, and for any $Z$-module $N$ with a compatible filtration, 
the associated graded of $N$ is a module over the  $\Z$-algebra
$$\bigoplus_{k\geq m \geq 0} \Gamma(\fX; \cL^{k-m}).$$
We will use without comment the obvious equivalence between modules over this $\Z$-algebra and graded modules over the section ring $R(\cL):=\bigoplus_{k\ge 0}\Gamma(\fX; \cL^{k})$.
A filtration of $N$ is called \textbf{good} if its associated graded is
a finitely generated module over $R(\cL)$.  Then $N$ has a good filtration 
if and only if it is finitely generated over $Z$.

Let $\Zmod$ be the category of finitely
generated modules over $Z$, and let $\Zmodbd$ be the full subcategory
of $\Zmod$ consisting of bounded modules. 
We define the functors $$\secs^\Z\colon\Dmod\to\Zmod\and\Loc^\Z\colon\Zmod\to\Dmod$$
by putting
$$\secs^\Z(\cN) := \bigoplus_{k\geq 0}\secs\big({_{k}\cT_{0}'}\otimes_{\cD}\cN\big)
\and
\Loc^\Z(N) := \left(\bigoplus_{k\geq 0}{}_0\cT_k'\right)\otimes_Z N.$$

\begin{lemma}
  If $N$ is finitely generated over $\Za$, then $\Loc^\Z(N)$ is
  finitely generated over $\cD$.
\end{lemma}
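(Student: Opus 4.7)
The plan is to reduce to the case where $N$ is free and cyclic, and in that case show that $\Loc^\Z$ simply extracts a shifted quantization of a line bundle. Since $N$ is an $\N$-graded finitely generated $Z$-module, it has finitely many homogeneous generators $n_1,\ldots,n_r$ sitting in degrees $m_1,\ldots,m_r$. Let $F_i$ denote the free graded left $Z$-module of rank one on a generator in degree $m_i$, so that ${}_\ell F_i = {}_\ell Z_{m_i}$. The generators determine a surjection $\bigoplus_{i=1}^r F_i \twoheadrightarrow N$. The functor $\Loc^\Z=\big(\bigoplus_k {}_0\cT'_k\big)\otimes_Z-$ is right exact as a tensor product, so it suffices to prove that each $\Loc^\Z(F_i)$ is finitely generated over $\cD$.

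The first substantive step is to identify $\Loc^\Z(F_i)$ explicitly with ${}_0\cT'_{m_i}$. Using the $Z$-bilinearity relation, any elementary tensor $x\otimes y$ with $x\in {}_0\cT'_k$ and $y\in {}_kZ_{m_i} = {}_\ell F_i$ can be rewritten as $xy\otimes 1_{m_i}$ with $xy\in {}_0\cT'_{m_i}$ (here $1_{m_i}$ denotes the generator of $F_i$), which yields the desired isomorphism of left $\cD$-modules. The second substantive step is to check that ${}_0\cT'_{m_i}$ is good: by Proposition \ref{line bundles}, ${}_0\cT_{m_i}$ is a coherent $\bS$-equivariant $\cQ_0$-module whose reduction mod $h$ is the invertible sheaf $\cL^{-m_i}$, so Nakayama shows it is locally free of rank one over $\cQ_0$; then ${}_0\cT_{m_i}[\hon]$ is a coherent $\bS$-equivariant $\cD(0)$-lattice in ${}_0\cT'_{m_i}$, placing it in $\Dmod$.

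The only point that requires any care is the identification in the middle step, since a $\Z$-algebra is non-unital and tensor products must be interpreted correctly; the rest is formal. The remainder of the argument is then immediate: a surjection of $\cD$-modules $\bigoplus_{i=1}^r {}_0\cT'_{m_i} \twoheadrightarrow \Loc^\Z(N)$ from a good module to its quotient shows that $\Loc^\Z(N)$ is finitely generated, pushing forward any coherent lattice on the source.
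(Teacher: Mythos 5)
Your proof is correct and is essentially the paper's argument: the paper also chooses a finite set of generating degrees and exhibits $\Loc^\Z(N)$ as a quotient of $\bigoplus_{k\leq K}{}_0\cT'_k\otimes_{{}_kZ_k}N_k$, concluding by coherence of the quantized line bundles ${}_0\cT'_k$. Your refinement of passing all the way to free cyclic covers and identifying $\Loc^\Z(F_i)\cong{}_0\cT'_{m_i}$ is just a more explicit bookkeeping of the same reduction, so no further comment is needed.
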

\begin{proof}
  There is some integer $K$ such that $\bigoplus_{k=0}^KN_k$ generates
  $N$.  Thus
  $\Loc^\Z(N)$ is a quotient of $\bigoplus_{k=0}^K {}_0\cT_k'
  \otimes_{{}_k\Za_{k}}N_k$.  Since the latter module is clearly
  finitely generated, the former is as well.
\end{proof}

A coherent lattice $\cN(0)$ in $\cN$ induces a filtration on $\secs^\Z(\cN)$, which is 
good because we have an injection
\[\gr\secs^\Z(\cN)\hookrightarrow \bigoplus_{m\geq 0}\Gamma(\fX;\,\,\bcN\otimes\cL^{\otimes m}),\]
where we put $\bcN := \cN(0)/\cN(-1)$.  The cokernel of this map is
bounded, since if $m\gg 0$, then $H^1(\fX;\bcN\otimes\cL^{\otimes
  m})=0$, and consequently, $\gr
\secs\big({_{m}\cT_{0}'}\otimes_{\cD}\cN\big)\cong \Gamma(\fX;\bcN\otimes\cL^{\otimes
  m})$.   This shows, in particular, that
\begin{equation}
\overline{\Loc}(\gr\secs^\Z(\cN))\cong \bcN,\label{eq:LG1}
\end{equation}
where $\overline{\Loc}$ is the usual functor sending a graded module over $R(\cL)$
to a coherent sheaf on $\fX$ by the 
localization theorem for sheaves on a
projective (over affine) variety.  
Conversely, 
a good filtration on a $Z$-module $N$ induces a lattice in $\Loc^\Z(N)$, which 
is coherent because we have 
\begin{equation}
\overline{\Loc}\big(\gr N\big)\cong \overline{\Loc^\Z(N)}.\label{eq:LG2}
\end{equation}
The functor $\Loc^\Z$ is left-adjoint to $\secs^\Z$;
let $\iota_N\colon N\to \secs^\Z\!\left(\Loc^\Z(N)\right)$ and 
$\epsilon_\cN\colon\Loc^\Z\!\left(\secs^\Z(\cN)\right)\to \cN$
be the unit and co-unit of the adjunction.  The following theorem justifies our name for $Z$.

\begin{theorem}\label{Zloc}
The co-unit $\e_\cN$ is always an isomorphism and the unit $\iota_N$ is an isomorphism
in sufficiently high degree.  Furthermore, $\Loc^\Z$ kills all bounded modules, 
thus $\secs^\Z$ and $\Loc^\Z$ are biadjoint equivalences
between $\Dmod$ and the quotient of $\Zmod$ by $\Zmodbd$.
\end{theorem}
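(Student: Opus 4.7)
The plan is to reduce this non-commutative Serre theorem to its classical counterpart for the projective variety $\fM$ over $\fM_0$ equipped with the very ample line bundle $\cL$, by systematically passing to associated gradeds via the filtration/Rees-algebra machinery of Section \ref{section ring}.

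First I would establish a filtration dictionary between the two sides, showing that $\secs^\Z$ and $\Loc^\Z$ are compatible with good filtrations in a way that identifies their associated gradeds with the classical adjoint pair $\overline{\Loc}\dashv \overline{\Gamma}$ between finitely generated graded $R(\cL)$-modules and coherent sheaves on $\fM$. Concretely, given $\cN \in \Dmod$ with coherent lattice $\cN(0)$ and $\bar\cN := \cN(0)/\cN(-1)$, the induced filtration on $\secs^\Z(\cN)$ satisfies
\[\gr\secs^\Z(\cN) \hookrightarrow \bigoplus_{m\geq 0} \Gamma(\fM;\,\bar\cN \otimes \cL^{\otimes m})\]
(as already noted in the paper), with equality in sufficiently large $m$ by the Serre vanishing $H^{>0}(\fM;\bar\cN\otimes\cL^m)=0$, available because $\fM$ is projective over the affine cone $\fM_0$. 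Dually, given a good filtration on $N \in \Zmod$, the natural lattice on $\Loc^\Z(N)$ has associated graded sheaf equal to the classical sheafification $\overline{\Loc}(\gr N)$ of the graded $R(\cL)$-module $\gr N$.

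With this dictionary in place, all three assertions follow from classical Serre theory on $\fM$. For the co-unit $\e_\cN$: its associated graded is the classical co-unit $\overline{\Loc}\,\overline{\Gamma}(\bar\cN)\to\bar\cN$, which is an isomorphism because $\cL$ is very ample; since the lattices are $h$-adically complete, a filtered map that is iso on associated graded is itself an iso. For the unit $\iota_N$: its associated graded is the classical unit $\gr N \to \overline{\Gamma}\,\overline{\Loc}(\gr N)$, which by Serre's theorem is an isomorphism in sufficiently high degree, so the same holds for $\iota_N$. For bounded modules: if $N\in\Zmodbd$, then any good filtration on $N$ has bounded $\gr N$, which is annihilated by a power of the irrelevant ideal of $R(\cL)$ and hence has vanishing sheafification $\overline{\Loc}(\gr N)=0$; thus the associated graded of the lattice on $\Loc^\Z(N)$ vanishes, forcing $\Loc^\Z(N)=0$. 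Standard Serre-quotient formalism then promotes these three statements into the asserted biadjoint equivalence $\Dmod \simeq \Zmod/\Zmodbd$.

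The principal obstacle will be verifying that $\Loc^\Z$ is genuinely compatible with filtrations, since this functor involves tensoring over the non-commutative $\Z$-algebra $Z$ against the bimodule $\bigoplus_{k\geq 0}{}_0\cT_k'$; one must check that a good filtration on $N$ together with the natural lattices on the ${}_0\cT_k$'s combine to produce a coherent lattice on $\Loc^\Z(N)$ whose associated graded is precisely $\overline{\Loc}(\gr N)$, with no higher-Tor contributions interfering. Once this and the analogous filtered-to-graded reduction for $\secs^\Z$ are secure, and the requisite completeness of the filtrations is in hand (which follows from coherence of the lattices), the rest of the argument is formal.
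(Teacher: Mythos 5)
Your proposal is correct and follows essentially the same route as the paper: the compatibility of $\secs^\Z$ and $\Loc^\Z$ with good filtrations and lattices (including $\overline{\Loc}(\gr N)\cong\overline{\Loc^\Z(N)}$) is set up first, and then each assertion is proved by passing to the associated graded, invoking the classical localization theorem for a projective-over-affine variety with a very ample bundle, and lifting back via Nakayama's lemma (equivalently, $h$-adic completeness of the lattices). The "principal obstacle" you flag is exactly the identification the paper records just before the theorem, so nothing further is needed.
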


\begin{remark}We note that this theorem is quite close in flavor to several
others in the theory of $\Z$-algebras, such as
\cite[11.1.1]{SvdB}, but these typically assume finiteness hypotheses that are too
strong for our situation.
\end{remark}

\begin{remark}
If we dropped the assumption that $S$ is affine, we would expect to be able to prove a Theorem
similar to Theorem \ref{Zloc} in which the $\Z$-algebra is replaced by a sheaf of $\Z$-algebras
over $S$.
\end{remark}

\begin{proofZloc}
Combining Equations \eqref{eq:LG1} and \eqref{eq:LG2}, we have that 
the induced map \[\overline{\epsilon_\cN}\colon
\overline{\Loc^\Z(\secs^\Z(\cN))}\to
\overline{\Loc}(\gr\secs^\Z(\cN))\cong \bcN\] 
is an isomorphism.  By Nakayama's lemma, $\epsilon_\cN$ is an isomorphism as well.  
Similarly, the map 
$$\gr(\iota_N)\colon  \bcN\to\overline{\Loc^\Z(\secs^\Z(\cN))}$$ is an isomorphism in high degree,
thus the same is true for $\iota_N$.
If $N$ is bounded, then $\overline{\Loc^\Z(N)}\cong \overline{\Loc}\big(\gr N\big)$ is the zero sheaf,
thus $\Loc^\Z(N) = 0$, as well.
\end{proofZloc}

\begin{corollary}\label{cor:D-full}
  The functor $D^b(\cD\mmod)\to D^b_{\cD\mmod}(\cD\mMod)$ is fully faithful.
\end{corollary}

\begin{proof}
Let $\cN$ be a good $\cD$-module, and let
  $N:=\secs^\Z(\cN)$.  Since $N$ is finitely generated, there is some
  $m$ such that the evaluation map
  ${}_p\Za_m\otimes_\C {_mN}\to {_pN}$ is surjective for all $p\geq m$.
  Localizing, this shows we have a surjective map
  ${}_0\cT_m\otimes_\C {_mN}\to \cN$.  Taking a classical limit 
  (possibly after increasing $m$), we
  obtain a surjection $\cL^{-m}\otimes_\C  {_mN}\to \cN /h \cN $; thus
  we have described the quantization of the familiar construction of
  such a map in algebraic geometry.  Applying this inductively, we can resolve
  $\cN$ as a complex of locally free sheaves over $\cD$, each step
  given by sums of ${}_0\cT_{m_i}$ with $m_0<m_1<m_2<\cdots$.  

By 
  taking $m_0$ sufficiently large, we can assure that for any fixed
  good $\cM$, we have
  $H^i(\fM;\cL^{m_j}\otimes_{\fS_\fM}\cM/h\cM)=0$ for all $i> 0,j\geq 0$.  Thus, we also have $\Ext^{i}_{\cD}({}_0\cT_{m_j},\cM)=0$ for all
  $i> 0,j\geq 0$.  It follows that we can use this resolution to compute $\Ext(\cN,\cM)$ in
  either $ D^b(\cD\mmod)$ or $D^b_{\cD\mmod}(\cD\mMod)$ and we see
that the results are canonically isomorphic.
\end{proof}
\subsection{\texorpdfstring{$\mathbb{Z}$}{Z}-algebras and abelian localization}\label{abelian loc}

First, we discuss some basic results that hold whenever $\fX/S$
satisfies our running assumptions for this section.
We call a bimodule between two rings {\bf Morita} if it induces a Morita equivalence between the two rings.
We call a $\Z$-algebra Z {\bf Morita} if for all $k \ge m \ge 0$ the ${}_kZ_k-{}_mZ_m$-bimodule ${}_k Z_m$ is
Morita and the natural map
\begin{equation}\label{Morita}
{}_k \Za_{k-1}\otimes {_{k-1}Z_{k-2}}\otimes
\cdots \otimes {}_{m+1} \Za_m\to{_kZ_m}
\end{equation}
is an isomorphism. In the terminology
of \cite[\S 5.4]{GS}, this means that $\Za$ is isomorphic to the Morita $\Z$-algebra
attached to the bimodules $ {}_{m+1} \Za_m$.

\begin{definition}\label{def:Z-shift}
  For any natural number $p$, let $Z[p]$ be the $\mathbb{Z}$-algebra
  defined by putting ${_kZ[p]_m}:= {_{k+p}Z_{m+p}}$.
  For any $Z$-module $N$, we define a
  $Z[p]$-module $N[p]$ by $N[p]_k = N_{p+k}$.
\end{definition}

 It is clear that $Z[p]$ is isomorphic to the $\mathbb{Z}$-algebra
  $Z(\fX, \cQ_p, \cL)$. 

\begin{proposition}\label{Morita-iff-loc}
The $\Z$-algebra $Z$ constructed in Section \ref{sec:Z-algebra} is Morita if and only if, 
for all $k\geq 0$, localization holds for $\cD_k$.
\end{proposition}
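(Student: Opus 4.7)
The proposition has two directions, both of which rest on the observation from Proposition \ref{line bundles} that tensoring with ${}_{k}\cT'_{m}$ gives an equivalence between $\cD_m\mmod$ and $\cD_k\mmod$.  Under localization, this equivalence should descend to an equivalence between $A_m\mmod$ and $A_k\mmod$ implemented by ${}_{k}Z_{m}=\Gamma({}_{k}\cT'_{m})$.  The plan is to use this observation together with Theorem \ref{Zloc} to translate between the geometric statement (localization for each $\cD_k$) and the algebraic statement (Morita-ness of $Z$).

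For the $(\Leftarrow)$ direction, suppose that abelian localization holds at each $\cD_k$, so that $\Gamma_k$ and $\Loc_k$ are mutually inverse equivalences and in particular $\Gamma_k$ is exact.  Composing the equivalence ${}_{k}\cT'_{m}\otimes_{\cD_m}-\colon \cD_m\mmod\to\cD_k\mmod$ with $\Loc_m$ and $\Gamma_k$ yields an equivalence $A_m\mmod\to A_k\mmod$; a short computation (using $\Loc_m(M)=\cD_m\otimes_{A_m}M$ and exactness of $\Gamma_k$) identifies this equivalence with ${}_{k}Z_{m}\otimes_{A_m}-$, so that ${}_{k}Z_{m}$ is a Morita bimodule.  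The multiplication isomorphisms \eqref{Morita} follow in a similar way: the sheaf-level isomorphism ${}_{k}\cT'_{\ell}\otimes_{\cD_\ell}{}_{\ell}\cT'_{m}\cong{}_{k}\cT'_{m}$, after applying $\Gamma$ and using localization at $\cD_\ell$ to commute sections with the tensor product, gives the desired isomorphism ${}_{k}Z_{\ell}\otimes_{A_\ell}{}_{\ell}Z_{m}\cong {}_{k}Z_{m}$; the case of arbitrarily many intermediate indices then follows by induction.

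For the $(\Rightarrow)$ direction, suppose $Z$ is Morita.  The shifted $\Z$-algebra $Z[k]$ is isomorphic to $Z(\fX,\cQ_k,\cL)$ and is again Morita, so it suffices to establish abelian localization at $\cD=\cD_0$.  By Theorem \ref{Zloc}, $\Loc^\Z$ and $\secs^\Z$ are inverse equivalences between $\Dmod$ and $\Zmod/\Zmodbd$.  Let $\Phi\colon A\mmod\to\Zmod$ be the Morita induction functor given by $\Phi(M)_k:={}_{k}Z_{0}\otimes_A M$, with $Z$-action coming from multiplication in $Z$.  Unwinding the tensor relations defining $\Loc^\Z$ shows that $\Loc^\Z\circ\Phi\cong\Loc$: in $\big(\bigoplus_k{}_{0}\cT'_{k}\big)\otimes_Z\Phi(M)$, every element $x\otimes(z\otimes m)$ with $x\in{}_{0}\cT'_{k}$ and $z\in{}_{k}Z_{0}$ is equivalent to $(x\cdot z)\otimes m\in{}_{0}\cT'_{0}\otimes_A M=\cD\otimes_A M$.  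Hence if the induced functor $\bar\Phi\colon A\mmod\to\Zmod/\Zmodbd$ is an equivalence, then so is $\Loc=\Loc^\Z\circ\bar\Phi$, and localization holds.

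The main obstacle is verifying that $\bar\Phi$ is an equivalence whenever $Z$ is Morita.  The key claim is that for any finitely generated $N\in\Zmod$, the natural map $\Phi(N_0)\to N$ given by the structure maps of $N$ is an isomorphism in all sufficiently large degrees, so that its kernel and cokernel are bounded.  Surjectivity in high degree follows by combining finite generation of $N$ (which forces $N_k=\sum_{i\leq L}{}_{k}Z_{i}\cdot N_i$ for $k$ exceeding the generating degree $L$) with the Morita multiplication isomorphisms \eqref{Morita}, which let us rewrite each summand in terms of ${}_{k}Z_{0}\otimes_A N_0$.  Injectivity in high degree is a standard Morita-theoretic argument using that ${}_{k}Z_{0}$ is a Morita bimodule.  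This noncommutative analogue of Serre's theorem relating graded modules and coherent sheaves on projective varieties is a standard feature of the theory of Morita $\Z$-algebras (compare \cite[\S 5.4]{GS}).
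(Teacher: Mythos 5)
Your backward direction (abelian localization at every $\cD_k$ implies $Z$ is Morita) is sound and is essentially the paper's argument: the paper establishes the natural isomorphism ${}_{k+1}Z_k\otimes-\cong\secs\big({}_{k+1}\cT'_k\otimes\Loc(-)\big)$ to see that each adjacent bimodule is Morita and then obtains \eqref{Morita} by induction, which is your computation specialized to adjacent indices. The forward direction, however, has a genuine gap. Your key claim --- that for every finitely generated $Z$-module $N$ the natural map $\Phi(N_0)\to N$ has bounded kernel and cokernel --- is false. A $Z$-module can be bounded and still have $N_0\neq 0$: take $N$ concentrated in degree $0$ with $N_0=A$, all structure maps ${}_kZ_0\otimes N_0\to N_k=0$ being zero; then $\Phi(N_0)_k={}_kZ_0\otimes_A A\cong{}_kZ_0$ is nonzero for every $k$ (it is a Morita bimodule, and its associated graded is $\Gamma(\fX;\cL^k)\neq 0$), so the kernel of $\Phi(N_0)\to N$ is unbounded. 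Dually, the module with $N_0=0$ and $N_k={}_kZ_1$ for $k\geq 1$ (the free module generated in degree $1$) is finitely generated and unbounded while $\Phi(N_0)=0$, so the cokernel is unbounded. Your surjectivity argument breaks at exactly this point: \eqref{Morita} only factors ${}_kZ_i$ through the intermediate degrees between $i$ and $k$, and the $\Z$-algebra is lower triangular, so there is no component ${}_0Z_i$ with $i>0$ that would let you rewrite ${}_kZ_i\cdot N_i$ in terms of ${}_kZ_0\otimes_A N_0$; nothing relates $N_i$ back to $N_0$.

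The correct statement, and the one the paper relies on, is the Gordon--Stafford equivalence: when $Z$ is Morita, the functor $\gamma(M)=\bigoplus_k {}_kZ_0\otimes_A M$ descends to an equivalence $\Amod\to\Zmod/\Zmodbd$ whose quasi-inverse is $\beta(N)={}_0Z_j\otimes_{A_j}N_j$ for $j\gg 0$; the comparison one proves is between $N_k$ and ${}_kZ_j\otimes_{A_j}N_j$ for $k\geq j\gg 0$, i.e.\ induction from a sufficiently high degree rather than from degree $0$, which is precisely how the low-degree pathologies above are absorbed by the quotient by bounded modules. The paper's proof cites \cite[\S 5.5]{GS} for this, composes with Theorem \ref{Zloc}, and uses the same observation you make (that $\Loc\cong\Loc^\Z\circ\gamma$, via the multiplication map ${}_0\cT'_k\otimes_{A_k}{}_kZ_0\to\cD_0$) to conclude that $\secs$ is an equivalence, and finally passes from $Z$ to $Z[k]$ to get localization at every $\cD_k$, as you do. So your overall strategy for the forward direction is salvageable, but the step establishing that $\bar\Phi$ is an equivalence must be replaced by the $j\gg 0$ argument (or by the citation), not by the comparison with $\Phi(N_0)$.
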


\begin{proof}
Consider the functor $\gamma(M)=
\bigoplus_k\,{_kZ_0}\otimes _{A}M$ from finitely
generated modules over $A = {_0Z_0}$ to $\Zmod/\Zmodbd$.   Let $\b$ denote the adjoint to this functor;
one description of $\b$ is that $\b(\{{}_jN\})={}_0Z_j\otimes_{A_j}
{}_jN$ for $j\gg 0$.  There is a natural transformation $\b(\secs^\Z(\cM))\to
\secs(\cM)$, induced by the natural transformation $\Loc(M)\to
\Loc^\Z(\{{_kZ_0}\otimes _{A}M\})$.  The latter natural transformation
has inverse given by the multiplication map of sections
${}_0\cT_k'\otimes_{A_k}{_kZ_0}\to \cD_0$, tensored with $M$ over
$A$.  Thus the former natural transformation is an isomorphism as
well. 

In particular, if we assume that $Z$ is Morita then Gordon and
Stafford \cite[\S 5.5]{GS} show that $\gamma$ and $\b$ are
equivalences.  Thus combining this result with Theorem \ref{Zloc}, we
see that $\secs=\b\circ \secs^\Z$ is the composition of two
equivalences, and thus an equivalence itself and localization holds for
$\cD$.  Furthermore, if $Z$ is Morita, then $Z[k]$ is Morita for all
$k\geq 0$, so localization holds for $\cD_k$ for all $k\geq 0$.

Conversely, suppose that localization holds for $\cD_k$ for all $k\geq 0$.
We have a natural isomorphism of functors
\[{}_{k+1}Z_k\otimes -\cong
\secs({}_{k+1}\cT'_k\otimes\Loc(-))\] from $A_{k}\mmod$ to $A_{k+1}\mmod$.  
Since the right hand side is an
equivalence, so is the left hand side; this proves that the bimodule ${_{k+1}Z_k}$ is Morita for all $k\geq 0$.
Similarly, this implies that \[{}_{k+1}Z_k\otimes{_kZ_m}\cong \secs\Big( {}_{k+1} \cT'_{k}
\otimes \Loc({_kZ_m}) \Big)\cong \secs({}_{k+1}\cT'_m)\cong
{_{k+1}Z_m}. \] By induction, this implies that the map \eqref{Morita} is an isomorphism.  Thus, $\Za$ is Morita.
\end{proof}

For the remainder of the subsection, we consider the case of a
conical symplectic resolution $\fM$.  As in Proposition \ref{twistor def}, let 
$\pi\colon\scrM_\eta\to\aone$ be the twistor family of $\fM$ with
$\scrN_\eta$ the affinization of $\scrM_\eta$, and let $\scr{L}$
be the line bundle on $\scrM_\eta$ extending $\cL$.
Let $\scrQ_k$ be the $\bS$-equivariant quantization of $\scrM_\eta$ with period $[\omega_{\scrM_\eta}] + h(\la + k\eta)$.

Lemma \ref{minimal-polynomial} has an algebraic counterpart.  Assume $N$ is a $\mathscr{A}$-module such that:
  \begin{enumerate}
  \item $N\cong H^i(\scrM_\eta;\cN)^{\bS}$ for any sheaf $\cN$ satisfying
    the hypotheses of Lemma \ref{minimal-polynomial}.
\item The preimage $\pi^{-1}(S)$ of the support $S$ of the coherent
  sheaf $\gr N$ on $\mathscr{N}_\eta$ is contained in a Lagrangian
  subvariety of $\fM$.  
  \end{enumerate}
\begin{lemma}\label{algebraic-minimal}
There exists a nonzero polynomial $q(x)\in \C[x]$ such that $q(h^{-1}t)\in\scr{A}$ acts by zero on $N$.
\end{lemma}
\begin{proof}
  If $N\cong H^i(\scrM_\eta;\cN)^{\bS}$, then the minimal polynomial
  $q$ of $\cN$ provides the desired polynomial.  If hypothesis (2)
  holds, then $\Loc(N)$ is supported on $\pi^{-1}(S)$, so Lemma
  \ref{minimal-polynomial} applies to $\Loc(N)$.  Since the map
  $N\hookrightarrow \secs(\Loc(N))$ is injective, the polynomial $q$ such
  that $q(h^{-1}t)$ kills $\Loc(N)$ applies equally to $N$.  
\end{proof}

One particularly important application is to the product
$\fM\times\fM$, and its twistor deformation $\scrM_\eta\times_{\aone}
\scrM_{\eta}$.  The completed outer tensor product
$\scrQ_k\hat\boxtimes_{\fS[\aone]} \scrQ^{\op}_\ell$ is a quantization of
this product, with $\bS$-invariant section algebra $
\scrA_k\otimes_{\C[h^{-1}t]} \scrA^{\op}_\ell$.  Modules over this section
algebra are just $\scrA_k\operatorname{-}\scrA_\ell$-bimodules with the left
and right actions of $h^{-1}t$ coinciding.  An important example of such a
bimodule is ${_{k}\mathscr Z_{\ell}}:=\secs({_k\scr
  T_\ell}[\hmon])$ or a tensor product of such bimodules.  These have
the further special property that $\gr ({_{k}\mathscr Z_{\ell}})$ is
supported on the diagonal in $\scrN_\eta\times_{\aone}
\scrN_{\eta}$; the same is thus true of any tensor product of these modules.

The preimage of the diagonal under $\pi\times \pi$ is just $\scrM_\eta\times_{\scrN_{\eta}}
\scrM_{\eta}$, so its intersection with the preimage of any $a\in
\aone$ is Lagrangian (by the semi-small property).  Thus, we have that:
\begin{lemma}\label{dagger}
Let $B$ be a filtered $\scrA_k\operatorname{-}\scrA_m$-bimodule which
is a subquotient of a tensor product of filtered bimodules of the form
${_{k'}\mathscr Z_{m'}}$, and whose support lies in $\fM\times \fM$
(i.e. whose classical limit $\gr B$ is killed by
$t$).  Then
there exists a nonzero polynomial $q_B(x)\in\C[x]$ such that $q_B(h^{-1}t)$
acts by zero on $B$.
\end{lemma}

\begin{proposition}\label{asympt-local}
There is a positive integer $p$ such that
$\Za[p]$ is Morita.
\end{proposition}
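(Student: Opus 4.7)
The plan is to reduce to Theorem~\ref{derived-local} and then upgrade derived localization to abelian localization for sufficiently large twists. By Proposition~\ref{Morita-iff-loc}, proving that $Z[p]$ is Morita is equivalent to establishing that abelian localization holds at $\la + k\eta$ for every $k \geq p$. Since $\cL$ is very ample and hence ample, Proposition~\ref{ample-affine} guarantees that $\scrM_\eta(\infty)$ is affine, so Theorem~\ref{derived-local} produces an integer $p_0$ such that derived localization holds at $\la + k\eta$ for all integers $k \geq p_0$.

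It remains to upgrade derived to abelian localization. For this it suffices to show that, for $k$ sufficiently large, the higher right derived functors of $\secs$ vanish on every good $\cD_k$-module, since exactness of $\secs$ combined with a derived equivalence produces an abelian equivalence. Given a $\cD_k$-module $\cN$ with coherent lattice $\cN(0)$, Lemma~\ref{projlim} computes $R^i\Gamma(\fM;\cN(0))$ as the projective limit of $R^i\Gamma\bigl(\fM;\cN(0)/\cN(-n\ell)\bigr)$; by devissage along the $h$-adic filtration these are controlled by cohomologies of the coherent sheaves $\cN(-nj)/\cN(-n(j+1))$ on $\fM$. Because the bimodule ${}_{k}\cT_{k-m}$ quantizes $\cL^m$, the tensor product equivalence ${}_{k}\cT'_{k-m}\otimes_{\cD_{k-m}}-$ between coherent $\cD_{k-m}$-modules and coherent $\cD_k$-modules translates these associated-graded pieces into twists by $\cL^m$ of pieces coming from a $\cD_{k-m}$-module. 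Classical Serre vanishing on $\fM$, which is projective over the affine cone $\fM_0$ with $\cL$ very ample, then kills these cohomology groups for $m \gg 0$.

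The main obstacle is to make this vanishing \emph{uniform} across all coherent $\cD_k$-modules, since classical Serre vanishing provides bounds that depend on the sheaf being twisted. I expect this to be handled via the $\Z$-algebra framework of Theorem~\ref{Zloc}: every coherent $\cD_k$-module is a quotient of $\Loc^\Z(N)$ for a finitely generated $Z[k]$-module $N$, and the generation hypothesis built into the definition of a $\Z$-algebra means that $Z$ is generated by finitely many bimodules ${_{k}Z_{m}}$ with bounded $k-m$. This should reduce the required cohomology vanishing to a bounded family of coherent sheaves on $\fM$, and a Noetherian argument applied to the finitely generated homogeneous coordinate ring $R(\cL)$ then produces a single integer $p$ such that, for every $k \geq p$ and every coherent $\cD_k$-module $\cN$, all higher cohomology of $\cN$ vanishes, completing the proof.
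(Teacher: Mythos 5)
Your overall strategy---reduce via Proposition \ref{Morita-iff-loc} to abelian localization at $\la+k\eta$ for all $k\geq p$, get derived localization from Theorem \ref{derived-local}, and then upgrade by proving exactness of $\secs$---reverses the logical order of the paper, and the step you flag as the ``main obstacle'' is exactly where the argument breaks. Uniform vanishing of $H^{>0}(\fM;\cN)$ for \emph{all} good $\cD_k$-modules $\cN$, for all $k\geq p$, is (given derived localization) equivalent to the statement of Corollary \ref{large quantizations}, which in the paper is deduced \emph{from} Proposition \ref{asympt-local}; so unless you supply a genuinely independent proof of that vanishing, the route is circular. The mechanism you propose does not supply one: the period $k$ of the quantization imposes no positivity on the classical limit $\cN(0)/\cN(-1)$, which is an arbitrary coherent sheaf on $\fM$ varying with $\cN$. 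Writing $\cN\cong{}_k\cT'_{k-m}\otimes_{\cD_{k-m}}\cM$ only says the classical limit is $\cL^m\otimes\overline{\cM}$ with $\overline{\cM}$ again arbitrary, so Serre vanishing gives a bound depending on $\cN$, and the family of all such classical limits (even for a single $k$) is not bounded; no Noetherian argument on $R(\cL)$ makes the bound uniform. Indeed, exactness of $\secs$ is a genuinely delicate, asymmetric condition (abelian localization can fail at $\la$ while holding at $-\la$, cf.\ Corollary \ref{left-right-local} and the example of $T^*\mathbb{P}^1$), so it cannot follow formally from Serre vanishing plus derived localization.

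The paper avoids this entirely: it does not pass through abelian localization at all. Instead it proves directly that the adjacent multiplication maps \eqref{adjacent} ${}_kZ_{k\pm1}\otimes_{A_{k\pm1}}{}_{k\pm1}Z_k\to A_k$ are surjective for $k\gg 0$, by working over the twistor family $\scrM_\eta\to\aone$: the cokernel $E_\pm$ of the corresponding map over $\aone$ is supported over $0\in\aone$ (the other fibers are affine by Proposition \ref{ample-affine}) and on the Steinberg variety, so Lemma \ref{minimal-polynomial} shows $h^{-1}t$ satisfies a complex polynomial on it, whence $E_\pm$ specializes to zero at $t=hk$ for all but finitely many $k$. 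A separate Noetherianity-plus-composition argument then upgrades surjectivity of the adjacent maps to the full Morita condition \eqref{Morita}. If you want to salvage your approach, you would need an independent proof of uniform higher-cohomology vanishing for good modules at large periods; as written, that key input is assumed rather than proved.
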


\begin{proof}
The statement that $\Za[p]$ is Morita can be broken down into 3
smaller statements:
\begin{itemize}
\item[(a)] There exists $p$ such that the bimodule ${}_{k}\Za_{k-1}$ is Morita for all $k\geq p$.
\item [(b)]  There exists $p$ such that the map \eqref{Morita} is surjective for all $k>m\geq p$.
\item [(c)] There exists $p$ such that the map \eqref{Morita} is injective for all $k>m\geq p$.
\end{itemize}

We first prove (a).  The bimodule ${}_{k}\Za_{k-1}$ is Morita if and only if
the maps 
\begin{equation}\label{adjacent}
{_{k}\Za_{k- 1}}\otimes_{A_{k- 1}} {_{k- 1}\Za_k}\to A_k
\end{equation}
and
\begin{equation}
\label{adjacentreverse}
{_{k-1}\Za_{k}}\otimes_{A_{k}} {_{k}\Za_{k-1}}\to A_{k-1}
\end{equation} are both isomorphisms.  
Let ${_{0}\mathscr T_{- 1}}$ be the $\scrQ_0 - \scrQ_{- 1}$ bimodule quantizing $\mathscr{L}$,
and let ${_{- 1}\mathscr T_{0}}$ be the $\scrQ_{-1} - \scrQ_0$ bimodule quantizing $\mathscr{L}^{-1}$.
Using notation similar to that of Proposition \ref{general pullback}, we have 
$${_{k}\cT_{k- 1}} \cong \sigma_k^*\big({_{0}\mathscr T_{-
    1}}\big):={_{0}\mathscr T_{- 1}}\big/(t-kh) \cdot {_{0}\mathscr T_{- 1}}|_{\fM}$$
$${_{k- 1}\cT_{k}} \cong \sigma_k^*\big({_{- 1}\mathscr
  T_{0}}\big):={_{- 1}\mathscr
  T_{0}}\big/(t-hk) \cdot {_{- 1}\mathscr
  T_{0}}|_{\fM},$$
  which induces maps
  \begin{equation}
\sigma_k^*\big({_{0}\mathscr Z_{-1}}\big):={_{0}\mathscr Z_{-1}}\big/(t-kh) \cdot {_{0}\mathscr Z_{-1}}\to  {_{k}{Z}_{k- 1}}\label{eq:sigma-Z1}
\end{equation}  \begin{equation}
\sigma_k^*\big({_{-1}\mathscr
  Z_{0}}\big):={_{-1}\mathscr
  Z_{0}}\big/(t-kh) \cdot {_{-1}\mathscr
  Z_{0}}\to {_{k- 1} {Z}_{k}}.\label{eq:sigma-Z2}
\end{equation}
Consider the short exact sequence 
$$0 \longrightarrow {_{0}\mathscr T_{-1}}\overset{t-kh}\longrightarrow {_{0}\mathscr
  T_{- 1}} \longrightarrow {_{k}\cT_{k- 1}}\longrightarrow 0.$$
Adjoining $\hmon$ and taking sections, we obtain a long exact sequence $$ 0 \longrightarrow {_{0}\mathscr Z_{-1}}\overset{t-kh}\longrightarrow {_{0}\mathscr
  Z_{- 1}} \longrightarrow {_{k}Z_{k- 1}}\longrightarrow H^1(\scrM_\eta;{_k\scr
  T_{k-1}}[\hmon]) \longrightarrow \cdots.$$
This tells us that the map \eqref{eq:sigma-Z1} is injective, with cokernel equal to the
submodule of $H^1(\scrM_\eta;{_k\scr T_{k-1}}[\hmon])$ annihilated by $t-kh$.
  Note that the associated graded
of the bimodule $H^1(\scrM_\eta;{_k\scr
  T_{k-1}}[\hmon])$ is supported over $0\in \aone$, since all other
fibers of $\pi$ are affine (Proposition \ref{ample-affine}).  
By Lemma \ref{algebraic-minimal}, there exists a nonzero polynomial $f(x)$ such that $f(h^{-1}t)$
acts by zero on $H^1(\scrM_\eta;{_k\scr T_{k-1}}[\hmon])$.
If $t-kh$ fails to act injectively on $H^1(\scrM_\eta;{_k\scr T_{k-1}}[\hmon])$, then 
so does $h^{-1}t-k$, which implies that $k$ is a root of $f(x)$.
Since there are only finitely many roots, there exists a $p$ such that $t-kh$ acts injectively
for all $k\geq p$, and therefore the map \eqref{eq:sigma-Z1} is an isomorphism.  
The same argument with $k$ and $k-1$ reversed applies to the map \eqref{eq:sigma-Z2}.

Now, consider the tensor product map 
\begin{equation}\label{map}
{_{0}\mathscr
Z_{- 1}}\otimes_{\mathscr{A}_{- 1}} {_{- 1}\mathscr
Z_{0}}\to
\mathscr{A}_{0}.
\end{equation} 
Let $K$ be the kernel and $E$ be the cokernel of this map, which are
bimodules over $\mathscr{A}_{0}$.  Over non-zero elements of $\aone$,
the fibers are affine, so this map is an isomorphism.  Thus $\gr K$ and
$\gr E$ are killed by $t$, and 
Lemma \ref{dagger} applies.  Thus,  there are minimal polynomials for
$h^{-1}t$ acting on these modules given
by $q_K$ and $q_E$.  

The usual spectral sequence for tensor product
shows that the cokernel of the map \eqref{adjacent} is $\sigma_k^* E = E \big/ (h-tk)\cdot E$,
and the kernel of this map is an extension of $\mathbb{R}^1\sigma_k^*(
E)\cong \operatorname{Tor}^1_k(\C,E)$ and
$\sigma_k^*( K)$.  Possibly increasing the $p$ introduced earlier, we
can assume that for $k\geq p$, the element $h^{-1}t-k$ acts
invertibly on $E$ and $K$.  Thus, we
have \[\sigma_k^*(E)=\sigma_k^*( K)=\mathbb{R}^1\sigma_k^*(
E)=0.\]  
This shows that 
\eqref{adjacent} is an isomorphism.  A completely symmetric argument
shows that after increasing $p$ again, we may also conclude that the map
\eqref{adjacentreverse} is an isomorphism, and so (a) is established.

We next prove (b).
Fix an integer $r$ such that $R(\cL)$ is generated in degrees less than
or equal to $r$; it follows that $Z$ is generated by ${_kZ_m}$ for $k-m\leq r$.
For $k$ and $m$ such that $k-m\leq r$, we can proceed exactly as in the proof of (a) to find a $p$
such that the map \eqref{Morita} is a surjection whenever $m\geq p$.
For the rest of the cases, we can induct on the quantity $k-m-r$.
Our inductive hypothesis tells us that
the image of the map \eqref{Morita} contains the image of the
multiplication map ${}_k\Za_{q}\otimes_{A_q}{_q\Za_m}$ for all
$k>q>m$.  Thus, the associated graded of the image of \eqref{Morita}
contains all elements of $R(\cL)$ of degree $k-m$ which can be written as a sum of
products of lower degree elements.  Since elements of degree $r\leq
k-m$ generate $R(\cL)$, this implies that the map \eqref{Morita} is indeed surjective, and (b) is proved.

Finally, we use (a) to prove (c).  
Choose $p$ such that the map
${_{j+1}Z_j}\otimes{_jZ_{j+1}}\to {_{j+1}Z_{j+1}}$ is an isomorphism for all $j\geq p$.
Now let $k>m\geq p$ be given, and consider the maps
$${_kZ_{k-1}}\otimes{_{k-1}Z_{k-2}}\otimes\cdots\otimes{_{m+1}Z_m}\otimes{_mZ_{m+1}}\otimes\cdots\otimes{_{k-2}Z_{k-1}}
\otimes{_{k-1}Z_k}$$
\begin{equation}\label{first}\downarrow\end{equation}
$${_kZ_m}\otimes{_mZ_{m+1}}\otimes\cdots\otimes{_{k-2}Z_{k-1}}\otimes{_{k-1}Z_k}$$
\begin{equation}\label{second}\downarrow\end{equation}
$${_kZ_k} = A_k.$$
By our choice of $p$, the composition of the maps \eqref{first} and \eqref{second} is an isomorphism.
It follows that \eqref{first} is injective.
Since the map \eqref{first} is the tensor product of the map \eqref{Morita} with the Morita bimodule 
${_mZ_{m+1}}\otimes\cdots\otimes{_{k-2}Z_{k-1}}\otimes{_{k-1}Z_k}$,
the map \eqref{Morita} must also be injective. 
\end{proof}

Propositions \ref{Morita-iff-loc} and \ref{asympt-local} immediately yield the following corollary.

\begin{corollary}\label{large quantizations}
There is an integer $p$ such that localization holds for
$\cD_k$ for all $k\geq p$.
\end{corollary}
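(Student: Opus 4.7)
The plan is to deduce the corollary immediately by combining Proposition \ref{Morita-iff-loc} with Proposition \ref{asympt-local} via the reindexing observation that $Z[p] \cong Z(\fM, \cD_p, \cL)$. Specifically, Proposition \ref{asympt-local} provides an integer $p$ such that $Z[p]$ is Morita, so I would apply Proposition \ref{Morita-iff-loc} not to the original $\Z$-algebra $Z$ but to its shift $Z[p]$.

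First I would recall that, by construction, $Z[p]$ is the quantum homogeneous coordinate ring associated to the quantization $\cD_p$ together with the same very ample line bundle $\cL$. The quantizations indexed by $Z[p]$ in the sense of Section \ref{abelian loc} are $(\cD_p)_k = \cD_{p+k}$. Then Proposition \ref{Morita-iff-loc}, applied to $Z[p]$, asserts that $Z[p]$ is Morita if and only if localization holds for $\cD_{p+k}$ for every $k \geq 0$.

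Combining this equivalence with Proposition \ref{asympt-local}, which guarantees that $Z[p]$ is Morita for some $p$, we conclude that localization holds for $\cD_k$ for all $k \geq p$, which is exactly the statement of the corollary.

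Since the argument is purely a formal reindexing, there is no real obstacle — all of the substantive work has already been carried out in the proofs of Propositions \ref{Morita-iff-loc} and \ref{asympt-local}. The only point to be careful about is making sure the identification $Z[p] \cong Z(\fM, \cD_p, \cL)$ is used correctly when translating the Morita condition on $Z[p]$ into an abelian localization statement for $\cD_{p+k}$; this is straightforward from the definition of $Z[p]$ and the compatibility of the bimodules $\kmi$ with shifts of the period.
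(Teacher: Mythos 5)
Your proposal is correct and is essentially the paper's own argument: the paper derives the corollary immediately from Propositions \ref{Morita-iff-loc} and \ref{asympt-local}, using exactly the identification $Z[p]\cong Z(\fM,\cQ_p,\cL)$ that you invoke (which is stated explicitly in Section \ref{sec:Z-algebra}). Nothing further is needed.
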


\begin{remark}\label{ample vs very ample}
Corollary \ref{large quantizations} is precisely the first statement of Corollary  \ref{first-cor}  from the introduction 
for very ample line bundles.
If $\eta$ is only ample, then there exists a positive integer $r$ such that $r\eta$ is very ample, and we obtain
Corollary \ref{first-cor} by applying Corollary \ref{large quantizations} with $\la' = \la + j\eta$ and $\eta' = r\eta$ for $j=0,1,\ldots,r-1$.
\end{remark}

It is still desirable to have a non-asymptotic result; that is, a necessary and sufficient condition 
for localization to hold for $\cD$ itself in terms of $\Z$-algebras. 
Let $Z^{(p)}$ be the $\Z$-algebra defined by ${}_kZ^{(p)}_m\cong {}_{kp}Z_{mp}$ with the obvious product structure.
It is clear that $Z^{(p)}$ is isomorphic to the $\Z$-algebra $Z(\fM, \cQ, \cL^p)$.

\begin{lemma}\label{restriction}
For all $p$, the restriction functor $\Zmod/\Zmodbd\to\Zpmod/\Zpmodbd$ is an equivalence of categories.
\end{lemma}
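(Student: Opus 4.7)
The plan is to invoke Theorem \ref{Zloc} twice to identify both quotient categories with the same category $\Dmod$, and then check that the restriction functor intertwines the two localization equivalences. The key observation is that $Z = Z(\fM,\cQ,\cL)$ and $Z^{(p)} \cong Z(\fM,\cQ,\cL^p)$ are built from the same quantization $\cD$; only the choice of very ample line bundle has changed. Since $\cL^p$ is also very ample, Theorem \ref{Zloc} applies equally to $Z^{(p)}$.

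First I would unwind the definitions. For any $\cN\in\Dmod$ and $k\geq 0$, the $\cD_{kp}$-$\cD_0$ bimodule appearing as the $k$-th graded piece of $\secs^\Z_{Z^{(p)}}(\cN)$ quantizes $(\cL^p)^k = \cL^{kp}$, so by the uniqueness clause of Proposition \ref{line bundles} it coincides with the bimodule ${}_{kp}\cT'_0$ appearing as the $(kp)$-th piece of $\secs^\Z_Z(\cN)$. Consequently the restriction functor $\mathrm{res}\colon \Zmod \to \Zpmod$ satisfies
\[\mathrm{res}\circ \secs^\Z_Z \;\cong\; \secs^\Z_{Z^{(p)}}\]
as functors $\Dmod \to \Zpmod$.

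Next I would verify that $\mathrm{res}$ descends to a well-defined functor between the quotient categories. Bounded modules obviously restrict to bounded modules. To check that the restriction of a finitely generated $Z$-module $N$ is finitely generated modulo bounded, I would apply the unit map $\iota_N\colon N \to \secs^\Z_Z(\Loc^\Z_Z(N))$ of Theorem \ref{Zloc}, which is an isomorphism in sufficiently high degree. Applying $\mathrm{res}$ and invoking the intertwining relation above shows that $\mathrm{res}(N)$ agrees with $\secs^\Z_{Z^{(p)}}(\Loc^\Z_Z(N))$ outside a bounded range, and the latter is finitely generated over $Z^{(p)}$ by Theorem \ref{Zloc} applied to the triple $(\fM,\cQ,\cL^p)$.

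Finally, the intertwining relation exhibits $\mathrm{res}$, viewed as a functor on quotients, as the composition $\secs^\Z_{Z^{(p)}}\circ \Loc^\Z_Z$ of two equivalences, and is therefore itself an equivalence. I do not expect any serious obstacle: the argument is essentially formal once one observes that the bimodules defining the two localization functors are literally compatible with passage from $Z$ to $Z^{(p)}$, so that the $\Z$-algebra records only a choice of truncation scale that becomes invisible in the quotient by bounded modules.
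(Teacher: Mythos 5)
Your proposal is correct and follows the same route as the paper: the paper's proof simply notes that both quotient categories are identified with $\Dmod$ by Theorem \ref{Zloc} and that these identifications are compatible with restriction, which is exactly the intertwining relation $\mathrm{res}\circ\secs^\Z_Z\cong\secs^{\Z}_{Z^{(p)}}$ you verify via the uniqueness clause of Proposition \ref{line bundles}. You have merely written out the "easy to check" compatibility (and the descent to the quotient categories) in more detail than the paper does.
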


\begin{proof}
By Theorem \ref{Zloc}, both the source and the target are equivalent to $\Dmod$, and it is easy to check that
these equivalences are compatible with the restriction functor. 
\end{proof}

\begin{proposition}\label{morita-loc}
Localization holds for $\cD$ if and only if $\Zp$ is Morita for some $p$.  
\end{proposition}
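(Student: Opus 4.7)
The plan is to deduce this proposition directly from the two main results already established in this section, namely Proposition \ref{Morita-iff-loc} (which relates Morita-ness of a $\Z$-algebra to abelian localization for the diagonal quantizations) and Corollary \ref{large quantizations} (which gives asymptotic abelian localization). The key observation is that, under the identification $\Zp \cong Z(\fM,\cQ,\cL^p)$ noted just before the statement, the line bundle $\cL^p$ has Euler class $p\eta$, so the quantizations indexed by the diagonal of $\Zp$ are precisely $\cD_0=\cD, \cD_p, \cD_{2p}, \ldots$, i.e.\ $\cD_{kp}$ for $k\geq 0$.

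First I would apply Proposition \ref{Morita-iff-loc} to the $\Z$-algebra $Z(\fM,\cQ,\cL^p)$: by that proposition, $\Zp$ is Morita if and only if abelian localization holds for $\cD_{kp}$ for every $k\geq 0$. This essentially reduces the present statement to bookkeeping.

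For the implication $(\Leftarrow)$, if $\Zp$ is Morita for some $p$, then in particular localization holds at $k=0$, which gives localization for $\cD_0=\cD$.

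For $(\Rightarrow)$, suppose localization holds for $\cD$. By Corollary \ref{large quantizations}, choose $p_0$ such that localization holds for $\cD_k$ for every $k\geq p_0$, and take $p:=p_0$ (or any larger integer). Then for $k\geq 1$ we have $kp\geq p_0$, so localization holds for $\cD_{kp}$ by Corollary \ref{large quantizations}; and for $k=0$ it holds by hypothesis. Thus localization holds for $\cD_{kp}$ for all $k\geq 0$, so $\Zp$ is Morita by the reformulation above. No step here is a genuine obstacle: the only care required is to observe that the case $k=0$ is precisely where Corollary \ref{large quantizations} gives no information and must be supplied by the hypothesis, which is why abelian localization must be assumed rather than inferred.
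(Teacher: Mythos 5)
Your proof is correct, and it reaches the statement by a different route than the paper. You reduce both implications to Proposition \ref{Morita-iff-loc} applied to the rescaled algebra $\Zp\cong Z(\fM,\cQ,\cL^p)$ (an identification the paper records just before the statement; note that $\cL^p$ is again very ample relative to the affinization, so that proposition does apply), together with Corollary \ref{large quantizations} to supply abelian localization at $\la+kp\eta$ for all $k\geq 1$, the case $k=0$ being the hypothesis — as you note, this is exactly the case the asymptotic result cannot give, and one should also take $p\geq 1$, which is harmless. The paper instead proves the ``if'' direction by factoring $\secs$ as $\Dmod\to\Zmod/\Zmodbd\to\Zpmod/\Zpmodbd\to\Amod$ via Theorem \ref{Zloc}, Lemma \ref{restriction}, and the Gordon--Stafford equivalence, and proves the ``only if'' direction by taking $p$ from Proposition \ref{asympt-local} so that $Z[p]$, hence $\Zp[1]$, is Morita, and then verifying by hand the two remaining conditions — that ${_pZ_0}$ is a Morita bimodule and that ${_{2p}Z_p}\otimes{_pZ_0}\to{_{2p}Z_0}$ is an isomorphism — using natural isomorphisms such as ${_pZ_0}\otimes-\cong\secs\big({_p\cT'_0}\otimes\Loc(-)\big)$ and localization at $\la$ and $\la+p\eta$. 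The underlying mechanism is the same in both arguments (localization at the relevant periods converts the section bimodules into Morita bimodules), but your version outsources all the bookkeeping to the already-proven converse of Proposition \ref{Morita-iff-loc} for the rescaled line bundle, which makes it shorter and bypasses Lemma \ref{restriction} entirely, whereas the paper's version isolates precisely which localization statements are needed to glue the degree-zero piece onto the Morita algebra $Z[p]$.
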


\begin{proof}
If $\Zp$ is Morita, then the functor $\secs\colon\Dmod\to\Amod$ factors as
$$\Dmod\to\Zmod/\Zmodbd\to\Zpmod/\Zpmodbd\to \Amod,$$
where the first functor is the equivalence of Theorem \ref{Zloc}, the second is the equivalence
of Lemma \ref{restriction}, and the last is the equivalence of \cite[\S 5.5]{GS}.  Thus localization holds for $\cD$.

Conversely, assume that localization holds for $\cD$.  By Theorem \ref{asympt-local}, there is an integer $p$
such that $Z[p]$ is Morita, which easily implies that $\Zp[1]$ is Morita.  
We need to extend this to show that $\Zp$ is Morita, which involves showing that the bimodule ${_pZ_0}$ 
is Morita and the multiplication map ${_{2p}Z_p}\otimes{_pZ_0}\to {_{2p}Z_0}$ is an isomorphism.
The fact that ${_pZ_0}$ is Morita follows from the natural isomorphism of functors
$${_pZ_0}\otimes - \;\;\cong\;\; \secs\big({_p\cT'_0}\otimes\Loc(-)\big)$$ along with the fact that localization holds for both $\cD$
and $\cD_p$.
Similarly, the fact that the multiplication map is an isomorphism follows from the natural isomorphism of functors
$${_{2p}Z_p}\otimes - \;\;\cong\;\; \secs\big({_{2p}\cT'_p}\otimes \Loc(-)\big)$$
applied to the module ${_{p}Z_0}$.
\end{proof}

\begin{remark}
The ``if'' direction of Proposition \ref{morita-loc} is very close in content to \cite[2.10]{KR} (though they do not use the language
of $\Z$-algebras) and our proof draws heavily on theirs.
We note, however, that Proposition \ref{asympt-local} and Corollary \ref{large quantizations} have no analogues in \cite{KR}.
\end{remark}

\subsection{Comparison of the analytic and algebraic categories}
\label{GAGA}
We keep our running assumptions from the start of Section
\ref{sec:Z-algebra}, and assume for simplicity that $S$ is smooth and that $\C[\fX]^\bS=\C$.  Up until this point we have worked exclusively in the algebraic 
category, quantizing the sheaf of regular functions in the Zariski topology.
On the other hand, some other important papers have considered quantizations 
of the functions on an analytic variety, for example \cite{KR,KSdq}.  We will need to apply some 
results from these papers below, so we must prove a comparison theorem 
relating quantizations and their module categories for the nondegenerate Poisson scheme
$\fX$ and its analytification $\fX^{\an}$.


First, we note that every quantization $\cQ$ in the Zariski topology 
introduces a corresponding quantization $\cQ^{\an}$ 
of the structure sheaf in the analytic category.  To see this, we can consider 
the jet bundle $J_\infty\cQ$, which is a pro-vector bundle on $\fX$
with flat connection whose sheaf of flat sections is $\cQ$, as explained in \cite[1.4]{BK04a}.
The corresponding sheaf of analytic sections $(J_\infty\cQ)^{\an}$ again has
a flat connection, and we let
$\cQ^{\an}$ be its sheaf of flat sections.  We have a map $\a^{-1}\cQ\to \cQ^{\an}$
where $\a\colon \fX^{\an} \to \fX$ is the identity on
points.  
If $\cQ$ is $\bS$-equivariant, so is $\cQ^{\an}$. 

As in the Zariski topology, we
let $\cD^{\an}:=\cQ^{\an}[\hmon]$.  Similarly, for any $\cD$-module $M$, we let 
$M^{\an}:=\a^{-1}M\otimes_{\a^{-1}\cD} \cD^{\an}$.
As in \cite{KR}, we call an $\bS$-equivariant $\cD^{\an}$-module {\bf good}
if it admits a coherent $\bS$-equivariant $\cQ^{\an}|_{U}$-lattice on every relatively compact open subset of
$\fX$.  If $M$ is a good $\cD$-module, $M^{\an}$ is a good $\cD^{\an}$-module.  

\begin{theorem}\label{th:GAGA}
  The functor
  $(-)^{\an}\colon \cD\mmod\to \cD^{\an}\mmod$ is an equivalence of
  categories.  
\end{theorem}

\begin{proof}
  In essence, the proof is simply to observe that a version Theorem \ref{Zloc} holds in
  the analytic topology.  More precisely, we define the quantum homogeneous coordinate ring
  $Z^{\an}$ exactly as we defined $Z$.  There is a canonical map from $Z$ to $Z^{\an}$,
  and we claim that it is an isomorphism.
  
  In bidegree $(0,0)$, this map is the map from $\secs(\cD)$ to $\secs(\cD^{\an})$.
  To see that this is an isomorphism, it is enough to show that the associated
  graded map $\Gamma(\fS_\fM)\to\Gamma(\fS_\fM^{\an})^{\fin}$ is an isomorphism, 
  where $(-)^{\fin}$ denotes the subalgebra of
  $\bS$-locally finite vectors.   Since all $\bS$-weights on $\C[\fM]$ are
  positive, any $\bS$-weight vector in $\Gamma(\fS_\fM^{\an})$ can be
  interpreted as a section of a line bundle on the projectivization of
  $\fX_0$ for the $\bS$ action; by the classic GAGA theorem of Serre \cite{GAGA},
  this is in fact algebraic, and thus arises from an algebraic
  function on $\fX_0$.  The argument in arbitrary bidegree follows from a similar analysis
  of sections of line bundles.
  
  Now that we know that $Z$ and $Z^{\an}$ are isomorphic, we have a functor from $\cD^{\an}\mmod$
  to $\cD\mmod$ given by the composition
  $$\cD^{\an}\mmod \overset{\left(\secs^\Z\right)^{\an}}\longrightarrow Z^{\an}\mmod\cong \Zmod
  \overset{\Loc^\Z}\longrightarrow\cD\mmod.$$
This functor splits $(-)^{\an}$ and is exact (since the cohomology of a
  sufficiently high twist with $\cL$ vanishes), so to check that it
  gives an equivalence, we need only check that it kills no module $\mathcal{K}$.  Thus, we need only show that for any good
  $\bS$-equivariant $\cD$-module,  we must have that $\secs^\Z$ is not
  0.  Since $\cL$ is ample, $\mathcal{K}/h\mathcal{K}\otimes \cL^k$ has
  non-zero sections for $k\gg 0$ unless $\mathcal{K}/h\mathcal{K}=0$; then Nakayama's lemma tells us that
  ${}_{k}{\cT_0'}^{\an}\otimes_{\cD_0}\mathcal{K}$ has non-zero sections as
  well unless $\mathcal{K}=0$.  This completes the proof.
\end{proof}

\begin{remark} Hou-Yi Chen \cite{HYC} proves a version of Theorem \ref{th:GAGA} in the more general
context of DQ-algebroids, but subject to the hypothesis that $\fX$ is
projective over a point (which is never the case for a conical symplectic resolution of positive dimension).  
Chen uses a more direct reduction to Serre's classic
GAGA theorem than we do; it is possible that his techniques could be adapted to our setting, as well.
\end{remark}

\begin{remark}
  It might worry the reader that we used some analytic techniques in
  the proof of Proposition \ref{line bundles}, used that result in the
  proof of Theorem \ref{Zloc}, and then used that in the proof of Theorem \ref{th:GAGA}; at first glance, this looks as though it may be circular.  In fact, in the proof of Proposition \ref{line bundles}, we
  use only the comparison theorem between algebraic and analytic de
  Rham cohomology; nothing in the vein of GAGA.

Similarly, it might worry the reader that we use Theorem \ref{th:GAGA}
in the proof of Lemma \ref{minimal-polynomial} earlier in the paper,
but Lemma \ref{minimal-polynomial} is
only used in the proof of the localization results, Theorem
\ref{derived-local} and Proposition \ref{asympt-local}, which are not
used in this section.
\end{remark}

\subsection{Twisted modules and the Kirwan functor}
\label{sec:kirwan}
In this section, we return to the assumptions of Section \ref{QHr},
while keeping those introduced at the start of \ref{sec:Z-algebra}.  That
is we additionally assume that we
have a Hamiltonian action of 
a connected reductive algebraic group $G$ on $(\fX,\cQ)$ such that
$\C[\fX]^{G\times \bS}$ with quantized moment map $\eta\colon
U(\mg)\to A$, which induces a flat commutative moment map $\fX\to \mg^*$.
We fix a $G$-equivariant ample line bundle $\cL$ on $\fX$ and we let
$\fU$ be its semistable locus.
We assume that the $G$ action on $\fU$ is free and, and if
$\fX_{\red}$ is the reduced space (with $\cL_{\red}$ its induced ample line bundle), that we have an induced isomorphism
$\C[\mu^{-1}(0)]^G=\C[\fX_{\red}]$, and more generally an isomorphism
$\Gamma(\mu^{-1}(0);\cL^k)^G=\Gamma(\fX_{\red},\cL^k)$.  

Fix an element $\xi\in\chi(\mg)$. We'll let $\cD_{\red}$ be the 
quantization of $\fX_{\red}$ defined by reduction by $\eta-\xi$ (as
defined in Section \ref{QHr}).
We call a $G$-equivariant object $\cN$ of $\Dmod$ (respectively $\cD_\fU\mmod$) {\bf $\xi$-twisted}
if, for all $x\in\mg$, the action of $x$ on $\cN$ induced by the $G$-structure coincides with 
left multiplication by the element $\eta(x)-\xi(x)\in A$. 
Let $\Dmod_\xi$ (respectively $\cD_\fU\mmod_\xi$) denote the full subcategory of $\xi$-twisted objects of 
$\Dmod$ (respectively $\cD_\fU\mmod$).
Kashiwara and Rouquier \cite[2.8(ii)]{KR}
prove that $\cD_\fU\mmod_\xi$ is equivalent to $\cDred\mmod$ via the functor that takes
$\cN$ to $\psi_*\sHom(\cE_\xi,\cN)$, where
$\cE_\xi$ is the sheaf defined in Section \ref{QHr}.

Define the functor $\kappa\colon\Dmod\to\cD_{\red}\mmod$ by putting
$$\kappa(\cN) :=\psi_*\sHom(\cE_\xi,\cN_\fU)$$
for all $\cN$ in $\Dmod$.
We call $\kappa$ the {\bf Kirwan functor} in analogy with the Kirwan map
in (equivariant) cohomology.  Our main result in this section will be Theorem \ref{surjectivity},
which says that the Kirwan functor is essentially surjective.  To prove this theorem, we introduce
all of the analogous constructions in the context of $\Z$-algebras.

In Section \ref{sec:Z-algebra}, we defined a
$\Z$-algebra
$\Za = Z(\fX,\cD, \cL)$ and functor $\secs^\Z\colon \cD\mmod\to \Za\mmod$.
We may also define the $\Z$-algebra $\Za_{\red} = Z(\fX_{\red}, \cDred, \cL_{\red})$, with its own
sections functor $\Gamma^{\Z}_{\bS,\red}\colon \cD_{\red}\mmod\to
\Za_{\red}\mmod$.

By assumption, we have a ring
homomorphism $$\eta\colon U(\mg)\longrightarrow A = \secs(\cD)\cong {_0Z_0}.$$
Moreover, 
for all $m\geq 0$, there is a unique homomorphism
$$\eta_m\colon U(\mg)\longrightarrow\secs(\cD_m)\cong{_mZ_m}$$
such that $\eta_0 = \eta$ and for all $x\in\mg$, the action of $x$ on $\cL$ induced by the $G$-structure
coincides with that induced by the adjoint action, via $\eta_{m+1}$ and $\eta_m$, 
on the $\cD_{m+1}-\cD_m$ bimodule ${_{m+1}\cT_m}$
that quantizes $\cL$.  By Proposition \ref{quotient}, we can describe
$A_{\red}$ as an algebraic reduction of $A$, and similarly, we have a
map ${_{j}Z_k}\to \Gamma(\fU;{_j\cT_k})$, which induces a map
\[{_iY_j} := {_iZ_j}\Big{/}{_iZ_j}\cdot \langle
  \eta_j(x)-\xi(x)\mid x\in \mg\rangle\to \Gamma(\fU;{_j\cT_k}\otimes
  \cE_\xi).\]
  \begin{lemma}\label{Z-bimod}
The induced map    ${_iY_j}^G\to {_i(\Za_{\red})_j}$ is an isomorphism.
  \end{lemma}
  \begin{proof}
    The proof is essentially the same as Proposition
    \ref{quotient}.  The  associated graded map
    $\Gamma(\mu^{-1}(0);\cL^{i-j})^G\to\Gamma(\fX_{\red},\cL_{\red}^{i-j})$
    is an isomorphism
    by assumption, so this implies the same for the map under consideration.
  \end{proof}

We say that a $G$-equivariant $\Za$-module $N = \bigoplus_m {_mN}$  is
{\bf $\xi$-twisted} if, for all $x\in \mg$, the action of $x$ on ${_mN}$ induced by the $G$-structure
coincides with left multiplication by the element $\eta_m(x) - \xi(x)\in {_mZ_m}$.
We denote the category of such modules $\Zmod_\xi$.

Lemma \ref{Z-bimod} tells us that $Y$ is a naturally a $\Za-\Za_{\red}$
  bimodule.
We define the {\bf $\Z$-Kirwan functor} $$\kappa^\Z:=\Hom_{\Za}(Y,-)\colon\Zmod\to
\Za_{\red}\mmod$$ along with its left adjoint
 $$\kappa^\Z_! := Y\otimes_{\Za_{\red}}-: \Za_{\red}\mmod\to\Zmod.$$

\begin{remark}
Every $\Za$-module $N$ has a largest submodule $N_\xi$ on which
$\eta_j(x)-\xi(x)$ acts locally finitely and on which the $\mg$-action
integrates to a $G$-action.  The $G$-action
makes $N_\xi$ a $\xi$-twisted equivariant module in a canonical way.  
Because of the $\xi$-twisted condition, the $G$-invariant part of $N_\xi$ is already a module over $\Za_{\red}$ in
the obvious way, and we have a canonical isomorphism
$\kappa^\Z(N)\cong N_\xi^G$. 
\end{remark}

\begin{proposition}\label{mod-bounded}
  The functors $\kappa^\Z$ and $\kappa^\Z_!$ both
  preserve boundedness and thus
  induce functors 
\[\kappa^\Z\colon \Zmod/\,\Zmodbd\to
\Za_{\red}\mmod/\,\Za_{\red}\mmod_{\operatorname{bd}}\]
and
\[
\kappa^\Z_!\colon
\Za_{\red}\mmod/\,\Za_{\red}\mmod_{\operatorname{bd}}\to \Zmod/\,\Zmodbd.\]
\end{proposition}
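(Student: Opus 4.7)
The assertion for $\kappa^\Z$ is essentially immediate from the preceding remark: $\kappa^\Z(N)\cong N^G_\xi$ is realized as a $\Z$-graded subspace of $N$ with $(\kappa^\Z N)_m\subseteq {}_mN$ for every $m$, so boundedness of $N$ forces boundedness of $\kappa^\Z(N)$. I would dispatch this case first, in one sentence.

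The substantive content is the preservation of boundedness by $\kappa^\Z_! = Y \otimes_{Z_{\red}} -$. Suppose $M \in Z_{\red}\mmod$ is bounded with ${}_jM = 0$ for $j > J$. Then the $i$-th graded piece of $\kappa^\Z_!(M)$ is the finite direct sum $\bigoplus_{j \leq J} {}_iY_j \otimes_{{}_jZ_{\red,j}} {}_jM$, and I need each summand to vanish for $i \gg 0$. My plan is to use right-exactness of $\otimes_{Z_{\red}}$ to reduce, via a composition series of $M$ over $Z_{\red}$, to the case where $M$ is simple and concentrated in a single degree $j_0$, so that $M \cong {}_{j_0}Z_{\red,j_0}/\mathfrak{m}$ for some maximal ideal $\mathfrak{m}$; the problem then becomes showing $ {}_iY_{j_0} = {}_iY_{j_0}\cdot\mathfrak{m} $ for all sufficiently large $i$.

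To control this, I would pass to the associated graded. Filtering the bimodule ${}_i\cT_{j_0}$ realizes $\gr({}_iY_{j_0})$ as a quotient of $\Gamma(\fX,\cL^{i-j_0})$ by the ideal generated by the image of the classical moment map; using the identification of $\xi$-twisted equivariant quantizations with quantizations of line bundles on the reduction (cf.\ Proposition~\ref{S equals T}), the $\xi$-twisted $G$-invariant part of this quotient assembles, as $i$ varies, into a finitely generated graded module over $Z_{\red}$. By Theorem~\ref{Zloc} applied to $\fX_{\red}$ (and the very ampleness of $\cL_{\red}$ over its affinization), this module corresponds to a coherent sheaf on $\fX_{\red}$, and a graded-Nakayama argument applied to the finite-dimensional quotient ${}_{j_0}Z_{\red,j_0}/\mathfrak{m}$ kills the sheaf in degrees above a Castelnuovo--Mumford-type threshold; a filtration-lifting argument then transports the vanishing back to ${}_iY_{j_0}/{}_iY_{j_0}\cdot\mathfrak{m}$.

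The main obstacle will be controlling the right $Z_{\red}$-module structure on ${}_iY_{j_0}$ uniformly in $i$. The argument requires the compatibility $\eta_i(x) \cdot t = t \cdot \eta_{j_0}(x)$ for $t \in {}_i\cT_{j_0}$ and $x \in \mg$, which ensures that the right action of ${}_{j_0}Z_{\red,j_0}$ on ${}_iY_{j_0}$ is well-defined and interacts compatibly with the filtration. Making this compatibility precise across all $i$, while simultaneously tracking the full $G$-isotypic decomposition in passing from the quantum to the classical picture rather than only the $\xi$-isotypic summand, is the technical core of the argument.
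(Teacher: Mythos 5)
Your treatment of $\kappa^\Z$ is fine and matches the paper's (one-line) argument. The $\kappa^\Z_!$ half, however, has a gap at its very first step. The degree-$i$ piece of $Y\otimes_{\Za_{\red}}M$ is \emph{not} the direct sum $\bigoplus_j {}_iY_j\otimes_{{}_j(\Za_{\red})_j}{}_jM$: the tensor product over the $\Z$-algebra also imposes the relations $y\cdot z\otimes m = y\otimes z\cdot m$ for $z\in{}_j(\Za_{\red})_k$ with $j>k$, so the degree-$i$ piece is a quotient of that sum. This is not a cosmetic point, because these relations are exactly what make the statement true: the summand-wise vanishing you are led to prove, ${}_iY_{j_0}={}_iY_{j_0}\cdot\mathfrak{m}$ for $i\gg 0$, is false in general. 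Already in the degenerate case $G=\{e\}$ (so $Y=\Za$ and $\kappa^\Z_!$ is tensoring a bounded module up to $\Za$), the bimodules ${}_i\Za_{j_0}$ are asymptotically Morita, so ${}_i\Za_{j_0}\otimes_{{}_{j_0}\Za_{j_0}}S\neq 0$ for a nonzero simple $S$; boundedness of the output holds only because elements of ${}_i\Za_{j_0}$ factor through intermediate degrees $j$ close to $i$, where ${}_jM=0$. Your reduction step is also unsound: bounded finitely generated $\Za_{\red}$-modules need not have finite length, since each diagonal algebra ${}_m(\Za_{\red})_m\cong A_{\red}$ is typically an infinite-dimensional Noetherian algebra (e.g.\ $\,{}_P(\Za_{\red})_P$ itself, placed in degree $P$ with the off-diagonal action set to zero, is bounded and finitely generated but of infinite length), and in this noncommutative setting a simple module is a quotient by a maximal \emph{left} ideal, not by a maximal ideal $\mathfrak m$, so the graded-Nakayama/Castelnuovo--Mumford step does not apply as stated.

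What the paper does instead is prove a \emph{uniform generation statement for the bimodule $Y$ near the diagonal}, independent of the module $M$: choosing $N$ and $M'$ so that $\cL^{N}$ and $\cL_{\red}^{M'}$ have no higher cohomology, and using that $\bigoplus_n\Gamma(\fX;\cL^n)^G$ is a finitely generated module over $\bigoplus_n\Gamma(\fX_{\red};\cL_{\red}^n)$, one finds a single constant $N'$ such that the multiplication map $\bigoplus_{j\geq i-N'}{}_iY_j\otimes{}_j(\Za_{\red})_k\to{}_iY_k$ is surjective (checked on associated graded) whenever $k\leq i-N'$. Consequently, inside $Y\otimes_{\Za_{\red}}M$ every class of degree $i\geq N'$ is represented in $\bigoplus_{j\geq i-N'}{}_iY_j\otimes{}_jM$, and if ${}_pM=0$ for $p\geq P$ this forces the degree-$i$ piece to vanish for $i>P+N'$. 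If you want to repair your write-up, the statement to aim for is this surjectivity of multiplication with bounded width $N'$ (your associated-graded/ampleness instincts are the right tool for it), rather than vanishing of the individual summands ${}_iY_{j}\otimes_{{}_{j}(\Za_{\red})_{j}}{}_{j}M$, and no dévissage to simple modules is needed.
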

\begin{proof}
  The functor $\kappa^\Z$ obviously sends bounded modules to bounded
  modules.
To see that $\kappa_!^\Z$ preserves boundedness, find integers $N$ and $M$ such that all of the higher cohomology groups of $\cL^{N}$ and 
$\cL^{M}_{\red}$ vanish.
Then for any non-negative integers $i,j,k$ with $i\geq  j+N \geq k+N+M$
the associated graded of the multiplication map
 ${}_iY_{j}\otimes {}_j(\Za_{\red})_k\to {}_iY_k$. 
is \[\Gamma(\mu^{-1}(0);\cL^{i-j})^G\otimes
\Gamma(\fX_{\red};\cL^{j-k}_{\red})\to \Gamma(\mu^{-1}(0);\cL^{i-k})^G.\]  Since $\oplus_n
\Gamma(\mu^{-1}(0);\cL^{n})^G$ is a finitely generated module over
$\oplus_n\Gamma(\fX_{\red};\cL^{n}_{\red})$, there is some $N'$ such
that $\oplus_{N\leq n\leq N'}\Gamma(\mu^{-1}(0);\cL^{n})^G$
generates $\oplus_{N\leq n}\Gamma(\mu^{-1}(0);\cL^{n})^G$.   That
is, if we fix $i$ and $k$ such that $i - k\geq N$, then $\Gamma(\fX;\cL^{i-k})^G$ 
is spanned by
the images of the maps \[\Gamma(\mu^{-1}(0);\cL^{i-j})^G\otimes
\Gamma(\fX_{\red};\cL^{j-k}_{\red}) \to \Gamma(\mu^{-1}(0);\cL^{i-k})^G\,\text{ for all $j$ such that }\,N\leq i-j\leq N'.\]   

We may as well assume that $N'\geq M+N$.
Thus, since a map whose associated graded is surjective is itself
surjective, we see that the
map \[\bigoplus_{j\geq i-N'}{}_iY_{j}\otimes {}_j(\Za_{\red})_k\to
{}_iY_k\] 
is surjective for all $k\leq i-N'$.
If $M$ is a $\Za_{\red}$-module, it follows that if $i\geq N'$ then
$\kappa^\Z_!(M)_{i}$ is spanned by the images of ${}_iY_{j}\otimes M_j$ for $j\geq
i-N'$.  Then if  $M_p=0$ for
$p\geq P$, we have $\kappa^\Z_!(M)_{i}=0$ whenever $i>P+N'$.

This shows that both $\kappa^\Z$ and $\kappa_!^\Z$ preserve bounded
modules and thus induce functors on the quotient categories.
\end{proof}

\begin{proposition}\label{square}
The following diagram commutes.
\[\tikz[->,thick]{
\matrix[row sep=10mm,column sep=20mm,ampersand replacement=\&]{
\node (a) {$\Dmod$}; \& \node (c) {$\Zmod/\,\Zmodbd$};\\
\node (b) {$\cD_{\red}\mmod$}; \& \node (d) {$\Za_{\red}\mmod/\,\Za_{\red}\mmod_{\operatorname{bd}}$};\\
};
\draw (a) -- (c) node[above,midway]{$\secs^\Z$};
\draw (a) -- (b) node[left,midway]{$\kappa$};
\draw[->] (b) --(d) node[above,midway]{$\Gamma^{\Z}_{\bS,\red}$};
\draw (c)--(d) node[right,midway]{$\kappa^\Z$};
}\]
\end{proposition}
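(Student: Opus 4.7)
The strategy is to establish a projection formula relating Kirwan reduction with the bimodules ${}_k\cT_0$ that define $\secs^\Z$, and then pass to sections. For each $k\geq 0$, let $\eta_k\colon U(\mg)\to\secs(\cD_k)$ be the quantized moment map induced on $\cD_k$ by the requirement of compatibility with $\eta=\eta_0$ via the $G$-equivariant structure on ${}_k\cT_0$; concretely, on ${}_k\cT_0$ the difference between left multiplication by $\eta_k(x)$ and right multiplication by $\eta_0(x)$ equals the infinitesimal $G$-action coming from the $G$-linearization of $\cL$. Let $\kappa_k\colon\cD_k\mmod\to\cD_{k,\red}\mmod$ be the corresponding Kirwan functor built from $\eta_k$ and $\xi$, so $\kappa=\kappa_0$. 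By Proposition \ref{dui-heck} applied level by level, combined with the fact that $c_1(\cL_{\red})$ arises from the $G$-linearization of $\cL$, the reduction of $\cD_k$ at $\xi$ is canonically identified with the level-$k$ quantization $\cD_{k,\red}$ in $\Za_{\red}$; write ${}_k\cT^{\red}_m$ for the resulting bimodule between reductions quantizing $\cL_{\red}^{k-m}$.

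The main step is the projection formula: a natural isomorphism of $\cD_{k,\red}$-modules
\[ \kappa_k\bigl({}_k\cT'_0\otimes_\cD\cN\bigr)\;\cong\;\bigl({}_k\cT^{\red}_0\bigr)'\otimes_{\cD_{\red}}\kappa(\cN),\qquad \cN\in\Dmod. \]
The crucial input is a canonical isomorphism of $\cD_k|_\fU$--$\cD_{\red}$ bimodules
\[ \cE^{(k)}_\xi\;\cong\;{}_k\cT_0|_\fU\otimes_{\cD|_\fU}\cE_\xi, \]
where $\cE^{(k)}_\xi=\cD_k|_\fU\bigl/\cD_k|_\fU\!\cdot\!\langle\eta_k(x)-\xi(x)\mid x\in\mg\rangle$. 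This isomorphism is forced by uniqueness of quantizations of the line bundle $\cL^k|_\fY$, and is well-defined precisely because of the compatibility spelled out above among $\eta_k$, $\eta_0$, and the $G$-action on ${}_k\cT_0$. Combined with the equivalence $-\otimes_\cD{}_k\cT'_0$ of Proposition \ref{line bundles} and the tensor--$\sHom$ adjunction, it yields
\[ \sHom_{\cD_k|_\fU}\!\bigl(\cE^{(k)}_\xi,({}_k\cT'_0\otimes_\cD\cN)|_\fU\bigr)\;\cong\;\bigl({}_k\cT^{\red}_0\bigr)'\otimes_{\cD_{\red}}\sHom_{\cD|_\fU}\!\bigl(\cE_\xi,\cN|_\fU\bigr); \]
applying $\psi_*$ produces the projection formula.

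Taking $\bS$-invariant global sections on both sides and summing over $k\geq 0$, the right-hand side becomes $\secs^{\Z_{\red}}(\kappa(\cN))$ by definition. For the left-hand side, at each level $k$ one identifies $\secs(\kappa_k(\cM))$ with the $\xi$-twisted $G$-locally finite invariant part of $\secs(\cM)$ regarded as a module over ${}_kZ_k$: this is essentially a global section theorem for the Kashiwara--Rouquier equivalence at level $k$, and the potential discrepancy between $\Gamma^\bS(\fX;\cM)$ and $\Gamma^\bS(\fU;\cM|_\fU)$ (which is supported on the unstable locus) contributes only bounded pieces in the $\Z$-algebra grading since $\cL$ is $G$-ample. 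Assembling the resulting isomorphisms over $k$ recovers $\kappa^\Z(\secs^\Z(\cN))$ via the description $\kappa^\Z(N)\cong N_\xi^G$ from the remark preceding Proposition \ref{mod-bounded}. Naturality in $\cN$ is immediate, and Proposition \ref{mod-bounded} ensures the whole construction descends to the categories modulo bounded modules.

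The principal difficulty is the projection formula: it requires careful tracking of the two distinct $\mg$-actions on ${}_k\cT_0$ coming from $\eta_k$ on the left and $\eta_0$ on the right, and it requires ensuring that the periods of the two reduced quantizations on either side of the formula match, so that both sides are naturally modules over the \emph{same} $\cD_{k,\red}$. Matching periods is exactly the content of Proposition \ref{dui-heck} applied to each $\cD_k$, combined with the observation that $c_1(\cL_{\red})$ agrees with the Kirwan image of the $G$-character through which $G$ acts on the fibers of $\cL|_\fU$.
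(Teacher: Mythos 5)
There are two genuine gaps. The first is your ``crucial input'': the asserted isomorphism $\cE^{(k)}_\xi\cong{}_k\cT_0|_\fU\otimes_{\cD|_\fU}\cE_\xi$ is false in general. Reducing modulo $h$, the left-hand side is $\fS_\fY$ (pushed forward to $\fU$), while the right-hand side is $\cL^k|_\fY$, so the two sides do not even quantize the same sheaf when $\cL|_\fY$ is nontrivial; and in the typical case where $\cL$ is the trivial bundle linearized by a character $\theta$ (e.g.\ $\fX=T^*V$), what is true is ${}_k\cT_0|_\fU\otimes_{\cD|_\fU}\cE_\xi\cong\cE^{(k)}_{\xi+k\theta}$ --- the twist shifts with $k$, and your formula drops exactly this shift. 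Worse, if your isomorphism held, then since ${}_k\cT'_0\otimes_\cD-$ is an equivalence (Proposition \ref{line bundles}) you would get $\kappa_k({}_k\cT'_0\otimes_\cD\cN)\cong\kappa(\cN)$, which is incompatible with the projection formula you want: it would force $({}_k\cT^{\red}_0)'$ to be the identity bimodule, i.e.\ $\cL_{\red}^k$ to be trivial, impossible since $\cL_{\red}$ is ample on the non-affine reduction. The correct input is an analogue of Proposition \ref{S equals T} for quantized line bundles, namely that $\psi_*\sHom_{\cD_k|_\fU}\bigl(\cE^{(k)}_\xi,\,{}_k\cT_0|_\fU\otimes_{\cD|_\fU}\cE_\xi\bigr)$ is the quantization of $\cL_{\red}^{k}$; it is precisely the $k$-dependent shift of the twist that produces the factor $({}_k\cT^{\red}_0)'$. (The appeal to Proposition \ref{dui-heck} is also misplaced: that result concerns the canonical quantization and canonical moment map over a point, not $\cD_k$ with arbitrary period; the period matching you need should instead come from the quantized-line-bundle statement just described.)

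The second and more serious gap is the sentence asserting that the discrepancy between $\left(\secs^\Z(\fX;\cN)\right)^G$ and $\left(\secs^\Z(\fU;\cN)\right)^G$ ``contributes only bounded pieces since $\cL$ is $G$-ample.'' That boundedness is the actual content of the proposition, and you assert it rather than prove it. In degree $m$ the kernel of the restriction map is $H^0_{\fX\setminus\fU}\bigl(\bar\cN\otimes\cL^m\bigr)^G$, which does vanish for $m\gg0$ because the invariant sections of $\cL^m$ over the unstable locus are just constants; but the cokernel is controlled by $H^1_{\fX\setminus\fU}\bigl(\bar\cN\otimes\cL^m\bigr)^G$, which is \emph{not} finitely generated, so one must argue separately that the image of $\Gamma(\fU;\bar\cN\otimes\cL^m)^G$ under the boundary map is finitely generated over the invariant section ring and is killed for $m\gg0$ by the locally nilpotent action of positive-degree invariant sections (which vanish on $\fX\setminus\fU$). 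Without this local-cohomology argument the claim of boundedness is unsupported, so even after repairing the projection formula your proof is incomplete at its key step. For comparison, the paper bypasses the projection formula entirely: it reduces to $\cN=\cN_\xi$, identifies the two compositions with $\left(\secs^\Z(\fX;\cN)\right)^G$ and $\left(\secs^\Z(\fU;\cN)\right)^G$, and proves that the restriction map between them is an isomorphism in high degree by exactly the argument sketched above.
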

(Note that the horizontal arrows are equivalences by Theorem \ref{Zloc}.)\\

\begin{proof}
Fix an object $\cN$ of $\Dmod$.  
First, note that we can assume that $\cN=\cN_\xi$,
that is, that $\cN$ has an $G$-equivariant structure agreeing with that
induced by $\eta-\xi$.  This is because passing to
the largest submodule that has such a structure doesn't change $\kappa$ or $\kappa^\Z$.

 With this assumption, we have a restriction map 
  $$\kappa^\Z\left(\secs^\Z (\cN)\right) = \;\left(\secs^\Z (\fX; \cN)\right)^G \longrightarrow\left(\secs^\Z (\fU; \cN)\right)^G
 \cong\;\;\Gamma^{\Z}_{\bS,\red}\!\big(\kappa(\cN)\big)$$
where $\secs^\Z(\fX;-) = \secs^\Z$, and $\secs^\Z(\fU; -)$ denotes the same functor 
defined using the set $\fU$ of stable points.
 As in the proof of Theorem \ref{Zloc}, let $\bar\cN :=
 \cN(0)/\cN(-1)$.

For each $m\in\Z$, the restriction from $\fX$ to $\fU$ gives the following long
  exact sequence in local cohomology.
  \begin{multline*}
    H^0_{\fX\setminus \fU}\left(\bar{\cN}\otimes \cL^m\right)^G\longrightarrow
    \Gamma\left(\fX;\bar{\cN}\otimes \cL^m\right)^G \longrightarrow
    \Gamma\left(\fU;\bar{\cN}\otimes \cL^m\right)^G\longrightarrow H^1_{\fX\setminus
      \fU}\left(\bar{\cN}\otimes \cL^m\right)^G\longrightarrow \cdots
  \end{multline*}
 The space 
\begin{equation}\label{sections1}
 \bigoplus_{m\geq 0}H^0_{\fX\setminus
   \fU}\left(\bar{\cN}\otimes \cL^m\right)
\end{equation} 
of sections of twists of $\bar\cN$ which are supported on $\fX\setminus
   \fU$ is finitely generated over the
 ring 
\begin{equation}\label{sections2}
 \bigoplus_{m\geq 0}\Gamma\left({\fX\setminus \fU}; \cL^m\right)
\end{equation} 
of sections of powers of the restriction of $\cL$ to $\fX\setminus\fU$.
 Since $G$ is reductive, the invariant part of \eqref{sections1} is finitely generated
 over the invariant part of \eqref{sections2}.  The invariant part of
 \eqref{sections2} is a single copy of $\C$, since any invariant section
 of $\cL^m$ for $m>0$ vanishes on all unstable points.
 Thus $H^0_{\fX\setminus \fU}\left(\bar{\cN}\otimes \cL^m\right)^G$
 vanishes for $m\gg 0$.

The module $H^1_{\fX\setminus \fU}\left(\bar{\cN}\otimes \cL^m\right)$
is {\it not} in general finitely generated as a module over the invariant section ring.  On
the other hand, the module $\bigoplus_{m\geq
  0}\Gamma\left(\fU;\bar{\cN}\otimes \cL^m\right)^G$ is the sections
of the twists of a coherent sheaf on the quotient $\fU/G$, which is
projective over an affine variety, and thus finitely generated over
the invariant section ring $\bigoplus_{m\geq 0}\Gamma\left({ \fU};\cL^m\right)^G.$
In particular, its image in $\bigoplus_{m\geq
  0} H^1_{\fX\setminus \fU}\left(\bar{\cN}\otimes \cL^m\right)^G$ under
the boundary map is finitely generated over the same ring.

 Since any positive degree
invariant section of $\cL$ vanishes on $\fX\setminus\fU$, its action on local
cohomology is locally nilpotent; this implies that there is some
integer $k$ such that all invariants of degree $\geq k$ act trivially
on the image of $\bigoplus_{m\geq
  0}\Gamma\left(\fU;\bar{\cN}\otimes \cL^m\right)^G$ under the
boundary map.  This in turn implies that the image is trivial for $m$ sufficiently large.  Note that we used the fact that the image is
finitely generated in both of these steps.

 It follows that the restriction map
 $$\left(\Gamma(\fX; \bar\cN\otimes\cL^m)\right)^G \longrightarrow\left(\Gamma (\fU; \bar\cN\otimes\cL^m)\right)^G$$
 is an isomorphism for $m\gg 0$.
 We next observe that 
 $$\left(\Gamma (\fX; \bar\cN\otimes\cL^m)\right)^G \cong\; \gr\left(\secs^\Z (\fX; {_m\cT'_0}\otimes_\cD\cN)\right)^G
 \cong\; \gr\left(\secs^\Z (\fX;\cN)[m]\right)^G$$
 and similarly $$\left(\Gamma (\fU; \bar\cN\otimes\cL^m)\right)^G \cong\; \gr\left(\secs^\Z (\fU;\cN)[m]\right)^G,$$
 where $[m]$ denotes a shift as in Definition \ref{def:Z-shift}.
 Since maps that induce isomorphism on associated
 graded are isomorphisms, we may conclude that 
 the restriction map
 $$\left(\secs^\Z (\fX;\cN)[m]\right)^G\longrightarrow \left(\secs^\Z (\fU;\cN)[m]\right)^G$$
 is an isomorphism for $m\gg 0$.  This is equivalent to the statement that the kernel and cokernel of the map
  $$\left(\secs^\Z (\fX;\cN)\right)^G\longrightarrow \left(\secs^\Z (\fU;\cN)\right)^G$$
are bounded, as desired.
\end{proof}

\begin{lemma}\label{adjoints}
The Kirwan functor $\kappa$ has a left adjoint $\kappa_!$ such that $\kappa\circ\kappa_!$
  is isomorphic to the identity functor on $\cD_{\red}\mmod$.
\end{lemma}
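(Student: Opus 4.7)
The plan is to build $\kappa_!$ by transport of structure from the $\Z$-algebra setting, and then to deduce the identity $\kappa\circ\kappa_!\cong\id$ from its analogue for $\kappa^\Z$ and $\kappa^\Z_!$.

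I would first define
\[
\kappa_! \;:=\; \Loc^\Z \circ \kappa^\Z_! \circ \secs^{\Z_{\red}} \colon \cD_{\red}\mmod \longrightarrow \Dmod.
\]
This makes sense because $\kappa^\Z_!$ descends to the quotient categories by Proposition \ref{mod-bounded} and $\secs^\Z, \secs^{\Z_{\red}}$ are equivalences by Theorem \ref{Zloc}. The adjunction $(\kappa_!,\kappa)$ then follows formally from the commutative square of Proposition \ref{square}, which identifies $\kappa$ with $\kappa^\Z$ under these equivalences, combined with the tautological adjunction $(\kappa^\Z_!,\kappa^\Z)$ coming from the formulas $\kappa^\Z_!=Y\otimes_{\Za_{\red}}-$ and $\kappa^\Z=\Hom_\Za(Y,-)$.

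The same transport reduces the identity $\kappa\circ\kappa_!\cong\id_{\cD_{\red}\mmod}$ to showing that for every finitely generated $\Za_{\red}$-module $M$, the unit
\[
M \;\longrightarrow\; \Hom_{\Za}\!\big(Y,\, Y\otimes_{\Za_{\red}} M\big)
\]
has bounded kernel and cokernel, so that it becomes an isomorphism in the quotient $\Za_{\red}\mmod/\Za_{\red}\mmod_{\operatorname{bd}}$.

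This last step is the main obstacle. My approach would be to mimic the local-cohomology argument of Proposition \ref{square}, now applied to the $\Za$-module $\kappa^\Z_!(M)=Y\otimes_{\Za_{\red}} M$ in place of $\secs^\Z(\cN)$. After fixing a compatible filtration on $M$, one compares in each degree the $G$-invariant graded piece over $\fX$ with that over the semistable locus $\fU$; the latter is identified, through the isomorphism $\fU/G\cong\fX_{\red}$, with the corresponding graded piece of $M$. The key inputs will be finite generation of $\bigoplus_m H^0_{\fX\setminus\fU}(-\otimes\cL^m)^G$ over the one-dimensional ring of $G$-invariants on unstable points, together with finite generation of $\bigoplus_m\Gamma(\fU;-\otimes\cL^m)^G$ over the section ring of $\cL_{\red}$. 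Exactly as in the proof of Proposition \ref{square}, these force both $H^0_{\fX\setminus\fU}(-)^G$ and the image of the boundary map in $H^1_{\fX\setminus\fU}(-)^G$ to vanish in sufficiently high degree; lifting from the associated graded back to the filtered objects then exhibits the unit as an isomorphism in high degree, which is exactly the desired boundedness of kernel and cokernel.
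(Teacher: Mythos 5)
Your first half coincides with the paper's reduction: both pass through Proposition \ref{square} and Theorem \ref{Zloc} and use the tautological adjunction between $\kappa^\Z_!=Y\otimes_{\Za_{\red}}-$ and $\kappa^\Z=\Hom_{\Za}(Y,-)$, so everything comes down to showing that the unit $M\to\Hom_{\Za}(Y,Y\otimes_{\Za_{\red}}M)$ has bounded kernel and cokernel. The gap lies exactly at the step you call the main obstacle. The local-cohomology argument of Proposition \ref{square} compares sections over $\fX$ with sections over $\fU$ of a sheaf that is given in advance; it cannot simply be ``applied to the $\Za$-module $Y\otimes_{\Za_{\red}}M$,'' because that module is defined purely algebraically and is not presented as the invariant sections of anything on $\fX$. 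More seriously, the clause in which you identify ``the graded piece over $\fU$'' with the corresponding graded piece of $M$ via $\fU/G\cong\fX_{\red}$ is precisely the statement to be proved: that $\kappa^\Z_!(M)$, restricted to the semistable locus and reduced, returns $M$. To make your comparison meaningful you would have to sheafify $\gr(Y\otimes_{\Za_{\red}}M)$ for compatible good filtrations, check that sections over $\fX$ of this sheafification recover the graded module in high degree, and then prove that its restriction to $\fU$ descends to the sheaf on $\fX_{\red}$ attached to $\gr M$ --- a compatibility of reduction with the tensor product and with classical limits (using flatness of $\mu$ and freeness of the $G$-action on $\fU$) that appears nowhere in your sketch. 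Only after these identifications does the local-cohomology comparison close the argument.

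The paper avoids any per-module analysis by proving a statement about the bimodule $Y$ alone. Reductivity of $G$ gives an isotypic splitting ${}_iY_j\cong{}_iY'_j\oplus{}_iY^G_j$, and the natural map ${}_iY^G_j\to{}_i(\Za_{\red})_j$ has associated graded $\Gamma(\fX;\cL^{i-j})^G\to\Gamma(\fX_{\red};\cL_{\red}^{i-j})$, which is an isomorphism for $i-j\gg 0$; hence $Y\cong Y'\oplus\Za_{\red}$ as right $\Za_{\red}$-modules modulo bounded modules. Since $M$ carries no $G$-action and $Y'$ has no trivial isotypic component, $(Y'\otimes_{\Za_{\red}}M)^G=0$, so $\kappa^\Z\kappa^\Z_!(M)\cong(Y\otimes_{\Za_{\red}}M)^G\cong\Za_{\red}\otimes_{\Za_{\red}}M\cong M$ modulo bounded modules, naturally in $M$ and with no geometric input depending on $M$. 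If you want to rescue your route, observe that the identification you are missing on the semistable side is essentially equivalent to this high-degree isomorphism $Y^G\cong\Za_{\red}$, so it is both easier and cleaner to prove that statement directly.
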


\begin{proof}
  By Theorem \ref{square}, we may work instead with the $\Z$-Kirwan
  functor $\kappa^\Z$ and its left adjoint $\kappa^\Z_!$.
  Let ${_iY'_j}\subset {_iY_j}$ be the sum of all non-trivial
  $G$-isotypic components.  Since $G$ is reductive, ${_iY_j}$ is
  isomorphic to ${_iY'_j}\oplus {_iY^G_j}$.  There is a natural map
  from ${_iY^G_j}$ to ${_i(\Za_{\red})_j}$ whose associated graded is
  the map $\Gamma(\mu^{-1}(0);\cL^{i-j})^G\to
  \Gamma(\fX_{\red};\cL_{\red}^{i-j})$.  This map is an isomorphism
  when $i-j$ is sufficiently large, which implies that the same is
  true of the map ${_iY^G_j}$ to ${_i(\Za_{\red})_j}$.  Thus, modulo
  bounded modules, we have $Y\cong Y' \oplus \Za_{\red}$ as a right module
  over $\Za_{\red}$.  
Then for any $\Za_{\red}$-module $N$, we have
\[\kappa^\Z\circ\kappa^\Z_!(N) = \kappa^\Z(Y\otimes_{\Za_{\red}}N)
\cong (Y\otimes_{\Za_{\red}}N)^G\cong
\Za_{\red}\otimes_{\Za_{\red}}N\cong N,\]
modulo bounded modules. 
\end{proof}

\begin{remark}
  One can use similar principles to construct a right adjoint
  as well as a left to $\kappa$.  One considers the $\Za_{\red}-\Za$
  bimodule \[{}_iW_{j} := {_iZ_j}\Big{/} \langle \eta_j(x)-\xi(x)\mid
  x\in \mg\rangle \cdot {_iZ_j}.\] The obvious guess for the right
  adjoint based on general nonsense is $\Hom_{\Za_{\red}}(W,-)$;
  however, we need to exercise care here since $W$ is not finitely
  generated as a left module.  On the other hand, it 
  is (as a left module) the direct sum $W=\bigoplus_{\chi\in \hat G}W^\chi$ 
  of its isotypic components $W^\chi$ according to the natural $G$ action, and each
  isotypic component is finitely generated even after taking the
  associated graded by a classical theorem of Hilbert.  We should
  emphasize that here $\hat G$ is the set of all finite dimensional
  representations, not just 1-dimensional ones.

  A replacement for $\Hom_{\Za_{\red}}(W,-)=\prod_{\chi\in \hat
    G}\Hom_{\Za_{\red}}(W^\chi,-)$ with better finiteness properties
  is the direct sum $\kappa_*(-)=\bigoplus_ {\chi\in \hat
    G}\Hom_{\Za_{\red}}(W^\chi,-)$ which we can consider as the
  subspace of $\Hom_{\Za_{\red}}(W,-)$ which kills all but finitely many isotypic
  components.  This is closed under the action of $\Za$ acting on the
  right since the $G$-action on $\Za$ is locally finite.

It is still not obvious that $\kappa_*$ takes finitely generated modules to finitely generated modules.
When $X$ is the cotangent bundle to a smooth affine $G$-variety, 
this is proved in a recent preprint by McGerty and Nevins \cite[6.1(3)]{MN-Morse}.
\end{remark}

The following theorem, which is an immediate consequence of Lemma \ref{adjoints}, 
may be regarded as a categorical, quantum version of Kirwan surjectivity.

\begin{theorem}\label{surjectivity}
The Kirwan functor $\kappa$
is essentially surjective.
\end{theorem}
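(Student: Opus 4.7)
The plan is to derive the theorem immediately from Lemma \ref{adjoints}, which is essentially the content of the remark preceding the theorem statement. Given any object $M$ of $\cD_{\red}\mmod$, Lemma \ref{adjoints} produces an object $\kappa_!(M)$ of $\Dmod$ together with a natural isomorphism $\kappa(\kappa_!(M))\cong M$. This exhibits $M$ as (isomorphic to) the image under $\kappa$ of some object of $\Dmod$, so every object of the target category lies in the essential image of $\kappa$. That is precisely the definition of essential surjectivity.

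It is worth unpacking why Lemma \ref{adjoints} has already done all of the work. The lemma is proved by passing through the equivalence of Theorem \ref{square} to the $\Z$-algebra side, where the bimodule $Y$ computing $\kappa^\Z$ splits as $Y\cong Y'\oplus \Za_{\red}$ modulo bounded modules, with $Y'$ the sum of non-trivial $G$-isotypic components; the identity summand immediately produces the adjunction isomorphism $\kappa^\Z\circ\kappa^\Z_!\cong \mathrm{id}$ in the quotient $\Za_{\red}\mmod/\Za_{\red}\mmod_{\mathrm{bd}}$. Transporting back through the horizontal equivalences of Theorem \ref{square} gives the corresponding statement for $\kappa$ and its left adjoint $\kappa_!$. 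There is no real obstacle remaining for the theorem itself; the only subtle point is simply to observe that $\kappa_!$ lands in $\Dmod$ (which is built into Proposition \ref{mod-bounded} together with the equivalence of Theorem \ref{Zloc}), after which the essential surjectivity is formal.

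So the one-line proof I would write is: for any $M\in\cD_{\red}\mmod$, set $\cN:=\kappa_!(M)\in\Dmod$; by Lemma \ref{adjoints}, $\kappa(\cN)\cong M$, so $M$ lies in the essential image of $\kappa$. \qed
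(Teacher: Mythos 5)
Your proof is correct and matches the paper's own argument exactly: the paper also deduces essential surjectivity by applying the left adjoint $\kappa_!$ from Lemma \ref{adjoints}, whose composition with $\kappa$ is the identity, so that $\kappa_!(M)$ is a witness for any object $M$ of $\cD_{\red}\mmod$. Your added commentary on how Lemma \ref{adjoints} is established via Theorem \ref{square} and the splitting $Y\cong Y'\oplus \Za_{\red}$ modulo bounded modules is accurate but not needed beyond citing the lemma.
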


\begin{proof}
For any object of $\cD_{\red}\mmod$, we can apply the left
adjoint from Lemma \ref{adjoints} to obtain a 
witness to essential surjectivity.
\end{proof}

\begin{remark}\label{ksmn}
McGerty and Nevins \cite{MN} always work with symplectic quotients of affine schemes,
and the category of quantizations that they consider is by definition the essential image of the Kirwan functor.
Thus Theorem \ref{surjectivity} establishes that their module category is the same as ours.
\end{remark}

\section{Convolution and twisting}
\label{sec:bimodules}
Throughout Section \ref{sec:bimodules}, we'll only consider conical
symplectic resolutions $\fM$. Let $\nu\colon\fM\to\fM_0$ be the
resolution map with
Steinberg variety $\fZ:=\fM\times_{\fM_0}\fM$.  Consider the three different projections
$p_{ij}\colon\fM\times\fM\times\fM\to\fM\times\fM$ as well as the two
projections $p_i\colon\fM\times\fM\to\fM$.  The cohomology 
$H^{2\dim\fM}_\fZ(\fM\times\fM; \C)$ with supports in $\fZ$ has a convolution product
given by the formula
$$\a\star\b := (p_{13})_*(p_{12}^*\a\cdot p_{23}^*\b),$$
making it into a semisimple algebra \cite[8.9.8]{CG97}.  
For any closed subvariety $\fL\subset\fM$ with the property that $\fL =
\nu^{-1}(\nu(\fL))$, there is a degree-preserving action of this 
algebra on the cohomology $H^*_{\fL}(\M; \C)$ given by the
formula $$\a\star\gamma := (p_2)_*(\a\cdot p_1^*\gamma).$$ 

\begin{example}
When $\fM$ is the
cotangent bundle of the flag variety, $H^{2\dim\fM}_\fZ(\fM\times\fM; \C)$ is
isomorphic to the group ring of the Weyl group \cite[3.4.1]{CG97}, and
$H^*(\fM; \C)$ is isomorphic to the regular representation.
\end{example}

In this section, we explain how to categorify this action.  In Section
\ref{sec:HC}, we define the category of Harish-Chandra bimodules over
a pair of quantizations.  There is both an algebraic and a geometric
version of this definition, and they are related by the localization
and invariant section functors.  In Section \ref{sec:index}, we show
that a (geometric) Harish-Chandra bimodule has a characteristic cycle in
$H^{2\dim\fM}_\fZ(\fM\times \fM; \C)$, and tensor products of bimodules
categorify convolution product of cycles.  Furthermore, an object $\cN$ of
$D^b(\Dmod)$ has a characteristic cycle in $H^{\dim\fM}_\fL(\fM; \C)$ for any 
$\fL\subset \fM$ containing $\supp \cN$, and
we show that the tensor product action of bimodules on modules
categorifies the convolution action.  
In Section \ref{twisting bimodules} we define a particularly nice collection of (algebraic) 
Harish-Chandra bimodules, which we use in 
Section \ref{twisting} to study a certain collection of auto-equivalences of
$D^b(\Amod)$ related to twisting functors on BGG category $\cO$.

\subsection{Harish-Chandra bimodules}\label{sec:HC}
Recall that, for any $\la\in\Ht$, we let $A_\la := \secs(\cD_\la)$ be the section ring
of the quantization of $\fM$ with period $\la$.  Recall from
Proposition \ref{restriction of sections}
that we can write this ring for each $\la$ as a quotient of the sections $\scrA$ of a canonical quantization of the universal deformation $\scrM$.  
Let $H$ be a finitely generated $A_{\la'}$-$A_{\la}$ bimodule.   Recall
that $\gr A_\la\cong \gr A_{\la'}\cong\C[\fM]$, thus for any
filtration $H(0)\subset H(1)\subset \ldots \subset H$ which is compatible 
with the filtrations on $A_\la$ and $A_{\la'}$, the $\C[\fM]\otimes\C[\fM]$-module 
$\gr H$ may be
interpreted as an $\bS$-equivariant sheaf on $\fM_0\times\fM_0 \cong
\Spec \big(\C[\fM]\otimes\C[\fM]\big)$.

When $n$ is greater than 1, we will be interested in a {\bf thickened associated graded}
$\gr_n H:= R(H)/hR(H)$.  This is a module over $R(A_\la\otimes A_{\la'}^{\op})/hR(A_\la\otimes A_{\la'}^{\op})\cong
\C[\fM_0]\otimes\C[\fM_0]\otimes \C[\hon]/(h)$, and thus
over $\C[\fM_0]\otimes\C[\fM_0]$.  The module $\gr_n H$ is an $n$-fold self-extension of
$\gr H$, but this can be a non-split extension, so $\gr_n H$ contains
more information.

\begin{definition}\label{hca}
We say that $H$ is {\bf Harish-Chandra} if it is finitely generated
and it admits a filtration
such that $\gr_n H$ is scheme-theoretically supported on
the diagonal.  Equivalently, we require that if $a_\la\in A_\la(k)$ and $a_{\la'}\in A_{\la'}(k)$
are specializations of the same element $a\in\scrA$, then for all $h\in H(m)$,
we have $a_\la\cdot h - h\cdot a_{\la'} \in H(k+m-n)$.
\end{definition}

\newcommand{\DHCa}[2]{D^b_{\mathbf{HC}}({A_{#1}\operatorname{-mod-}A_{#2}})}

\newcommand{\DHCg}[2]{D^b_{\mathbf{HC}}({\cD_{#1}\operatorname{-mod-}\cD_{#2}})}

Let ${_{\la'}^{\mbox{}}\HCa_{\la}}$
be the category of Harish-Chandra bimodules, and let
$\DHCa{\la'}{\la}$ be the full subcategory of $D^b(A_{\la'}\operatorname{-mod-}A_{\la})$
consisting of objects $H$ whose cohomology $\coho^i{( H)}$ is Harish-Chandra.

\begin{proposition}\label{prop:HCa-tensor}
 If $A_{\la'}$ has finite global dimension, $H_1\in \DHCa{\la'}{\la}$, and $H_2\in
 \DHCa{\la''}{\la'}$, then $H_2\Lotimes H_1\in \DHCa{\la''}{\la}$.
\end{proposition}
\begin{proof}
Consider the Rees modules  $R(H_1),R(H_2)$
associated to some good filtration.
These modules have locally free resolutions over $R(A_{\la'}\otimes A_{\la}^{\op})$
and $R(A_{\la''}\otimes A_{\la'}^{\op})$ such that, if $f$ is congruent to $f'$ modulo $h$,
then $f'\otimes
1-1\otimes f$ acts trivially on the cohomology of $R(H_i)$ modulo $h$.
By a standard result of homological algebra, there
exists a homotopy $p_i$ on the resolution of $R(H_i)$ such that $f'\otimes
1-1\otimes f+\partial p_i+p_i\partial$ acts trivially modulo $h$ on the
complex itself.  Then
$p_2\otimes 1+1\otimes p_1$ is a homotopy that plays the same role
for $f''\otimes 1-1\otimes f$ acting on the tensor product of these
complexes.  This shows that  $f''\otimes 1-1\otimes f$ acts by 0
modulo $h$ on the cohomology of $R(H_2)\Lotimes R(H_1)$, so $H_2\Lotimes H_1\in \DHCa{\la''}{\la}$.
\end{proof}

We can view $A_{\la'}\otimes A_{\la}^{\op}$ as the ring of $\bS$-invariant sections of a
sheaf $\cD_{\la'}\hat\boxtimes\cD_{\la}^{\op}$ on $\fM\times \fM$; we
must complete the naive tensor product in the $h$-adic topology in
order to satisfy the hypotheses of a quantization.  As a quantization, 
$\cD_{\la'}\hat\boxtimes\cD_{\la}^{\op}$ has period $(\la',-\la)$.  By a $\cD_{\la'}-\cD_{\la}$ bimodule,
we mean an $\bS$-equivariant sheaf of
$\cD_{\la'}\hat\boxtimes\cD_{\la}^{\op}$-modules on $\fM\times
\fM$.  We let $D^b(\cD_{\la'}\operatorname{-mod-}\cD_{\la})$ be the
bounded derived
category of good $\cD_{\la'}\hat\boxtimes\cD_{\la}^{\op}$-modules.

\begin{definition}\label{hcg}
  For any $\la,\la'\in\Ht$, let $\HCglala$ be the category of good
  $\cD_{\la'}-\cD_{\la}$ bimodules $\cH$ with ``thick classical limits'' that
  are scheme-theoretically supported on the Steinberg $\fZ\subset\fM\times\fM$.  More precisely, if $\scrQ$ is the canonical quantization of $\scrM$,
  we require $\cH$ to admit a lattice $\cH(0)$ such that for all sections $\tilde f$ of $\scrQ$,
  $\cH(0)$ is invariant under
  $h^{-1}(\tilde f_{\la'}\otimes 1-1\otimes \tilde f_{\la})$, where $\tilde f_{\la'}$ and
  $\tilde f_{\la}$ are the specializations of $\tilde f$ at $\la'$ and
  $\la$, respectively.  As in the algebraic setting, we define
  $\DHCg{\la'}{\la}$ to be the full subcategory of
  $D^b(\cD_{\la'}\operatorname{-mod-}\cD_{\la})$  consisting of objects $\cH$
  whose cohomology $\coho(\cH)$ lies in $\HCglala$.
\end{definition}
Considering these bimodules as modules over the quantization
$\cD_{\la',-\la}$ of $\fM\times\fM$, we can apply the (derived)
localization and sections functors as in previous sections.

\begin{theorem}\label{bimodule-loc}
  For every $\la,\la'$, we have $\Rsecs(\DHCg{\la'}{\la})\subset \DHCa{\la'}{\la}$. If $A_\la$ and
  $A_{\la'}$ have finite
  global dimension\footnote{The finite global dimension hypothesis is truly necessary.  If $A_\la$ does not have finite global dimension, the
derived localization $\LLoc(A_\la)$ as a bimodule may not be bounded and
thus not in $\DHCg{\la}{\la}$.}, then $\LLoc(\DHCa{\la'}{\la})\subset \DHCg{\la'}{\la}$.
\end{theorem}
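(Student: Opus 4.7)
\emph{Plan.} The strategy for both inclusions is to reduce to statements about coherent sheaves on $\fM\times\fM$ via the Rees/associated graded construction, using the key geometric fact that $\fZ=(\nu\times\nu)^{-1}(\Delta_{\fM_0})$.

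For $\Rsecs$, it suffices to treat a single $\cH\in\HCglala$. Fix a good $\bS$-equivariant lattice $\cH(0)\subset\cH$ witnessing the defining property, so that $\cH(0)/h\cH(0)$ is a coherent sheaf on $\fM\times\fM$ scheme-theoretically supported on $\fZ$. Adapting the boundedness argument used to prove that $\Rsecs$ takes $\cD\mmod$ to $D^b(A\mmod)$ (the proposition following Lemma \ref{projlim}) to the bimodule setting shows that $\Rsecs\cH$ is a bounded complex with finitely generated cohomology over $A_{\la'}\otimes A_\la^{\op}$. Each $\coho^i(\Rsecs\cH)$ inherits a filtration from the lattice via $\coho^i(\Rsecs\cH)(m):=H^i(\fM\times\fM;\,h^{-m/n}\cH(0))^{\bS}$, and its thickened associated graded $\gr_n$ embeds as a subquotient of the coherent sheaf cohomology $H^i(\fM\times\fM;\cH(0)/h\cH(0))$. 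Pushing forward to $\fM_0\times\fM_0$ along $\nu\times\nu$, this module factors through $\C[(\nu\times\nu)(\fZ)]=\C[\Delta_{\fM_0}]$, so $\gr_n\coho^i(\Rsecs\cH)$ is scheme-theoretically supported on the diagonal, proving the algebraic Harish--Chandra condition.

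For $\LLoc$, reduce to a single $H\in\HCalala$ with good filtration such that $\gr_n H$ is supported on $\Delta_{\fM_0}$. Since $A_\la$ and $A_{\la'}$ have finite global dimension, so do the enveloping algebra $A_{\la'}\otimes A_\la^{\op}$ and its Rees algebra $\cE:=R(A_{\la'}\otimes A_\la^{\op})$; in particular we may choose a finite length resolution $P^{\bullet}\to R(H)$ by finitely generated projective $\cE$-modules compatible with the Rees grading. Applying $(\cD(0)_{\la'}\boxtimes\cD(0)_\la^{\op})\otimes_{\cE}(-)$ termwise, completing $h$-adically, and inverting $h$ yields a bounded complex of $\cD_{\la'}\boxtimes\cD_\la^{\op}$-modules computing $\LLoc(H)$, with each term carrying a canonical $\cD(0)_{\la'}\boxtimes\cD(0)_\la^{\op}$-lattice. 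The reduction mod $h$ of this complex of lattices is the pullback of $P^{\bullet}/hP^{\bullet}\to\gr_n H$ under $\nu\times\nu$; since $\gr_n H$ is supported scheme-theoretically on $\Delta_{\fM_0}$, the resulting coherent complex on $\fM\times\fM$ is supported set-theoretically on $\fZ$. Taking cohomology and unwinding the lattice structure then exhibits lattices on $\coho^i\LLoc(H)$ satisfying the defining property of geometric Harish--Chandra bimodules.

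The main obstacle is verifying that the support and lattice conditions survive the \emph{derived} tensor product, rather than merely holding on individual terms of a resolution. This requires a spectral sequence argument comparing reduction mod $h$ of $\coho^i$ with $\coho^i$ of the reduction mod $h$, and in particular one must invoke Mittag--Leffler style vanishing in the spirit of Lemma \ref{projlim} to pass between a complex of $h$-adic lattices and its reduction mod~$h$. A secondary technical point is confirming that the Rees algebra $\cE$ indeed has finite global dimension (it does, using flatness of the Rees construction together with the standard bound $\operatorname{gl.dim}(R\otimes S)\leq \operatorname{gl.dim}(R)+\operatorname{gl.dim}(S)$ for Noetherian $\C$-algebras).
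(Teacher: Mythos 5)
The $\Rsecs$ half of your proposal is essentially the paper's argument: you take a lattice $\cH(0)$ witnessing Definition \ref{hcg}, use the analogue of Lemma \ref{projlim} on $\fM\times\fM$ to control $H^p$ of the lattice by the finite quotients, and conclude support on the diagonal from the fact that $\fZ=(\nu\times\nu)^{-1}(\Delta_{\fM_0})$, so that the ideal of $\Delta_{\fM_0}$ kills the thick classical limit. The paper packages this slightly differently (an $m$-step filtration on each $H^p(\fM\times\fM;\cH(0)/\cH(-mn))$ on whose associated graded $\tilde f_{\la'}\otimes 1-1\otimes\tilde f_{\la}$ acts trivially, then the inverse limit), whereas you assert that $\gr_n$ of the cohomology is a subquotient of the cohomology of $\cH(0)/h\cH(0)$; with the filtration taken to be the images of the $H^p(h^{-m/n}\cH(0))^{\bS}$ this can be made to work, and I would count it as the same proof.

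The genuine gap is in the $\LLoc$ half. What Definition \ref{hcg} demands is a lattice inside each cohomology sheaf $\coho^p(\LLoc(H))$ that is preserved by $h^{-1}(\tilde f_{\la'}\otimes 1-1\otimes\tilde f_{\la})$ for sections $\tilde f$ of $\scrQ$, i.e.\ scheme-theoretic support of the \emph{thick} (mod $h$, not mod $h^{\nicefrac1n}$) classical limit of the cohomology on $\fZ$. Your argument only yields a support statement for the cohomology sheaves of the mod-$h$ reduction of one chosen complex computing $\LLoc(H)$ (and you even state it only set-theoretically); converting that into the lattice-invariance property of the cohomology is exactly the crux, and you defer it to an unspecified spectral-sequence/Mittag--Leffler comparison, which you yourself flag as ``the main obstacle.'' The paper closes this step with a short but essential homotopy argument that your proposal lacks: since the algebraic condition (Definition \ref{hca}) says $(\tilde f_{\la'}\otimes 1-1\otimes\tilde f_{\la})\cdot R(H)\subset h\,R(H)$, the induced endomorphism of any projective resolution of $R(H)$ is null-homotopic modulo $h$; applying $\cD_{\la'}(0)\boxtimes\cD_{\la}(0)^{\op}\otimes_{R(A_{\la'}\otimes A_{\la}^{\op})}(-)$ termwise, the same homotopy shows that the induced lattice in each $\coho^p(\LLoc(H))$ is invariant under $h^{-1}(\tilde f_{\la'}\otimes 1-1\otimes\tilde f_{\la})$, with no comparison of ``$\coho^i$ of the reduction'' versus ``reduction of $\coho^i$'' needed. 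A secondary problem: your finite-length resolution requires finite global dimension of the Rees algebra $\cE=R(A_{\la'}\otimes A_{\la}^{\op})$, which does not follow from finite global dimension of $A_{\la'}\otimes A_{\la}^{\op}$ (note $\cE$ degenerates at $h=0$ to a thickening of $\C[\fM_0]\otimes\C[\fM_0]$, which is singular); fortunately it is also unnecessary, since boundedness of $\LLoc(H)$ comes from finite global dimension of $A_{\la'}\otimes A_{\la}^{\op}$ itself, and the homotopy argument above works with an arbitrary projective resolution of $R(H)$.
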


\begin{proof}
Let $\cH$ be an object in $\HCglala$, and let $\cH(0)\subset\cH$ be a lattice
satisfying the required condition.
For every $m$, we have a long exact  sequence showing that
$H^p(\fM;\cH(0)/\cH(-mn))$ is a extension of a submodule of
$H^p(\fM;\cH(0)/\cH(-(m-1)n))$ and quotient of
$$H^p(\fM;\cH(-(m-1)n)/\cH(-mn))\cong H^p(\fM;\cH(0)/\cH(-n)).$$   Thus, $H^p(\fM;\cH(0)/\cH(-mn))$ has an $m$ step filtration compatible with $\cH(0) \supset
  \cH(-n)\supset\cdots$ such that  elements of $A_{\la'}\otimes A^{\op}_{\la}$ of
  the form $\tilde f_{\la'}\otimes 1-1\otimes \tilde f_{\la}$ act
  trivially on the associated graded.   Since
  $H^p(\fM;\cH(0))=\varprojlim H^p(\fM;\cH(0)/\cH(-mn))$ by Lemma
  \ref{projlim}, we have an induced filtration on this group such that
  $\tilde f_{\la'}\otimes 1-1\otimes \tilde f_{\la}$ acts trivially
  modulo $h$.  This shows that the cohomology of $\R \Gamma^\bS(\cH)$ is Harish-Chandra as well. 

Now let $H$ be an object of $\HCalala$ and put $\cH:=\LLoc(H)$.
A filtration of $H$ induces a lattice in $\coho^p(\cH)$.
For any $\tilde f\in \Gamma(\fM;\scrD(0))$, we have that
  \[(\tilde f_{\la'}\otimes 1-1\otimes \tilde f_{\la})\cdot
  R(H)\subset h\cdot R( H);\]  thus, on any projective resolution, the
  map induced by 
  $(\tilde f_{\la'}\otimes 1-1\otimes \tilde f_{\la})$ is
  null-homotopic mod $h$; this
  implies that our lattice in $\coho^p(\cH)$ has the required property.
\end{proof}

\begin{corollary}
If derived localization holds at $\la'$ and $-\la$, then $\LLoc$ and $\Rsecs$
are inverse equivalences between $\DHCa{\la'}{\la}$ and $\DHCg{\la'}{\la}$.
If localization holds at $\la'$ and $-\la$, then $\Loc$ and $\secs$
are inverse equivalences between $\HCalala$ and $\HCglala$.
\end{corollary}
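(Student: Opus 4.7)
The plan is to combine Theorem \ref{bimodule-loc} with a Künneth-type derived localization statement on a product. A Harish-Chandra bimodule is nothing but a module over $\cD_{\la'}\hat\boxtimes_{\C((\hmon))}\cD_\la^{\op}$ (geometric side) or $A_{\la'}\hat\otimes A_\la^{\op}$ (algebraic side) satisfying the Harish-Chandra condition of Definitions \ref{hca} and \ref{hcg}. By Proposition \ref{opp-zero}, $\cD_\la^{\op}$ is a quantization of $\fM$ with opposite Poisson structure whose period corresponds to $-\la$, so $\cD_{\la'}\hat\boxtimes\cD_\la^{\op}$ is an $\bS$-equivariant quantization of the conical symplectic resolution $\fM\times\fM^{\op}$ whose section algebra is $A_{\la'}\hat\otimes A_\la^{\op}$.

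The key step is to show that the hypothesis---derived localization for $\cD_{\la'}$ and for $\cD_\la^{\op}$ (i.e., at $-\la$)---implies derived localization for this product quantization. I would deduce this from Kaledin's criterion (Theorem \ref{kaledin-DL}): the diagonal bimodule on $(\fM\times\fM^{\op})\times(\fM\times\fM^{\op})$ is, after reordering factors, the external product of the diagonal bimodules of $\cD_{\la'}$ and $\cD_\la^{\op}$, and the natural morphism $\LLoc(A_{\operatorname{diag}})\to\cD_{\operatorname{diag}}$ for the product is the exterior product of the corresponding morphisms for the two factors. Since exterior products of quasi-isomorphisms between bounded complexes of flat modules are again quasi-isomorphisms, the criterion for the product follows from the criterion for each factor. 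Consequently $\LLoc$ and $\Rsecs$ are mutually inverse equivalences between the ambient derived categories $D(\cD_{\la'}\hat\boxtimes\cD_\la^{\op}\mMod)$ and $D(A_{\la'}\hat\otimes A_\la^{\op}\mMod)$; combining this with Theorem \ref{bimodule-loc} (which tells us that both functors preserve the Harish-Chandra subcategories) gives the first claim. The abelian statement is obtained by the same argument, using that abelian localization at $\la'$ and $-\la$ promotes $\secs$ and $\Loc$ to exact inverse equivalences, together with the fact that the lattice-and-filtration analysis of Theorem \ref{bimodule-loc} preserves the Harish-Chandra subcategories at the abelian level.

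The main obstacle is the technical verification of the Künneth step in the completed $h$-adic, $\bS$-equivariant setting: one must confirm that external products of bimodules and their derived localizations interact well with completed tensor products over $\C((\hmon))$. However, all the cohomological finiteness and vanishing results required (e.g.\ Propositions \ref{GR} and \ref{cohomology}) hold for $\fM\times\fM^{\op}$ because it is again a conical symplectic resolution, and the formulation of Kaledin's criterion is exterior-product-compatible by design, so this verification is essentially formal given suitable $\bS$-equivariant free resolutions.
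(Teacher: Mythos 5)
Your proposal is correct and follows essentially the route the paper intends: the corollary is stated as an immediate consequence of Theorem \ref{bimodule-loc}, with the implicit step being exactly your observation that Harish-Chandra bimodules are modules over the product quantization $\cD_{\la'}\boxtimes\cD_\la^{\op}$ (whose period involves $-\la$ by Proposition \ref{opp-zero}), so that (derived) localization at $\la'$ and $-\la$ makes $\LLoc$/$\Rsecs$ (resp.\ $\Loc$/$\secs$) inverse equivalences on the ambient categories, which then restrict to the Harish-Chandra subcategories by Theorem \ref{bimodule-loc}. Your Künneth verification via Kaledin's criterion is a reasonable way to make explicit the product-localization step that the paper leaves implicit.
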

 
Consider the convolution product 
defined by the formula
\begin{equation}\label{convolution}
\cH_1\star\cH_2:=(p_{13})_*(p_{12}^{-1}\cH_1\overset{L}\otimes_{p_2^{-1}\cD_{\la'}}p_{23}^{-1}\cH_2),
\end{equation}
where $p_{ij}$ is one of the three projections from $\fM\times\fM\times\fM$ to $\fM\times\fM$.
  
\begin{proposition}\label{prop:HCg-tensor}
  If $\cM\in D^b(\cD_{\la''}{\hat\boxtimes}
 \cD_{\la'}^{\op}\mmod)$ and  $\cN\in
  D^b(\cD_{\la'}{\hat\boxtimes}
 \cD_{\la}^{\op}\mmod),$ then we have $\cM\star\cN\in D^b(\cD_{\la''}{\hat\boxtimes}
 \cD_{\la}^{\op}\mmod).$
If furthermore
$\cM\in \DHCg{\la''}{\la'},$ and $\cN\in
  \DHCg{\la'}{\la}$, then $\cM\star\cN\in \DHCg{\la''}{\la}$.
\end{proposition}

\begin{proof}
  The modules $\cM(0)$ and $\cN(0)$ have finite resolutions
  \[\cdots \to M^{(1)}_1\hat\boxtimes M^{(2)}_1\to
  M^{(1)}_0\hat\boxtimes M^{(2)}_0\to \cM(0) \and \cdots \to N^{(1)}_1\hat\boxtimes N^{(2)}_1\to
  N^{(1)}_0\hat\boxtimes N^{(2)}_0\to \cN(0)\] with $M_j^{(1)}$ (resp. $M_j^{(2)},N_j^{(1)},N_j^{(2)}$)
  locally free over $\cQ_{\la''}$ (resp. $\cQ_{\la'}^{\op},\cQ_{\la'},\cQ^{\op}_{\la}$), since the same is true of
  coherent sheaves over $\fS_{\fM\times \fM}$.  Thus, we can apply convolution to these
  modules by taking the naive tensor product over
  $p_2^{-1}\cD_{\la'}$:
\[\cM\star\cN(0):=M_\bullet^{(1)}\,\hat{\boxtimes} \,\mathbb{H}^\bullet\big(\fM;
M_\bullet^{(2)}\otimes_{\cD_{\la'}}N_\bullet^{(1)}\big) \,\hat\boxtimes\,
N_{\bullet}^{(2)},\]
where the middle term is considered as a complex of vector spaces,
which is of finite length since $\fM$ is finite dimensional.  This
shows that $\cM\star\cN$ is a bounded length complex. 

The argument that $\cM\star\cN$ lies in ${}_{\la''}\HCg{}_{\la}$ if
$\cM,\cN$ are Harish-Chandra is exactly as in
Proposition \ref{prop:HCa-tensor}.
The action of
$f_{\la}\otimes 1-1\otimes f_{\la'}$ on any resolution of $\cM(0)$ is
homotopic to 0 modulo $h$ for a global function $f$, as is the action of $f_{\la'}\otimes
1-1\otimes f_{\la''}$ on any resolution of $\cN(0)$.  Thus, tensoring these
homotopies gives one for $f_{\la}\otimes 1-1\otimes f_{\la''}$ on
$\cM\star\cN(0)$.  This function thus kills the cohomology of the classical limit
$\cM\star\cN(0)/h\cdot \cM\star\cN(0)$.
\end{proof}

\begin{proposition}\label{tensor equivalence}
Suppose that derived localization holds for $\la,\la',-\la'$, and $-\la''$.
The derived sections functor $\R\secs$ intertwines the convolution
of bimodules with derived tensor product.  That is, given Harish-Chandra bimodules
$\cH_1\in \DHCg{\la'}{\la}$ and $\cH_2\in \DHCg{\la''}{\la'}$, we have an isomorphism
$$\R\secs(\cH_1\star\cH_2)\cong \R\secs(\cH_1)\Lotimes\R\secs(\cH_2).$$
In particular, if $\la=\la'=\la''$ and derived localization holds for $\pm\la$, then the derived localization
and sections functors are inverse equivalences of tensor categories.
\end{proposition}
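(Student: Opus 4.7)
The plan is to unpack the definition of convolution and reduce $\R\secs(\cH_1\star\cH_2)$ step by step to $\R\secs(\cH_1)\Lotimes_{A_{\la'}}\R\secs(\cH_2)$, using two main ingredients: first, $\R\secs$ commutes with the proper pushforward $(p_{13})_*$; and second, $\R\secs$ intertwines derived tensor product over the middle quantization $\cD_{\la'}$ with derived tensor product over $A_{\la'}$. The four derived localization hypotheses are used exactly to make the second ingredient work and to guarantee that every intermediate object lands in the correct Harish-Chandra bimodule category (via Theorem \ref{bimodule-loc}).

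First I would rewrite
\[
\R\secs(\cH_1\star\cH_2)\;\cong\;\R\secs_{\fM^3}\bigl(p_{12}^*\cH_1\Lotimes_{p_2^{-1}\cD_{\la'}}p_{23}^*\cH_2\bigr)
\]
by using that $\bS$-invariant derived sections commute with the proper map $p_{13}$, so pushing forward and then taking sections on $\fM\times\fM$ equals taking sections directly on $\fM^3$. Then I would decompose the sections functor on $\fM^3$ using a Fubini-style argument: take sections along the outer two factors (absorbing $\cH_1$ and $\cH_2$ as $A_{\la'}$-bimodules) and then along the middle factor where the derived tensor over $\cD_{\la'}$ lives. This should identify the right-hand side with $\R\secs(\cH_1)\Lotimes_{\R\secs(\cD_{\la'})}\R\secs(\cH_2)$.

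The main obstacle is this second step: the compatibility between $\R\secs$ and $\Lotimes_{\cD_{\la'}}$. The cleanest route I see is to transfer the computation through $\LLoc$, which is monoidal essentially by construction, being the left adjoint that respects base rings. The key input is that derived localization at $\la'$ and $-\la'$ gives inverse equivalences on the relevant bimodule categories, together with the observation that $\LLoc(A_{\la'}^{\operatorname{diag}})\cong\cD_{\la'}^{\operatorname{diag}}$ as a bimodule, which is precisely Kaledin's criterion (Theorem \ref{kaledin-DL}). A free resolution of $\R\secs(\cH_1)$ as a right $A_{\la'}$-module then localizes to a resolution of $\cH_1$ by bimodules of the form $\LLoc(\text{free})$; tensoring these with $\R\secs(\cH_2)$ on the algebraic side and with $\cH_2$ on the geometric side, and then applying $\R\secs$, produces quasi-isomorphic complexes by the base case.

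Finally, the derived localization hypotheses at $\la$ and $-\la''$ are used to ensure that the tensor product $\R\secs(\cH_1)\Lotimes_{A_{\la'}}\R\secs(\cH_2)$ and the convolution $\cH_1\star\cH_2$ both lie in the appropriate derived Harish-Chandra bimodule categories, so that the isomorphism assembled above is an isomorphism in $D^b(\HCalala)$ (and in $D^b(\HCglala)$ after applying $\LLoc$). In the special case $\la=\la'=\la''$ with derived localization holding at $\pm\la$, combining the intertwining just established with the fact that $\R\secs$ and $\LLoc$ are already inverse derived equivalences on $\HCglalasame$ and $\HCalalasame$ immediately upgrades them to inverse equivalences of monoidal triangulated categories.
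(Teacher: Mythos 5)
Your proposal is correct and in essence follows the same route as the paper: both arguments hinge on using the derived localization hypotheses to transport free resolutions of the section bimodules into resolutions of the sheaf bimodules (the paper resolves both $\cH_1$ and $\cH_2$ over $A_{\la''}\otimes A_{\la'}^{\op}$ and $A_{\la'}\otimes A_{\la}^{\op}$, respectively, and then matches the resulting explicit complexes, while you resolve one side and invoke a base case), so the core mechanism is identical. Two small corrections: the resolution of $\Rsecs(\cH_1)$ must be by free $A_{\la''}\otimes A_{\la'}^{\op}$-bimodules, not merely free right $A_{\la'}$-modules (otherwise the complex cannot be localized on $\fM\times\fM$ to a resolution of $\cH_1$), and you should not assert that $\LLoc$ is ``monoidal essentially by construction,'' since that monoidality is precisely what is being proved --- it is the resolution computation, not the adjunction, that establishes it.
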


\begin{proof}
  The complex of modules $\R\secs(\cH_1)$ has a free resolution over
  $$A_{\la''}\otimes (A_{\la'})^{\op}=\secs(\cD_{\la''}\hat\boxtimes \cD_{\la'}^{\op})$$ of the form
  \begin{equation}
\cdots \to  A_{\la''}\otimes U_1\otimes A_{\la'}\to  A_{\la''}\otimes U_0\otimes A_{\la'}\to \cdots,\label{eq:4}
\end{equation}
and similarly $\R\secs(\cH_2)$ has a free resolution over
  $A_{\la'}\otimes A_\la^{\op}=\secs(\cD_{\la'}\hat\boxtimes \cD_\la^{\op})$ 
  \begin{equation}
\cdots \to  A_{\la'}\otimes V_1\otimes A_\la\to  A_{\la'}\otimes V_0\otimes
A_\la\to \cdots .\label{eq:3}
\end{equation}

Since derived localization holds, the sheaves $\cH_1$ and $\cH_2$ have resolutions \[\cdots \to  \cD_{\la''}\otimes
U_1\otimes \cD_{\la'}\to  \cD_{\la''}\otimes U_0\otimes \cD_{\la'}\to\cdots \]
\[\cdots \to  \cD_{\la'}\otimes V_1\otimes \cD_\la\to  \cD_{\la'}\otimes V_0\otimes
\cD_\la\to \cdots.\]
Thus, the convolution $\cH_1\star \cH_2$ is given by the complex
\begin{equation}
\cdots \to \bigoplus_{i+j=k+1}\cD_{\la''}\otimes
U_i\otimes A_{\la'}\otimes V_j\otimes \cD_{\la}\to \bigoplus_{i+j=k}\cD_{\la''}\otimes
U_i\otimes A_{\la'}\otimes V_j\otimes \cD_{\la}\to\cdots\label{eq:2}.
\end{equation}
The sections of \eqref{eq:2} is the complex
\begin{equation}
\cdots \to \bigoplus_{i+j=k+1}A_{\la''}\otimes
U_i\otimes A_{\la'}\otimes V_j\otimes A_{\la}\to \bigoplus_{i+j=k}A_{\la''}\otimes
U_i\otimes A_{\la'}\otimes V_j\otimes A_{\la}\to\cdots.
\end{equation} 
This is also the tensor product of the complexes \eqref{eq:4} and
\eqref{eq:3}, so this shows that the convolutions and tensor products agree.
\end{proof}

Following \Caldararu and Willerton \cite{CW}, we define a
2-category $\Quag$ where
\begin{itemize}
\item   objects are elements of $H^2(\fM;\C)$,
\item 1-morphisms from $\la$ to $\la'$ are objects of $\DHCg{\la'}{\la}$ with composition given by
  $\star$, and 
\item 2-morphisms are the usual morphisms in $\DHCg{\la'}{\la}$.
\end{itemize}
Similarly, we can define a 2-category $\Quaa$ whose objects are those $\la$
for which $A_\la$ has finite global dimension (we should consider only
these because of Proposition \ref{prop:HCa-tensor}) and whose
1-morphisms are objects of $\DHCa{\la'}{\la}$, with composition given by derived
tensor product.

Let $\Cat$ denote the 2-category of all categories, and consider the functors 
$$F^g\colon\Quag\to\Cat \and F^a\colon\Quaa\to\Cat$$
taking $\la$ to $D^b(\cD_\la\mmod)$ and $D^b(A_\la\mmod)$, respectively.
On 1-morphisms, $F^g$ takes an object $\cH$ to the functor given by convolution with $\cH$,
defined exactly as in Equation \eqref{convolution}.  Similarly, $F^a$ takes an object $H$
to the functor given by tensor product with $H$.

Let $\fL_0\subset\fM_0$ be an $\bS$-equivariant closed subscheme, and let $\fL\subset\fM$
be its scheme-theoretic preimage.
We would like to use $\fL_0$ and $\fL$ to define subcategories of $\Amod$ and $\Dmod$
in a way that is analogous to the definitions of algebraic and geometric Harish-Chandra bimodules
(Definitions \ref{hca} and \ref{hcg}).  In fact, those definitions will specialize to these when
$\fM$ is replaced by $\fM\times\fM$ and $\fL_0$ is the diagonal subscheme of $\fM_0\times\fM_0$.

\begin{definition}\label{def:Cla}
Let $C_\la^{\fL_0}$ be the full subcategory of $A_\la\mmod$ consisting of modules $N$ admitting
a filtration with thickened associated graded $\gr_n N$
scheme-theoretically supported on $\fL_0$.  Equivalently, we require
that if the symbol of $a_{\la}\in A_\la(k)$ vanishes on $\fL_0$, then $a_{\la}\cdot N(m)\subset N(k+m-n)$.
Let $\DCLa$ be the full subcategory of $D^b(A_\la\mmod)$ consisting of
objects with cohomology
in $\CLa$.
\end{definition}

\begin{definition}
Let $\mathcal{C}^\fL_\la$ be the full subcategory of $\cD_\la\mmod$ consisting of modules $\cN$
that have thick classical limits that are scheme-theoretically supported on $\fL$.
More precisely, we require a lattice $\cN(0)$ such that for any section $\tilde f$ of $\cQ$ whose reduction modulo $h$ lies in
the ideal sheaf of $\fL$, $\cN(0)$ is preserved by the action of $h^{-1}\tilde f$.
Let $\DCLg$ be the full subcategory of $D^b(\cD_\la\mmod)$ consisting of
objects with cohomology
in $\CLg$.
\end{definition}

Proposition \ref{prop:HCg-tensor}, along with an easy extension of the
proof of Proposition \ref{prop:HCa-tensor}, show that we have functors
\[F_\fL^g\colon\Quag\to\Cat \and F_\fL^a\colon\Quaa\to\Cat\]
taking $\la$ to $\DCLg$ and $\DCLa$, respectively.

\begin{example}
Suppose that $\fL_0\subset\fM$ is the unique $\bS$-fixed point; then $\fL = \nu^{-1}(0)$
is the core of $\fM$ (Remark \ref{core}), possibly with a non-reduced scheme structure.
If the weight $n$ of the symplectic form is equal to 1, then $\fL$ is Lagrangian, 
and $C^{\fL_0}_\la$ is the category of finite-dimensional $A_\la$-modules.
When $n$ is greater than 1, the core may be too small, in which case $C^{\fL_0}_\la$ will be zero.  
For example, if $\fM$ is the Hilbert scheme
of points on $\C^2$ and $\bS$ acts by scaling $\C^2$ (with $n=2$), then the core is the punctual Hilbert
scheme, which has dimension one less than half the dimension of $\fM$.
\end{example}

\begin{example}\label{catO}
Suppose that $\fM$ is equipped with a Hamiltonian action of $\bT := \C^\times$ that commutes
with the action of $\bS$ and has finite fixed point set $\fM^\bT$, and consider the Lagrangian subvariety
$$\fL_0 := \left\{p\in\fM_0\bigmid \displaystyle\lim_{t\to 0}t\cdot p\;\text{exists}\right\}.$$
In this case, $C^{\fL_0}_{\la}$ is the category of finitely generated $A_{\la}$-modules that are locally finite
for the action of $A_{\la}^+$, where $A_{\la}^+$ is the subring of $A_{\la}$ consisting of elements
with non-negative $\bT$-weight.  
This is an analogue of a block of BGG category $\cO$, and will be the primary
object of study in our forthcoming paper \cite{BLPWgco} with Licata.

To explain the connection with BGG category $\cO$, 
take $\fM = T^*(G/B)$ and let $\rho\in\Ht$ be half of the Euler class of the canonical bundle.
Then the ring $A_{\la+\rho}$ is a central quotient of $U(\mg)$, and
$C^\fL_{\la+\rho}$ is the category of finitely generated, $U(\mb)$-locally finite
$U(\mg)$-modules with same central character as the Verma module $V_\la$ with highest weight $\lambda$,
where $H^2(\fM;\C)$ is identified with the dual Cartan $\mathfrak{h}^*$ 
via the Chern class map.  When
$\la$ is a regular integral weight, this category is equivalent in a non-obvious way to the block $\cO_\la$
of BGG category $\cO$ by \cite[Th. 1]{Soe86}. 
\end{example}

\subsection{Characteristic cycles}\label{sec:index}
Let $\cD$ be a quantization of $\fM$, and let $\cN\in D^b(\Dmod)$ be
an object of the bounded derived category.
We have isomorphisms
 $$ \sHom^\bullet_{\cD}(\cN,\cN)\cong
  \sHom^\bullet_{\cD}(\cN,\cD)\Lotimes_{\cD}\cN \cong
  \cD_{\Delta}\Lotimes_{\cD\hat\boxtimes\cD^{\op}}\big(\cN\hat\boxtimes
  \sHom^\bullet_{\cD}(\cN,\cD)\big),$$
  and evaluation defines a canonical map to the Hochschild homology 
  $$
  \mathcal{HH}(\cD) := \cD_{\Delta}\Lotimes_{\cD\hat\boxtimes\cD^{\op}}\cD_{\Delta}.$$
All this is completely general, and holds in both the Zariski and the
classical topology.  In the classical topology, we also have an isomorphism  $\mathcal{HH}(\cD^{\an}) \cong
 \C_{\fM_\Delta}[\dim\fM]((h))$ by \cite[6.3.1]{KSdq}.  (This is a
local calculation, so it suffices to check for the Weyl algebra, where
it follows from a Koszul resolution.)

We define the {\bf characteristic cycle} 
$$\suppc(\cN)\in H^0(\mathcal{HH}(\cD^{\an}))\cong H^{\dim\fM}\big(\fM; \C((h)) \big)$$
to be the image of $\id\in H^0(\sHom^\bullet_{\cD}(\cN^{\an},\cN^{\an}))$ along this map.
More generally, if $\cN$ is supported on a subvariety $j\colon\fL\hookrightarrow\fM$,
then we may consider the identity map of $\cN^{\an}$ to be a section of 
$j^!\sHom^\bullet_{\cD}(\cN^{\an},\cN^{\an})$.
Applying our map then gives us a class in 
$$\suppc(\cN)\in H^0(j^!\mathcal{HH}(\cD^{\an}))\cong H^{\dim\fM}_\fL\big(\fM; \C((h)) \big).$$
Our abuse of notation is justified by the fact that this class is functorial
for inclusions of subvarieties.
If we replace the conical symplectic resolution $\fM$ with the product $\fM\times\fM$,
then this construction associates to a Harish-Chandra bimodule $\cH\in \DHCg{\la'}{\la}$
a class $\suppc(\cH)\in H^{2\dim\fM}_\fZ\big(\fM\times\fM; \C((h))\big)$.

Kashiwara and Schapira \cite[7.3.5]{KSdq} show that the characteristic cycle of a holonomic $\cD$-module 
(that is, one with Lagrangian support) may be computed in terms of its classical limit.

\begin{proposition}[Kashiwara and Schapira]\label{multiplicities}
If $\cN\in\Dmod$ is supported on a Lagrangian subvariety $\fL$ with components $\{\fL_i\}$, then
for any $\cD(0)$-lattice $\cN(0)\subset\cN$, 
\[\suppc(\cN)=\sum_{i}\rk_{\fL_i}\!\!\big(\cN(0)/\cN(-1)\big)\cdot
[\fL_i]\in H^{\dim\fM}_\fL(\fM ;\C)\subset  H^{\dim\fM}_\fL\big(\fM;\C((h)) \big),\]
where $\rk_{\fL_i}$ denotes the rank at the generic point of $\fL_i$.
\end{proposition}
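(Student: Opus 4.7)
The plan is to reduce to a local computation via the identification $\mathcal{HH}(\cD^{\an}) \cong \C_{\fM_\Delta}[\dim\fM]((h))$ of Kashiwara--Schapira. Both sides of the claimed equation are local on $\fM$: the right hand side is computed at a generic smooth point of each component $\fL_i$, and the left hand side is a section of the local cohomology sheaf $j^!\mathcal{HH}(\cD^{\an})$, which can be computed one component at a time by restricting to the smooth locus of $\fL$. Thus the assertion reduces to fixing one component $\fL_i$, removing from $\fM$ all other components and the singular locus of $\fL_i$, and checking that in $H^{\dim\fM}_{\fL_i^{\operatorname{sm}}}(\fM^{\operatorname{sm}}; \C((h)))$ the class $\suppc(\cN)$ equals $\rk_{\fL_i}(\bar\cN) \cdot [\fL_i]$, where $\bar\cN = \cN(0)/\cN(-1)$.

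Next, I would use a Darboux chart. Near a smooth point of $\fL_i$, the symplectic variety $\fM^{\an}$ is analytically isomorphic to $T^*V$ with its standard form for a vector space $V$ of dimension $d=\tfrac12\dim\fM$, in such a way that $\fL_i$ becomes the zero section; Theorem \ref{th:GAGA} allows us to pass to the analytic setting freely. Any $\bS$-equivariant quantization of this model is gauge equivalent to the formal Weyl algebra via the Moyal--Weyl product (the obstruction lives in a cohomology group that vanishes on a polydisk). Thus we may assume $\cD^{\an}$ is the analytic Weyl algebra $\cD^{\an}_V$ and $\cN$ is supported on the zero section.

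Now I would invoke additivity. Both the characteristic cycle (via the trace formalism and the triangulated structure on $D^b(\cD\mmod)$) and the generic rank of $\bar\cN$ along $\fL_i$ are additive in short exact sequences. Choosing a local $\cD(0)$-generating set for $\cN$ over the generic point of $\fL_i$, and filtering, one reduces to the case where $\bar\cN$ is a free $\fS_{\fL_i}$-module of rank $1$. Locally this $\cN$ is isomorphic to $\cD/\cD\cdot\langle \partial_1,\dots,\partial_d\rangle$, the standard holonomic module along the zero section. For this module, the Koszul resolution
\[
0 \to \cD\otimes\wedge^d V \to \cdots \to \cD\otimes V \to \cD \to \cN \to 0
\]
yields an explicit resolution of $\sHom^\bullet_{\cD}(\cN,\cN)$ and makes the canonical evaluation map to $\mathcal{HH}(\cD^{\an})$ computable by an explicit chain-level formula; one then identifies the image of $\mathrm{id}$ with the generator $1 \cdot [\fL_i]$ of $H^{\dim\fM}_{\fL_i}(\fM; \C((h)))$ under the Kashiwara--Schapira isomorphism.

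The main obstacle is the normalization in the last step: one must verify that the identification $\mathcal{HH}(\cD^{\an}) \cong \C_{\fM_\Delta}[\dim\fM]((h))$ is normalized so that the Koszul computation of the identity of the rank-one model sheaf yields $[\fL_i]$ with coefficient exactly $1$, rather than a sign, a power of $h$, or a twist by a density bundle. This amounts to tracing through the explicit isomorphism in \cite{KSdq} for the Weyl algebra on a point, where the Hochschild class of $\mathrm{id}_{\cN}$ reduces to the trace of the Koszul complex $(\wedge^\bullet V, \partial)$; once the sign convention is fixed this trace is $1$, and the rest of the proof then goes through by the reductions above.
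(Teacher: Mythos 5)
The paper does not actually prove this proposition: it is quoted verbatim from Kashiwara and Schapira \cite[7.3.5]{KSdq}, so there is no internal argument to compare against. Your sketch is, in effect, an outline of how that cited theorem is proved — localize at a generic point of each component $\fL_i$, trivialize the quantization in a Darboux chart, reduce to the standard simple module along the Lagrangian, and compute its Hochschild class by a Koszul resolution — and the outer reductions are fine (restriction to the smooth locus of a single component, local uniqueness of the quantization in the analytic topology; note that on a small chart around a generic point of $\fL_i$ the $\bS$-structure is neither available nor needed, since the construction of $\suppc$ never uses it).

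The genuine gap is the d\'evissage step. ``Choosing a local $\cD(0)$-generating set and filtering'' does not reduce you to $\cD/\cD\cdot\langle\partial_1,\dots,\partial_d\rangle$: what is needed is the local structure theory of holonomic modules over a DQ-algebra along a smooth Lagrangian — that a \emph{good} holonomic module, on a dense open subset of $\fL_i$, admits a finite filtration whose subquotients are simple modules along $\fL_i$; that on a contractible chart all such simple modules are isomorphic to the standard one (rank-one twists over $\C((h))$ are trivial there, and goodness is exactly what rules out wild twists of the shape $e^{f/h^2}$, which admit no coherent lattice); and that the number of subquotients equals $\rk_{\fL_i}\big(\cN(0)/\cN(-1)\big)$ for \emph{any} lattice, which requires the commensurability argument showing this rank is independent of the choice of $\cN(0)$. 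This is a substantive theorem of \cite{KSdq}, not a routine filtration argument, and without it the reduction to the rank-one model is unjustified. The second load-bearing point is the one you flag but defer: a priori the coefficient of $[\fL_i]$ lies only in $\C((h))$, and pinning it to the integer $1$ for the standard module requires tracing the normalization of $\mathcal{HH}(\cD^{\an})\cong \C_{\fM_\Delta}[\dim\fM]((h))$ and of Kashiwara--Schapira's simple modules (which are built with a half-form twist). Those two items together are essentially the entire content of the cited result, so as written your argument reconstructs its scaffolding while deferring its substance.
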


We can also take characteristic cycles in families for modules over quantizations of twistor
deformations.   For $\eta\in\Ht$, let $\mathscr{M}_\eta\to \aone$ be the twistor deformation 
defined in Section \ref{sec:deformations} with quantization $\scrD$ extending $\cD$.  
Let $\scrN$ be a good $\scrD$-module, and consider the
image of the identity via the natural morphisms
\begin{multline}\label{relative-HH}
  \sHom^\bullet_{\scrD}(\scrN,\scrN)\cong
  \sHom^\bullet_{\scrD}(\scrN,\scrD)\Lotimes_{\scrD}\scrN \cong
  \scrD_{\Delta}\Lotimes_{\scrD\hat\boxtimes_{\aone}\scrD^{\op}}\big(\scrN\hat\boxtimes_{\aone}
  \sHom^\bullet_{\scrD}(\scrN,\scrD)\big)\\ \to
  \scrD_{\Delta}^{\an}\otimes_{\scrD ^{\an}\hat\boxtimes_{\aone}\scrD^{\an,\op}}\scrD_{\Delta}^{\an}\cong
  \pi^{-1}\fS_{\aone}[\dim\fM]((h)).
\end{multline}
This defines a class in relative 
cohomology $\suppc(\scrN)\in H^{\dim\fM}_{\mathscr{L}}(\mathscr{M}_\eta/\aone;\C((h)))$ for any 
Lagrangian $\mathscr{L}\supset
\supp(\scrN)$.  
If we let $\fL=\fM\cap \mathscr{L}$, then we have a
natural restriction map $$H^{\dim\fM}_{\mathscr{L}}(\mathscr{M}_\eta/\aone;\C((h)))\to
\HLCh$$ given by dividing by the coordinate $t$ on $\aone$. We
also have a natural functor of restriction from
$\scrD\mmod\to \cD\mmod$ given by $\scrN|_{\fM}=
\scrN\overset{L}\otimes_{\C[t]}\C$. 
The following lemma says that these operations are compatible.

\begin{lemma}\label{restriction-commutes}
  If $\scrN$ is a good $\scrD$-module,
  then $\suppc(\scrN|_{\fM})=\suppc(\scrN)|_{\fM}$. 
\end{lemma}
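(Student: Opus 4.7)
My plan is to verify that each morphism appearing in the construction of the relative characteristic cycle in \eqref{relative-HH} commutes with derived restriction along the inclusion $i\colon \fM \hookrightarrow \scrM_\eta$ of the fiber over $0\in\aone$, so that applying $i^*$ to $\suppc(\scrN)$ reproduces the image of $\id_{\scrN|_\fM}$ under the analogous chain of absolute morphisms.

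First, I would set up the derived restriction functor $i^*$ given by $i^*(-)= -\Lotimes_{\pi^{-1}\fS_\aone}\C$ at the level of sheaves, and observe that it is monoidal: it sends $\scrD$ to $\cD$, the diagonal bimodule $\scrD_{\Delta}$ to $\cD_{\Delta}$, and the exterior product $\scrN\boxtimes_\aone\sHom^{\bullet}_\scrD(\scrN,\scrD)$ to $\cN\boxtimes\sHom^{\bullet}_\cD(\cN,\cD)$, where $\cN := \scrN|_\fM$. The naturality of the adjunction isomorphism $\sHom^\bullet_\scrD(\scrN,\scrN)\cong \scrD_\Delta\Lotimes_{\scrD\boxtimes_\aone\scrD^{\op}}(\scrN\boxtimes_\aone\sHom^\bullet_\scrD(\scrN,\scrD))$ then gives a commutative square between its restriction to $\fM$ and the analogous absolute isomorphism for $\cN$. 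Under this square, $\id_\scrN$ restricts to $\id_\cN$ since $i^*$ preserves identity endomorphisms of its image.

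Second, I would check that the Kashiwara--Schapira identification $\mathcal{HH}(\scrD^{\an}/\aone)\cong \pi^{-1}\fS_\aone[\dim\fM]((h))$ from \cite[6.3.1]{KSdq} restricts compatibly to the absolute identification $\mathcal{HH}(\cD^{\an})\cong \C_{\fM}[\dim\fM]((h))$. This is a purely local statement that reduces, as in the argument of \cite{KSdq}, to a Koszul-type resolution of the diagonal; since the Weyl algebra computation involved is stable under base change of a flat parameter, both identifications arise from the same Koszul complex, and $i^*$ carries one to the other.

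Combining these two ingredients, the image of $\id_\scrN$ under the composite \eqref{relative-HH} maps, under $i^*$, to the image of $\id_{\cN}$ under the absolute composite that defines $\suppc(\cN)$; since $i^*$ on relative cohomology with supports is exactly the operation of restriction of cycles along $t=0$, this yields the desired equality $\suppc(\scrN)|_{\fM}=\suppc(\cN)$. The main obstacle I expect is ensuring that derived tensor product and the Hochschild complex genuinely commute with restriction at the possibly non-transverse point $0\in\aone$; this requires enough flatness to avoid higher $\operatorname{Tor}$ corrections. Here one can exploit the fact that $\scrD$ is flat over $\pi^{-1}\fS_\aone$ and that a good $\scrD$-module $\scrN$ admits an $\bS$-equivariant lattice whose Rees model is torsion-free over $\C[t]$ (since the $h$-adic completion separates the lattice filtration), which makes the relevant base change morphisms quasi-isomorphisms and lets the argument go through without an auxiliary spectral sequence.
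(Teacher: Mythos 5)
Your overall strategy---restrict each map in the chain \eqref{relative-HH} along $t=0$ and check that it reproduces the absolute chain \eqref{absolute-HH}, so that both characteristic cycles are images of the identity under the same map---is essentially the paper's, and your points about naturality of the evaluation maps and about the Koszul computation behind the Kashiwara--Schapira identification are fine. The gap is in your final flatness step. The claim that every good $\scrD$-module admits an $\bS$-equivariant lattice whose Rees model is torsion-free over $\C[t]$ is false, and it fails precisely in the cases this lemma is built for: good modules supported on the central fiber $\fM=\pi^{-1}(0)$ (for instance the cohomology sheaves of the cone $\scrP$ in the proof of Theorem \ref{derived-local}, or the modules of Lemma \ref{minimal-polynomial}, on which $h^{-1}t$ satisfies a polynomial equation) have $t$ acting locally nilpotently on the classical limit, so no $t$-torsion-free lattice can exist. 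The parenthetical reason you give (``the $h$-adic completion separates the lattice filtration'') concerns $h$, not $t$, and does not bear on this.

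Fortunately, no flatness over $\C[t]$ is needed, because the restriction $\scrN|_{\fM}=\scrN\Lotimes_{\C[t]}\C$ is already derived. What actually requires proof is the base-change compatibility of the duality and evaluation steps, i.e. that $i^*\sHom^{\bullet}_{\scrD}(\scrN,\scrD)\cong \sHom^{\bullet}_{\cD}(\scrN|_{\fM},\cD)$ and that the two evaluation maps match, so that applying $i^*$ to \eqref{relative-HH} yields \eqref{absolute-HH}. This is a statement about perfect complexes and is handled, as in the paper, by reducing to the case where $\scrN$ is locally free (locally, a good coherent module has a finite locally free resolution): there $\sHom^{\bullet}_{\scrD}(\scrN,\scrD)$ is concentrated in degree $0$ and itself locally free, every base-change map is an isomorphism on the nose, and the identity restricts to the identity. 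Replacing your flatness claim by this reduction closes the gap and brings your argument into line with the paper's proof.
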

\begin{proof}
  Consider the complex \eqref{relative-HH} of $
  \pi^{-1}\fS_{\aone}$ modules, and
take the derived tensor product with $\C$ over $\C[t]$.
We claim that we obtain corresponding sequence for  $\scrN|_{\fM}$.
That is, we obtain \begin{multline}\label{absolute-HH}
  \sHom^\bullet_{\scrD}(\scrN|_{\fM},\scrN|_{\fM})\cong
  \sHom^\bullet_{\scrD}(\scrN|_{\fM},\cD)\Lotimes_{\cD}\scrN|_{\fM} \cong
  \cD_{\Delta}\Lotimes_{\cD\hat\boxtimes\cD^{\op}}\big(\scrN|_{\fM}\hat\boxtimes
  \sHom^\bullet_{\cD}(\scrN|_{\fM},\cD)\big)\\ \to
  \cD_{\Delta}^{\an}\otimes_{\cD ^{\an}\hat\boxtimes\cD^{\an,\op}}\cD_{\Delta}^{\an}\cong
  \C_\fM[\dim\fM]((h)).
\end{multline} It
suffices to prove this for $\scrN$ locally free.  In this case,
$\sHom^\bullet (\scrN,\scrD)$ is concentrated in degree 0 and is itself
locally free, so the statement is clear.

Thus $\suppc(\scrN)|_{\fM}$ can be obtained as the image of the identity
under the map \eqref{absolute-HH}.
By definition $\suppc(\scrN|_{\fM})$ is the image of the identity under \eqref{absolute-HH}, so we are done.
\end{proof}

Consider the category $K(\fZ)$ with objects $H^2(\fM;\C)$ and morphisms
$H^{2\dim\fM}_\fZ\big(\fM\times\fM; \C\big)$ between any two objects, with composition given by the convolution
structure defined at the beginning of this section.
We also have a category $K(\HCg)$ with objects $H^2(\fM;\C)$ and morphisms $K({_{\la'}^{\mbox{}}\HCg_{\la}})$ from $\la$ to $\la'$, with composition given by convolution;
this is simply the decategorification of the 2-category defined in the previous section.

\begin{proposition}\label{K-conv}
  The characteristic cycle map defines a functor $K(\HCg)\to K(\fZ)$. 
\end{proposition}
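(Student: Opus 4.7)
The plan is to show $\suppc(\cH_1\star \cH_2) = \suppc(\cH_1)\star \suppc(\cH_2)$ in $H^{2\dim\fM}_\fZ(\fM\times\fM;\C((h)))$ by decomposing convolution into its constituent pullback, derived tensor product, and pushforward operations, and invoking the corresponding compatibilities developed by Kashiwara and Schapira in their theory of DQ-kernels \cite{KSdq}. By Theorem \ref{th:GAGA} it suffices to verify the identity analytically, where the Hochschild homology computation $\mathcal{HH}(\cD^{\an})\cong \C_{\fM_\Delta}[\dim\fM]((h))$ used in the definition of $\suppc$ is available.

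Concretely, I would proceed as follows. First, the external tensor product satisfies a K\"unneth-type formula for characteristic cycles: $\suppc(\cH_1\boxtimes \cH_2)$ is the exterior product $\suppc(\cH_1)\times \suppc(\cH_2)$ in cohomology with supports on $\fM^4$. This follows from the compatibility of Hochschild homology with external products and is a special case of \cite[\S 5]{KSdq}. Second, under the pullbacks $p_{ij}^*$, the characteristic cycle behaves as ordinary pullback of cohomology classes with supports, because pulling back a module along a smooth map is implemented by tensoring with a free bimodule and the Hochschild evaluation map is natural. Third, the derived tensor product $\otimes^L_{\cD_{\la'}}$ over the diagonal of the middle factor corresponds at the level of characteristic cycles to the cup product with the fundamental class of $\fM_\Delta^{(2)}\subset \fM^3$, which is precisely the mechanism producing the intersection $p_{12}^*\suppc(\cH_1)\cdot p_{23}^*\suppc(\cH_2)$ in the convolution formula. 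Finally, the pushforward $(p_{13})_*$ along the projection commutes with the characteristic cycle map because $p_{13}$ is proper (as $\fM$ is projective over $\fM_0$, so the restriction to the product of Steinbergs is proper), and proper pushforward of Hochschild classes is established in \cite[\S 7]{KSdq}. Composing these four compatibilities recovers exactly the definition $\alpha\star \beta := (p_{13})_*(p_{12}^*\alpha\cdot p_{23}^*\beta)$ of the convolution in $H^{2\dim\fM}_\fZ(\fM\times\fM;\C)$, and shows that $\suppc$ respects identity bimodules as well (the identity $1$-morphism $\cD^\Delta_\la$ has characteristic cycle $[\fM_\Delta]$ by Proposition \ref{multiplicities}, which is the unit for convolution).

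The main obstacle is bookkeeping the degree shifts and supports: each of the four operations above shifts cohomological degree, and these shifts must add up correctly against the factor $[\dim \fM]$ present in $\mathcal{HH}(\cD^{\an})$. Verifying this cleanly requires working with the relative Hochschild homology $\mathcal{HH}(\cD_{\la'}\boxtimes \cD_{\la}^{\op})$ on $\fM\times\fM$ and tracking its restriction along the projection $p_{13}$, which is where the Kashiwara--Schapira formalism earns its keep. A further mild subtlety is that their results are stated for coherent DQ-modules in the analytic topology on a general symplectic manifold; we can apply them because we have reduced to the analytic setting via Theorem \ref{th:GAGA}, and because the support conditions on Harish-Chandra bimodules ensure the relevant pushforwards remain well-defined.
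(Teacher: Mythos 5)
Your overall route is the same as the paper's: transfer to the analytic setting and invoke the Kashiwara--Schapira machinery for Hochschild classes of DQ-kernels. The difference is one of packaging. The paper simply observes that $\fZ\times_{\fM}\fZ\to\fZ$ is proper and cites the kernel-composition theorem \cite[6.5.4]{KSdq}, which already asserts that Hochschild classes compose under convolution; the remaining point it makes is that the algebraic-to-analytic comparison must be \emph{monoidal} and compatible with $\sHom$ (not just an equivalence of module categories as in Theorem \ref{th:GAGA}), so that the analytification of $\cH_1\star\cH_2$ is computed by convolving $\cH_1^{\an}$ and $\cH_2^{\an}$. Your decomposition of convolution into external product, pullback, tensor over the diagonal, and proper pushforward, with a separate compatibility for each, is essentially a re-derivation of \cite[6.5.4]{KSdq}; the ``degree bookkeeping'' you defer is exactly what that single citation takes care of, so nothing is wrong, but you are re-proving the black box rather than using it. Your treatment of the unit (via Proposition \ref{multiplicities}) and of properness of $p_{13}$ on the product of Steinbergs is correct and matches the paper.

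One genuine gap: the target category $K(\fZ)$ has morphism spaces $H^{2\dim\fM}_\fZ(\fM\times\fM;\C)$, whereas you announce that you will prove the multiplicativity identity in $H^{2\dim\fM}_\fZ\big(\fM\times\fM;\C((h))\big)$, which is where $\suppc$ naturally lives. For the functor to be well defined you must also show that $\suppc(\cH)$ of an \emph{arbitrary} Harish-Chandra bimodule, not just of the identity bimodule, lies in the $\C$-coefficient subspace. This is the first sentence of the paper's proof: a Harish-Chandra bimodule admits a lattice whose classical limit is supported on the Steinberg $\fZ$, which is Lagrangian in $\fM\times\fM$ by semismallness, so Proposition \ref{multiplicities} expresses $\suppc(\cH)$ as a finite $\C$-linear combination of fundamental classes of components of $\fZ$. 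You already use Proposition \ref{multiplicities} for the unit, so the fix is immediate, but as written your argument only produces a functor into a category with $\C((h))$-coefficient morphism spaces.
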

\begin{proof}
The fact that the characteristic cycle of a morphism in $K(\HCg)$ is an element of $H^{2\dim\fM}_\fZ(\fM \times \fM; \C)$
rather than $H^{2\dim\fM}_\fZ\big(\fM\times\fM; \C((h))\big)$ follows from Proposition \ref{multiplicities}.
Since the map $\fZ\times_{\fM}\fZ\to \fZ$ is proper, the rest of the
proposition follows from \cite[6.5.4]{KSdq} and the fact that the
functor $(-)^{\an}$ is monoidal and preserves Hom-spaces.
\end{proof}

Now fix a subvariety $\fL_0\subset\fM_0$, and let $\fL\subset\fM$ be its scheme-theoretic preimage as in Section \ref{sec:HC}.
We assume for convenience that $\fL$ is Lagrangian.
Consider the functor
$$G_\fL\colon K(\HCg)\to \mathsf{Ab}$$
taking:
\begin{itemize}
\item The class $\la$ to $K(\mathcal{C}^\fL_\la)$, the Grothendieck group of
  objects in $\mathcal{C}^\fL_\la$ with finitely generated
  cohomology concentrated in finitely many degrees.  Note that by its
  definition, $\mathcal{C}^\fL_\la$ may not be a Serre subcategory, in
  which case we consider the subgroup of the Grothendieck group of all
  holonomic $\cD$-modules generated by the objects in $\mathcal{C}^\fL_\la$.
\item  The class $[\cH]\in
  K({_{\la'}^{\mbox{}}\HCg_{\la}})$ to the convolution operator
$$[\cH]\star - : K(\mathcal{C}^\fL_\la)\to K(\mathcal{C}^\fL_{\la'})$$
defined by the formula $$[\cN]\star \a :=
(p_{13})_*(p_{12}^*[\cN]\cdot p_{23}^*\a).$$
\end{itemize}

We also have a functor 
$$H_\fL\colon K(\HCg)\to \mathsf{Ab}$$
taking every object $\la$ to
$H^{\dim\fM}_\fL(\fM; \C)$, where the map on morphisms is defined by the convolution action
of $H^{2\dim\fM}_\fZ(\fM\times\fM;\C)$ on $H^{\dim\fM}_\fL(\fM; \C)$.

\begin{proposition}\label{K-act}
  The characteristic cycle map $$\label{HCCC}\suppc\colon K(\mathcal{C}^\fL_\la)\to
  H^{\dim\fM}_\fL(\fM; \C)$$
  defines a natural transformation from $G_\fL$ to $H_\fL$.  That
  is, for all $\cH\in \DHCg{\la'}{\la}$ and $\cN\in D^b(\mathcal{C}^\fL_\la)$, 
  $$\suppc(\cH)\star\suppc(\cN) = \suppc(\cH\star\cN).$$
\end{proposition}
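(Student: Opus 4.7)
The proof would follow the same blueprint as the proof of Proposition \ref{K-conv}, but adapted to the situation where a Harish-Chandra bimodule acts on a module rather than on another bimodule. The plan is to reduce everything to a statement about characteristic cycles for holonomic DQ-modules on complex manifolds, where the Kashiwara--Schapira machinery \cite{KSdq} applies directly.

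First, I would use Theorem \ref{th:GAGA} to pass to the analytic category, replacing $\cH$ and $\cN$ by $\cH^{\an}$ and $\cN^{\an}$. Since analytification is monoidal and preserves $\sHom$-spaces (and is compatible with proper pushforward for sheaves supported on projective-over-affine pieces), it preserves both sides of the claimed equality; so it suffices to establish the identity in the analytic setting. Next, I would write the convolution $\cH\star \cN$ as the composition
\[
\cH\star\cN \;\cong\; (p_1)_*\bigl(\cH\,\Lotimes_{p_2^{-1}\cD_\la}\, p_2^{-1}\cN\bigr),
\]
where $p_1,p_2\colon \fM\times \fM\to \fM$ are the two projections. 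The support of the integrand lies in $\fZ\cap(\fM\times \fL)$, which maps properly under $p_1$ because $\fZ\to \fM$ is projective (being base-changed from $\nu\colon\fM\to\fM_0$), so the pushforward is well-defined and the resulting sheaf is supported in $\fL$. Moreover $\fZ$ and $\fM\times\fL$ meet with Lagrangian intersection inside $T^*(\fM\times\fM)$ in the appropriate non-characteristic sense, since $\fL$ is Lagrangian and $\fZ$ is the union of Lagrangian components of $\fM\times_{\fM_0}\fM$.

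The main work is then to invoke the compatibility of characteristic cycles with external products, non-characteristic inverse image and proper direct image from \cite[\S 6.5]{KSdq}. Applied in sequence, these yield
\[
\suppc\bigl(\cH\Lotimes_{p_2^{-1}\cD_\la} p_2^{-1}\cN\bigr) \;=\; \suppc(\cH)\cdot p_2^*\suppc(\cN)
\]
in $H^{\dim \fM+ 2\dim\fM}_{\fZ\cap p_2^{-1}(\fL)}(\fM\times\fM;\C)$, and
\[
\suppc\bigl((p_1)_*(-)\bigr)\;=\;(p_1)_*\suppc(-).
\]
Combining these and comparing with the formula defining the convolution action of $H^{2\dim\fM}_\fZ(\fM\times\fM;\C)$ on $H^{\dim \fM}_\fL(\fM;\C)$ gives exactly $\suppc(\cH\star\cN)=\suppc(\cH)\star\suppc(\cN)$. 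Finally, Proposition \ref{multiplicities} ensures that the cycle $\suppc(\cN)$ lies in $H^{\dim\fM}_\fL(\fM;\C)$ rather than requiring Laurent coefficients in $h$, so the equality makes sense at the level stated.

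The main obstacle I anticipate is the verification that the intersection of $\fZ$ with $\fM\times\fL$ is non-characteristic (or, in the language of \cite{KSdq}, that the requisite transversality and properness conditions hold) so that the Kashiwara--Schapira compatibility theorems can be applied directly. Given the symplectic semi-smallness of $\nu$ and the Lagrangian condition on $\fL$, this should follow by a dimension count, but it is the one technical input that does not simply transfer verbatim from the proof of Proposition \ref{K-conv}.
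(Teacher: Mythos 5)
Your proposal follows essentially the same route as the paper: pass to the analytic category via Theorem \ref{th:GAGA} and apply the Kashiwara--Schapira compatibility of characteristic (Hochschild) classes with composition of kernels, \cite[6.5.4]{KSdq}, the only hypothesis being properness of the support map, which holds because $\nu$ is projective so $\fZ\times_{\fM}\fL\to\fL$ is proper. The paper's proof is exactly this citation; the non-characteristicity/transversality verification you flag as the main obstacle is not actually required by \cite[6.5.4]{KSdq}, so that concern can be dropped.
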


\begin{proof}
 Since the map $\fZ\times_{\fM}\fZ\to \fL$ is proper, this 
follows immediately from \cite[6.5.4]{KSdq}.
\end{proof}

Thus, these bimodules provide a natural categorification of the
convolution algebra of a symplectic singularity, and at least certain
of its natural convolution modules.  Of course, the 
characteristic cycle maps need not be isomorphisms, but in many contexts, they are.

\begin{example}
  In the case where $\fM=T^*(G/B)$, the category
  ${}_{\la}^{\mbox{}}\HCg_{\la}$ is equivalent to the category of regular
  twisted D-modules on $G/B\times G/B$ for the twist
  $(\la+\rho,-\la+\rho)$ which are smooth on diagonal $G$-orbits; as
  long as $\la+\rho$ is integral, this is the same as the category of
  perverse sheaves smooth along the same stratification.  The fact
  that these categorify the symmetric group (and thus, implicitly,
  that $\suppc$ is an isomorphism in this case) goes back at least as
  far as \cite{SpIH}.  This perspective is
  Koszul dual to the usual categorification of the Weyl group by
  projective functors \cite[5.16]{BG}.
\end{example}

\begin{example}
In the case where $\fM$ is a hypertoric variety, the map from $K({}_{\la}^{\mbox{}}\HCg_{\la})$ to
$H^{2\dim\fM}_\fZ(\fM\times \fM;\C)$ is surjective by \cite[7.11]{BLPWtorico}, which allows us to conclude
that every irreducible representation of the convolution algebra remains
irreducible over $K(\HCg)$.  The dimensions of these
representations are computed in \cite{PW07} to be $h$-numbers of various matroids.
\end{example}

\begin{example}
  In the case of Nakajima quiver varieties, it is more natural to
  consider all quiver varieties associated to a highest weight $\mu$
  jointly, and thus define a 2-subcategory $\Qua(\mu)$ of modules over
  the exterior products of quantizations of quiver varieties
  associated to $\la$ and possibly different dimension vectors.

  However, even with different dimension vectors, we still have a
  notion of ``diagonal'' in the product of two quiver varieties with
  the same highest weight.   The affinization of a quiver variety is the
  moduli space of semi-simple representations of the pre-projective
  algebra of a given dimension, and we say a pair of such representations
  lies in the {\bf stable diagonal} if they become isomorphic after
  the addition of trivial representations.  We can define a 2-categories
  $\HCg(\mu)$ by replacing the diagonal and its vanishing
  ideal with that of the stable diagonal.  The third author \cite[Theorem A]{Webcatq} relates this construction to
  works by Cautis and Lauda \cite{CaLa} and Nakajima \cite{Nak98}.

\begin{proposition}[Webster]
 There is a 2-functor from the version of the 2-quantum group
    $\mathcal{U}$ defined by Cautis and Lauda to
    $\HCg(\mu)$ with the property that the induced map of K-groups is exactly the geometric
    construction of $U(\mg)$ defined by Nakajima.
  \end{proposition}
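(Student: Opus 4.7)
The plan is to construct the 2-functor explicitly by specifying its action on objects, generating 1-morphisms, and generating 2-morphisms of the Cautis--Lauda 2-quantum group $\mathcal{U}$, and then to verify the defining relations. On objects, a weight $\la$ of the $\mg$-representation determined by $\mu$ is sent to a Nakajima quiver variety $\fM(\mathbf{v},\mathbf{w})$ whose dimension vector $\mathbf{v}$ is chosen so that the weight of the cohomology is $\la$; the highest weight $\mathbf{w}$ is fixed by $\mu$. On generating 1-morphisms, the raising and lowering morphisms $\mathsf{E}_i\colon \la\to\la+\a_i$ and $\mathsf{F}_i\colon \la\to\la-\a_i$ are assigned to Harish-Chandra bimodules $\cH_{E_i}$, $\cH_{F_i}$ constructed by quantizing (appropriate twists of) the structure sheaf of the Hecke correspondence $\mathfrak{H}_i(\mathbf{v},\mathbf{w})\subset \fM(\mathbf{v},\mathbf{w})\times \fM(\mathbf{v}\pm e_i,\mathbf{w})$. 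These are Lagrangian and sit in the stable diagonal by construction, so Proposition~\ref{line bundles} produces a unique $\bS$-equivariant quantization compatible with the relevant periods, and the bimodules belong to the modified category $\HCg(\mu)$.

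The second step is to produce the generating 2-morphisms. The adjunction units and counits (caps and cups) come from the fact that the projections from $\mathfrak{H}_i$ to its two factors are proper, so Kashiwara--Rouquier pushforward yields natural transformations between compositions of $\cH_{E_i}$ and $\cH_{F_i}$. The dot 2-morphisms should be implemented by multiplication by the tautological class $c_1$ of the line bundle $\det$ on the universal subbundle over $\mathfrak{H}_i$, lifted to the quantization. The KLR crossings between parallel strands of color $i$ come from quantizing the natural correspondence on the iterated Hecke variety $\mathfrak{H}_i^{(k)}$ that realizes the nilHecke algebra; differently colored crossings come from the transversal intersection of $\mathfrak{H}_i$ and $\mathfrak{H}_j$ in the iterated product.

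The main obstacle is the verification of all Cautis--Lauda relations at the level of 2-morphisms. This requires checking (i) the nilHecke relations for each color $i$, which reduce to explicit calculations on $\mathfrak{H}_i^{(k)}$ (these are tractable because $\mathfrak{H}_i^{(k)}$ is a tower of Grassmannian bundles); (ii) the mixed-color KLR relations, which follow from a transversality analysis of $\mathfrak{H}_i\cap \mathfrak{H}_j$ inside the triple product; and (iii) the $\mathfrak{sl}_2$ relations $\mathsf{E}_i\mathsf{F}_i - \mathsf{F}_i\mathsf{E}_i \simeq $ (shifts of identity), which require decomposing the convolution $\mathfrak{H}_i\times_{\fM(\mathbf{v})}\mathfrak{H}_i^{\op}$ as a union of the stable diagonal and the off-diagonal piece isomorphic to $\mathfrak{H}_i^{\op}\times_{\fM(\mathbf{v}-e_i)}\mathfrak{H}_i$. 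This last verification mirrors the classical derivation of the Kac--Moody commutation relation from Hecke correspondences, but at the quantized level it requires constructing an actual distinguished triangle of Harish-Chandra bimodules rather than just a class identity in the Grothendieck group.

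The final step, comparison with Nakajima's construction, is then comparatively formal. By Proposition~\ref{multiplicities}, the characteristic cycle of $\cH_{E_i}$ is the fundamental class $[\mathfrak{H}_i]$, and likewise for $\cH_{F_i}$; by Proposition~\ref{K-conv} the map $\suppc$ is a functor from $K(\HCg(\mu))$ to the convolution algebra on $\bigoplus H^*_{BM}(\mathfrak{H})$. Since the Hecke correspondences $[\mathfrak{H}_i]$ are exactly the operators Nakajima uses in \cite{Nak98} to construct the $U(\mg)$-action on $\bigoplus H^*(\fM(\mathbf{v},\mathbf{w}))$, the induced map on $K$-groups of our 2-functor realizes precisely Nakajima's geometric construction, completing the proposition.
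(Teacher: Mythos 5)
The paper offers no proof of this statement to compare against: it is quoted directly from the third author's work \cite[Thm.~A]{Webcatq}, and the present paper only records it as an example of the formalism. Judged on its own, your outline has the expected overall shape (Hecke correspondences for $\mathsf{E}_i,\mathsf{F}_i$, characteristic cycles for the decategorification, with the final comparison to Nakajima handled by Propositions \ref{multiplicities} and \ref{K-conv}), but the first and central step rests on a tool that does not do what you claim. Proposition \ref{line bundles} produces quantizations of \emph{line bundles} on a single resolution, i.e.\ invertible $\cQ_k$--$\cQ_m$ bimodules supported on the diagonal; it says nothing about quantizing the structure sheaf (or a twist) of a Lagrangian correspondence such as $\mathfrak{H}_i(\mathbf{v},\mathbf{w})$ inside a product of two \emph{different} quiver varieties. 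Quantizing a Lagrangian subvariety is obstructed in general, and even when a deformation of its structure sheaf to a $\cD$-bimodule exists it need not be unique or carry the needed $\bS$-structure; this is exactly the difficulty that the actual construction in \cite{Webcatq} avoids by building the bimodules via quantum Hamiltonian reduction from explicit bimodules over differential operators on the space of framed quiver representations, where the categorical action is already accessible, rather than by quantizing the Hecke correspondences directly.

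The second gap is that the verification of the Cautis--Lauda relations is the substance of the theorem, and your sketch only names the geometric ingredients one would hope to use. In particular, ``multiplication by $c_1$ of the tautological line bundle'' is not yet a 2-morphism in $\HCg(\mu)$: one must exhibit an honest bimodule endomorphism of the quantized correspondence lifting that operator, and similarly the nilHecke and mixed KLR relations require identifications of convolutions of quantized correspondences, not just of their classical supports. Most seriously, the $\mathfrak{sl}_2$ relation requires an actual distinguished triangle (or direct sum decomposition) of Harish-Chandra bimodules refining the set-theoretic decomposition of $\mathfrak{H}_i\times_{\fM(\mathbf{v})}\mathfrak{H}_i^{\op}$; you correctly flag this but give no construction, and it does not follow from anything in the present paper. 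So the proposal is a reasonable roadmap, but as written it is not a proof, and its key existence step is unsupported.
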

\end{example}

\subsection{Twisting bimodules}\label{twisting bimodules}
For the rest of this paper, we will assume that the Picard group of
$\fM$ is torsion-free, so that a line bundle is determined by its Euler class in $\Ht$.
This assumption is not strictly necessary, but it greatly simplifies the notation (see Remark \ref{notation}).

Consider the universal Poisson deformation $\scrM$ of $\fM$.
Let $\mathscr{L}$ be a line bundle on $\scrM$, let $\cL$ be its restriction to $\fM$, and let 
$\gamma\in H^2(\fM;\Z)\cong H^2(\scrM;\Z)$ be the Euler class of $\mathscr{L}$ or $\cL$.
Let ${}_\gamma\mathscr{T}_0$ be the quantization of $\mathscr{L}$ constructed in Proposition
\ref{line bundles}, and let ${}_\gamma\mathscr{T}'_0 := {}_\gamma\mathscr{T}_0[\hmon]$. This is a right $\scrD$-module and a left module over $\scrD_{\gamma}$, the
quantization with period $I+h\gamma$.\footnote{We note that all quantizations of $\scrM$ are isomorphic 
as sheaves of algebras, but they are not isomorphic as sheaves of $\pi^{-1}\fS_{\Ht}$-algebras.}
Then $\secs(\scrM;{_\gamma\mathscr{T}'_0})$ is a family over $H^2(\fM;\C)$ via the
right action of $\scr{A} = \secs(\scrM; \scr{D})$. 

Recall the map $c\colon \C[H^2(\fM;\C)]\to \Gamma(\scrM;\scrD)$ from Section \ref{section ring},
and the fact that $h^{-1}c(x)\in\scrA$ for all $x\in\Ht^*$.  Also recall that, by Proposition \ref{restriction of sections},
the specialization of $\scrA$ at $h^{-1}c(x)=\la(x)$ for all $x\in\Ht^*$ is isomorphic to $A_\la$.

\begin{definition}\label{T-def}
  Let ${}_{\la+\gamma} T_{\la}$ denote the $A_{\la+\gamma}-A_{\la}$ bimodule that we obtain by specializing
  $\secs(\scrM;{_\gamma\mathscr{T}'_0})$ at $h^{-1}c(x)=\la(x)$
  for all $x\in \Ht^*$.
\end{definition}

\begin{remark}\label{notation}
The purpose of the assumption at the beginning of this section was to ensure that the bimodule ${}_{\la+\gamma} T_{\la}$
is actually determined by $\la$ and $\gamma$; without the assumption, the bimodule would depend on an additional
choice of a line bundle with Euler class $\gamma$.
\end{remark}

\begin{proposition}
The bimodule ${_{\la+\gamma}T_{\la}}$ is Harish-Chandra.
\end{proposition}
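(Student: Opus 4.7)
\textit{Proof plan.} The plan is to verify the equivalent formulation of the Harish-Chandra condition given in Definition \ref{hca}, using the natural filtration on ${}_{\lambda+\gamma}T_\lambda$ induced by the $h$-adic lattice ${}_\gamma\mathscr{T}_0 \subset {}_\gamma\mathscr{T}'_0$ of Proposition \ref{line bundles}. Concretely, I would define $T(m)$ to be the image in ${}_{\lambda+\gamma}T_\lambda$ of the $\bS$-invariant sections of $h^{-m/n}\cdot {}_\gamma\mathscr{T}_0$ under the specialization map of Definition \ref{T-def}; this is automatically compatible with the filtrations on $A_{\lambda+\gamma}$ and $A_\lambda$ because the bimodule structure on ${}_\gamma\mathscr{T}_0$ respects the $h$-adic lattices on $\scrD$ and $\scrD_\gamma$. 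Finite generation of ${}_{\lambda+\gamma}T_\lambda$ will follow from the coherence of ${}_\gamma\mathscr{T}_0$ together with the fact that $\scrM$ is projective over the affine variety $\scrN = \Spec\C[\scrM]$, so that the classical limit $\mathscr{L}|_\fM$ has finitely generated sections; a Koszul-resolution argument along the lines of Proposition \ref{restriction of sections} then shows that specialization commutes with taking sections and produces a finitely generated bimodule.

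For the key commutator check, let $a \in \scrA(k)$ and choose a lift to a section $\tilde a$ of $\scrD(k)$ of $\bS$-weight $k$ with classical symbol $\bar a$; using the canonical identification of classical limits $\scrD/h\scrD \cong \fS_\scrM \cong \scrD_\gamma/h\scrD_\gamma$ together with the local freeness of $\scrD_\gamma(k)$, I would lift $\bar a$ to a matching section $\tilde a_\gamma$ of $\scrD_\gamma(k)$. For any section $t$ of $h^{-m/n}\cdot {}_\gamma\mathscr{T}_0$, the expression $\tilde a_\gamma \cdot t - t \cdot \tilde a$ reduces modulo $h$ to $\bar a \cdot \bar t - \bar t \cdot \bar a$; but both the left and the right $\fS_\scrM$-actions on $\mathscr{L} = {}_\gamma\mathscr{T}_0/h\cdot{}_\gamma\mathscr{T}_0$ coincide with the standard line-bundle module structure, so this expression vanishes and the commutator lies in $h\cdot {}_\gamma\mathscr{T}(k+m) = {}_\gamma\mathscr{T}(k+m-n)$. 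Passing to $\bS$-invariant sections and specializing then yields $a_{\lambda+\gamma}\cdot t - t\cdot a_\lambda \in T(k+m-n)$, which is exactly the equivalent Harish-Chandra condition.

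The hard part will be the bookkeeping for the specialization data, since the section rings of $\scrD$ and $\scrD_\gamma$ have distinct central maps $c$ and $c_\gamma$ from $\C[\Ht]$ whose difference reflects the period shift $h\gamma$; one must check that $h^{-1}c(x) = \lambda(x)$ on the right indeed corresponds to $h^{-1}c_\gamma(x) = (\lambda+\gamma)(x)$ on the left, and that the phrase ``$a_\lambda$ and $a_{\lambda+\gamma}$ are specializations of the same $a \in \scrA$'' in Definition \ref{hca} is well-formed in this context. Making this identification precise amounts to identifying the two filtered deformations $\secs(\scrM;\scrD)^\bS$ and $\secs(\scrM;\scrD_\gamma)^\bS$ via the existence of the coherent lattice ${}_\gamma\mathscr{T}_0$ guaranteed by Proposition \ref{line bundles}; once this identification is in hand, the matching of lifts $\tilde a \leftrightarrow \tilde a_\gamma$ used above makes sense and the argument goes through.
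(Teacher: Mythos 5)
Your proposal is correct and follows essentially the same route as the paper's proof: filter ${}_{\la+\gamma}T_{\la}$ by specializing the lattice filtration $h^{\nicefrac{-m}{n}}{}_\gamma\mathscr{T}_0[\hon]$, lift a given symbol to sections of both $\scrD(k)$ and $\scrD_\gamma(k)$ via the common classical limit, and use the fact that ${}_\gamma\mathscr{T}_0$ quantizes a line bundle, so the left and right actions agree modulo $h$ and the commutator drops the filtration degree by $n$. The "hard part" you flag about the two central maps is not really needed, since (as in the paper's argument) the Harish-Chandra condition only depends on the common symbol in $\C[\fM]$ of the two specialized elements, so checking it for one matching pair of lifts suffices.
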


\begin{proof}
By definition, ${_{\la+\gamma}T_{\la}}$ is a specialization of $\secs(\scrM;{_\gamma\mathscr{T}'_0})$.
It carries a natural filtration, where ${_{\la+\gamma}T_{\la}}(m)$ is the same specialization of 
$\secs(\scrM;h^{\nicefrac{-m}{n}}{_\gamma\mathscr{T}_0}[\hon])$.  
We claim that the associated graded module with respect
to this filtration is scheme-theoretically supported on the diagonal.

To see this, consider a function $f\in\C[\fM]$ of $\bS$-weight $\ell$.  
We can choose a lift $\tilde f \in \secs(\scrD(\ell))$ so that its image 
in $\gr \secs(\scrD) \cong \C[\scr M]$ restricts to $f$ on $\fM$.
Let $\scrD_\gamma$ be the quantization of $\scrM$ with period $\gamma$;
since $\gr \secs(\scrD_\gamma) \cong \gr \secs(\scrD)$, we can choose a lift
$\tilde f_\gamma \in \secs(\scrD_\gamma(\ell))$ of $f$ similarly.
To show that $f\otimes 1 - 1\otimes f$
annihilates $\gr_n({_{\la+\gamma}T_{\la}})$, it is sufficient to show that $\tilde f_\gamma\otimes 1 - 1\otimes \tilde f$
takes  $\secs(\scrM;h^{\nicefrac{-m}{n}}{_\gamma\mathscr{T}_0}[\hon])$ to 
$\secs(\scrM;h^{1-\nicefrac{\ell+m}{n}}{_\gamma\mathscr{T}_0}[\hon])$.
This follows from the fact that ${_\gamma\mathscr{T}_0}$ is the
quantization of a line bundle on $\scrM$, so the left action of 
$\tilde f_\gamma$ and the right action of $\tilde f$ agree modulo $h$.
\end{proof}

The following two propositions are bimodule analogues of 
Corollary \ref{same-sections} and Proposition \ref{Weyl rings}.
Since their proofs are essentially identical, we omit them.
 
\begin{proposition}\label{same-sections-bimodules}
Let $\fM$ and $\fM'$ be two conical symplectic resolutions of the same cone.
Fix elements $\la,\gamma\in\Ht\cong H^2(\fM';\C)$, 
where $\gamma$ is the Euler class of a line bundle on $\fM$ or its strict transform on $\fM'$.
The isomorphism of rings in Corollary \ref{same-sections} induces an isomorphism of bimodules
${}_{\la+\gamma}T_{\la}\cong{}_{\la+\gamma}T'_{\la}$.
\end{proposition}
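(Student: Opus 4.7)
The strategy is to mirror the proof of Proposition \ref{deformed same sections}, with the bimodule ${}_\gamma\scr{T}_0$ in place of the algebra $\scrD$. Let $U\subset\scrM$ and $U'\subset\scrM'$ be the open subsets constructed in that proof, whose complements have codimension at least $3$, and let $f\colon U\stackrel{\sim}{\to} U'$ be the isomorphism induced by the birational map $\scrM\dasharrow\scrM'$. Under $f$, the line bundle $i^*\scr{L}$ on $U$ corresponds to $(i')^*\scr{L}'$ on $U'$, where $\scr{L}'$ is the line bundle on $\scrM'$ whose Euler class is $\gamma\in H^2(\scrM';\C)\cong \Ht$: the two indeed agree because line bundles on smooth varieties are determined by their restriction to any open subset whose complement has codimension at least $2$.

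Let $\scr{T}'' := i'_*\,i^*({}_\gamma\scr{T}_0)$, regarded as a sheaf on $\scrM'$. A bimodule analogue of Lemma \ref{codimension-3}, obtained by repeating its local cohomology and Mittag-Leffler arguments, shows that $\scr{T}''$ is a coherent quantization of $\scr{L}'$ as a bimodule over the quantizations $i'_*i^*\scrD$ and $i'_*i^*\scrD_\gamma$, and that the restriction map $\secs(\scrM'; \scr{T}''[\hmon]) \to \secs(\scrM; {}_\gamma\scr{T}_0[\hmon])$ is an isomorphism. The proof of Proposition \ref{deformed same sections} canonically identifies $i'_*i^*\scrD$ with $\scrD'$ and (by the same argument applied to the quantization of period $I + h\gamma$) $i'_*i^*\scrD_\gamma$ with $\scrD'_\gamma$. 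Combining this with the uniqueness statement of Proposition \ref{line bundles} for quantizations of line bundles yields a canonical isomorphism of $\scrD'_\gamma$-$\scrD'$ bimodules $\scr{T}'' \cong {}_\gamma\scr{T}'_0$. Taking invariant sections then produces a canonical bimodule isomorphism
\[
\secs\bigl(\scrM; {}_\gamma\scr{T}_0[\hmon]\bigr) \;\cong\; \secs\bigl(\scrM'; {}_\gamma\scr{T}'_0[\hmon]\bigr),
\]
compatible with the ring identifications $\scrA \cong \scrA'$ and $\scrA_\gamma \cong \scrA_\gamma'$ from Proposition \ref{deformed same sections}, since all of these are instances of the same restriction-pushforward construction applied to the various sheaves.

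Specializing at $h^{-1}c(x) = \la(x)$ for all $x\in\Ht^*$, exactly as in Definition \ref{T-def} and the spectral sequence argument of Proposition \ref{restriction of sections}, yields the desired $A_{\la+\gamma}$-$A_\la$ bimodule isomorphism ${}_{\la+\gamma}T_\la \cong {}_{\la+\gamma}T'_\la$. This intertwines the ring isomorphism of Corollary \ref{same-sections} on both the left and right actions, because it is the specialization of the $\scrA$-linear, $\scrA_\gamma$-linear isomorphism above. The main technical point requiring verification will be the bimodule analogue of Lemma \ref{codimension-3}: since a coherent bimodule is by definition a coherent module over the sheaf of algebras $\scrD_\gamma \boxtimes \scrD^{\op}$, essentially the same arguments apply verbatim, with the codimension-$3$ hypothesis on the complement of $U$ ensuring that $\mathbb{R}^1 i_*$ vanishes on the relevant filtration quotients. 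The only subtlety is checking that both the left $\scrD_\gamma$-action and the right $\scrD$-action extend compatibly through the pushforward, which follows by applying the single-algebra version of Lemma \ref{codimension-3} to the two ambient quantizations separately and then to their combined action.
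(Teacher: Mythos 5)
Your argument is correct and is essentially the proof the paper has in mind: the paper omits it, stating only that it is "essentially identical" to the proofs of Corollary \ref{same-sections} (via Propositions \ref{restriction of sections} and \ref{deformed same sections}), and your proposal carries out exactly that adaptation — the codimension-$3$ pushforward argument of Lemma \ref{codimension-3} applied to the quantized line bundle ${}_\gamma\scr{T}_0$ on the universal deformations, uniqueness from Proposition \ref{line bundles}, and then specialization as in Definition \ref{T-def}.
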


\begin{proposition}\label{Weyl bimodules}
For any $\la,\gamma\in\Ht\cong H^2(\fM';\C)$, 
where $\gamma$ is the Euler class of a line bundle on $\fM$, and any $w\in W$,
the isomorphisms of Proposition \ref{Weyl rings} induce isomorphisms of bimodules
${}_{\la+\gamma}T_{\la}\cong{}_{w\cdot(\la+\gamma)}T_{w\cdot\la}$.
\end{proposition}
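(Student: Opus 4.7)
My plan is to adapt the proof of Proposition \ref{Weyl rings} to the bimodule setting. I would first restrict attention to the open subset $\becircled{\scrM}\subset\scrM$ on which $\scrM \to \scrN$ is an isomorphism, whose complement has codimension at least $2$. Via the identification with the smooth locus of $\scrN$, $\becircled{\scrM}$ carries a $W$-action covering the $W$-action on $\Ht$, the canonical quantization $\scrD|_{\becircled{\scrM}}$ is $W$-equivariant, and so is the section ring $\scrA$; this is how Proposition \ref{Weyl rings} produced the ring isomorphisms $A_{\la} \cong A_{w\la}$.

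The central step will be to construct, for each $w \in W$, a canonical isomorphism of bimodules on $\becircled\scrM$
$$w^*\bigl({}_\gamma\mathscr{T}_0'|_{\becircled\scrM}\bigr) \;\cong\; {}_{w\gamma}\mathscr{T}_0'|_{\becircled\scrM},$$
intertwining the identifications $w^*\scrD \cong \scrD|_{\becircled\scrM}$ and $w^*\scrD_\gamma \cong \scrD_{w\gamma}|_{\becircled\scrM}$. The latter identification follows from Proposition \ref{general pullback}: the period of $w^*\scrD_\gamma$ differs from that of $\scrD_\gamma$ only by the $W$-action on $\Ht$, and thus coincides with the period of $\scrD_{w\gamma}$. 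The displayed bimodule isomorphism then follows from the uniqueness clause of Proposition \ref{line bundles}, once we observe that both sides quantize the same line bundle on $\becircled\scrM$, namely $w^*\mathscr{L}$, whose Euler class is $w\gamma$. Here I must allow $w\gamma$ to be the Euler class of a line bundle not on $\fM$ itself but on a different conical symplectic resolution $\fM'$ of $\fM_0$ (see Theorem \ref{dream} and Remark \ref{chambers}); this is precisely the situation covered by Proposition \ref{same-sections-bimodules}.

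Taking $\bS$-invariant global sections, and using a codimension-$2$ extension argument in the spirit of Lemma \ref{codimension-3} to pass between $\scrM$ and $\becircled\scrM$, this will yield a $W$-compatible isomorphism between $\secs(\scrM;{}_\gamma\mathscr{T}_0')$ and $\secs(\scrM;{}_{w\gamma}\mathscr{T}_0')$ as bimodules, after the identifications $\scrA_\gamma \cong \scrA_{w\gamma}$ induced by $W$-equivariance. Because the structure map $c$ is $W$-equivariant, specializing at $h^{-1}c(x)=\la(x)$ on the right and $h^{-1}c(x)=(\la+\gamma)(x)$ on the left in one bimodule corresponds, under this isomorphism, to specializing at $w\la$ and $w(\la+\gamma)$ on the other side. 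By Definition \ref{T-def} (combined with Proposition \ref{same-sections-bimodules} when $w\gamma$ is not a Picard class on $\fM$), this produces the isomorphism ${}_{\la+\gamma}T_\la \cong {}_{w(\la+\gamma)}T_{w\la}$, with compatibility under multiplication in $W$ following from the uniqueness clauses used throughout. The main obstacle will be the identification $w^*\scrD_\gamma \cong \scrD_{w\gamma}$ and the matching of line bundles across different resolutions of $\fM_0$; these are questions of careful bookkeeping with the period map and with Section \ref{sec:systems-resolutions}, but they require no new geometric input beyond what is already in place.
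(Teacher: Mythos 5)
Your proposal is correct and is essentially the argument the paper intends: the authors omit the proof of Proposition \ref{Weyl bimodules}, declaring it essentially identical to the proofs of Corollary \ref{same-sections} and Proposition \ref{Weyl rings}, and your write-up is exactly that adaptation (restrict to $\becircled{\scrM}$, use the $W$-action and $W$-equivariance of the canonical quantization there, identify $w^*\big({}_\gamma\mathscr{T}_0'\big)$ with ${}_{w\gamma}\mathscr{T}_0'$ by the uniqueness clause of Proposition \ref{line bundles}, compare sections across a codimension-two complement as in Lemma \ref{codimension-3}, and use $W$-equivariance of $h^{-1}c$ at the specialization step, with Proposition \ref{same-sections-bimodules} handling the case where $w\gamma$ lives on a different resolution). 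The only cosmetic correction is that the identification $w^*\scrD_\gamma\cong\scrD_{w\gamma}$ rests on naturality of the period map under Poisson isomorphisms (together with the vanishing of structure-sheaf cohomology on $\becircled{\scrM}$ needed to invoke the classification there), rather than on Proposition \ref{general pullback}, which concerns base change along sections of $\Delta\times S\to\Delta$.
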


We would like to have an analogue of Proposition \ref{restriction of sections}, as well, though
an extra hypothesis is needed.
The following proposition gives a natural map from ${}_{\la+\gamma} T_{\la}$ to
$\secs(\fM;{}_{\la+\gamma}\cT_{\la}')$, and gives a sufficient (though not necessary) condition for it to be an isomorphism.  (Note that it is always injective.)

\begin{proposition}\label{bi-sections}
There is a natural map from the bimodule ${}_{\la+\gamma} T_{\la}$ to
  $\secs(\fM;{}_{\la+\gamma}\cT'_{\la})$.
If $H^1(\fM;{}_{\la+\gamma} \cT'_{\la})=0$, then this map is an isomorphism.
\end{proposition}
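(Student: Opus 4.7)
The plan is to construct the map using the defining relations and then prove the isomorphism by specializing the central parameters one at a time.

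For the natural map, fix a basis $x_1,\ldots,x_n$ of $\Ht^*$ and set $f_i := h^{-1}c(x_i)-\la(x_i)\in\scrA$. Each $f_i$ is central in $\scrD$, since $c$ factors through the central $\pi^{-1}\fS_{\Ht}$-structure, and the sheaf ${}_{\la+\gamma}\cT'_\la$ is by construction the quotient of ${}_\gamma\scrT'_0$ by the central ideal generated by the $f_i$. Consequently, the restriction-of-sections map $\secs(\scrM;{}_\gamma\scrT'_0)\to\secs(\fM;{}_{\la+\gamma}\cT'_\la)$ kills each $f_i$ and factors canonically through the quotient ${}_{\la+\gamma}T_\la=\secs(\scrM;{}_\gamma\scrT'_0)/(f_1,\ldots,f_n)$.

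For the isomorphism claim, I specialize one parameter at a time. Define sheaves $\scrT^{(0)}:={}_\gamma\scrT'_0$ and $\scrT^{(k+1)}:=\scrT^{(k)}/f_{k+1}\scrT^{(k)}$, so that $\scrT^{(n)}={}_{\la+\gamma}\cT'_\la$. By flatness of the quantization over $\pi^{-1}\fS_{\Ht}$, each $f_{k+1}$ is a non-zero divisor on $\scrT^{(k)}$, giving short exact sequences of sheaves
\begin{equation*}
0\to\scrT^{(k)}\xrightarrow{f_{k+1}}\scrT^{(k)}\to\scrT^{(k+1)}\to 0,
\end{equation*}
and corresponding long exact sequences of $\bS$-invariant sheaf cohomology. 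I prove by downward induction on $k$ that $H^1(\scrM;\scrT^{(k)})^\bS$ vanishes after localization at the maximal ideal $\mathfrak{m}_\la\subset\C[\Ht]$: the base case $k=n$ is the hypothesis. For the inductive step, if $H^1(\scrM;\scrT^{(k+1)})^\bS=0$, the long exact sequence shows that $f_{k+1}$ acts surjectively on $H^1(\scrM;\scrT^{(k)})^\bS$, whereupon the classical Nakayama lemma in the local ring $\C[\Ht]_{\mathfrak{m}_\la}/(f_1,\ldots,f_k)$ forces the localization to vanish, provided $H^1(\scrM;\scrT^{(k)})^\bS$ is finitely generated over $\C[\Ht]/(f_1,\ldots,f_k)$. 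With these vanishings in hand, the degree-zero part of each long exact sequence becomes short exact after localizing at $\la$, and iteration yields $\secs(\fM;{}_{\la+\gamma}\cT'_\la)\cong{}_{\la+\gamma}T_\la$ after observing that both sides are already annihilated by the $f_i$ and hence unchanged by localization at $\la$.

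The principal technical obstacle is establishing the finite generation required for Nakayama's lemma, namely that $H^1(\scrM;\scrT^{(k)})^\bS$ is a finitely generated module over $\C[\Ht]/(f_1,\ldots,f_k)$. This should follow from an $\bS$-equivariant coherent pushforward argument exploiting that $\scrM$ is projective over its affinization $\scrN=\Spec\C[\scrM]$ together with the positivity of the $\bS$-action on the base, but verifying it carefully in our setting requires some work.
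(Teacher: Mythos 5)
Your construction of the natural map is fine and is essentially the paper's, except that identifying the specialization of ${}_{\gamma}\scr{T}'_{0}$ at $h^{-1}c(x)=\la(x)$ with ${}_{\la+\gamma}\cT'_{\la}$ is not ``by construction'': ${}_{\la+\gamma}\cT'_{\la}$ is built directly on $\fM$, and one needs the uniqueness clause of Proposition \ref{line bundles} to see that the specialized sheaf, being a quantization of $\cL$, is isomorphic to it. The genuine gap is in the isomorphism step. Your Nakayama argument requires $H^1(\scrM;\scr{T}^{(k)})^\bS$ to be finitely generated over the \emph{central commutative} ring $\C[\Ht]/(f_1,\dots,f_k)$ localized at $\mathfrak{m}_\la$, and this is not what an $\bS$-equivariant coherent pushforward argument can deliver, nor is it true in general. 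Projectivity of $\scrM$ over its affinization gives (via Lemma \ref{projlim} and the proposition following it) finite generation of the cohomology over the \emph{noncommutative} section algebras, which have infinite rank over their centers; already $H^0(\scrM;{}_{\gamma}\scr{T}'_{0})^\bS$, the universal twisting bimodule, is not finitely generated over $\C[\Ht]$. The modules $H^1(\scrM;\scr{T}^{(k)})^\bS$ are supported over the complexified walls of $\cH$, with fibers that are typically infinite-dimensional Harish-Chandra bimodules (compare the higher cohomology of line bundles on $T^*(G/B)$), so even after localizing at $\mathfrak{m}_\la$ there is no finite generation over the local ring, and Nakayama cannot be invoked.

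The paper replaces finite generation by \emph{local finiteness} of the central action, which is exactly the content of Lemma \ref{minimal-polynomial}; but that lemma needs two things your intermediate steps do not have: a one-parameter twistor family $\scrM_\nu$ with $\nu$ chosen so that $\scrM_\nu(\infty)$ is affine (Proposition \ref{ample-affine}), forcing the relevant $H^1$ to be supported over $0\in\aone$, and Lagrangian (Steinberg) support of its localization, coming from the Harish-Chandra property -- its proof runs through holonomicity, the Kashiwara--Schapira finiteness theorem, and GAGA. Your coordinate-by-coordinate specialization along an arbitrary basis of $\Ht^*$ produces intermediate families over higher-dimensional bases whose generic fibers need not be affine, so the intermediate $H^1$'s have neither support control nor any holonomicity statement, and neither finite generation nor its weaker substitute is available there. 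The paper's proof avoids these intermediate steps altogether: it specializes from $\scrM$ to $\scrM_\nu$ in a single stroke, proving surjectivity at that stage by pushing to the affinization $\scrN$ and comparing reflexive classical limits on an affine variety, and saves the cohomological argument for the one remaining specialization $t=0$, where the hypothesis $H^1(\fM;{}_{\la+\gamma}\cT'_{\la})=0$ enters through an eigenvalue argument showing $h^{-1}t$ acts invertibly on $H^1(\scrM_\nu;{}_{\la+\gamma}{\scr{T}'_{\la}}{}^{(\nu)})^\bS$.
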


\begin{proof}
The pullback of ${}_{\gamma}\scr{T}'_{0}$ along the map $\Delta \to H^2(\fM;\C)\times \Delta$ given by $h \mapsto (h\la, h)$ is a quantization of $\cL$.  By the uniqueness of the quantized line bundles constructed in Proposition \ref{line bundles}, this pullback is isomorphic to ${}_{\la+\gamma}\cT'_{\la}$.  Since ${}_{\la+\gamma} T_{\la}$
is obtained from ${}_{\gamma}\scr{T}'_{0}$ by first taking sections and then specializing, this defines the required map.

Now suppose that  $H^1(\fM;{}_{\la+\gamma} \cT'_{\la})=0$.  To prove that our map is 
surjective, we factor the pullback into two steps.
 Choose $\nu\in H^2(\fM;\C)$ with
  $\scrM_\nu(\infty)$ affine.  Let
  ${}_{\la+\gamma}{\scr{T}_{\la}'}^{(\nu)}$ be the bimodule on $\scrM_\nu$ obtained by pulling ${}_{\gamma}\scr{T}'_{0}$ back along the map $\aone\times
  \Delta\to H^2(\fM;\C)\times \Delta$ taking $(t,h)$ to $(t\nu+h\la, h)$.  
  Thus ${}_{\la+\gamma}\cT'_{\la}$ is obtained from this sheaf by pulling back further by the map $\Delta \to \aone\times \Delta$ given by $h\mapsto (0, h)$.
  Let 
  ${}_{\la+\gamma}O_{\la}:=\secs(\scrM_\nu;{}_{\la+\gamma}{\scr{T}_{\la}'}^{(\nu)})$.
  To show that our map is surjective, it will suffice to show that 
  \begin{enumerate}
  \item the map from
    $\secs(\scrM;{}_{\gamma}\scr{T}_{0}')$ to
    ${}_{\la+\gamma}O_{\la}$ is surjective, and 
\item the map
    from ${}_{\la+\gamma}O_{\la}$ to
    $\secs(\fM;{}_{\la+\gamma}\cT_{\la}')$ is surjective.
  \end{enumerate}

Consider the variety $\scr{N} := \Spec\C[\scrM]$ from Section \ref{sec:Weyl-gp-def},
along with the related variety $\scr{N}_\nu := \Spec\C[\scrM_\nu]\subset \scr{N}$.
Let $\scr{N}^{\operatorname{sm}}$ and $\scr{N}_\nu^{\operatorname{sm}}$ be their smooth loci;
since the affinization maps for $\scrM$ and $\scrM_\nu$ are isomorphisms over the smooth loci,
we may regard $\scr{N}^{\operatorname{sm}}$ as a subvariety of $\scrM$ and
$\scr{N}_\nu^{\operatorname{sm}}$ as a subvariety of $\scrM_\nu$.

Let ${}_{\gamma}\scr{S}_{0}'$ be the sheaf on $\scr{N}$ obtained from ${}_{\gamma}\scr{T}_{0}'$
by first restricting it to $\scrN^{\operatorname{sm}}$ and then pushing it forward
to $\scrN$; since the complement of $\scrN^{\operatorname{sm}}$ in $\scrM$
has codimension at least 2, we have 
$$\secs(\scrM;{}_{\gamma}\scr{T}_{0}')
\cong \secs(\scrN;{}_{\gamma}\scr{S}_{0}').$$
Similarly, we define a sheaf ${}_{\la+\gamma}{\scr{S}_{\la}'}^{(\nu)}$ on  $\scrN_\nu$ 
obtained from ${}_{\la+\gamma}{\scr{T}_{\la}'}^{(\nu)}$
by first restricting it to $\scrN_\nu^{\operatorname{sm}}$ and then pushing it forward
to $\scrN_\nu$, and we have
$${}_{\la+\gamma}O_{\la}
\cong \secs(\scrN_\nu;{}_{\la+\gamma}{\scr{S}_{\la}'}^{(\nu)}).$$
To see that the map from $\secs(\scrM;{}_{\gamma}\scr{T}_{0}')$ to
${}_{\la+\gamma}O_{\la}$ is surjective, it suffices to check that the associated graded is surjective.
When we pass to the associated graded, we obtain a map between spaces of sections
of two coherent sheaves on $\scrN$, namely the classical limits
$\overline{{}_{\gamma}\scr{S}'_{0}}$ and
$\overline{{}_{\la+\gamma}{\scr{S}_{\la}'}^{(\nu)}}$. By definition,
the restriction of
$\overline{{}_{\la+\gamma}{\scr{S}_{\la}'}^{(\nu)}}$ to
$\scrN^{\operatorname{sm}}_\nu$ is a quotient of the restriction of
$\overline{{}_{\gamma}\scr{S}'_{0}}$ to $\scrN^{\operatorname{sm}}$.
Since the singular locus has codimension 3 on both $\scrN$ and $\scrN_\nu$, the induced map between pushforward sheaves is surjective, and
since $\scrN$ is affine, the same is true of the sections.

We now turn to the second surjectivity statement.  Consider the exact sequence
\[0\longrightarrow{}_{\la+\gamma}{\scr{T}_{\la}'}^{(\nu)}\overset{h^{-1}t}\longrightarrow
{}_{\la+\gamma}{\scr{T}_{\la}'}^{(\nu)}\longrightarrow
{}_{\la+\gamma} \cT_{\la}'\longrightarrow 0\]
of sheaves on $\scrM_\nu$ and its associated long exact sequence
\begin{multline*}
  0 \longrightarrow {}_{\la+\gamma}O_{\la}\overset{h^{-1}t}\longrightarrow {}_{\la+\gamma}O_{\la}\longrightarrow
  \secs(\fM;{}_{\la+\gamma}\cT_{\la}')
  \longrightarrow H^1(
  \scrM_\nu;{}_{\la+\gamma}{\scr{T}_{\la}'}^{(\nu)})^\bS\\
  \overset{h^{-1}t}
\longrightarrow H^1(
  \scrM_\nu;{}_{\la+\gamma}{\scr{T}_{\la}'}^{(\nu)})^\bS\longrightarrow
  H^1(\fM;{}_{\la+\gamma} \cT_{\la}')^\bS\longrightarrow\cdots .
\end{multline*}
The surjectivity statement that we need is equivalent (by exactness) to injectivity of the action of $h^{-1}t$
on $H^1(\scrM_\nu;{}_{\la+\gamma}{\scr{T}_{\la}'}^{(\nu)})^\bS$.

Since the generic fiber of $\scrM_\nu$ is affine, 
$H^1(\scrM_\nu;{}_{\la+\gamma}{\scr{T}_{\la}'}^{(\nu)})^\bS$
is supported on the fiber over 0.  This bimodule is Harish-Chandra, so its
localization has Lagrangian support in $\fM \times \fM$.  Applying Lemma
\ref{minimal-polynomial}, we see that $h^{-1}t$ satisfies a polynomial
equation on $H^1(\scr{M}_\nu; {}_{\la+\gamma}{\scr{T}_{\la}'}^{(\nu)})^\bS$, so the bimodule
is the sum of finitely many generalized eigenspaces for $h^{-1}t$
and $h^{-1}t$ acts with finite length.  In particular, if $0$ is a root of
this minimal polynomial, the map $h^{-1}t$ is not surjective (since its
stable image is a proper summand), and thus $H^1(\fM; {}_{\la+\gamma} \cT_{\la})$ 
is not 0.  This is impossible by assumption, so 0 cannot
be a root.  Thus, $h^{-1}t$ does act invertibly, so the desired map is
surjective.
\end{proof}

The following proposition says that derived tensor product with a twisting bimodule
does not change the characteristic cycle of the localization.  Let $N$ be an object of
$D^b(A_\la\mmod)$, so that $\LLoc({}_{\la+\gamma}T_{\la}\overset{L}{\otimes} N)$ is an object of $D^b(\cD_{\la+\gamma}\mmod)$.

\begin{proposition}\label{same-cycle}
Assume derived localization holds at $\la$ and $\la+\gamma$.  Then we have that
  \[\suppc(\LLoc(N))=\suppc(\LLoc({}_{\la+\gamma}T_{\la}\overset{L}{\otimes} N)).\]  
\end{proposition}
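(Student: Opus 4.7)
The proof proceeds in two stages: first a reduction from the algebraic to the geometric side using the derived localization hypotheses, and then a Morita-invariance argument for characteristic cycles.

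For the reduction, Theorem \ref{bimodule-loc} together with derived localization at $\la$ and $\la+\gamma$ gives an equivalence $\LLoc\colon D^b({}_{\la+\gamma}\HCa_{\la}) \to D^b({}_{\la+\gamma}\HCg_{\la})$, which must send ${}_{\la+\gamma}T_{\la}$ to ${}_{\la+\gamma}\cT'_{\la}$; a comparison morphism is supplied by Proposition \ref{bi-sections}, and both sides have the same classical limit, namely $\cL$ on the diagonal. Using the module analogue of Proposition \ref{tensor equivalence}, whose proof is identical, we conclude that
$$\LLoc({}_{\la+\gamma}T_{\la} \overset{L}\otimes N) \cong {}_{\la+\gamma}\cT'_{\la} \overset{L}\otimes_{\cD_\la} \LLoc(N).$$
The proposition thus reduces to showing that the characteristic cycle of any $\cN \in D^b(\cD_\la\mmod)$ coincides with that of $\cN' := {}_{\la+\gamma}\cT'_{\la} \overset{L}\otimes_{\cD_\la} \cN$.

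By Proposition \ref{line bundles}, tensoring with ${}_{\la+\gamma}\cT'_{\la}$ realizes a Morita equivalence between $\cD_\la$ and $\cD_{\la+\gamma}$. In particular it produces a canonical identification $\sHom^\bullet_{\cD_\la}(\cN,\cN) \cong \sHom^\bullet_{\cD_{\la+\gamma}}(\cN',\cN')$ sending $\id_\cN$ to $\id_{\cN'}$, and a Morita-invariance isomorphism of Hochschild homology sheaves $\mathcal{HH}(\cD_\la^{\an}) \cong \mathcal{HH}(\cD_{\la+\gamma}^{\an})$. Each side is canonically isomorphic to $\C_{\fM_\Delta}[\dim\fM]((h))$ by \cite[6.3.1]{KSdq}, and one verifies that the Morita-invariance map agrees with the identity under these Kashiwara--Schapira identifications. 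Since the evaluation map defining $\suppc$ is purely formal in the bimodule structure on the diagonal, it is intertwined by the Morita equivalence, so $\suppc(\cN)$ and $\suppc(\cN')$ correspond under the identity on $H^{\dim\fM}_{\fL}(\fM; \C((h)))$.

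The main technical obstacle is the compatibility assertion in the previous paragraph. This is essentially a local calculation on $\fM$: on an open subset trivializing $\cL$, the bimodule ${}_{\la+\gamma}\cT_{\la}$ is just $\cD_\la$ viewed as a bimodule, and the Morita equivalence becomes tautological; any residual global ambiguity reduces to a nonzero constant scalar on $H^{\dim\fM}(\fM; \C((h)))$, which can be pinned down to $1$ by testing on $\cN = \cD_\la$. Alternatively, whenever $\LLoc(N)$ lies in $D^b(\mathcal{C}^\fL_\la)$ for some Lagrangian $\fL$, one may apply Proposition \ref{K-act} directly: Proposition \ref{multiplicities} gives $\suppc({}_{\la+\gamma}\cT'_{\la}) = [\Delta]$, and convolution with the class of the diagonal is the identity on $H^{\dim\fM}_\fL(\fM; \C)$.
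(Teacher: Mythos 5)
Your reduction step contains the real gap: you assert that derived localization at $\la$ and $\la+\gamma$ forces $\LLoc({}_{\la+\gamma}T_{\la})\cong{}_{\la+\gamma}\cT'_{\la}$, citing Proposition \ref{bi-sections} and an equality of classical limits. But the bimodule ${}_{\la+\gamma}T_{\la}$ is defined (Definition \ref{T-def}) by specializing the sections of the universal quantized line bundle ${}_\gamma\scr{T}'_0$ over $\Ht$, not as $\secs(\fM;{}_{\la+\gamma}\cT'_{\la})$, and taking sections does not commute with specialization in general. Proposition \ref{bi-sections} only produces an injective comparison map ${}_{\la+\gamma}T_{\la}\to\secs(\fM;{}_{\la+\gamma}\cT'_{\la})$, and it is an isomorphism only under the extra hypothesis $H^1(\fM;{}_{\la+\gamma}\cT'_{\la})=0$, which is not assumed in Proposition \ref{same-cycle}; even granting it, identifying $\LLoc({}_{\la+\gamma}T_{\la})$ with ${}_{\la+\gamma}\cT'_{\la}$ requires knowing $\Rsecs({}_{\la+\gamma}\cT'_{\la})$ is concentrated in degree $0$, which is again a cohomology-vanishing statement not implied by derived localization for $\cD_\la$ and $\cD_{\la+\gamma}$. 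Likewise ``both sides have the same classical limit'' is unjustified: the associated graded of ${}_{\la+\gamma}T_{\la}$ is a specialization of sections of a sheaf over $\scrN$, which need not coincide with $\Gamma(\fM;\cL)$ when cohomology jumps. This identification is precisely the difficulty the paper's proof is designed to avoid: it works in the twistor family $\scrM_\nu$ (with $\scrM_\nu(\infty)$ affine), uses flatness of ${}_{\la+\gamma}O_{\la}$ over $\aone$ together with affineness of the nonzero fibers to see that the relative characteristic cycle of ${}_{\la+\gamma}{\scr{T}_{\la}'}^{(\nu)}$ is the class of the diagonal over every nonzero point, and then invokes Lemma \ref{restriction-commutes} to conclude $\suppc(\LLoc({}_{\la+\gamma}T_{\la}))=[\fM_{\Delta}]$ without ever comparing $\LLoc({}_{\la+\gamma}T_{\la})$ to ${}_{\la+\gamma}\cT'_{\la}$.

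Once one knows $\suppc(\LLoc({}_{\la+\gamma}T_{\la}))=[\fM_\Delta]$, the conclusion does follow as in your final step from Proposition \ref{K-conv} (compatibility of $\suppc$ with convolution) and triviality of convolution with the diagonal; so your second stage, while phrased via Morita invariance of Hochschild homology rather than convolution, is not where the problem lies and is essentially redundant given Propositions \ref{multiplicities} and \ref{K-conv}. To repair the argument you must compute $\suppc(\LLoc({}_{\la+\gamma}T_{\la}))$ directly, e.g.\ by the degeneration-in-families argument above, rather than by identifying this complex with the quantized line bundle on $\fM$.
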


\begin{proof}
As in the proof of Proposition \ref{bi-sections}, choose $\nu\in\Ht$ such that $\scrM_\nu(\infty)$ is affine, and
  consider the sheaf ${}_{\la+\gamma}{\scr{T}_{\la}'}^{(\nu)}$. At any
  point $p$ of $\aone$, the derived functor of base change to the fiber
  $\pi^{-1}(p\nu)$ over
  $p$ sends ${}_{\la+\gamma}{\scr{T}_{\la}'}^{(\nu)}$ to the derived
  localization $\LLoc({}_{\la+\gamma}O_\la/(t-p))$ as a module over a
  quantization of $\scrM_\nu \times \scrM_\nu$, since the module
  ${}_{\la+\gamma}O_\la$ is flat over $\aone$.

  If $p$ is not 0, then the fiber is affine, and
  $\LLoc({}_{\la+\gamma}O_\la/(t-p))$ is a line bundle on the diagonal
  in $\pi^{-1}(p\nu)\times \pi^{-1}(p\nu)$.  In particular, the class
  $\suppc({}_{\la+\gamma}{\scr{T}_{\la}'}^{(\nu)})$ thus must be the
  class of the diagonal over every non-zero point in $\aone$.  By
  Lemma \ref{restriction-commutes}, we thus have that \[\suppc(\LLoc({}_{\la+\gamma}T_{\la}))=\suppc(
  {}_{\la+\gamma}{\scr{T}_{\la}'}^{(\nu)}|_{\pi^{-1}(0)})=[\fM_{\Delta}].\]
By Proposition \ref{K-conv}, the characteristic cycle map intertwines derived tensor product with convolution.
Since convolution with the diagonal is trivial, this implies the desired equality. 
\end{proof}

We conclude this section by computing these bimodules explicitly in
the case where $\fM$ is a
symplectic quotient of a vector space, as in Example \ref{quotient construction}.
Let $G$ be a connected reductive algebraic group acting on a vector space $V$
with flat moment map $\mu:T^*V\to\mg^*$;
let $\fM$ be the symplectic quotient of $T^*V$ at a generic character $\theta$ of $G$,
and suppose that the Kirwan map $\mathsf{K}:\chi(\mg)\to \Ht$ is an isomorphism.
Let $A_{T^*V}$ be the section
ring of the unique quantization of $T^*V$; this is isomorphic to the ring of differential operators on $V$.
Fix a quantized moment map $\eta:U(\mg)\to A_{T^*V}$ and an element $\xi\in\chi(\mg)$, and let
$\cD_\xi$ be the associated quantization of $\fM$ (Section \ref{QHr}) with section ring $A = \secs(\cD)$.  
By Proposition \ref{quotient}, we have $A\cong \End_{A_{T^*V}}(Y_\xi)$.

Fix a second character $\xi'$ such that $\xi'-\xi$ integrates to a character of $G$, and consider the
$A'-A$ bimodule 
\begin{equation}\label{bimodule}
\Hom(Y_{\xi'},Y_{\xi})
  \cong\Big(A_{T^*V}\big{/}A_{T^*V}\cdot \langle \eta(x) - \xi(x)\mid
  x\in\mg\rangle\Big)^{\xi'-\xi}.
\end{equation}
By Proposition \ref{S equals T}, we have a natural map from
  $\Hom(Y_{\xi'},Y_{\xi})$ to
  ${_{\mathsf{K}(\xi')}T_{\mathsf{K}(\xi)}}$.  
  This map is is always injective but it need not be an isomorphism; the restriction to
the semistable locus can can cause new sections to appear. 

\begin{lemma}\label{twist-reduce}
If $\xi' = \xi + m\theta$ for $m\gg 0$, then the map from $\Hom(Y_{\xi'},Y_{\xi})$ to
${_{\mathsf{K}(\xi')}T_{\mathsf{K}(\xi)}}$ is an isomorphism.
\end{lemma}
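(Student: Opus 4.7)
The plan is to identify both sides of the map as $(\xi'-\xi)$-isotypic components of sections of a common quantized sheaf on $T^*V$ versus $\fU$, reducing the lemma to showing that for $\xi'-\xi = m\theta$ with $m \gg 0$, no ``new sections'' appear upon restriction to the semistable locus. By the presentation \eqref{bimodule}, $\Hom(Y_{\xi'},Y_{\xi})$ is the $(\xi'-\xi)$-isotypic component of $Y_\xi = \Gamma(T^*V;\tilde\cR_\xi)[\hmon]$ under the natural $G$-action, where $\tilde\cR_\xi$ denotes the extension of $\cR_\xi$ to all of $T^*V$ defined by the same quotient formula. On the other side, the argument in the proof of Proposition \ref{S equals T}—identifying a map $\cR_{\xi'} \to \cR_\xi$ with its value on $\bar 1$, which must lie in the $(\xi'-\xi)$-isotypic part of $\cR_\xi$—together with the realization of ${_{\mathsf{K}(\xi')}T_{\mathsf{K}(\xi)}}$ via Definition \ref{T-def} gives a natural embedding of ${_{\mathsf{K}(\xi')}T_{\mathsf{K}(\xi)}}$ into $\Gamma(\fU;\cR_\xi[\hmon])^{\xi'-\xi}$. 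Under these identifications, the map of the lemma is induced by the restriction of sections from $T^*V$ to $\fU$, restricted to the relevant isotypic component.

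To show this restriction is an isomorphism for $m \gg 0$, I would apply the local cohomology argument from the proof of Proposition \ref{square} almost verbatim. Equip $\tilde\cR_\xi$ with its $h$-adic filtration; its associated graded is the structure sheaf of $\mu^{-1}(0) \subset T^*V$. The kernel and cokernel of restriction from $T^*V$ to $\fU$ on the $m\theta$-isotypic component of the associated graded are controlled by the $G$-invariants of $H^0_{T^*V\setminus\fU}(\fS_{\mu^{-1}(0)} \otimes \cL_\theta^m)$ and $H^1_{T^*V\setminus\fU}(\fS_{\mu^{-1}(0)} \otimes \cL_\theta^m)$ respectively. Since $\cL_\theta$ is ample on $\fM$, the argument of Proposition \ref{square} applies with $\bar\cN = \fS_{\mu^{-1}(0)}$, $\fX = T^*V$, and $\cL = \cL_\theta$: for $m \gg 0$ the first group vanishes because any invariant section of $\cL_\theta^m$ vanishes on unstable points, while the image of sections over $\fU$ in the second group vanishes by local nilpotence of the positive-degree part of the invariant section ring. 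A Nakayama/completeness argument with respect to the $h$-adic filtration then lifts this classical isomorphism to the quantized level.

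The main technical obstacle I anticipate is ensuring that the threshold $m \gg 0$ is uniform across all filtration degrees (rather than depending on them), so that a graded isomorphism produces an isomorphism of filtered objects. This should follow from the finite generation of $\bigoplus_m H^i_{T^*V\setminus\fU}(\fS_{\mu^{-1}(0)} \otimes \cL_\theta^m)^G$ over an appropriate graded invariant section ring, exactly as in the proof of Proposition \ref{square}. A secondary point to confirm is that the isotypic component $\Gamma(\fU;\cR_\xi[\hmon])^{m\theta}$ coincides with ${_{\mathsf{K}(\xi')}T_{\mathsf{K}(\xi)}}$ for $m \gg 0$ (so that our map lands surjectively onto the latter), which should follow from Proposition \ref{bi-sections} together with the vanishing of $H^1(\fM;{_{\mathsf{K}(\xi')}\cT'_{\mathsf{K}(\xi)}})$, itself a consequence of the ampleness of $\cL_\theta^m$ on $\fM$ and Serre vanishing.
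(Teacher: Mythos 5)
Your proposal is correct and takes essentially the same route as the paper: the paper's proof simply passes to the associated graded, where the map becomes the restriction $\C[\mu^{-1}(0)]_{m\theta}\to\Gamma(\fM,\cL_{m\theta})$, and quotes that this is an isomorphism for $m\gg 0$, concluding by the filtered-to-graded principle. Your local-cohomology argument in the style of Proposition \ref{square} is just an expanded proof of that standard graded statement (and your closing points about the target via Propositions \ref{S equals T} and \ref{bi-sections} match the paper's construction of the map), so the two arguments agree in substance.
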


\begin{proof}
The associated graded of this map is the natural map from $\C[\mu^{-1}(0)]_{m\theta}$
to $\Gamma(\fM,\cL_{m\theta})$, where the subscript in the source indicated the $\bS$-weight space.
This map is an isomorphism for sufficiently large $m$, thus so is our original map.
\end{proof}

\begin{remark}\label{theta doesn't matter}
We note that, by Corollary \ref{same-sections} and Proposition \ref{same-sections-bimodules},
the source and target of the map in Lemma \ref{twist-reduce} (along with the map itself)
are independent of the choice of conical symplectic resolution.
Thus Lemma \ref{twist-reduce}
simply says that our map is an isomorphism when $\xi$ and $\xi'$ are sufficiently far apart in any generic direction.
\end{remark}

\subsection{Twisting functors}\label{twisting}
By Theorem \ref{dream}, the set $I$ of isomorphism classes of conical symplectic resolutions of $\fM_0$
is finite.  For each $i\in I$, let $\fM_i$ be a representative resolution.  By Remark \ref{chambers},
the chambers of the hyperplane arrangement $\cH$ are in canonical bijection with $I\times W$, where $W$
is the Weyl group from Section \ref{sec:Weyl-gp-def}.  For each pair $(i,w)$, let $\Pi_{i,w}\subset P_\R$
be the set of parameters $\la$ in the corresponding chamber of $\cH$ with the additional property
that localization holds at $w\la$ on $\fM_i$ and derived localization
holds at $w'\la$ and $-w'\la$ on $\fM_{i'}$ for all pairs $(i',w')$. 
Let $$\Pi := \displaystyle\bigcup_{I\times W} \Pi_{i,w}\subset P_\R.$$

\begin{lemma}\label{finitely-many}
If $w\eta$ is an ample class on $\fM_i$, then for any $\la$, the class
$\la+k\eta$ lies in $\Pi_{i,w}$ for all but finitely many $k\in \Z_{\geq 0}$.
\end{lemma}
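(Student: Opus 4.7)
The plan is to unpack membership in $\Pi_{i,w}$ into its two constituent conditions and verify each for all but finitely many $k\in\Z_{\geq 0}$. By definition, $\la+k\eta\in\Pi_{i,w}$ means that (i) $\la+k\eta$ lies in the chamber of $\cH$ associated with the pair $(i,w)$, and (ii) localization holds at $w(\la+k\eta)=w\la+k(w\eta)$ on $\fM_i$. The conclusion then follows by intersecting two cofinite subsets of $\Z_{\geq 0}$.

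For the chamber condition, I would appeal to Remark~\ref{chambers}, which identifies this chamber as a $W$-translate of the ample cone of $\fM_i$. Under this identification, membership of $\mu\in P_\R$ in the $(i,w)$-chamber is equivalent to $w\mu$ being ample on $\fM_i$ (this is precisely the convention that makes ``localization holds at $w\la$ on $\fM_i$'' a sensible companion condition in the definition of $\Pi_{i,w}$). Thus $\la+k\eta$ is in the $(i,w)$-chamber exactly when $w\la+k(w\eta)\in\mathrm{Amp}(\fM_i)$; since $w\eta$ is ample by hypothesis and the ample cone is an open convex cone, the affine ray $k\mapsto w\la+k(w\eta)$ lies in $\mathrm{Amp}(\fM_i)$ for all $k$ greater than some finite threshold.

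For the localization condition, I would invoke Corollary~\ref{large quantizations} as extended by Remark~\ref{ample vs very ample} to merely ample (rather than very ample) classes. Applied to the resolution $\fM_i$ with base period $w\la$ and ample direction $w\eta$, this says that localization holds for the quantization of $\fM_i$ with period $w\la+k(w\eta)$ for all but finitely many $k\in\Z_{\geq 0}$.

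Intersecting the two cofinite sets produced above gives a cofinite subset of $\Z_{\geq 0}$ on which both conditions hold simultaneously, and hence $\la+k\eta\in\Pi_{i,w}$. There is no real obstacle to this argument; the lemma is a packaging of earlier results, with the only small bookkeeping point being to match the convention under which ``$w\eta$ ample on $\fM_i$'' picks out the interior of the $(i,w)$-chamber.
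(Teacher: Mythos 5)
Your proof is correct and follows essentially the same route as the paper's: identify the $(i,w)$-chamber with the $w$-translate of the ample cone of $\fM_i$ via Remark \ref{chambers}, note that $w(\la+k\eta)$ is ample for $k$ large, and invoke Corollary \ref{large quantizations} (with the ample-versus-very-ample reduction of Remark \ref{ample vs very ample}, which the paper leaves implicit) for the localization condition. The only difference is that you spell out the two cofinite conditions and their intersection slightly more explicitly than the paper does.
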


\begin{proof}  Recall from Remark \ref{chambers} that the chamber of $\cH$ indexed by $(i,w)$
is equal to the $w$ translate of the ample cone of $\fM_i$.
Since $w\eta$ is ample on $\fM_i$, so is $w(\la+k\eta)$ when $k$ is sufficiently large.
The fact that localization holds at $w(\la+k\eta)$ for large $k$
follows from Corollary \ref{large quantizations}, and  Theorem
\ref{derived-local} shows the required derived localization
statements. The fact that there are only finitely many elements of $I$
shows that only finitely many $k$ need to be removed.
\end{proof}

Let $A_\la$ be the invariant section ring of the quantization with period $\la$.
(Note that, by Corollary \ref{same-sections}, the ring $A_\la$ does not depend on the choice of resolution of $\fM_0$.)
For any pair of elements $\la,\la'\in H^2(\fM;\C)$ that differ by an integral class,
let 
\begin{equation}\label{pure twist}
\Phi^{\la'\!,\la}\colon D(A_{\la}\mMod)\to D(A_{\la'}\mMod)
\end{equation}
be the functor obtained by derived tensor product with the bimodule $\bi$.
For any $\la\in\Pi$ and $w\in W$, let
\begin{equation}\label{impure twist}
\Phi^{\la}_w\colon D(A_{w\la}\mMod)\to D(A_{\la}\mMod)
\end{equation}
be the equivalence obtained from the isomorphism of Proposition \ref{Weyl rings}.
Note that the compatibility in the statement of Proposition \ref{Weyl rings} implies that
the composition $\Phi^{w\la}_{w^{-1}}\circ\Phi^{\la}_w$ is naturally isomorphic to the identity functor.

\begin{proposition}\label{alg-geom}
Suppose that $\la'\in\Pi_{i,w}$.  Then the functor $\Phi^{\la'\!,\la}$ is naturally isomorphic to the composition
\begin{multline*}
  D(A_\la\mMod) \overset{\Phi^{\la}_w}{\xrightarrow{\hspace*{1cm}}}
  D(A_{w\la}\mMod)\overset{\LLoc_i}{\xrightarrow{\hspace*{1cm}}}
  D(\cD_{w\la}\mMod)\\\overset{{{_{w\la'}\!\cT'_{w\la}}}\otimes -}{\xrightarrow{\hspace*{1.5cm}}}
  D(\cD_{w\la'}\mMod)\overset{\mathbb{R}\Gamma_{\bS,i}}{\xrightarrow{\hspace*{1cm}}}
  D(A_{w\la'}\mMod) \overset{\Phi^{w\la'}_{w^{-1}}}{\xrightarrow{\hspace*{1cm}}} 
  D(A_{\la'}\mMod),
\end{multline*}
where the subscript $i$ on $\Rsecs$ and $\LLoc$ refers to the fact
that we are using the resolution $\fM_i$.  
\end{proposition}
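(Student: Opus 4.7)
The plan is to reduce the statement to a direct application of derived localization for bimodules. The first move is to use Proposition \ref{Weyl bimodules} to eliminate the Weyl group factor: the isomorphism ${}_{\la'}T_\la \cong {}_{w\la'}T_{w\la}$ intertwines with the Weyl equivalences $\Phi^\la_w$ and $\Phi^{w\la'}_{w^{-1}}$, so it suffices to establish the proposition in the case $w=e$. Concretely, one needs to check that under the isomorphism of section rings of Proposition \ref{Weyl rings}, derived tensor product with ${}_{\la'}T_\la$ and with ${}_{w\la'}T_{w\la}$ give naturally isomorphic functors; this is formal once the bimodules themselves are identified compatibly.

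With that reduction, the task becomes showing that when derived localization holds at $\la'$ on $\fM_i$ (and, by Corollary \ref{left-right-local}, also at $-\la'$), the derived tensor product $ {}_{\la'}T_\la \overset{L}\otimes -$ on $D(A_\la\mMod)$ is naturally isomorphic to the composition $\mathbb{R}\Gamma_{\bS,i} \circ ({}_{\la'}\cT'_\la \otimes -) \circ \LLoc_i$. The main step is to identify $\LLoc_i({}_{\la'}T_\la)$ with the geometric bimodule ${}_{\la'}\cT'_\la$ inside $D^b({}_{\la'}^{\phantom{|}}\HCg_\la)$. By Theorem \ref{bimodule-loc} and our derived localization assumption, derived localization and derived sections are inverse equivalences between $D^b({}_{\la'}^{\phantom{|}}\HCg_\la)$ and $D^b({}_{\la'}^{\phantom{|}}\HCa_\la)$, so it is equivalent to show that $\mathbb{R}\secs({}_{\la'}\cT'_\la) \cong {}_{\la'}T_\la$.

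To establish this last isomorphism, I would use the construction of ${}_{\la'}T_\la$ from Definition \ref{T-def} together with Proposition \ref{bi-sections}. The bimodule ${}_{\la'}T_\la$ was defined as the specialization of $\secs(\scrM; {}_\gamma\scr{T}'_0)$ at $h^{-1}c(x)=\la(x)$, and ${}_{\la'}\cT'_\la$ is the further pullback to $\fM$. The Koszul spectral sequence argument used in the proof of Proposition \ref{restriction of sections} applies here as well: tensor ${}_\gamma\scr{T}'_0$ with the Koszul resolution of $\C_\la$ to compute both $\mathbb{R}\secs({}_{\la'}\cT'_\la)$ and the specialization of $\secs({}_\gamma\scr{T}'_0)$, and then invoke the vanishing of higher cohomology of ${}_\gamma\scr{T}'_0$ on $\scrM$. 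This last vanishing is where derived localization enters in an essential way --- it guarantees that the higher sections vanish after specializing at $\la'$, which is equivalent to $H^{>0}(\fM_i; {}_{\la'}\cT'_\la) = 0$.

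Once $\LLoc_i({}_{\la'}T_\la) \cong {}_{\la'}\cT'_\la$ is in hand, Proposition \ref{tensor equivalence} finishes the proof: convolution of bimodules matches derived tensor product after taking invariant sections, so we have the chain of natural isomorphisms
\[
{}_{\la'}T_\la \overset{L}\otimes N \;\cong\; \mathbb{R}\secs\big({}_{\la'}\cT'_\la \otimes \LLoc_i(N)\big).
\]
The main obstacle will be the vanishing of the relevant higher cohomology of ${}_\gamma\scr{T}'_0$, or equivalently verifying that the natural map ${}_{\la'}T_\la \to \secs(\fM_i; {}_{\la'}\cT'_\la)$ is an isomorphism under our hypotheses. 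The derived localization hypothesis at $\la'$ should do this, but care is needed because Proposition \ref{bi-sections} only gives a sufficient criterion ($H^1(\fM; {}_{\la'}\cT'_\la) = 0$); one must show this cohomological vanishing follows from derived localization holding at $\la'$, probably by applying Theorem \ref{bimodule-loc} to the identity bimodule ${}_{\la'}\cT'_{\la'}$ and then tracking through the tensor structure, or alternatively by noting that $\LLoc_i$ of a bimodule in $D^b({}_{\la'}^{\phantom{|}}\HCa_\la)$ is concentrated in non-positive degrees when both $\la$ and $\la'$ lie in the localization locus.
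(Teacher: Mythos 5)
Your overall architecture is the same as the paper's: reduce using Proposition \ref{Weyl bimodules}, identify $\LLoc_i({}_{w\la'}T_{w\la})$ with ${}_{w\la'}\cT'_{w\la}$ via Proposition \ref{bi-sections}, and finish by the compatibility of derived tensor product with localization/convolution. The genuine gap is in how you justify the hypothesis of Proposition \ref{bi-sections}. You try to deduce $H^1(\fM_i;{}_{w\la'}\cT'_{w\la})=0$ from \emph{derived} localization holding at $\la'$ (plus Corollary \ref{left-right-local}), and you flag this yourself as the main obstacle, offering two patches; neither works. That $\LLoc_i$ of a bimodule is concentrated in non-positive degrees is automatic for any left derived functor and says nothing about $\Rsecs$ being concentrated in degree $0$; and Theorem \ref{bimodule-loc} applied to ${}_{\la'}\cT'_{\la'}$ only places $\Rsecs$ of a geometric Harish-Chandra bimodule in the bounded derived category of algebraic ones, again with no degree bound. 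More fundamentally, derived localization \emph{cannot} imply the vanishing: nonvanishing higher cohomology of good modules is exactly what separates derived from abelian localization, and derived localization holds at every point of $\Pi$ while abelian localization does not.

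The input you are missing is the definition of $\Pi_{i,w}$: the hypothesis $\la'\in\Pi_{i,w}$ means that \emph{abelian} localization holds at $w\la'$ on $\fM_i$, not merely derived localization at $\la'$. This is what the paper uses: abelian localization at $w\la'$ kills the higher cohomology of the good module ${}_{w\la'}\cT'_{w\la}$, so Proposition \ref{bi-sections} gives ${}_{w\la'}T_{w\la}\cong\mathbb{R}\Gamma_{\bS,i}({}_{w\la'}\cT'_{w\la})$, hence ${}_{w\la'}\cT'_{w\la}\cong\LLoc_i({}_{w\la'}T_{w\la})$, and the statement follows. Two smaller points: after your "reduction to $w=e$" you write everything at $\la,\la'$, but the localization statement available is at $w\la'$, so the bimodule identification must be carried out at $w\la,w\la'$ on $\fM_i$ and only then transported by $\Phi^{\la}_w$ and $\Phi^{w\la'}_{w^{-1}}$; and your proposal to rerun the Koszul argument of Proposition \ref{restriction of sections} directly on ${}_{\gamma}\scr{T}'_{0}$ would require higher-cohomology vanishing for that sheaf over all of $\scrM$, which is not available --- this is precisely why Proposition \ref{bi-sections} needs its more delicate proof through the twistor family and Lemma \ref{minimal-polynomial}, and why you should simply quote it rather than reprove it.
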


\begin{proof}
Since $\la'\in\Pi_{i,w}$, localization holds at $w\la'$, which implies that the higher cohomology of
${}_{w\la'}\cT'_{w\la}$ is trivial.  Then Proposition \ref{bi-sections} tells us that
${}_{w\la'}T_{w\la}\cong\mathbb{R}\Gamma_{\bS,i}({}_{w\la'}\cT'_{w\la})$, 
and therefore that
${}_{w\la'}\cT'_{w\la}\cong\LLoc_i({}_{w\la'}T_{w\la})$.
The proposition follows immediately using Proposition \ref{Weyl bimodules}.
\end{proof}

\begin{corollary}
For all $\la\in \Pi$
the functor $\Phi^{\la'\!,\la}$ induces a functor $D^b(A_\la\mmod)\to
D^b(A_{\la'}\mmod)$.  If $\la'\in\Pi$ as well, then this functor is an
equivalence.   
\end{corollary}

\begin{proof}
The functor $\LLoc_i$ induces an equivalence $D^b(A_{w\la}\mmod)
  \to D^b(\cD_{w\la}\mmod)$ as discussed in Remark \ref{rem:bounded-enough}.  The functor
  ${{_{w\la'}\!\cT'_{w\la}}}\otimes -$ is an equivalence of abelian
  categories with inverse ${{_{w\la}\!\cT'_{w\la'}}}\otimes -$
by the uniqueness part of Proposition \ref{line bundles}.  The functor
$\mathbb{R}\Gamma_{\bS,i} $ induces a functor
$D^b(\cD_{w\la'}\mmod)\to 
  D^b(A_{w\la'}\mmod)$ by Proposition \ref{prop:Rsecs-bounded}, which
  is also an equivalence if  $\la'\in\Pi$.
\end{proof}

\begin{corollary}\label{within}
If $\la$ and $\la'$ lie in the same chamber of $\cH$, then $\Phi^{\la,\la'}\circ\Phi^{\la'\!,\la}$
is naturally isomorphic to the identity functor.
\end{corollary}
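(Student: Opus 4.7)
The plan is to unpack both $\Phi^{\la'\!,\la}$ and $\Phi^{\la,\la'}$ via Proposition \ref{alg-geom} into compositions of derived sections, localization, and convolution with geometric twisting bimodules, and then successively cancel. By hypothesis, both $\la$ and $\la'$ lie in a common chamber $\Pi_{i,w}$, so that Proposition \ref{alg-geom} applies in both directions using the same pair $(i,w)$, which is crucial.

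First I would write
\[
\Phi^{\la'\!,\la} \;\cong\; \Phi^{w\la'}_{w^{-1}} \circ \Rsecs_i \circ \big({}_{w\la'}\cT'_{w\la} \otimes -\big) \circ \LLoc_i \circ \Phi^{\la}_w
\]
and
\[
\Phi^{\la,\la'} \;\cong\; \Phi^{w\la}_{w^{-1}} \circ \Rsecs_i \circ \big({}_{w\la}\cT'_{w\la'} \otimes -\big) \circ \LLoc_i \circ \Phi^{\la'}_w.
\]
Composing $\Phi^{\la,\la'}\circ\Phi^{\la'\!,\la}$, the innermost block $\Phi^{\la'}_w \circ \Phi^{w\la'}_{w^{-1}}$ cancels to the identity by the compatibility noted immediately after \eqref{impure twist}. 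Next, since $\la'\in\Pi_{i,w}$, derived localization holds at $w\la'$ on $\fM_i$, so the resulting composition $\LLoc_i\circ\Rsecs_i$ in the middle is naturally isomorphic to the identity on $D(\cD_{w\la'}\mMod)$.

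This reduces the composition to
\[
\Phi^{w\la}_{w^{-1}} \circ \Rsecs_i \circ \Big(\big({}_{w\la}\cT'_{w\la'}\big) \otimes_{\cD_{w\la'}} \big({}_{w\la'}\cT'_{w\la}\big) \otimes -\Big) \circ \LLoc_i \circ \Phi^{\la}_w.
\]
The sheaves ${}_{w\la}\cT'_{w\la'}$ and ${}_{w\la'}\cT'_{w\la}$ are quantizations of the line bundles with Euler classes $w\la-w\la'$ and $w\la'-w\la$ respectively, so they are locally free as one-sided modules, and their tensor product is a quantization of the trivial line bundle compatible with the $\cD_{w\la}$-bimodule structure on both sides. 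By the uniqueness statement in Proposition \ref{line bundles}, this tensor product is canonically isomorphic to $\cD_{w\la}$ itself, so convolution with it is the identity functor on $D(\cD_{w\la}\mMod)$. After this cancellation, invoking derived localization at $w\la$ (which holds since $\la\in\Pi_{i,w}$) gives $\Rsecs_i\circ\LLoc_i\cong\id$, and finally $\Phi^{w\la}_{w^{-1}}\circ\Phi^{\la}_w\cong\id$ on $D(A_\la\mMod)$ by the same compatibility as before.

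The main things to check carefully are that the natural isomorphisms assembled from Propositions \ref{line bundles}, \ref{alg-geom}, and \ref{Weyl bimodules} actually compose coherently, and in particular that the isomorphism ${}_{w\la}\cT'_{w\la'}\otimes_{\cD_{w\la'}}{}_{w\la'}\cT'_{w\la}\cong\cD_{w\la}$ from Proposition \ref{line bundles} is compatible with the two-sided module structure used when we convolve in both directions. This is the only nontrivial point; everything else is bookkeeping among equivalences that have already been established in the paper.
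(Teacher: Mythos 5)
Your proposal is correct and takes essentially the same route as the paper, whose proof simply cites Propositions \ref{line bundles} and \ref{alg-geom}: one unpacks both functors via Proposition \ref{alg-geom} using the common chamber $(i,w)$ and cancels, the key point being the uniqueness statement of Proposition \ref{line bundles}, which gives ${}_{w\la}\cT'_{w\la'}\otimes_{\cD_{w\la'}}{}_{w\la'}\cT'_{w\la}\cong\cD_{w\la}$ as bimodules. Your more explicit bookkeeping (cancelling the $\Phi_w$'s and the $\LLoc_i\circ\Rsecs_i$ pairs, which is legitimate since $\la,\la'\in\Pi_{i,w}$) is just a spelled-out version of what the paper leaves implicit.
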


\begin{proof}
This follows similarly from Propositions \ref{line bundles} and \ref{alg-geom}.
\end{proof}

Fixing a particular $\la\in\Pi$,
we define {\bf twisting functors} to be the group of endofunctors of $D(A_{\la}\mMod)$ 
(or of the full subcategory $D^b(A_{\la}\mmod)$)
obtained by composing functors of 
the form \eqref{pure twist} and \eqref{impure twist} and their inverses,
and we define {\bf pure twisting functors}
to be the subgroup obtained using only functors of the form \eqref{pure twist} and their inverses.
Note that Corollary \ref{within}
implies that any such composition that never leaves the chamber in which $\la$ lives is trivial.  However,
when one crosses a wall and then crosses back, one can and does obtain something nontrivial (see Proposition
\ref{shuf-twist} for the case of the Springer resolution).

For Lemma \ref{straight-line} we adopt the notational convention, introduced in Section \ref{sec:derived}, 
whereby we fix $\eta\in H^2(\fM;\Z)$ and $\la\in\Ht$ and use $k$ in a subscript or superscript in place of
$\la+k\eta$.

\begin{lemma}\label{straight-line}
Suppose that $w\eta$ is very ample on $\fM_i$.
Then for any natural numbers $k_{\ell}> k_{\ell-1}>\cdots>k_1\geq k_0$,
there is a natural isomorphism of functors
\[
\Phi^{k_\ell,k_{0}}\simeq
\Phi^{k_\ell,k_{\ell-1}}\circ \cdots \circ \Phi^{k_1,k_{0}}.\]
\end{lemma}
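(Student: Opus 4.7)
The plan is to translate the algebraic twisting functors into geometric ones via Proposition~\ref{alg-geom}, exploit cancellations in the iterated composition, and conclude by invoking the uniqueness part of Proposition~\ref{line bundles}. Throughout, let me abbreviate $\la_j := \la + k_j\eta$. Since $w\eta$ is (in particular) ample on $\fM_i$, Lemma~\ref{finitely-many} implies $\la_j \in \Pi_{i,w}$ for all sufficiently large $k_j$, which is the situation in which the functors appearing in the statement are defined; in particular, abelian localization on $\fM_i$ holds at each $w\la_j$.

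First I will rewrite each factor $\Phi^{k_{j+1}, k_j}$ using Proposition~\ref{alg-geom} as the composition
\[
\Phi^{w\la_{j+1}}_{w^{-1}} \circ \Rsecs_i \circ \bigl( {}_{w\la_{j+1}}\cT'_{w\la_j} \otimes_{\cD_{w\la_j}} - \bigr) \circ \LLoc_i \circ \Phi^{\la_j}_w.
\]
Composing two consecutive factors, the Weyl twist $\Phi^{\la_{j+1}}_w \circ \Phi^{w\la_{j+1}}_{w^{-1}}$ appearing in the middle is naturally isomorphic to the identity (by the compatibility observed right after Proposition~\ref{Weyl rings}), and the adjacent pair $\LLoc_i \circ \Rsecs_i$ is also naturally isomorphic to the identity because $\la_{j+1} \in \Pi_{i,w}$ means abelian localization holds at $w\la_{j+1}$ on $\fM_i$. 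Iterating this cancellation for $j=1,\dots,\ell-1$, the entire composition $\Phi^{k_\ell,k_{\ell-1}} \circ \cdots \circ \Phi^{k_1,k_0}$ becomes naturally isomorphic to
\[
\Phi^{w\la_\ell}_{w^{-1}} \circ \Rsecs_i \circ \Bigl( {}_{w\la_\ell}\cT'_{w\la_{\ell-1}} \otimes_{\cD_{w\la_{\ell-1}}} \cdots \otimes_{\cD_{w\la_1}} {}_{w\la_1}\cT'_{w\la_0} \otimes_{\cD_{w\la_0}} - \Bigr) \circ \LLoc_i \circ \Phi^{\la_0}_w.
\]

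Next I will identify the iterated geometric tensor product. It is a $\cD_{w\la_\ell}\textup{-}\cD_{w\la_0}$ bimodule whose classical limit (reduction modulo $h$ of any compatible lattice) is the tensor product of line bundles $\cL^{w(k_\ell - k_{\ell-1})\eta} \otimes \cdots \otimes \cL^{w(k_1 - k_0)\eta} \cong \cL^{w(k_\ell - k_0)\eta}$. By the uniqueness part of Proposition~\ref{line bundles}, it is therefore canonically isomorphic to ${}_{w\la_\ell}\cT'_{w\la_0}$. Substituting this canonical isomorphism and applying Proposition~\ref{alg-geom} in reverse identifies the composition with $\Phi^{k_\ell,k_0}$, as required.

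The main obstacle will be coherence: producing from the stepwise natural isomorphisms above a single natural isomorphism of functors on the derived category (respecting not just objects but all morphisms and all triangles). This reduces to checking that the adjunction unit and counit for $\Rsecs_i \dashv \LLoc_i$, which implement the cancellations in the middle, are compatible with the multiplication isomorphism among the bimodules ${}_{w\la_\ell}\cT'_{w\la_j}$. This compatibility is a formal consequence of the fact that ${}_{w\la_\ell}\cT'_{w\la_0}$ is determined by a universal property, so the multiplication map among the $\cT'$ bimodules that realizes the isomorphism in Proposition~\ref{line bundles} is unique; but writing down the coherence verification explicitly will require some bookkeeping.
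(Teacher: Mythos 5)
There is a genuine gap: your argument requires localization to hold, on $\fM_i$, at $w(\la+k_j\eta)$ for \emph{every} index $j$, and this is not among the hypotheses of the lemma. Proposition \ref{alg-geom}, which you apply to each factor $\Phi^{k_{j+1},k_j}$, has the hypothesis $\la+k_{j+1}\eta\in\Pi_{i,w}$, and your cancellation $\LLoc_i\circ\Rsecs_i\simeq\id$ at each intermediate stage again requires (derived) localization at $w(\la+k_{j+1}\eta)$ on $\fM_i$. But the lemma is asserted for arbitrary natural numbers $k_\ell>\cdots>k_1\geq k_0$; Lemma \ref{finitely-many} only gives membership in $\Pi_{i,w}$ for all but finitely many $k$, and your parenthetical claim that this ``is the situation in which the functors appearing in the statement are defined'' is not right: $\Phi^{k',k}$ is simply the derived tensor product with the bimodule ${}_{k'}T_{k}$, which needs no localization hypothesis. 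Moreover, $\Pi_{i,w}$ is by definition contained in the chamber of $\cH$ indexed by $(i,w)$, while in the intended application (the proof of Theorem \ref{twisting braid}) the intermediate parameters are chosen as representatives of \emph{all} the chambers crossed by a colinear path, so they are never in $\Pi_{i,w}$ and localization is not known there. Thus your proof establishes only the special case in which every $\la+k_j\eta$ lies in $\Pi_{i,w}$, which is strictly weaker than the lemma and insufficient for its use.

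The paper's proof avoids localization entirely and works at the level of bimodules: since $w\eta$ is very ample on $\fM_i$ and $k'>k$, the higher cohomology of $\cL^{k'-k}$ vanishes, hence so does that of ${}_{k'}\cT'_{k}$; Proposition \ref{bi-sections} then gives ${}_{k'}T_{k}\cong\Rsecs({}_{k'}\cT'_{k})$ for every ordered pair, and combining this with the factorization ${}_{k_\ell}\cT'_{k_0}\cong{}_{k_\ell}\cT'_{k_{\ell-1}}\otimes_{\cD_{k_{\ell-1}}}\cdots\otimes_{\cD_{k_1}}{}_{k_1}\cT'_{k_0}$ coming from Proposition \ref{line bundles} yields an isomorphism of bimodules ${}_{k_\ell}T_{k_0}\cong{}_{k_\ell}T_{k_{\ell-1}}\Lotimes_{A_{k_{\ell-1}}}\cdots\Lotimes_{A_{k_1}}{}_{k_1}T_{k_0}$. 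Since each $\Phi$ is by definition derived tensor product with the corresponding $T$, this single bimodule isomorphism immediately gives the natural isomorphism of functors; in particular the coherence bookkeeping you flag at the end, forced on you by the chain of adjunction units and counits, does not arise in that formulation. If you want to salvage your outline, you should replace the factor-by-factor appeal to Proposition \ref{alg-geom} by the cohomology-vanishing argument above, at which point your proof collapses into the paper's.
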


\begin{proof}
Let $\cL$ be the line bundle on $\fM_i$ with Euler class $\eta$.  For any $k'>k$,
the higher cohomology of $\cL^{k'-k}$ vanishes. Therefore the higher
cohomology of ${_{k'}\cT'_k}$ vanishes as well.
By the same argument that we used in the proof of Proposition \ref{alg-geom}, 
Proposition \ref{bi-sections} tells us that
\begin{multline*} {}_{k_{\ell}}T_{k_0}\cong
  \Rsecs({}_{k_{\ell}}\cT_{k_0}) \cong
  \Rsecs({}_{k_\ell}\cT_{k_{\ell-1}}{\otimes}_{\cD_{k_{\ell-1}}} \cdots
  {\otimes}_{\cD_{k_{1}}} {}_{k_{1}}\cT_{k_0})\\
  \cong\Rsecs({}_{k_\ell}\cT_{k_{\ell-1}})\overset{L}{\otimes}_{A_{k_{\ell-1}}}\cdots
  \overset{L}{\otimes}_{A_{k_{1}}}\Rsecs( {}_{k_{1}}\cT_{k_0})
\cong
  {}_{k_\ell}T_{k_{\ell-1}}\overset{L}{\otimes}_{A_{k_{\ell-1}}}
  \cdots \overset{L}{\otimes}_{A_{k_{1}}} {}_{k_{1}}T_{k_0}
\end{multline*}
as desired.  Since $\Phi^{k_{\ell},k_0} = {_{k_\ell}}
T_{k_0}\overset{L}{\otimes}_{A_{k_0}}\!-$, the isomorphism follows.
\end{proof}

Let $$E := \Ht\smallsetminus\displaystyle\bigcup_{H\in\cH}H_\C$$ be the complement of the complexification of $\cH$.
The main theorem of this section says that the fundamental group of $E/W$ acts on our category by twisting functors.

\begin{theorem}\label{twisting braid}
For any $\la\in\Pi$, there is a natural homomorphism from $\pi_1(E/W, [\la])$ 
to the group of twisting functors on $D(A_\la\mMod)$.  The subgroup $\pi_1(E, \la)$
maps to the group of pure twisting functors.
\end{theorem}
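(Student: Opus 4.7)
The plan is to invoke the standard Deligne-type presentation of the fundamental group of the complement of a complexified real central hyperplane arrangement. The group $\pi_1(E,\la)$ is generated by ``half-loop'' paths between pairs of adjacent chambers of $\cH$, with relations arising from codimension-2 flats of the arrangement; the group $\pi_1(E/W,[\la])$ is then obtained by adjoining generators corresponding to elements of $W$, with compatibilities encoded by the $W$-action on $\cH$. To define the action by twisting functors, it therefore suffices to describe functors on these generators and verify that the relations hold as natural isomorphisms of functors.

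First, for each chamber $C$ of $\cH$ I would fix a basepoint $\la_C\in C\cap\Pi$, which exists by Corollary \ref{large quantizations} together with Lemma \ref{finitely-many}, and attach to $C$ the category $D(A_{\la_C}\mMod)$. Different choices of $\la_C$ within $C$ yield canonically isomorphic categories via pure twisting functors by Corollary \ref{within}, so this assignment is well-defined up to canonical equivalence. To each ordered pair of adjacent chambers $(C,C')$ I would assign the pure twisting functor $\Phi^{\la_{C'},\la_C}$, after rechoosing the basepoints (if necessary) so that they differ by a small integral class across the wall separating $C$ from $C'$. For the generators coming from $W$, I would use the equivalences $\Phi^{\la_C}_w\colon D(A_{w\la_C}\mMod)\to D(A_{\la_C}\mMod)$ of Proposition \ref{Weyl rings} to identify the categories attached to $C$ and $wC$ for each $w\in W$.

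The verification of the codimension-2 relations rests on Lemma \ref{straight-line}: whenever a sequence of pure twisting functors can be realized by a straight line of parameters whose intermediate points all lie in $\Pi$, the composition is canonically isomorphic to the single twisting functor $\Phi^{\la',\la}$ determined by the endpoints. Around a codimension-2 flat $L$, the local arrangement determines two reduced galleries between a fixed chamber $C_0$ and its $L$-antipode $C_0'$, and each reduced gallery can be realized by a straight-line path in $\Pi$ provided the $\la_C$ are chosen sufficiently deep in each chamber and the integral wall-crossings are taken to be sufficiently positive multiples of the primitive classes. Applying Lemma \ref{straight-line} to both galleries identifies both compositions with the single functor $\Phi^{\la_{C_0'},\la_{C_0}}$, yielding the required codimension-2 relation via Corollary \ref{within}. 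The compatibility of the pure twisting functors with the $W$-equivalences, which allows the resulting representation of $\pi_1(E,\la)$ to extend to all of $\pi_1(E/W,[\la])$, is supplied by Proposition \ref{Weyl bimodules}, which naturally identifies $\bi$ with its $W$-translate.

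The main obstacle I anticipate is controlling choices: to apply Lemma \ref{straight-line} simultaneously at every codimension-2 flat, one needs lattice representatives in every chamber that can be simultaneously connected by straight-line paths entirely inside $\Pi$, avoiding the finitely many exceptional parameters where localization fails. This is resolved by pushing each basepoint $\la_C$ deep into the interior of its chamber (using the ampleness in Lemma \ref{finitely-many} and the asymptotic localization statement of Corollary \ref{large quantizations}) and replacing each primitive wall-crossing class by a large positive integer multiple; Lemma \ref{finitely-many} ensures that the required straight-line paths exist for almost every scaling. The independence of the final representation of $\pi_1(E/W,[\la])$ from these auxiliary choices then follows from further applications of Corollary \ref{within} and Lemma \ref{straight-line}.
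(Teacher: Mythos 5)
Your proposal follows essentially the same route as the paper: both use the Deligne/Paris-type presentation of $\pi_1(E)$ (relations at codimension-2 flats given by the two minimal positive galleries), attach $D(A_{\la_C}\mMod)$ to basepoints $\la_C\in\Pi$ chosen via Lemma \ref{finitely-many}, assign the wall-crossing functors $\Phi^{\la_{C'},\la_C}$, verify the codimension-2 relations by realizing each gallery by colinear representatives so that Lemma \ref{straight-line} and Corollary \ref{within} collapse both compositions to a single $\Phi^{\la',\la}$, and extend to $\pi_1(E/W,[\la])\cong\pi_1(E,\la)\rtimes W$ using Propositions \ref{Weyl rings} and \ref{Weyl bimodules}. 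Your handling of the auxiliary choices (pushing basepoints deep into chambers and scaling the integral wall-crossing classes) matches the paper's construction, so no substantive difference or gap to report.
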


\begin{proof}
For each element $(i,w)\in I\times W$, choose an integral class $\eta_{i,w}$ such that $w\eta_{i,w}$ is ample on $\fM_i$.
By Lemma \ref{finitely-many}, we may choose a natural
number $k_{i,w}$ such that $\la_{i,w} := \la + k_{i,w}\eta_{i,w}$ lies in $\Pi_{i,w}$.
The {\bf Deligne groupoid} of $\cH$ is the full sub-groupoid of the fundamental
groupoid of $E$ with objects $\{\la_{i,w}\mid(i,w)\in I\times W\}$.  Note that different choices would lead
to a canonically isomorphic groupoid; the only important thing is that we have chosen one representative
of each chamber.

The {\bf Deligne quiver} of a real hyperplane arrangement is the quiver with nodes indexed by chambers 
and arrows in both directions between any two adjacent chambers.
Paris \cite{Paris} proves that the Deligne groupoid is isomorphic
to the quotient of the fundamental groupoid of the Deligne quiver obtained by identifying any pair of positive paths
of minimal length between the same two nodes.\footnote{A path can travel forward or backward along arrows;
a positive path is one that always travels forward.}
Thus, to construct an action of the Deligne groupoid, it is sufficient
to first define an action of the Deligne quiver and then check Paris's relations.

Recall that the chambers of $\cH$ are in bijection with $I\times W$.
We begin by associating the category $D(A_{\la_{i,w}}\!\!\mMod)$
to the node indexed by $(i,w)$.  If the chambers indexed by $(i,w)$ and $(j,v)$
are adjacent, then we assign the functor $\Phi^{\la_{j,v},\la_{i,w}}$ to the corresponding arrow
in the Deligne quiver.  We now need to check the relations.
Salvetti defines a CW complex which is a $W$-equivariant homotopy model for the space $E$.  
As described in \cite[pp. 611-2]{Salv}, the
1-skeleton of this complex is the Deligne quiver, and so the attaching
maps of the 2-cells completely describe the relations in the
fundamental groupoid.  There is one 2-cell for each pair of a
codimension 2 face $F$ and an adjacent chamber $C$, and the attaching
map  identifies the two minimal positive paths from $C$ to its
opposite across $F$.  Thus, we need only check that composition along
these paths gives the same functors.

Suppose we are given two such chambers, labeled by $(i,w)$ and $(i'\!,w')$.
Let $H$ be a generic cooriented hyperplane that contains $F$
and bisects both chambers.  Figure \ref{rank-2} illustrates a 2-dimensional slice transverse to $F$,
so that $F$ appears as a point and $H$ appears as a line, which in the
picture we draw as dotted.

Choose elements $\mu$ and $\nu$ of $\Pi_{i,w}$ that differ from $\la_{i,w}$ by an integral class, 
with $\mu$ on the positive side and $\nu$ on the negative side of $H$.
Choose $\mu'$ and $\nu'$ in $\Pi_{i'\!\!,w'}$ similarly.
Let $\mu=\mu_1,\mu_2,\dots,\mu_n=\mu'$ be colinear integral representatives of all the
chambers on the positive side of $H$, and let $\nu=\nu_1,\dots, \nu_\ell=\nu'$ be colinear
representatives of all the chambers on the negative side of $H$.
We may arrange these classes such that for all $k$, $\mu_k-\mu_{k+1}$ and $\nu_k-\nu_{k+1}$ both lie 
in the chamber indexed by $(i,w)$.  Put differently, we may assume that $w\mu_k - w\mu_{k+1}$ 
and $w\nu_k-w\nu_{k+1}$ are both ample on $\fM_i$.  All of this is illustrated in Figure \ref{rank-2}.

\begin{figure}[h]
  \centering
  \begin{tikzpicture}[thick,scale=3 ]
    \draw (-.4,1) -- (.4,-1);
    \draw (-1,.4)-- (1,-.4);
    \draw[dashed] (-.8,.8)--(.8,-.8);
\draw (.2,1.06) -- (-.2,-1.06);
\draw (1.06,.2) -- (-1.06,-.2);
\node[label=180:{$\mu_2$},circle,fill=black,inner sep=1.5pt] at (-.42,.07){};
\node[label=135:{$\mu$},circle,fill=black,inner sep=1.5pt] at (-.9,.55){};
\node[label=135:{$\nu$},circle,fill=black,inner sep=1.5pt] at
(-.55,.9){};
\node[label=90:{$\nu_2$},circle,fill=black,inner sep=1.5pt] at (-.07,.42){};
\node[label=-45:{$\mu'$},circle,fill=black,inner sep=1.5pt] at (.55,-.9){};
\node[label=-45:{$\nu'$},circle,fill=black,inner sep=1.5pt] at
(.9,-.55){};
\node[label=-90:{$\mu_{\ell-1}$},circle,fill=black,inner sep=1.5pt] at (.07,-.42){};
\node[label=0:{$\nu_{\ell-1}$},circle,fill=black,inner sep=1.5pt] at (.42,-.07){};
\node at (-.5,-.5){$\ddots$};
\node at (.5,.5){$\ddots$};
  \end{tikzpicture}
\caption{A 2-dimensional slice.}
\label{rank-2}
\end{figure}
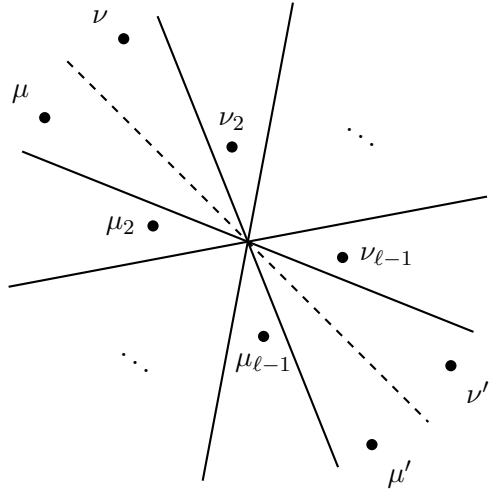

By Corollary \ref{within}, we may reduce the theorem to checking that the functors
\begin{equation*}
\Phi^{\la_{i'\!\!,w'},\,\mu'}\circ\Phi^{\mu'\!\!,\,\mu_{\ell-1}}\circ\cdots \circ\Phi^{\mu_2,\mu}\circ\Phi^{\mu,\la_{i,w}}
\and
\Phi^{\la_{i'\!\!,w'},\,\nu'}\circ\Phi^{\nu'\!\!,\,\nu_{\ell-1}}\circ\cdots \circ\Phi^{\nu_2,\nu}\circ\Phi^{\nu,\la_{i,w}}
\end{equation*}
from $D(A_{\la_{i,w}}\mMod)$ to $D(A_{\la_{i'\!\!,w'}}\mMod)$ are naturally isomorphic.
By Corollary \ref{within} and Lemma \ref{straight-line}, both are equivalent to $\Phi^{\la_{i'\!\!,w'},\la_{i,w}}$.

We have now established that the Deligne groupoid acts on the derived
categories $D(A_{\la_{i,w}}\mMod)$ for all $(i,w)\in I\times W$.
Specializing to a single parameter, we conclude that $\pi_1(E, \la)$ acts on
$D(A_\la\mMod)$ via pure twisting functors.  Furthermore, by
Proposition \ref{Weyl rings}, we have an
action of $W$ on the categories   $D(A_{\la_{i,w}}\mMod)$ via the
functors $\Phi^{\la}_{w}$.  The uniqueness of the quantizations of
line bundles (Proposition \ref{line bundles}) shows that
\[\Phi^{\la}_{w}\circ \Phi^{\la,\la'}\cong \Phi^{w\la,w\la'}\circ
\Phi^{\la'}_{w},\] so this action is
compatible with the action of $W$ on the Deligne groupoid $D$, considered
as a subgroupoid of the fundamental groupoid. 
This shows that the semi-direct product $D\rtimes W$ acts on the
categories  $D(A_{\la_{i,w}}\mMod)$.  The automorphisms of a point
$\la$ in the semi-direct product are isomorphic to $\pi_1(E/W, [\la])$.
\end{proof}

\begin{remark}\label{twisting fukaya}
We have already remarked that $\Dmod$ (and therefore $\Amod$, when localization holds)
may be thought of as a twisted algebraic version of the
Fukaya category of $\fM$ (Remark \ref{fukaya}).  
In this interpretation, we expect the action in Conjecture \ref{twisting braid} to be given by parallel
transport in the universal deformation, along the lines of the construction in \cite{SS} for Slodowy slices of type A.
\end{remark}

\begin{remark}\label{preserves subcategory}
As in Section \ref{sec:HC}, we may replace $D(A_\la\mMod)$ in the statement of Theorem \ref{twisting braid}
with $\DCLa$ (see Definition \ref{def:Cla}) for any $\bS$-equivariant $\fL_0\subset\fM_0$, or with
the bounded derived category $D^b(\CLa)$.  These categories are
related by a realization functor
$D^b(\CLa)\to \DCLa$, which may or may not be fully faithful.
\end{remark}

If $\fM$ is a hypertoric variety and 
$\fL$ is as in Example \ref{catO}, we obtain the twisting functors
studied in \cite[\S 6]{GDKD} and \cite[8.4]{BLPWtorico}.
To see this, we need to apply Lemma \ref{twist-reduce} and Remark \ref{theta doesn't matter}, 
because the functors in \cite[8.4]{BLPWtorico}
 are defined using the bimodules in Equation \eqref{bimodule}.

Recall that BGG category $\cO$ is the subcategory of finitely
generated $U(\mathfrak{g})$-modules on which $\mathfrak{b}$ acts
locally finitely, and $\mathfrak{h}$ acts semi-simply.  Let
$\cO_\la$ for a weight $\lambda$ be the Serre subcategory where the
center of $U(\mathfrak{g})$ acts with the same generalized character
as on the Verma module with highest weight $\la$.
If $\fM = T^*(G/B)$ and $\fL$ is as in Example \ref{catO}, then for
any regular integral weight $\la$, the category $C^\fL_{\la+\rho}$
is equivalent to $\cO_\la$ by Soergel's functor.  
As discussed above, this means that we have a realization functor $\mathscr{R}_\la\colon D^b(\cO_\la)\cong
D^b(C^\fL_{\la+\rho})\to D_{\fL}(A_{\la+\rho}\mmod)$, which is not obviously
fully faithful.  These functors
obviously commute with the translation equivalences between $C^\fL_{\la+\rho}$
and $C^\fL_{\la'+\rho}$ where $\la,\la'$ are both dominant and integral; thus the functor $\mathscr{R}_\la$ is
either fully faithful for all dominant integral $\la$ or for none.
The result \cite[5.13]{BLPWgco} shows that it must be fully faithful
for all $\la$ in an open subset $\fU\subset H^2(G/B)$, so it must be
an fully faithful for all dominant $\la$.  Thus, we can consider
$D^b(C^\fL_{\la+\rho})$ as a subcategory of $D_{\fL}(A_{\la+\rho}\mmod) \subset D^b(\Amod)$ in
this case.

The following result says that this equivalence
identifies the functors we call twisting functors with Arkhipov's
twisting functors \cite{Arktwist, AS}.
More precisely, Arkhipov defines a collection of derived auto-equivalences $\{T_w\mid w\in W\}$ of the category
$\cO_\la$ satisfying the relation $T_w\circ T_{w'} \cong T_{ww'}$ whenever
the length of $ww'$ is equal to the sum of the lengths of $w$ and $w'$, which means that these functors
generate an action of the generalized braid group.  In this case the discriminantal arrangement
is equal to the Coxeter arrangement for $W$, so the fundamental group $\pi_1(E/W, [\la])$
is also isomorphic to the generalized braid group.  

\begin{proposition}\label{shuf-twist}
Suppose that $\fM = T^*(G/B)$ and let $\fL$ be as in Example \ref{catO}.  
If $\la\in\Ht$ is regular, integral, and dominant, then Soergel's equivalence from the block $\cO_\la$ of BGG category $\cO$ to 
the category $C^\fL_{\la+\rho}$ intertwines Arhkipov's twisting action
 on $D^b(\cO)$ 
with the twisting action 
 on $ D^b(C^\fL_{\la+\rho}) \subset D(\AMod)$ 
from Theorem \ref{twisting braid}.
\end{proposition}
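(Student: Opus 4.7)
The plan is to reduce the proposition to checking that the two braid group actions agree on the Artin generators. Both $\pi_1(E/W, [\la+\rho])$ and Arkhipov's group are the Artin braid group of $W$, with standard generators $b_\alpha$ corresponding respectively to the elementary loops around the walls $H_\alpha$ and to the functors $T_{s_\alpha}$. Since both are functorial braid group actions on the same triangulated category $D^b(\cO_\la)\cong D^b(C^\fL_{\la+\rho})$, it is enough to produce, for each simple root $\alpha$, a natural isomorphism between the endofunctors assigned to $b_\alpha$.

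To unpack the geometric generator I would follow the construction in the proof of Theorem \ref{twisting braid}: pick $\mu\in \Pi$ in the chamber adjacent to $\la+\rho$ across the wall $H_\alpha$, so that the generator attached to $b_\alpha$ is
\[\Phi^{\la+\rho}_{s_\alpha}\circ\Phi^{s_\alpha\mu,\,\mu}\circ\Phi^{\mu,\,\la+\rho}.\]
By Proposition \ref{alg-geom}, because $\la+\rho$ and $\mu$ both lie in $\Pi$, this composition can be rewritten on the geometric side, via Beilinson-Bernstein localization, as $\Rsecs$ of a composition of tensor products with ${_{\mu}\!\cT_{\la+\rho}'}$ and ${_{s_\alpha\mu}\!\cT_{s_\alpha(\la+\rho)}'}$, followed by the Weyl isomorphism of Proposition \ref{Weyl rings}. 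Each of these bimodules quantizes a line bundle on $\fM=T^*(G/B)$ whose Euler class realizes the $\alpha$-wall-crossing, so that the geometric generator effectuates a derived intertwining operation associated with the subminimal parabolic $P_\alpha\supset B$.

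The identification of this wall-crossing functor with Arkhipov's $T_{s_\alpha}$, under Soergel's equivalence, is the crux of the proposition. One route is to check that, after localization, both functors coincide with the derived intertwining operator $R(\pi_\alpha)_*\pi_\alpha^*$ along the subminimal fibration $\pi_\alpha\colon G/B\to G/P_\alpha$, suitably twisted; this geometric realization of Arkhipov's twist goes back to the work of Beilinson-Ginzburg-Soergel and Bezrukavnikov-Mirkovi\'c. A parallel and more computational route is to verify agreement on Verma modules, where both functors send $M_{w\cdot\la}$ to $M_{s_\alpha w\cdot\la}$ when $s_\alpha w>w$ and sit in a matching two-step filtration in the opposite case. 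The principal obstacle in either approach is tracking the $\rho$-shift furnished by Proposition \ref{cot-period}: microlocalization of $\la$-twisted differential operators on $G/B$ has period $\la-\nicefrac{\varpi}{2}$, and the canonical identification $\varpi\leftrightarrow 2\rho$ is what forces $\la+\rho$ rather than $\la$ to be the correct period on the geometric side; this shift must be carefully incorporated into the description of the line bundle quantized by ${_{\mu}\!\cT_{\la+\rho}'}$. Once this bookkeeping is carried out, the remainder of the argument is formal: the agreement of the two functors on a single simple-reflection generator $b_\alpha$ extends, via the braid relations satisfied on both sides, to an isomorphism of the two braid group actions.
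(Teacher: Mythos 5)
Your reduction to Artin generators is fine at the level of weak actions, but the substance of the proposition is precisely the identification of the generator functors, and both of the routes you sketch for that step have problems. In route (a), the push-pull $\pi_\alpha^*R(\pi_\alpha)_*$ along $G/B\to G/P_\alpha$ is (up to shift) the translation functor through the $\alpha$-wall, i.e.\ a projective functor; it is neither Arkhipov's $T_{s_\alpha}$ (which strongly commutes with projective functors and corresponds under localization to an intertwining/Radon-type operation attached to the \emph{open} orbit in relative position $s_\alpha$, not to the closed correspondence $G/B\times_{G/P_\alpha}G/B$) nor the generator of Theorem \ref{twisting braid}, which is built from tensoring with the quantized line bundles ${}_{\mu}\cT'_{\la+\rho}$ of Proposition \ref{line bundles} together with the $W$-symmetry of section rings (Proposition \ref{Weyl rings}). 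So the proposed identification is wrong as stated, and even the correct identification cannot simply be cited; it is what needs proving. In route (b), knowing that both functors send Verma modules to (filtered extensions of) the expected Vermas does not produce a natural isomorphism of triangulated endofunctors; agreement on a family of objects determines nothing functorial by itself. Your concern about the $\rho$-shift (Proposition \ref{cot-period}) is legitimate bookkeeping but is not the missing ingredient.

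The paper's proof supplies exactly what your proposal lacks: a rigidity statement that pins the functors down. It shows that Arkhipov's functors are \emph{characterized} by two properties --- strong commutation with projective functors and $T_wV_\la\cong V_{w\cdot\la}$ --- using the fact that every object of $\cO_\la$ is obtained from the dominant Verma by a complex of projective functors and that morphisms are computed by bimodule homomorphisms, so any other family with these properties is canonically isomorphic to $T_w$. It then verifies the two properties for the twisting functors of Theorem \ref{twisting braid}, for all $w$ at once rather than generator by generator: strong commutation follows from the Harish-Chandra bimodule form of Soergel's equivalence, under which the twisting functor becomes left tensoring with a bimodule $R^w_\la$ while projective functors act by right tensoring (the left/right swap in the equivalence is the key point); and the value on the dominant Verma is computed geometrically, since after localization $V_\la$ is the line bundle $\cL_\la$ restricted to the open Bruhat cell and tensoring with ${}_{w(\la+\rho)}\cT'_{\la+\rho}$ visibly yields $V_{w\cdot\la}$. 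To repair your argument you would need either to prove such a uniqueness criterion yourself or to genuinely establish that the parameter-space wall-crossing functor equals the intertwining realization of $T_{s_\alpha}$, neither of which is done in the proposal.
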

\begin{proof}
We begin by showing that
Arkhipov's twisting functors are uniquely characterized by the following two properties:
\begin{itemize}
\item $T_w$ strongly commutes with projective functors \cite[Lemma 2.1]{AS}.  
That is, for any projective functor $F$,  there is an isomorphism $T_w\circ F\cong F\circ T_w$, 
and these isomorphisms are compatible with natural transformations of projective functors.
\item For all $w\in W$, $T_wV_{\la}\cong V_{w\cdot \la}$, where $V_\la$ is the Verma module 
with highest weight $\la$.
\end{itemize}
Indeed, let $\{T'_w\mid w\in W\}$ be any other collection of functors satisfying these
conditions.  By \cite[3.3(iib)]{BeGe}, for any irreducible projective object of $\cO_\la$,
there is a projective functor taking $V_{\la}$ to that object.  Since $\cO_\la$ has enough projectives,
for any object $N$ of $\cO_\la$, there is a complex $F_N$ of projective functors taking $V_{\la}$
to $N$.  Furthermore, projective functors may be regarded as modules over $\mg\times\mg$ \cite{Back01},
and we have $\Hom_{\mg}(N,N')\cong \Hom_{\mg\times\mg}(F_N,F_{N'})$.  
We therefore have 
\[T'_wN\cong T'_wF_NV_{\la}\cong F_NT'_wV_{\la}\cong 
F_NV_{w\cdot \la}\cong F_NT_wV_{\la}
\cong T_wF_NV_{\la}\cong T_wN,\] 
and the strong commutativity condition ensures that this induces
an isomorphism of functors.  

Since $\la$ is dominant, Soergel's equivalence between $\cO_\la$ and 
$C^\cL_{\la+\rho}$ is given by composing the functors
\begin{equation}
\tikz[very thick,->,baseline=0pt]{
\node (a) at (0,0) {${}_\la^1 \mathscr{H}_\la^\infty$};
\node (b) at (-6,0) {$\cO_\la$};
\node (c) at (6,0) {$C^\cL_{\la+\rho}$};
\draw (a.160) -- (b.30) node[above,midway]{$(-)^\circ\otimes V_\la$};
\draw (a.20) -- (c.158) node[above,midway]{$\varprojlim_i(-\otimes
  V_\la^i)$};
\draw (b.-30) -- (a.-160) node[below,midway]{$\Hom_{\C}^{\operatorname{\fin}}(V_\la,-)^{\circ}$};
\draw (c.-158) -- (a.-20) node[below,midway]{$\varinjlim_i \Hom_{\C}^{\operatorname{\fin}} (V_\la^i,-)$};
}\label{equivalences}
\end{equation}
where 
\begin{itemize}
\item ${}_\la^1 \mathscr{H}_\la^\infty$ denotes the category of
Harish-Chandra bimodules (in the usual sense) for $U(\mg)$ with
generalized central character $\la$ for both the left and right actions, with
the center acting on the left semi-simply, 
\item $\Hom_{\C}^{\operatorname{\fin}}(V_\la,N)$ is the
Harish-Chandra bimodule of $U(\mg)_\Delta$-locally finite $\C$-linear maps
$V_\la\to N$, 
\item $(-)^\circ$ denotes the functor on
$U(\mg)$-$U(\mg)$ bimodules which switches the left and right
actions, twisting by the antipode of $U(\mg)$, 
\item $V^i_\la$ denotes
the length $i$ thickened Verma module $V^i_\la:=U(\mg)\otimes_{U(\mathfrak{b})}(U(\mathfrak{h})/\mathfrak{m}_\la^i)$,
where $\mathfrak{m}_\la$ is the kernel of the action of
$U(\mathfrak{h})$ on the $\la$-weight space.
\end{itemize}
Thus, we need only show that our twisting functors 
on $D^b(C^\fL_{\la+\rho})$, transported to $D^b(\cO_\la)$ via Soergel's equivalence, satisfy these two conditions.

For any element $w\in W$, let
$R^w_\la:=\Phi_w^{\la+\rho}({_{w(\la+\rho)}T_{\la+\rho}})$, where ${_{w(\la+\rho)}T_{\la+\rho}}$ is
regarded as a left $A_{w({\la+\rho})}$-module.  Consider the twisting functor $$S_w:=
\Psi_w^{\la+\rho}\circ\Phi^{w(\la+\rho),\la+\rho}\cong R^w_\la\Lotimes-.$$
Under the bi-adjoint equivalences of $C^\cL_{\la+\rho}$ with ${}_\la^1
\mathscr{H}_\la^\infty$ described in Equation \eqref{equivalences} of Example \ref{catO}, this functor is intertwined with
$R^w_\la\Lotimes -$, now regarded as a functor on Harish-Chandra bimodules, since tensor product on the left 
commutes with $\varprojlim(-\otimes
V^i_\la)$.  On the other hand, the equivalence to
$\cO_\la$, described in the same equation, involves exchanging the left and right actions.
Thus, any projective functor
$F\cong F\big(U(\mg)\big)\otimes_{U(\mg)}-$ is intertwined with
$-\otimes_{U(\mg)}F(U(\mg))^\circ\colon {}_\la^1
\mathscr{H}_\la^\infty\to {}_\la^1
\mathscr{H}_\la^\infty$, which obviously commutes with $R^w_\la\otimes -$.

Checking the second condition is an easy geometric calculation.  
Since $\la$ is dominant and regular, localization holds at $\la$ \cite{BB}.
The localization of $V_\la$ is an object of $D_{\la+\rho}\mmod$, which we may regard as a twisted D-module
by Proposition \ref{microlocalization}.
Concretely, it is the restriction of the line bundle $\cL_\la$ to the open
Bruhat cell, where only the action of $\mg$ depends on $\la$.
Tensoring with ${_{w(\la+\rho)}\cT'_{\la+\rho}}$ takes us to the restriction of $\cL_{w\cdot \la}$ to that cell.
The sections of that restriction are exactly the Verma module $V_{w\cdot \la}$, since it is
generated by a unique $U$-invariant section of weight $w\cdot \la$ (here $U$ is the nilpotent radical of $B$),
and the dimension of weight spaces matches the character of the Verma module.
\end{proof}

We end by analyzing the twisting action of Theorem \ref{twisting
  braid} on the level of the Grothendieck group.  Assume $\la\in \Pi$.
Every twisting functor $\Phi\colon D^b(A_\la\mmod)\to D^b(A_\la\mmod)$ is
induced by derived tensor product with an algebraic Harish-Chandra
bimodule $K_\Phi$; by Proposition \ref{tensor equivalence},
this implies that the
corresponding functor $\LLoc\circ \Phi \circ \Rsecs\colon D^b(\cD_\la\mmod)\to
D^b(\cD_\la\mmod)$ is induced by convolution with a geometric Harish-Chandra bimodule $F_\Phi\in {}_\la\HCg_{\la}$.
By Proposition \ref{K-conv}, the effect of $\Phi$ on characteristic cycles is given by
convolution with the characteristic cycle $\suppc(F_\Phi)$.  Thus we obtain an algebra homomorphism
$$\a\colon \C[\pi_1(E/W, [\la])]\to H^{2\dim\fM}_\fZ(\fM\times\fM; \C).$$

\begin{proposition}\label{K-triv}
The subalgebra $\C[\pi_1(E, \la)]\subset\C[\pi_1(E/W, [\la])]$ is contained in the kernel of $\a$,
thus we obtain an induced homomorphism
$$\bar\a\colon\C[W]\to  H^{2\dim\fM}_\fZ(\fM\times\fM; \C).$$
\end{proposition}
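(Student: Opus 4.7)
The plan is to show that for every $\gamma \in \pi_1(E,\la)$, the associated geometric Harish-Chandra bimodule $F_{\Phi_\gamma} \in {}_\la\HCg_\la$ (the one obtained from the twisting functor $\Phi_\gamma$ via $\LLoc \circ - \circ \Rsecs$) has characteristic cycle equal to the diagonal class $[\fM_\Delta]$. Since $[\fM_\Delta]$ is the unit of the convolution algebra $H^{2\dim\fM}_\fZ(\fM\times\fM; \C)$, this implies that for every $\gamma \in \pi_1(E,\la)$ we have $\a(\gamma) = 1$, so the augmentation ideal of $\C[\pi_1(E,\la)]$ lies in the kernel of $\a$ and $\a$ descends to $\bar\a: \C[W] \to H^{2\dim\fM}_\fZ(\fM\times\fM; \C)$.

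First I would represent $\gamma$ as a composable sequence of elementary twists. Using the construction in the proof of Theorem \ref{twisting braid}, choose a sequence $\la = \la_0, \la_1, \ldots, \la_n = \la$ in $\Pi$ with consecutive entries differing by integral classes, so that $\Phi_\gamma$ is isomorphic to the composition $\Phi^{\la_n,\la_{n-1}} \circ \cdots \circ \Phi^{\la_1,\la_0}$; since $\gamma$ lies in the pure subgroup $\pi_1(E,\la)$, no Weyl-group factors $\Phi^{\mu}_w$ appear in the output. Each $\Phi^{\la_i,\la_{i-1}}$ is derived tensor product with the algebraic bimodule ${}_{\la_i}T_{\la_{i-1}}$, and under the derived localization equivalences (available at every $\la_i$) it corresponds via Proposition \ref{tensor equivalence} to convolution with the geometric bimodule $\LLoc({}_{\la_i}T_{\la_{i-1}})$. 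Therefore $F_{\Phi_\gamma} \simeq \LLoc({}_{\la_n}T_{\la_{n-1}}) \star \cdots \star \LLoc({}_{\la_1}T_{\la_0})$.

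Next I would apply Proposition \ref{K-conv} to convert this into an identity at the level of characteristic cycles:
\[
\suppc(F_{\Phi_\gamma}) = \suppc(\LLoc({}_{\la_n}T_{\la_{n-1}})) \star \cdots \star \suppc(\LLoc({}_{\la_1}T_{\la_0})).
\]
The key computation, already carried out inside the proof of Proposition \ref{same-cycle}, is that each factor equals $[\fM_\Delta]$: each bimodule ${}_{\la_i}\cT'_{\la_{i-1}}$ extends along the twistor line to a family ${}_{\la_i}{\scrT'_{\la_{i-1}}}^{(\nu)}$ on $\scrM_\nu \times_{\aone} \scrM_\nu$ whose fiber over any $p\neq 0 \in \aone$ is a line bundle on the diagonal of an affine variety (hence has characteristic cycle $[\fM_\Delta]$), and by Lemma \ref{restriction-commutes} this class restricts to the characteristic cycle of the fiber at $0$ as well. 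Since $[\fM_\Delta]$ is the unit of $\star$, the convolution collapses and we obtain $\suppc(F_{\Phi_\gamma}) = [\fM_\Delta]$, which gives the desired conclusion.

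The main obstacle I anticipate is the bookkeeping in the second paragraph. For a pure loop $\gamma$, consecutive $\la_i$'s will generally lie in different chambers of $\cH$, so when Proposition \ref{alg-geom} is used to identify each $\Phi^{\la_i,\la_{i-1}}$ with a geometric tensor product, it introduces a Weyl element $w_i$ and routes through a different resolution $\fM_{j_i}$; the factors $\Phi^{\la_{i-1}}_{w_i}$ and $\Phi^{w_i\la_i}_{w_i^{-1}}$ must telescope around the entire loop so that $F_{\Phi_\gamma}$ is realised as a bimodule on a single fixed $\fM \times \fM$. This telescoping is guaranteed by the compatibility statements in Propositions \ref{Weyl rings}, \ref{Weyl bimodules}, and \ref{same-sections-bimodules} (together with the fact that $\gamma \in \pi_1(E,\la)$ rather than $\pi_1(E/W,[\la])$), but verifying it carefully at each wall-crossing is where the bulk of the technical work lies.
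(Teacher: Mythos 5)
Your argument is correct and is essentially the paper's: the paper deduces the proposition directly from Proposition \ref{same-cycle} (pure twisting functors preserve characteristic cycles), whose proof is precisely the diagonal-cycle computation via the twistor family, Lemma \ref{restriction-commutes}, and Proposition \ref{K-conv} that you redo factor by factor at the level of bimodules. The telescoping worry in your final paragraph is not a real obstacle: since the rings $A_{\la_i}$ and the bimodules ${}_{\la_i}T_{\la_{i-1}}$ are independent of the resolution (Corollary \ref{same-sections}, Proposition \ref{same-sections-bimodules}) and derived localization holds at every point of $\Pi$ on the fixed resolution $\fM$ (Theorem \ref{one-to-all}), one can localize every factor on the single $\fM\times\fM$ without ever invoking Proposition \ref{alg-geom}.
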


\begin{proof}
By Proposition \ref{same-cycle}, pure twisting functors preserve characteristic cycles.
Since the subalgebra $\C[\pi_1(E, \la)]\subset\C[\pi_1(E/W, [\la])]$ acts by pure twisting functors, the result follows.
\end{proof}

\begin{remark}\label{W-action}
The map $\bar\a$ also has a direct geometric construction, which
precisely matches the one given by Chriss and Ginzburg \cite[3.4.1]{CG97}
for $\fM=T^*G/B$. Applying the argument of the proof of Proposition \ref{same-cycle} 
to an impure twisting functor shows that
the class corresponding to $w$ is a specialization of the graph of the
map $w\colon \pi^{-1}(\nu)\to \pi^{-1}(w\cdot \nu)$.  
\end{remark}

\bibliography{./symplectic}
\bibliographystyle{amsalpha}
\end{document}